\def\draft{n}
\documentclass[12pt]{amsart}
\usepackage[headings]{fullpage}
\usepackage{amssymb,epic,eepic,epsfig,amsbsy,amsmath,amscd,graphicx}
\usepackage[all]{xy}
\usepackage{url}

\usepackage{mathrsfs}
\usepackage[bookmarks=true,%
    colorlinks=true,%
    linkcolor=blue,%
    citecolor=blue,%
    filecolor=blue,%
    menucolor=blue,%
    urlcolor=blue,%
    breaklinks=true]{hyperref}


\newtheorem{theorem}{Theorem}[section]
\theoremstyle{definition}
\newtheorem{proposition}[theorem]{Proposition}
\newtheorem{lemma}[theorem]{Lemma}
\newtheorem{definition}[theorem]{Definition}
\newtheorem{remark}[theorem]{Remark}
\newtheorem{corollary}[theorem]{Corollary}

\newtheorem{question}[theorem]{Question}

\newtheorem{exercise}[theorem]{Exercise}

\def\printname#1{
        \if\draft y
                \smash{\makebox[0pt]{\hspace{-0.5in}
                        \raisebox{8pt}{\tt\tiny #1}}}
        \fi
}

\newcommand{\psdraw}[2]
         {\begin{array}{c} \hspace{-1.3mm}
        \raisebox{-4pt}{\epsfig{figure=draws/#1.eps,width=#2}}
        \hspace{-1.9mm}\end{array}}

\newlength{\standardunitlength}
\setlength{\standardunitlength}{0.0125in}

\catcode`\@=11
\long\def\@makecaption#1#2{%
     \vskip 10pt

\setbox\@tempboxa\hbox{
       \small\sf{\bfcaptionfont #1. }\ignorespaces #2}%
     \ifdim \wd\@tempboxa >\captionwidth {%
         \rightskip=\@captionmargin\leftskip=\@captionmargin
         \unhbox\@tempboxa\par}%
       \else
         \hbox to\hsize{\hfil\box\@tempboxa\hfil}%
     \fi}
\font\bfcaptionfont=cmssbx10 scaled \magstephalf
\newdimen\@captionmargin\@captionmargin=2\parindent
\newdimen\captionwidth\captionwidth=\hsize
\catcode`\@=12

\def\lbl#1{\label{#1}\printname{#1}}


\def\BN{\mathbb N}
\def\BZ{\mathbb Z}

\def\BQ{\mathbb Q}
\def\BR{\mathbb R}

\def\calB{\mathcal B}

\def\D{\Delta}

\def\calT{\mathcal T}

\def\bb{\beta}

\def\l{\lambda}

\def\S{\Sigma}
\def\s{\sigma}

\def\ga{\gamma}

\def\w{\omega}

\def\d{\delta}

\def\th{\theta}

\def\s{\sigma}

\def\ti{\widetilde}

\def\longto{\longrightarrow}
\def\w{\omega}

\def\calB{\mathcal{B}}

\def\sgn{\mathrm{sgn}}

\def\sign{\operatorname{sign}}
\def\Zp{\BZ_{\succ}[q] }
\def\fsl{\mathfrak{sl}}
\def\wp{w_{\succ}}
\def\wlt{w_{\mathrm{lt}}}
\def\mindeg{\mathrm{mindeg}}
\def\coeff{\mathrm{coeff}}
\def\calE{\mathcal E}
\def\nbd{\mathrm{nbd}}
\def\calP{\mathcal P}


\def\be{  \begin{equation} }
\def\ee{  \end{equation} }
\def\cP{\mathcal P}

\def\ve{\varepsilon}

\def\bb{\bar b }

\def\bD{\hat D}
\def\U{\mathcal U}
\newcommand{\qbinom}[2]{\begin{bmatrix}#1\\ #2\end{bmatrix}}
\def\ho{\hat \otimes}

\def\cH{\mathcal U}
\def\cF{\mathcal F}
\def\id{\mathrm{id}}
\def\bb{\mathfrak b}
\def\Hom{\mathrm{Hom}}

\def\ev{\mathrm{ev}}
\def\cE{\mathcal E}
\def\cV{\mathcal V}


\def\cV{\mathcal V}

\def\cT{\mathcal T}

\def\nbd{\mathrm{nbd}}
\def\calP{\mathcal P}

\def\vv{|\!|}

\def\cF{\mathcal F}
\def\calS{\mathcal S}

\def\oor{\mathrm{or}}
\def\cS{\calS}
\def\cQ{\mathcal Q}

\def\cD{D^*}
\def\Adj{\operatorname{Adj}}
\def\Deg{\operatorname{Deg}}
\def\cL{\mathcal L}
\def\Adm{\operatorname{Adm}}
\def\tF{\tilde F}
\def\bb{\mathbf{b}}
\def\te{\tilde e}

\def\ce{\breve{e}}

\begin{document}


\title[Nahm sums, stability and the colored Jones polynomial]{
Nahm sums, stability and the colored Jones polynomial}
\author{Stavros Garoufalidis}
\address{School of Mathematics \\
         Georgia Institute of Technology \\
         Atlanta, GA 30332-0160, USA \newline
         {\tt \url{http://www.math.gatech.edu/~stavros }}}
\email{stavros@math.gatech.edu}
\author{Thang T.Q. L\^e}
\address{School of Mathematics \\
         Georgia Institute of Technology \\
         Atlanta, GA 30332-0160, USA \newline
         {\tt \url{http://www.math.gatech.edu/~letu }}}
\email{letu@math.gatech.edu}
\thanks{The authors were supported in part by NSF. \\
\newline
1991 {\em Mathematics Classification.} Primary 57N10. Secondary 57M25.
\newline
{\em Key words and phrases: Nahm sums, colored Jones polynomial, links,
stability, modular forms, mock-modular forms, $q$-holonomic sequence,
$q$-series, Conformal Field Theory, thin-thick decomposition.
}
}

\date{May 14, 2012}


\begin{abstract}
Nahm sums are $q$-series of a special hypergeometric type that appear in
character formulas in Conformal Field Theory, and give rise to elements
of the Bloch group, and have interesting modularity properties.
In our paper, we show how Nahm sums arise naturally in Quantum Knot Theory,
namely we prove the stability of the coefficients of the colored Jones
polynomial of an alternating link and present a Nahm sum formula for the
resulting power series, defined in terms of a reduced diagram of the
alternating link. The Nahm sum formula comes with a computer implementation,
illustrated in numerous examples of proven or conjectural identities
among $q$-series.
\end{abstract}

\maketitle

\tableofcontents


\section{Introduction}
\lbl{sec.intro}

The colored Jones polynomial of a link is a sequence of Laurent polynomials
in one variable with integer coefficients. We prove in full a conjecture
concerning the stability of the colored Jones polynomial for all alternating
links.

A weaker form of stability ($0$-stability, defined below) for the colored
Jones polynomial of an alternating knot was conjectured by Dasbach and
Lin.  The $0$-stability is also proven independently by Armond for all
adequate links \cite{Ar2}, which include alternating links and closures
of positive braids, see also \cite{Ar}. The advantage of our approach is it proves
stability up to all order, and gives explicit formulas (in the form of
generalized Nahm sums) for the limiting series,  which in
particular implies convergence in the open unit disk in the $q$-plane.

Stability was observed in some examples by Zagier, and conjectured by the
first author to hold for all knots, assuming that we restrict the sequence
of colored Jones polynomials to suitable arithmetic progressions, dictated
by the quasi-polynomial nature of its $q$-degree \cite{Ga1,Ga3}. Zagier asked
about modular and asymptotic properties of the limiting $q$-series.
In a similar direction, Habiro asked about $0$-stability of the cyclotomic
function of alternating links in \cite{Ha}.

 Our
generalized Nahm sum
formula comes with a computer implementation (using as input a planar
diagram of a link), and allows the study of its asymptotics when $q$
approaches radially a root of unity. Our Nahm sum formula is reminiscent
to the cohomological Hall algebra of motivic Donaldson-Thomas invariants
of Kontsevich-Soibelman \cite{KS}, and complement recent work
of Witten \cite{Wi} and Dimofte-Gaiotto-Gukov \cite{DGG2}.

\subsection{Nahm sums}
\lbl{sub.nahmsum}

Recall the {\em quantum factorial} and {\em quantum Pochhammer symbol}
defined by \cite{An}:
$$
(x;q)_n=\prod_{k=0}^{n-1} (1-xq^k), \qquad (x;q)_\infty =\prod_{k=0}^\infty (1-xq^k)
$$
We will abbreviate $(x;q)_n$ by $(x)_n$.
\lbl{sub.nahm}

In \cite{Nahm0} Nahm studied $q$-hypergeometric series $f(q) \in \BZ[[q]]$
of the form
\begin{equation*}
f(q) =\sum_{n_1,\dots,n_r \geq 0} \frac{q^{\frac{1}{2} n^t \cdot A \cdot n + b \cdot n}}{
(q)_{n_1} \dots (q)_{n_r}}
\end{equation*}
where $A$ is a positive definite even integral symmetric matrix and
$b \in \BZ^r$.

Nahm sums appear in character formulas in Conformal Field Theory, and
define analytic functions in the complex unit disk $|q|<1$ with
interesting asymptotics at complex roots of unity, and with sometimes
modular behavior.  Examples of Nahm sums are the  seven famous,
mysterious $q$-series of Ramanujan that are nearly modular (in modern terms,
mock modular). For a detailed discussion, see \cite{Za.mock}. Nahm sums give
rise to elements of the Bloch group, which governs the leading radial
asymptotics of $f(q)$ as $q$ approaches a complex root of unity.
Nahm's Conjecture concerns the modularity of a Nahm sum $f(q)$, and was
studied extensively by Zagier, Vlasenko-Zwegers and others
\cite{VZ,Za.dilog}.

The limit of the colored Jones function of an alternating link
leads us to consider generalized Nahm sums of the form
\be
\lbl{eq.nahmgen}
\Phi(q)=\sum_{ n \in C \cap \BN^r} (-1)^{a \cdot n}
\frac{q^{\frac{1}{2} n^t \cdot A \cdot n + b \cdot n}}{(q)_{n_1}
\dots (q)_{n_r}}
\ee
where $C$ is a rational polyhedral cone in $\BR^r$, $b, a \in \BZ^r$
and $A$ is a symmetric (possibly indefinite) symmetric matrix.
We will say that the generalized Nahm sum \eqref{eq.nahmgen} is
{\em regular} if the function
$$
n \in C \cap \BN^r \mapsto \frac{1}{2} n^t \cdot A \cdot n + b \cdot n
$$
is proper and bounded below. Regularity ensures that the series
\eqref{eq.nahmgen}
is a well-defined element of the Novikov  ring
$$
\BZ((q ))=\{\sum_{n \in \BZ} a_n q^n \, | \, a_n=0, \, n \ll 0 \}
$$
of power series in $q$ with integer coefficients and
bounded below minimum degree.
In the remaining of the paper, the term Nahm sum will refer to a generalized
Nahm sum. The paper is concerned with a new source of Nahm sums, namely
Quantum Knot Theory.

\subsection{Stability of a sequence of polynomials}
\lbl{sec.stable}

For $f(q)= \sum a_j q^j\in \BZ((q))$ let $\mindeg_q f(q)$ denote the smallest
$j$ such that $a_j \neq 0$ and let $\coeff(f(q),q^j)= a_j$ denote
the coefficient of $q^j$ in $f(q)$.

\begin{definition}
\lbl{def.lim}
Suppose $f_n(q),f(q) \in \BZ((q))$. We write that
$$
\lim_{n\to \infty} f_n(q) = f(q)
$$
if
\begin{itemize}
\item
there exists $C$ such that $\mindeg_q(f_n(q)) \geq C$ for all $n$, and
\item
for every $j\in \BZ$,
\be
\lbl{e002}
\lim_{n\to \infty} \coeff(f_n(q), q^j) = \coeff(f(q), q^j).
\ee
\end{itemize}
\end{definition}
Since Equation \eqref{e002} involves a limit of integers, the above
definition implies that for each $j$, there exists $N_j$ such that
$$
f_n(q)-f(q) \in q^j \BZ[[q]]
$$
(and in particular, $\coeff(f_n(q), q^j)=\coeff(f(q), q^j)$)
for all $n > N_j$.

\begin{remark}
Although for every integer $j$ we have $\lim_{n\to\infty}\coeff(q^{-n^2},q^j)=0$,
it is not true that $\lim_{n\to\infty} q^{-n^2}=0$.
\end{remark}

\begin{definition}
\lbl{def.stable}
A sequence $f_n(q) \in \BZ[[q]]$ is {\em $k$-stable} if there
exist $\Phi_j(q) \in \BZ((q))$ for $j=0,\dots,k$ such that
\be
\lbl{eq.kstable}
\lim_{n \to \infty} \,\, q^{-k(n+1)}
\left(f_n(q) -\sum_{j=0}^k \Phi_j(q) q^{j(n+1)}\right)=0
\ee
We say that $(f_n(q))$ is {\em stable} if it is $k$-stable for all $k$.
Notice that if $f_n(q)$ is $k$-stable, then it is $k'$-stable for all $k' <k$
and moreover $\Phi_j(q)$ for $j=0,\dots,k$ is uniquely determined by $f_n(q)$.
We call $\Phi_k(q)$ the $k$-limit of $(f_n(q))$.
For a  stable sequence $(f_n(q))$, its associated series is given by
\begin{equation*}
F_f(x,q)=
\sum_{k=0}^\infty \Phi_k(q) x^k \in \BZ((q ))[[x]]\,.
\end{equation*}
\end{definition}

It is easy to see that the pointwise sum and product of $k$-stable
sequences are $k$-stable.

\subsection{Stability of the colored Jones function for alternating links}
\lbl{sub.results}

 Given a link $K$, let $J_{K,n}(q) \in \BZ[q^{\pm 1/2}]$
denote its colored Jones polynomial (see e.g.
\cite{Ohtsuki,Tu1}) with each component colored by the
$(n+1)$-dimensional irreducible representation of $\fsl_2$ and
normalized by
$$
J_{\text{Unknot},n}(q)=(q^{(n+1)/2}-q^{-(n+1)/2})/(q^{1/2}-q^{-1/2})\,.
$$
When $K$ is an {\em alternating} link, the lowest degree of $J_{K,n}(q)$
is known and the lowest coefficient is $\pm 1$ (see \cite{Le,Lickorish} and
Section \ref{sec.0stability}).
We  divide $J_{K,n}(q)$ by its lowest monomial to obtain $\hat J_{K,n}(q)$.
Although $J_{K,n}(q) \in \BZ[q^{1/2}]$, we have
$\hat J_{K,n}(q) \in \BZ[q]$; see \cite{Le_Duke}.

Our main results  link the colored Jones polynomial and its stability with
Nahm sums. The first part of the result, with proof given in Section
\ref{sec.bounded}, is the following.

\begin{theorem}
\lbl{thm.2}
For every alternating link $K$, the sequence $(\hat J_{K,n}(q))$
is stable and its associated $k$-limit
$\Phi_{K,k}(q)$ and series $F_K(x,q)$ can be effectively computed from
any reduced, alternating diagram $D$ of $K$.
\end{theorem}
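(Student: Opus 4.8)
The plan is to express the colored Jones polynomial through a state-sum over a reduced alternating diagram, and then to analyze the resulting $q$-series term by term using the theory of stable sequences. First I would invoke a Kauffman-bracket skein-theoretic expansion of $J_{K,n}(q)$ coming from a reduced alternating diagram $D$. The standard approach is to replace each crossing by its Jones-Wenzl-idempotent expansion and to contract along the checkerboard graph of the diagram; concretely, I would build the state sum out of the quantum $6j$-symbols (tetrahedral coefficients $\Th$ and $\Theta$-nets) attached to the faces and edges of $D$. Because $D$ is reduced and alternating, the Tait graph has no nugatory edges, which is what guarantees the known lowest-degree and $\pm 1$ leading-coefficient statements cited from \cite{Le,Lickorish}; this is precisely what lets us pass to the normalized $\hat J_{K,n}(q) \in \BZ[q]$.

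The key analytic step is to estimate the $q$-degree of each summand in this state sum as a function of the color $n$ and of the summation variables. Each local building block (a quantum factorial $(q)_m$, a quantum multinomial, or a tetrahedral coefficient) has a $q$-adic valuation that is a piecewise-quadratic function of the summation indices and of $n$. I would collect these contributions and show that, after dividing by the lowest monomial, the total exponent has the form $\tfrac12 n^t A n + b\cdot n + (\text{lower order in the internal variables})$ on the relevant polyhedral cone of admissible states, exactly matching the shape of a generalized Nahm sum \eqref{eq.nahmgen}. The admissibility conditions at each trivalent vertex (the triangle inequalities and parity constraints of the $6j$-symbols) cut out the rational polyhedral cone $C$, and regularity — properness and boundedness below of the quadratic form on $C \cap \BN^r$ — is what makes the limiting series land in $\BZ((q))$.

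To extract stability in the sense of Definition \ref{def.stable}, I would substitute $n \mapsto n+1$ and isolate, for each fixed power $x^k$, the contribution of those summation variables whose $q$-degree grows exactly linearly in $n$ with slope $k$. The mechanism is that the quantum factorials $(q)_{n+1-m}$ appearing in the denominators converge, as $n \to \infty$, to $(q)_\infty$ up to an explicit monomial in $q^{n+1}$; expanding $1/(q)_{n+1-m}$ as a power series in $q^{n+1}$ and collecting the coefficient of $q^{k(n+1)}$ produces the $k$-limit $\Phi_{K,k}(q)$ as a finite sum of Nahm-type $q$-series. Effectivity follows because every ingredient — the cone $C$, the matrix $A$, the vectors $a,b$, and the finite truncation needed to compute $\Phi_{K,k}(q)$ to any given $q$-order — is read off algorithmically from the combinatorics of $D$.

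The main obstacle I expect is controlling the limit \emph{uniformly}: I must show not merely that each individual summand stabilizes, but that the infinitely many summands with large internal indices contribute only to high powers of $q$, so that tail terms do not spoil the coefficient-wise limit demanded by Definition \ref{def.lim}. This requires a genuine lower bound showing that the $q$-degree of a summand tends to $+\infty$ as the internal summation variables grow, uniformly in $n$ — an estimate that hinges on positive-definiteness of $A$ restricted to the internal directions of the cone even though $A$ itself may be indefinite. Establishing this quadratic lower bound on $C$, and verifying that it dominates the linear terms coming from $b\cdot n$, is the technical heart of the argument; everything else is bookkeeping of local $q$-degrees.
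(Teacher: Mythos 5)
Your proposal founders on the step you yourself identify as the technical heart. You hope to prove a \emph{quadratic} lower bound for the $q$-degree of the summands, ``uniformly in $n$,'' by establishing positive-definiteness of the quadratic form restricted to the internal directions of the summation cone. This is false for alternating links, and not for a fixable reason: the paper shows (Section \ref{sec.kbounded}) that the relevant quadratic form $Q_2=Q+L_1$ is only \emph{copositive} on the cone of ($\BN$-valued centered) states, and that the cone \emph{always} contains directions of strictly linear growth --- concretely, if $s_p$ is the elementary state supported on the boundary of a $B$-polygon $p$ of the diagram, then $Q_2(m s_p)=m$. Copositivity with the linear bound $Q_2(s)\ge |s|$ is enough for $0$-stability (this is exactly how the paper proves Theorem \ref{thm.1}), but it cannot give $k$-stability for $k\ge 1$ by term-by-term degree estimates: along a linear-growth direction, the states $s=m s_p$ with $m\approx n$ all contribute at degree roughly $q^{(n+1)}$ times a bounded power of $q$, so infinitely many states (in $n$) pile up at each fixed order $q^{k(n+1)}$ and must be \emph{resummed}, not merely bounded. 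No refinement of the degree bookkeeping you describe can circumvent this, because the needed superlinear lower bound simply does not hold.

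The paper's actual argument replaces your hoped-for estimate with a structural decomposition: it partitions the set of $k$-bounded states by ``seeds'' (collections of disjoint $B$-polygons carrying balanced boundary data, Proposition \ref{prop.sk1}), writes each such state uniquely as $s=s'+\sum_p (m-\sigma'(p))s_p$ (Proposition \ref{p.533}), proves stability away from the linear-growth region by an adequacy argument (Proposition \ref{prop.unbounded}), and then handles the linear-growth region by a downward induction on seeds (Theorem \ref{thm.1stable.main}), where the resummation over the free parameter $m$ is carried out by a $q$-Laplace-transform-type identity (Lemma \ref{lem.dumb}); finiteness of the induction rests on Lemma \ref{lem.finite}. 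This occupies Sections \ref{sec.kbounded}--\ref{sec.sk} and is what makes full stability genuinely harder than $0$-stability. A secondary divergence: the paper works with the $R$-matrix state sum re-expressed in ``centered states'' precisely to separate the variables $n$ and $s$ in the exponents (Remark \ref{why}), rather than with a Kauffman-bracket/$6j$-symbol expansion; your skein-theoretic setup is not fatal in itself, but without the separation of variables and without the seed induction, your outline does not reach the conclusion.
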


Let us give some remarks regarding Theorem \ref{thm.2}.

\begin{remark}
\lbl{rem.thm2a}
If one uses the new normalization where with $J_{\text{Unknot}, n}(q) =1$,
the above theorem still holds. The new $F_K(x,q)$ is equal to the old one times $(1-q)/(1-x)$.
\end{remark}

\begin{remark}
\lbl{rem.thm2b}
If $\bar K$ is the mirror image of $K$, then
$J_{\bar K,n}(q^{-1}) = J_{K,n}(q)$. If $K$ is alternating, then so is $\bar K$.
Hence, applying Theorem \ref{thm.2}   to $\bar K$, we see that
similar stability result holds for the head of the colored Jones polynomial
of alternating link.
\end{remark}

\begin{remark}
\lbl{rem.thm2c}
The weaker $0$-stability (conjectured by Dasbach and Lin) is proven
independently by Armond \cite{Ar}. In \cite{Ar}, $0$-stability is
proved for all $A$-adequate links, which include all alternating links, but
no stability in full is proven there, nor any formula for the $0$-limit
is given.
As we will see, the proof of stability in full is more complicated than
that of $0$-stability and occupies the more difficult part of our paper,
given in Sections \ref{sec.kbounded}-\ref{sec.sk}.
\end{remark}

\begin{remark}
\lbl{rem.thm2d}
A sharp estimate regarding the rate of convergence of the stable sequence
$\hat J_{K,n}(q)$ is given in Theorem \ref{thm.qholo}.
\end{remark}

\subsection{Explicit Nahm sums for the 0-limit and 1-limit}
\lbl{sub.nahmalt}

Throughout this subsection  $D$ is a reduced diagram of a {\em non-split}
alternating link $K$ with $c$ crossings.

\subsubsection{Laplacian of a graph}

In this paper a graph is a finite one-dimensional CW-complex.
{\em A plane graph} is a graph $\Gamma$ (with loops and multiple edges
allowed) together with an embedding of $\Gamma$ into $\BR^2 \subset S^2$.
A plane graph $\Gamma$ gives rise to
a polygonal complex structure of $S^2$, and
its set of {\em vertices}, set of {\em edges}, and set of {\em polygons}
are denoted respectively by $\cV(\Gamma)$, $\cE(\Gamma)$ and $\cP(\Gamma)$.

The {\em adjacency matrix}
$\Adj(\Gamma)$ is the $\cV(\Gamma) \times \cV(\Gamma)$ matrix defined
such that $\Adj(\Gamma)(v,v')$ is the number of edges connecting $v$ and
$v'$. Let $\Deg(G)$ be the diagonal $\cV(\Gamma) \times \cV(\Gamma)$ matrix
such that $\Deg(\Gamma)(v,v)$ is the degree of the vertex $v$, i.e. the
number of edges incident to $v$, with the convention that each loop edge
at $v$ is counted twice.

The {\em Laplacian} $\cL(\Gamma): = - \Deg(\Gamma) + \Adj(\Gamma)$ plays an
important role in graph theory.

\subsubsection{Graphs associated to a reduced alternating non-split link diagram $D$}

The diagram $D$ gives rise to a polygonal complex of  $S^2= \BR^2 \cup \infty$
with $c$ vertices,
$2c$ edges, and  $c+2$ polygons. Since $D$ is alternating, there is a way to
assign a color $A$ or $B$ to each polygon such that
in a neighborhood of each crossing the colors are as in the following
figure, see e.g. \cite[p.217]{Tu3}.

\begin{figure}[htpb]
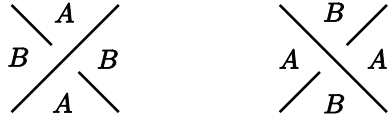

$$
\psdraw{ABtype}{2in}
$$
\caption{A checkerboard coloring of alternating planar projections}
\lbl{f.pntype}
\end{figure}

This is the usual checkerboard coloring of the regions of an alternating
link diagram, used already by Tait.
When we rotate the overcrossing arc at a crossing
counterclockwise (resp. clockwise), we swap a $A$-type (resp. $B$-type)
angle. Note that orientation dose not take part in the definition of
$A$-angles and $B$-angles.

Let $\cD$ be the dual of the plane graph $D$. By definition, each polygon
$p$ of $D$ contains in its interior the dual vertex $p^*\in \cV(\cD)$, and
$p^*$ is called an
$A$-vertex (respectively, $B$-vertex) of $\cD$ if $p$ is an $A$-polygon
(resp. $B$-polygon). Then $\cV(\cD)$  has a partition into $A$-vertices and
$B$-vertices
$\cV(\cD) = \cV_A \sqcup \cV_B$. The colors $A$ and $B$ give $\cD$ a
bipartite structure. Since the degree of each vertex of $D$ is 4, the
each polygon of $\cD$ is a quadrilateral, having 4 vertices, two of which
are $A$-vertices and two are $B$-vertices. Moreover, the two $B$-vertices
of each polygon of $\cD$ are opposite. Connect the two $B$-vertices of each
quadrilateral of $\cD$ by a diagonal inside that quadrilateral, and call it
a  $\cT$-edge.

{\em The Tait graph of $D$} is defined to be the plane graph $\cT$ whose set of
vertices is $\cV_B$, and whose set of edges is the set of $\cT$-egdes.  The plane
graph Tait graph totally determines the alternating link $K$ up to
orientation.
The graph $\cT$ can be defined for any link diagram, and is studied
extensively, see e.g. \cite{FKP,Ozawa,Thistlethwaite}.

Note that for a vertex $v \in \cV_B$, its degrees in $\cD$ and in $\cT$
are the same.

\subsubsection{The lattice and the cone}

Fix an $A$-vertex of $\cD$ and call it $v_\infty$.
We will focus on $\Lambda:=\BZ[\cV(\cD)]$, the $\BZ$-lattice of rank $c+2$
freely spanned by the vertices of $\cD$. Let $\Lambda_0 = \BZ[\cV(\cD)
\setminus \{ v_\infty\}]$, a sublattice of $\Lambda$ of rank $c+1$.

For an edge  $e \in \cE(\cD)$, define the $\BZ$-linear map
$e: \Lambda\to \BZ$ by
$$
e(v)= \begin{cases} 1 \quad &\text{if $v$ is a vertex of $e$},
\\ 0 & \text{otherwise}. \end{cases}
$$
An element $x \in \Lambda_0$ is {\em admissible} if $e(x) \ge 0$ for
every edge $e \in \cE(\cD)$. The set $\Adm \subset \Lambda_0$ of all
admissible elements is the intersection of $\Lambda_0$ with a rational convex
cone in $\Lambda \otimes \BR$.

Define the $\BZ$-linear map $L:\Lambda \to \frac 12 \BZ$ by
$$
L(v) = \begin{cases} 1 \quad  & \text{if } v \in \cV_B
\\ \frac{\deg(v)}2 -1  & \text{if } v \in \cV_A.
\end{cases}
$$

Let $\cQ$ be the symmetric $\cV(\cD) \times \cV(\cD)$ matrix defined by

\be
\label{e0001}
\cQ := \Deg(\cD) + \Adj(\cD) + \cL(\cT ).
\ee

Note that a priori $\cL(\cT )$ is a $\cV_B \times \cV_B$ matrix, and is
considered as a $\cV(\cD) \times \cV(\cD)$ matrix in the right hand side
of \eqref{e0001} by the trivial extension, i.e. in the extension, any entry
outside the block
$\cV_B \times \cV_B$ is 0.

The symmetric matrix $\cQ$ defines a symmetric bilinear form
$\cQ(x,y): \Lambda \otimes_\BZ \Lambda \to \BZ$. Let $Q: \Lambda \to
\frac 12\BZ$ be the corresponding
quadratic form, i.e.

$$
Q(\lambda) := \frac 12 \cQ(\lambda,\lambda).
$$

\begin{remark}
Although $Q(\lambda)$ and $L(\lambda)$ take value in
$\frac12\BZ$, we later show that $Q(\lambda) + L(\lambda) \in \BZ$.
While $Q,L$ depend only on $D$, the set $\Adm$ depends on the choice of
an $A$-vertex $v_\infty$.
\end{remark}

Examples that illustrate the above definitions are given in Section
\ref{sub.applications}.

\subsubsection{Nahm sum for the $0$-limit}

The next theorem is proven in Section \ref{sec.0stability}.

\begin{theorem}
\lbl{thm.1}
Suppose $D$ is a reduced alternating diagram of a non-split link $K$. Fix
any choice of $v_\infty$. Then the $0$-limit of $\hat J_{K,n}(q)$ is equal to
\be
\lbl{eq.nahm0}
\Phi_{K,0}(q) = (q)_\infty^{c}
\sum_{\lambda \in \Adm} (-1)^{2 L(\lambda)} \frac{q^{Q(\lambda)
+ L(\lambda)}}{\displaystyle{\prod_{e\in \cE(\cD)}\,  (q)_{e(\lambda)}}}.
\ee
The generalized Nahm sum on the right hand side is regular and belongs
to $\BZ[[q]]$.
\end{theorem}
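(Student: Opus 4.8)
The plan is to obtain the formula by extracting the leading term of an explicit skein-theoretic state sum for $\hat J_{K,n}(q)$. First I would cable the reduced alternating diagram $D$ with the $n$-th Jones--Wenzl idempotent on each strand and compute the Kauffman bracket, expanding every one of the $c$ crossings by the fusion rule. Because $D$ is alternating, the checkerboard coloring makes this expansion canonical: the internal colors are organized by the bipartite partition $\cV(\cD)=\cV_A\sqcup\cV_B$, and a state is recorded by an element $\lambda\in\Lambda$ with the color on each dual edge $e$ equal to $e(\lambda)$. This reduces $\hat J_{K,n}(q)$ to a finite sum over states, of the kind already used by Armond \cite{Ar} for $0$-stability.

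Next I would compute the local weights. Each fused crossing contributes a framing/twist monomial and a theta-symbol, and each resulting loop a quantum-integer factor; all of these are monomials in $q^{1/2}$ times ratios of quantum factorials. Summing the $q$-exponents over the whole diagram produces a quadratic-plus-linear function of the state, which I would match term by term with $Q(\lambda)+L(\lambda)$ for $\cQ=\Deg(\cD)+\Adj(\cD)+\cL(\cT)$, the denominators assembling into $\prod_{e\in\cE(\cD)}(q)_{e(\lambda)}$ and the signs into $(-1)^{2L(\lambda)}$. A guiding structural fact is that the vector $g=\sum_{v\in\cV_A}v-\sum_{v\in\cV_B}v$ lies in the kernel of $\cQ$ and leaves every $e(\lambda)$ invariant while $L(g)=-2$; thus $g$ is exactly the direction in which the color $n$ grows, and $\Lambda_0$, the slice pinned by $v_\infty$, is a transversal to it. This matching of skein weights to $(Q,L,\cQ)$ is the computational core of the argument.

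To pass to the $0$-limit I would normalize by the lowest monomial and let $n\to\infty$. The states split along the gauge direction $g$: those translated far in the $+g$ direction (colors near $n$) contribute only to higher $k$-limits, while the lowest-order contributions are the states in the transversal slice satisfying $e(\lambda)\ge 0$, i.e. precisely $\Adm\subset\Lambda_0$. On this locus the $c$ quantum factorials attached to the crossings are $(q)_n$, each converging to $(q)_\infty$, which produces the prefactor $(q)_\infty^{c}$ and hence the stated formula for $\Phi_{K,0}(q)$.

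Finally I would verify regularity and integrality. Regularity is the assertion that $\lambda\mapsto Q(\lambda)+L(\lambda)$ is proper and bounded below on $\Adm$, and I expect this to be the main obstacle: the form $\cQ$ is a competition between the signless Laplacian $\Deg(\cD)+\Adj(\cD)$, which is positive semidefinite with kernel $\BR g$, and the term $\cL(\cT)=-\Deg(\cT)+\Adj(\cT)$, which is negative semidefinite, so positivity is not automatic. The plan is to show that on the slice $\Lambda_0$ transverse to $g$ the form is positive definite --- using that $D$ is reduced and non-split, so that $\cT$ is connected --- and then that the cone constraints $e(\lambda)\ge 0$ leave only finitely many $\lambda$ in each total degree, whereupon the quadratic part dominates the linear term $L$. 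Granting regularity the sum lies in $\BZ((q))$, and membership in $\BZ[[q]]$ follows from its identification with the $0$-limit of the integral polynomials $\hat J_{K,n}(q)\in\BZ[q]$: every coefficient stabilizes to an integer, $\mindeg_q\ge 0$, and the state $\lambda=0$ contributes the constant term $1$ via $(q)_\infty^{c}$.
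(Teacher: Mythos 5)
Your structural observation about the gauge vector $g=\sum_{v\in\cV_A}v-\sum_{v\in\cV_B}v$ is correct and pleasant ($\cQ g=0$, $e(g)=0$ for every edge of $\cD$, and $L(g)=-2$), but the proposal has a genuine gap at the step you yourself flag as the main obstacle: regularity. Your plan is to show that $\cQ$ is positive definite on the slice $\Lambda_0$ and that the quadratic part then dominates the linear term $L$. This is provably false. For any $B$-vertex $v$ one has $\cQ(v,v)=\deg_{\cD}(v)-\deg_{\cT}(v)=0$, because the degrees of a $B$-vertex in $\cD$ and in $\cT$ coincide and neither graph has a loop at $v$ when $D$ is reduced; moreover $nv\in\Adm$ for all $n\ge 0$. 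So $Q$ vanishes identically on rays lying \emph{inside} the cone, it is degenerate (not definite) on $\Lambda_0$, and along these rays $Q(nv)+L(nv)=n$ grows only linearly, entirely because of the linear term. Hence ``the quadratic part dominates the linear term'' fails exactly where it is needed. What replaces positive definiteness in the paper is a copositivity estimate: $Q_2(s)=Q(s)+L_1(s)\ge|s|$ on the cone of centered states (Proposition \ref{prop.Q+L1}), proved by exhibiting a Hilbert basis of the cone consisting of elementary states supported on directed cycles of $\bD$ and checking $Q_2\ge 1$ on each basis element; this is where reducedness ($A$-adequacy, Lemma \ref{l67}) enters. This linear lower bound is also what you need, and do not have, for the limit extraction itself: to prove $\hat J_{K,n}(q)-\Phi_{K,0}(q)\in q^{n+1}\BZ[[q]]$ one must bound the tail of states with $|s|>n$ (this uses $Q_2(s)\ge|s|$) and control the replacement of $q^{n+1}$ by $0$ in each weight (this uses the $x$-adequacy of Lemma \ref{lem.amb}); your statement that states translated far in the $+g$ direction ``contribute only to higher $k$-limits'' is precisely the assertion to be proved, and no mechanism is offered for it.

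A secondary problem is the state sum itself. Expanding each crossing of the $n$-cabled diagram by the fusion rule produces one internal color per crossing, i.e.\ $c$ state variables, whereas the target formula is a sum over $\Adm\subset\Lambda_0$, a lattice of rank $c+1$; the fused diagram is a planar trivalent graph whose evaluation requires further recoupling, so your states are not yet ``colorings $\lambda$ with dual-edge values $e(\lambda)$.'' To get region colorings one needs a shadow-world type sum (or the Armond--Dasbach machinery), and the matching of its local weights with $(\cQ,L)$ --- which you correctly call the computational core --- is only asserted, not carried out. The paper avoids this entirely by staying with the $R$-matrix sum, introducing centered states so that $n$ and the state variables decouple (Lemma \ref{l.degree}), and then transporting the resulting Nahm sum to the dual graph via the explicit isomorphism $\tau$ of Proposition \ref{p.210}. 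So the route you propose might be completable, but both of its pillars --- the weight matching and, especially, regularity via definiteness --- are respectively missing and incorrect as stated.
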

A categorification of the above theorem was given recently by Rozansky
\cite{Rozansky}. Here are two consequences of this explicit formula.
The next corollary is proven in Section \ref{sub.cor.thm2.1}.

\begin{corollary}
\lbl{cor.thm2.1}
For every alternating link $K$, 
$\Phi_{K,0}(q) \in \BZ[[q]]$ is analytic in the unit disk
$|q|<1$.
\end{corollary}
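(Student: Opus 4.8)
The plan is to show that the generalized Nahm sum in Theorem \ref{thm.1} defines an analytic function on the open unit disk by exhibiting it as a uniformly convergent power series whose coefficients are eventually constant. Since Theorem \ref{thm.1} already tells us that $\Phi_{K,0}(q)$ equals the explicit regular Nahm sum \eqref{eq.nahm0} and that this series belongs to $\BZ[[q]]$, the entire problem reduces to proving that a convergent (in the $q$-adic sense) power series with integer coefficients, coming from a \emph{regular} generalized Nahm sum, actually converges on $|q|<1$ and is holomorphic there. The key mechanism is regularity: the function $\lambda \mapsto Q(\lambda)+L(\lambda)$ on $\Adm$ is proper and bounded below, so for each fixed power $q^N$ only finitely many lattice points $\lambda \in \Adm$ contribute, and the minimal degree of the $\lambda$-th term grows without bound as $\lambda \to \infty$ in the cone.

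First I would fix $q$ with $|q|<1$ and estimate each summand of \eqref{eq.nahm0} in absolute value. The denominators $(q)_{e(\lambda)}$ are quantum Pochhammer symbols, and one has the uniform bound $|(q)_m| \geq \prod_{k\geq 1}(1-|q|^k) = (|q|;|q|)_\infty > 0$ for all $m$; thus $1/|(q)_{e(\lambda)}| \leq (|q|;|q|)_\infty^{-1}$ is bounded independently of $\lambda$. Multiplying the $c$ edge factors (there are $|\cE(\cD)|=2c$ edges, so at most that many factors) gives a uniform constant $M(q)$ bounding the reciprocal product. The numerator contributes $|q|^{Q(\lambda)+L(\lambda)}$. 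By regularity, the exponent $Q(\lambda)+L(\lambda)$ is proper and bounded below on $\Adm$, so the number of $\lambda\in\Adm$ with $Q(\lambda)+L(\lambda)\leq T$ grows at most polynomially in $T$ (the lattice points in a cone with bounded value of a proper quadratic-plus-linear form), while $|q|^{Q(\lambda)+L(\lambda)}$ decays geometrically. Consequently
\be
\sum_{\lambda\in\Adm} \bigl| (-1)^{2L(\lambda)} q^{Q(\lambda)+L(\lambda)} / {\textstyle\prod_e (q)_{e(\lambda)}} \bigr|
\;\leq\; M(q) \sum_{\lambda\in\Adm} |q|^{Q(\lambda)+L(\lambda)} \;<\; \infty,
\ee
so the series converges absolutely at every $q$ in the open disk, and the prefactor $(q)_\infty^c$ is itself an entire function of $q$ (an infinite product convergent for $|q|<1$).

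To upgrade absolute convergence to analyticity I would invoke the standard Weierstrass criterion: on any closed subdisk $|q|\leq r<1$ the same estimate holds with $r$ in place of $|q|$, giving a uniform summable majorant $M(r)\sum_\lambda r^{Q(\lambda)+L(\lambda)}$ that is independent of $q$ on the subdisk. Since each summand is a holomorphic function of $q$ on $|q|<1$ (a monomial times a finite product of reciprocals of the holomorphic, nonvanishing functions $(q)_m$), uniform convergence on compact subsets forces the limit to be holomorphic there. Multiplying by the entire factor $(q)_\infty^c$ preserves analyticity, so $\Phi_{K,0}(q)$ is analytic on $|q|<1$. The statement of the corollary is for \emph{every} alternating link $K$; for a split link the colored Jones polynomial factors over the connected components, so the result follows from the non-split case treated in Theorem \ref{thm.1} together with the fact that products of functions analytic on the disk are again analytic.

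The main obstacle I anticipate is controlling the growth of the number of lattice points in $\Adm$ at a fixed value of the quadratic-plus-linear form: because $\cQ$ may be indefinite, one cannot simply bound $Q(\lambda)$ below by a positive-definite form on all of $\Lambda$. The crucial point is that regularity is asserted \emph{relative to the cone} $\Adm$, so the properness is only along the admissible directions, and I would need to use the precise geometry of the cone---namely that along every extreme ray the form $Q+L$ tends to $+\infty$---to guarantee that the sublevel sets $\{\lambda\in\Adm : Q(\lambda)+L(\lambda)\leq T\}$ are finite and grow subexponentially. This is exactly the content of ``regular'' in the sense defined in Section \ref{sub.nahm}, so once regularity of \eqref{eq.nahm0} is granted (as Theorem \ref{thm.1} does), the counting estimate becomes a routine consequence, and the analyticity argument goes through as above.
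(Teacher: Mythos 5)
Your proof is correct, and it follows the same overall strategy as the paper -- term-by-term absolute convergence of the sum \eqref{e501}/\eqref{eq.nahm0} -- but your key estimate is genuinely simpler. The paper does not use a uniform lower bound on the Pochhammer symbols: it fixes $a=|q|$, chooses $\ve>0$ with $a<(a+\ve)^{2c_D}$ as in \eqref{e508}, proves the exponentially decaying bound $|(q)_n|>C_1(a+\ve)^n$ of \eqref{e506}, and combines it with the inequality $\sum_{e}s(e)\le 2c_D\,Q_2(s)$ of \eqref{e507} to produce a geometric majorant with ratio $a/(a+\ve)^{2c_D}<1$. Your observation that $|1-q^k|\ge 1-|q|^k$, hence $1/|(q)_n|\le (|q|;|q|)_\infty^{-1}$ uniformly in $n$, makes both the $\ve$-tuning and \eqref{e507} unnecessary: the majorant is just $M(|q|)\,|q|^{Q(\lambda)+L(\lambda)}$, and all that remains is a polynomial bound on the sublevel-set counts. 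On that remaining point you should be more anchored than ``a routine consequence of regularity'': properness alone gives only \emph{finiteness} of the sublevel sets, not a polynomial count; what converts finiteness into an $O(T^{\mathrm{rank}})$ bound is a linear lower bound on the exponent, which the paper supplies either directly as $Q_2(s)\ge |s|$ (Proposition \ref{prop.Q+L1}(b), which is what its own proof invokes) or in general via the equivalence in Proposition \ref{prop.proper}. In compensation, you are more careful than the paper on the analytic side: the paper establishes only pointwise absolute convergence, whereas your Weierstrass argument on closed subdisks $|q|\le r<1$ is what literally yields holomorphy of the limit, and your explicit reduction of split links to the non-split case via multiplicativity (cf.\ Remark \ref{rem.normalizations1}) addresses a case the paper's proof passes over silently.

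Two small corrections: $(q)_\infty$ is analytic on $|q|<1$ but it is not an \emph{entire} function (the unit circle is a natural boundary), and the exponents $Q(\lambda)+L(\lambda)$ are nonnegative \emph{integers} on $\Adm$ (this is needed for each summand to be a monomial times a holomorphic nonvanishing quotient, and follows from $Q_2(\tau^{-1})\ge|s|\ge 0$ together with the integrality remark following Proposition \ref{prop.Q+L1}).
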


The next corollary is shown in Section \ref{sec.tait}.

\begin{corollary}
\lbl{c001} If the reduced Tait graphs of two alternating links $K_1,K_2$ are isomorphic as abstract graphs, then
they have the same $0$-limit, 
$\Phi_{K_1,0}(q)=\Phi_{K_2,0}(q)  $.
\end{corollary}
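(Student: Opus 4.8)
The plan is to start from the explicit formula of Theorem \ref{thm.1} and show that its right-hand side is already determined by the abstract isomorphism type of $\cT$. The first step is to record the combinatorial dictionary between $\cD$ and $\cT$: the $B$-vertices $\cV_B$ are exactly the vertices $\cV(\cT)$, the $A$-vertices $\cV_A$ are the faces $\cP(\cT)$, the $c$ crossings index $\cE(\cT)$, and each edge of $\cE(\cD)$ is a \emph{corner} of $\cT$, i.e. an incidence of a vertex with a face. In these coordinates I write $\lambda=(a,b)$ with $a=(a_f)_{f\in\cV_A}$ and $b=(b_v)_{v\in\cV_B}$, so that admissibility reads $a_f+b_v\ge 0$ over all corners $(f,v)$, and the denominator of \eqref{eq.nahm0} is $\prod_{(f,v)}(q)_{a_f+b_v}$.

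Next I would simplify the quadratic data. Since $\cD$ is bipartite, $\cQ$ has no $A$--$A$ off-diagonal entries, and for $v\in\cV_B$ one has $\deg_{\cD}(v)=\deg_{\cT}(v)$, so the diagonal contribution of $\Deg(\cD)$ on $\cV_B$ cancels the $-\Deg(\cT)$ hidden in $\cL(\cT)$. The upshot is that the $B$--$B$ block of $\cQ$ is exactly $\Adj(\cT)$, the $A$--$A$ block is the diagonal of face-lengths, and the $A$--$B$ block records corner incidences; in particular each face-variable $a_f$ enters only through the terms attached to $f$. I would also record the null vector $w=\mathbb 1_{\cV_A}-\mathbb 1_{\cV_B}$, for which a direct check gives $\cQ w=0$ and $L(w)=-2$. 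This identifies the choice of $v_\infty$ as the gauge-fixing of the one-dimensional kernel of $\cQ$ and clarifies the $v_\infty$-independence built into Theorem \ref{thm.1}.

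Because the $a_f$ are decoupled, for each fixed $b$ the sum factors as a product over the faces $f\neq v_\infty$ of one-dimensional $q$-hypergeometric sums in $a_f$. I would evaluate these by Durfee-type identities such as $\sum_{m} q^{m^2+dm}/((q)_m(q)_{m+d})=1/(q)_\infty$. Already the bigon case, a face bounded by a pair of parallel $\cT$-edges between $v$ and $v'$, collapses to $q^{-b_vb_{v'}}/(q)_\infty$, whose exponent cancels one of the two parallel-edge contributions to $\Adj(\cT)$ --- which is exactly the passage to the reduced graph. The bookkeeping of the powers of $(q)_\infty$ is guided by Euler's formula $|\cV(\cT)|-c+|\cP(\cT)|=2$, which in the clean case points to the intrinsic power $(q)_\infty^{|\cV(\cT)|-1}$ in the final expression.

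The main obstacle is the final step: establishing the summation identity for faces of arbitrary length (where, unlike the bigon, the single-face sum is not simply a power of $(q)_\infty$) and verifying that, once all face-variables are integrated out, the resulting sum over $\BZ[\cV(\cT)]$ --- its quadratic form, its cone, its denominator, and its power of $(q)_\infty$ --- depends only on $\cT$ as an abstract graph and not on the planar embedding. Equivalently, this is the assertion that $\Phi_{K,0}(q)$ is invariant under Whitney $2$-isomorphisms of the Tait graph, since two plane graphs with the same underlying abstract graph differ by a sequence of such flips. I expect this embedding-independence, rather than any single identity, to be the technical heart of the argument, with the bigon computation serving as the prototype for how the face-length and corner data must wash out.
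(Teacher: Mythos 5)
You have correctly set up the dictionary between $\cD$ and the Tait graph, and your bigon computation is exactly the paper's Lemma \ref{lem.planar} (Durfee's identity kills the face-variable of a bigon and effects the passage to the reduced graph). But your proposal stops precisely at the heart of the matter: you acknowledge that you cannot establish (i) a closed-form evaluation of the one-dimensional face sums for faces of length $\geq 3$, and (ii) invariance of the resulting expression under Whitney flips (planar 2-isomorphisms). Point (i) is in fact hopeless as stated: for a face of length $\geq 3$ the sum over $a_f$ is \emph{not} a monomial times a power of $(q)_\infty$ --- if it were, the whole Nahm sum would trivialize, contrary to the examples in the paper (e.g. $\Phi_{8_5,0}$). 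So the strategy of ``integrating out all face variables'' and then inspecting what remains cannot be carried through, and the embedding-independence you flag as the technical heart is left unproven.

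The paper closes this gap with a topological, not $q$-series-theoretic, input that your proposal is missing: a Whitney flip of the plane Tait graph $\gamma$ changes the associated alternating link $K(\gamma)$ by a \emph{Conway mutation}, and the colored Jones polynomial is mutation-invariant \cite{MT}; hence $\Phi_{\gamma,0}$ is unchanged (Lemma \ref{lem.002}). Combined with Whitney's theorem that any two planar embeddings of a 2-connected planar graph differ by a sequence of flips, this gives embedding-independence with no face-sum evaluation at all; Durfee's identity is needed only for bigons, and parallel edges are handled by first flipping them into a bigon position and then collapsing (Lemma \ref{lem.0003}), with an induction on the number of parallel pairs (Lemma \ref{lem.0004}). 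Two further points you would also need: the collapsed diagrams may fail to be reduced, but they remain $A$-adequate so Theorem \ref{thm.1} still applies (Remark \ref{rem.ade}); and the Tait graph of a reduced diagram of a non-split alternating link is 2-connected, which is what licenses the appeal to Whitney's theorem in the final step.
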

Here the reduced Tait graph $\cT'$ is obtained from
$\cT $ by replacing every set of parallel edges by an edge;
and two edges are parallel if they connect the same two vertices.
This corollary had been proven by Armond and Dasbach: in \cite{ArDas}, it is
proved that if two alternating links have the same reduced Tait graph, and the
0-limit of the first link exists, then
the 0-limit of the second one exists and is equal to that of the first one.
In section \ref{sec.tait} we will derive  Corollary \ref{c001} from the
explicit formula of Theorem \ref{thm.1}.

We end this section with a remark on normalizations.

\begin{remark}
\lbl{rem.normalizations1}
The colored Jones polynomial $J_{K,n}(q)$ (and consequentrly, its shifted
version $\hat J_{K,n}(q) \in 1 + q \BZ[q]$) is independent of the orientation
of the components of a link $K$ \cite{Tu2}. With our normalization we have
\begin{eqnarray*}
\Phi_{\text{Unknot},0}(q) &=& \frac{1}{1-q}, \qquad F_{\text{Unknot}}(x,q)=\frac{1-x}{1-q}\\
\Phi_{K_1 \sqcup K_2,0}(q) &=& \Phi_{K_1,0}(q)  \Phi_{K_2,0}(q) \\
\Phi_{K_1 \sharp K_2,0}(q) &=& (1-q) \Phi_{K_1,0}(q) \Phi_{K_2,0}(q)
\end{eqnarray*}
where $\sqcup$ and $\sharp$ denotes the disjoint union and the connected
sum respectively.
\end{remark}

\subsubsection{The $1$-limit}

For a quadrilateral $p$ of $\cD$, define a $\BZ$-linear map
$p: \Lambda \to \BZ$ by
\begin{align*}
p(v)= \begin{cases}
1 \quad &\text{if $v$ is one of the four vertices of $p$}  \\
 0  &\text{otherwise.}
\end{cases}
\end{align*}
The next theorem is proven in Section \ref{sub.Phi1}.

\begin{theorem}
\lbl{thm.1a}
Suppose $D$ is a reduced alternating diagram of a non-split link $L$.
Fix any choice of $v_\infty$. The 1-limit of $\hat J_{K,n}(q)$ is
\begin{eqnarray}
\lbl{eq.Phi1}
\Phi_{K,1}(q) &=& \frac{(q)_\infty^{c}}{1-q}
\left(
\sum_{\lambda \in \Adm} (-1)^{2 L(\lambda)} \frac{q^{Q(\lambda)+L(\lambda)}}{\prod_{e \in \calE(\cD)}\,
(q)_{e(\lambda)}}
\left(\sum_{e \in \calE(\cD)} q^{-e(\lambda)} - \sum_{p \in \cP(\cD )}  q^{-p(\lambda)} \right)
\right.
\\
& &
\left. - \sum_{v \in \cV_B}  \frac{1}{(q)_\infty^{\deg(v)}} \,\sum_{\lambda \in \Adm_v}
(-1)^{2L(\lambda)} \, \,
\,
\frac{q^{Q(\lambda)+L(\lambda)}}{\prod_{e \in \calE(\cD)}\, (q)_{e(\lambda)}}
\right)\,. \notag
\end{eqnarray}
where $\Adm_v$ is the set of all admissible $x$ such that $p(x)=0$ for
every $p \in \cP(\cD )$ incident to $v$.
\end{theorem}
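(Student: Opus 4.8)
The plan is to compute the 1-limit $\Phi_{K,1}(q)$ by extending the argument that produced the 0-limit formula in Theorem \ref{thm.1}. Recall that the 0-limit arises by collecting, for each $n$, the bottom $n$-independent part of $\hat J_{K,n}(q)$ after expressing the colored Jones polynomial as a state sum over admissible lattice points. The key observation is that the summation index in the state sum describing $\hat J_{K,n}(q)$ ranges over a truncated version of $\Adm$ cut off by the color $n$, and each summand carries quantum factorials of the form $(q)_{\text{something}(n,\lambda)}$. To extract the 1-limit one must expand each such truncated factorial one order deeper in the variable $x=q^{n+1}$, rather than merely taking its $n\to\infty$ stable value $(q)_\infty$.

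First I would set up the precise finite-$n$ state sum for $\hat J_{K,n}(q)$ in terms of the data $(Q,L,\Adm)$ attached to the reduced diagram $D$, as was done in Section \ref{sec.0stability} for Theorem \ref{thm.1}. Then I would carry out a uniform asymptotic expansion in $q^{n+1}$ of each factor in the summand. The two sources of $(n+1)$-dependence are (i) the overall prefactor built from $(q)_\infty^c$, which in the truncated model is a product of finite Pochhammer symbols whose length grows with $n$, and (ii) the individual denominators $(q)_{e(\lambda)}$ and the region/polygon constraints. Using the elementary expansion
\be
\lbl{eq.expand}
\frac{1}{(q)_{N}} = \frac{1}{(q)_\infty}\left(1 - q^{N+1} + O(q^{2(N+1)})\right),
\ee
valid as $N\to\infty$, I would track the first-order term in $x=q^{n+1}$ systematically. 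Summing the edge contributions $\sum_{e} q^{-e(\lambda)}$ and subtracting the polygon contributions $\sum_{p} q^{-p(\lambda)}$ is exactly what accounts for the linear correction coming from the edges and faces of the dual complex $\cD$, and this is the origin of the first (main) sum in \eqref{eq.Phi1}.

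Next I would isolate the second sum in \eqref{eq.Phi1}, which ranges over $B$-vertices $v$ and over the restricted admissible sets $\Adm_v$. This term has a different combinatorial origin: it captures the boundary effect where the truncation at color $n$ interacts with a vertex of the Tait graph, i.e. the admissible points $\lambda$ whose support "uses up" the full allowance near $v$, forcing $p(\lambda)=0$ for all polygons $p$ incident to $v$. These are precisely the configurations where the leading-order factorization into $(q)_\infty$ factors fails, and a corrected local factor $1/(q)_\infty^{\deg(v)}$ must be substituted. I would verify that these are the only such degenerate contributions, using the bipartite quadrilateral structure of $\cD$ established earlier (each face of $\cD$ is a quadrilateral with two opposite $B$-vertices), and that each contributes with the stated weight $q^{Q(\lambda)+L(\lambda)}/\prod_e (q)_{e(\lambda)}$.

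The main obstacle, I expect, is establishing the \emph{uniformity} of the expansion \eqref{eq.expand} across the infinitely many summands and controlling the interchange of the $n\to\infty$ limit with the lattice summation, so that the first-order-in-$x$ coefficient is genuinely the 1-limit in the sense of Definition \ref{def.stable}. Because $\Adm$ is an infinite cone, one must show that only finitely many $\lambda$ contribute to each power of $q$ (this follows from regularity of the Nahm sum, already guaranteed by Theorem \ref{thm.1}), and that the error terms $O(q^{2(n+1)})$ aggregated over all $\lambda$ do not leak into the $x^1$-coefficient. I would handle this by the same properness-and-boundedness estimates used to prove convergence in Theorem \ref{thm.1}, refined to one higher order; the bookkeeping separating the "bulk" edge/face correction from the "boundary" $B$-vertex correction is the delicate combinatorial heart of the argument.
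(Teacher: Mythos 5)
Your treatment of the first sum in \eqref{eq.Phi1} does match the actual mechanism, and it is essentially what the paper does on part of the state space: write $\hat J_{K,n}(q)$ as a state sum in $x=q^{n+1}$, expand each summand to first order in $x$, and read off the edge contributions $q^{-e(\lambda)}$ against the polygon contributions $-q^{-p(\lambda)}$ (note, though, that your expansion of $1/(q)_N$ is off: the first-order term is $-q^{N+1}/(1-q)$, not $-q^{N+1}$ with error $O(q^{2(N+1)})$, and this $1/(1-q)$ is exactly the prefactor in \eqref{eq.Phi1}). The first genuine gap is your convergence argument. You claim the interchange of $n\to\infty$ with the lattice sum follows from ``the same properness-and-boundedness estimates used to prove Theorem \ref{thm.1}, refined to one higher order.'' Those estimates give only the linear bound $Q(\lambda)+L(\lambda)\ge |\lambda|$, and this bound is \emph{attained}: along the rays $\lambda=m\cdot v$ with $v\in\cV_B$ one has $Q(\lambda)+L(\lambda)=m$ while the first-order coefficient contains $q^{-e(\lambda)}=q^{-m}$ for the edges at $v$, so individual first-order terms have $q$-degree bounded independently of $m$ and infinitely many $\lambda$ feed into each power of $q$. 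The sum over $\Adm$ in \eqref{eq.Phi1} converges only because, along every such ray, the $\deg(v)$ edge terms $q^{-m}$ cancel against the $\deg(v)$ polygon terms $q^{-m}$; and to know that this cancellation takes care of \emph{all} problematic states one must classify them. That classification is Corollary \ref{lem.1bdd} (a consequence of Proposition \ref{prop.sk1}): every $1$-bounded state of large norm is $m\,s_P+s'$ for a \emph{unique} $B$-polygon $P$, with $s'$ vanishing on all edges meeting $P$ --- dually $\lambda=m\cdot v+\lambda'$ with $v\in\cV_B$ and $\lambda'\in\Adm_v$. This is the technical core of the paper (Sections \ref{sec.kbounded}--\ref{sec.sk}); the ``delicate combinatorial heart'' you defer at the end is precisely this missing theorem, not bookkeeping refinements of Theorem \ref{thm.1}. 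The paper's proof accordingly splits the state sum into the non-$1$-bounded region, where your naive expansion is valid because $Q_2(s)-k|s|>|s|/3$ there, and the $1$-bounded region, which is handled by the structure theorem.

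The second gap is that your account of the $\cV_B$-sum misidentifies its mechanism and cannot produce its sign. It is not a ``corrected local factor'' substituted inside the main expansion; it is the limit of the tail of states \emph{excluded by the truncation} $|\lambda|\le n$. The states $\lambda=m\cdot v+\lambda'$ with $m>n$, $\lambda'\in\Adm_v$, are present in the infinite sum defining $\Phi_{K,0}(q)$ but absent from $\hat J_{K,n}(q)$, so they must be \emph{subtracted} when one computes $q^{-(n+1)}\bigl(\hat J_{K,n}(q)-\Phi_{K,0}(q)\bigr)$ --- that is where the minus sign in \eqref{eq.Phi1} comes from. The structure theorem gives the factorization of the summand, $F(x,q,\lambda)=q^{m}\,(q)_m^{-\deg(v)}\,F(x,q,\lambda')$, and writing $m=n+1+l$ one finds
$$
\sum_{l\ge 0}\frac{q^{l}}{(q)_{n+1+l}^{\deg(v)}}\;\longrightarrow\;
\frac{1}{(1-q)\,(q)_\infty^{\deg(v)}}\qquad (n\to\infty),
$$
which is the true origin of both the factor $(q)_\infty^{-\deg(v)}$ and, again, the $1/(1-q)$. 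To assemble the formula one also needs the uniqueness statements hidden in Proposition \ref{prop.sk1} (a large $1$-bounded state cannot concentrate at two $B$-vertices, nor at an $A$-vertex) and the fact that non-$1$-bounded states with $|\lambda|>n$ contribute nothing in the limit; none of this is available from the ingredients your proposal invokes.
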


For an example illustrating Theorems \ref{thm.1} and \ref{thm.1a}, see
Section \ref{sub.applications}.

\subsection{$q$-holonomicity}
\lbl{sub.qholo}
Recall the notion of a $q$-holonomic sequence
sequence and series from \cite{Z,PWZ}.
 We say that $f_n(q)$, belonging to a
$\BZ[q^{\pm 1}]$-module for $n=1,2,\dots$, is $q$-{\em holonomic} if it
satisfies a linear recursion of the form
\be
\lbl{eq.def.qholo}
\sum_{j=0}^d c_j(q^n,q) f_{n+j}(q) =0
\,,
\ee
for all $n \in \BN$ where $c_j(u,v) \in \BZ[u,v]$ for all $j$ and
$c_d \neq 0$.

The next theorem (proven in Section \ref{sec.thm.qholo}) shows the
$q$-holonomicity of $\Phi_{K,n}(q)$ for an alternating link, and gives
a sharp improvement of the rate of convergence in the definition of stability.

\begin{theorem}
\lbl{thm.qholo}
\rm{(a)} For every alternating link $K$, $\Phi_{K,n}(q)$ is $q$-holonomic.
\newline
\rm{(b)} Moreover, there exist constants $C$ and $C'$ such that
\be
\lbl{eq.degPhik}
\mindeg_q(\Phi_{K,k}(q)) \geq -C k^2-C'
\ee
for all $k$ and
\be
\lbl{eq.quadstable}
\left(f_n(q)-\sum_{j=0}^k \Phi_k(q) q^{j(n+1)}\right)q^{-k(n+1)} \in
q^{n+1-C(k+1)^2-C'}\BZ[[q]]
\ee
for all $k$ when $n$ is sufficiently large (depending on $k$).
\end{theorem}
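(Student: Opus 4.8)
The plan is to derive the $q$-holonomicity of the sequence of limits $\Phi_{K,k}(q)$ from the $q$-holonomicity of the colored Jones polynomial itself, by transporting its defining recursion through the stable limit, and then to extract the quantitative bounds of part (b) from that same recursion.

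For part (a), recall that $J_{K,n}(q)$, and hence $\hat J_{K,n}(q)$, is $q$-holonomic in $n$, so it satisfies a recursion of the form \eqref{eq.def.qholo},
$$
\sum_{j=0}^d c_j(q^n,q)\,\hat J_{K,n+j}(q)=0,
\qquad c_j(u,v)=\sum_i c_{j,i}(v)\,u^i\in\BZ[u,v].
$$
The idea is to treat $x=q^{n+1}$ as a formal variable: by Theorem \ref{thm.2} the stable expansion gives, informally, $\hat J_{K,n+j}(q)\rightsquigarrow F_K(q^j x,q)$ and $q^n\rightsquigarrow x/q$. First I would make this rigorous using the uniform lower bound on $\mindeg_q$ from Definition \ref{def.lim}: for each fixed power of $x$ only finitely many terms contribute, so substituting $q^n=x/q$ and letting $n\to\infty$ coefficient-by-coefficient converts the recursion into an honest linear $q$-difference equation
$$
\sum_{j=0}^d c_j(x/q,\,q)\,F_K(q^j x,q)=0
$$
for the associated series $F_K(x,q)\in\BZ((q))[[x]]$, with the $q$-shift acting by $x\mapsto qx$.

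The second step is to read off a recursion in $k$. Multiplication by $x^i$ lowers the $x$-degree by $i$, while $F(x)\mapsto F(q^jx)$ scales the coefficient of $x^k$ by $q^{jk}$; hence the coefficient of $x^m$ in the displayed $q$-difference equation is a linear relation
$$
\sum_{i=0}^{E} P_i(q^m,q)\,\Phi_{K,m-i}(q)=0,
\qquad E=\max_j\deg_u c_j,
$$
whose coefficients $P_i(q^m,q)$ are polynomials in $q^m$ (of degree at most $d$) and in $q^{\pm1}$. After clearing denominators this is precisely a $q$-holonomic recursion for $\Phi_{K,k}(q)$, proving part (a). Note the pleasant duality: the order of the new recursion is the degree of the old coefficients in $q^n$, while the degree of the new coefficients in $q^m$ is the order $d$ of the old recursion.

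For part (b) I would extract quantitative information from the same recursion. Since the $q$-degree of $J_{K,n}(q)$ is a quadratic quasi-polynomial in $n$, the valuations and top degrees of the coefficients $P_i(q^m,q)$ grow linearly in $m$ through their $q^{jm}$ terms; solving for $\Phi_{K,m}(q)$ by inverting $P_0(q^m,q)$ in the Novikov ring therefore lowers $\mindeg_q$ by an amount linear in $m$ at each step, and summing these drops over the $m$ iterations yields the quadratic bound \eqref{eq.degPhik}. The sharp convergence rate \eqref{eq.quadstable} then follows by feeding this estimate back into the stable expansion: the holonomic recursion controls the tail, and the bound $\mindeg_q(\Phi_{K,k})\geq -Ck^2-C'$ forces the error to lie in $q^{n+1-C(k+1)^2-C'}\BZ[[q]]$. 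The hard part will be the rigorous passage to the limit in the second step -- justifying that the recursion for $\hat J_{K,n}(q)$ genuinely survives the substitution $x=q^{n+1}$, i.e.\ that the limiting $q$-difference operator acts correctly term-by-term on $F_K(x,q)$. This is exactly where the uniform-in-$n$ degree estimates are needed, and where the bookkeeping for the sharp quadratic estimates of part (b) is most delicate.
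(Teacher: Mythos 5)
Your strategy for part (a) is the paper's own: the paper likewise takes the $q$-holonomic recursion \eqref{eq.def.qholo} for $\hat J_{K,n}(q)$ (holonomicity of $J_{K,n}$ being quoted from the literature), feeds in the stable expansion of Theorem \ref{thm.2}, and ``collects powers of $q$ and $q^n$'' to obtain a linear recursion for $\Phi_{K,k}(q)$; your substitution $x=q^{n+1}$, $q^n\rightsquigarrow x/q$, followed by extraction of the coefficient of $x^m$, is exactly that bookkeeping made explicit. Your route to \eqref{eq.degPhik} also matches: the paper gets the quadratic lower bound on $\mindeg_q(\Phi_{K,k}(q))$ from the linear recursion for $\Phi_{K,k}(q)$ (citing \cite[Thm.10.3]{GL2}), and the mechanism is the one you sketch, namely that the coefficient one inverts has valuation growing linearly in $k$ through its $q^{jk}$ terms, so the per-step drops accumulate quadratically.

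The gap is in the final step, the passage from \eqref{eq.degPhik} to \eqref{eq.quadstable}. You assert that ``the holonomic recursion controls the tail, and the bound $\mindeg_q(\Phi_{K,k})\geq -Ck^2-C'$ forces the error to lie in $q^{n+1-C(k+1)^2-C'}\BZ[[q]]$,'' but this restates the conclusion rather than proving it, and the recursion is the wrong tool here: a lower bound on $\mindeg_q(\Phi_{K,j}(q))$ for $j\le k$ says nothing by itself about the remainder $R_{k,n}(q)=\bigl(f_n(q)-\sum_{j=0}^k \Phi_j(q)\,q^{j(n+1)}\bigr)q^{-k(n+1)}$, which also involves $f_n(q)$. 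What the paper actually uses (Lemma \ref{lem.quadrate}) is the telescoping identity $R_{k,n}(q)=q^{n+1}\bigl(\Phi_{k+1}(q)+R_{k+1,n}(q)\bigr)$ together with stability one level up, i.e.\ $\mindeg_q R_{k+1,n}(q)\to\infty$ as $n\to\infty$. For $n$ large (depending on $k$) this gives
$\mindeg_q R_{k,n}(q) \ge n+1+\min\{\mindeg_q \Phi_{k+1}(q),\, \mindeg_q R_{k+1,n}(q)\} \ge n+1-C(k+1)^2-C'$,
which is \eqref{eq.quadstable}. This short observation is also the only way to see why the exponent involves $(k+1)^2$ rather than $k^2$: the error at level $k$ is governed by $\Phi_{k+1}(q)$, not by $\Phi_0,\dots,\Phi_k$. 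Without it, your proof of part (b) is incomplete.
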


Equation \eqref{eq.quadstable} is sharp when $K=4_1$ knot \cite{GZ}.

\begin{question}
\lbl{que.11}
Does $F_{K,n}(x,q)$ uniquely determine the sequence
$(\hat J_{K,n}(q))$ for the case of knots?
\end{question}

\subsection{Applications:  $q$-series identities}
\lbl{sub.applications}

In this section we illustrate Theorem \ref{thm.1} 
explicitly for the $4_1$ knot. Consider the planar projection $D$ of $4_1$
given in Figure \ref{f.41draw}. This planar projection is $A$-infinite.

\begin{figure}[htpb]
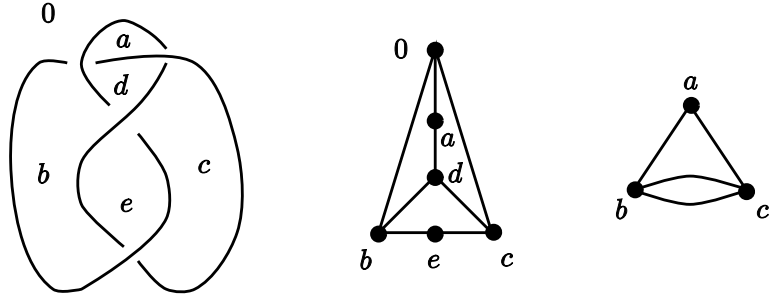

$$
\psdraw{41draw}{4in}
$$
\caption{A planar projection $D$ of the $4_1$ knot on the left,
the dual graph $D^*$ in the middle and the Tait graph $\calT$ on the right.}
\lbl{f.41draw}
\end{figure}

To compute $\Phi_{4_1,0}(q)$, proceed as follows:
\begin{itemize}
\item
Checkerboard color the regions of $D$ with $A$ or $B$ with the unbounded
region colored by $A$.
\item
Assign variables $a,b,c$ to the three $B$-regions and $e,f$ to the two
bounded $A$-regions, and assign $0$ to the unbounded $A$-region. Let
$\l=(a,b,c,d,e)^T$.
\item
Color each arc  of the diagram $D$ with the sum of the colors of
its two neighboring regions. $\l$ is admissible if the color of each
arc is a nonnegative integer number, i.e., $\l \in \BZ^5$ satisfies
$$
a,\,b,\,c,\,a+d,\,b+d,\,c+d,\,b+e,\,c+e \geq 0 \,.
$$
\item
Construct a square matrix (and a corresponding quadratic form $Q(\l)$)
which consists of four blocks: $BB$-block,
$AB$-block, $BA$-block and $AA$ block. On the $BB$, $AB$ and $BA$ blocks we
place the adjacency matrix of the corresponding regions: the adjacency number between two distinct $B$-regions is the number of common vertices, whereas the
adjacency number between an $A$-region and a $B$-region is the number of common edges. In the case when two regions share common vertices, the adjacency number is the number of common vertices.
On the $AA$-block
we place the diagonal matrix whose diagonal entries are  the number of sides of each $A$-region.
\item
We construct a linear form $L(\l)$ in $\l$ where the
coefficient of each $B$-variable $a,b,c$ is one, and the coefficient of each
$A$-variable $d,e$ is half the number of the sides of the corresponding region
minus $1$.
\end{itemize}
Explicitly, with the conventions of Figure \ref{f.41draw} we have
$$
Q(\l)=\frac{1}{2}\l^T
\left(\begin{array}{c|c}
\begin{array}{ccc}
0 & 1 & 1 \\ 
1 & 0 & 2 \\ 
1 & 2 & 0  
\end{array}
&
\begin{array}{cc}
 1 & 0 \\
 1 & 1 \\
1 & 1 \\
\end{array} \\ \hline
\begin{array}{ccc}
1 & 1 & 1 \\ 
0 & 1 & 1 
\end{array}
&
\begin{array}{cc}
3 & 0 \\
 0 & 2
\end{array}
\end{array}\right)
\l,
\qquad
L(\l)=(1,1,1,\frac{1}{2},0) \l\,\,.
$$
Then,
$$
\Phi_{4_1,0}(q)=(q)_\infty^4 \sum_{\l \in \Adm}
(-1)^d \frac{q^{Q(\l)+L(\l)}}{(q)_{a} (q)_{b}(q)_{c}(q)_{a+d}
(q)_{b+d} (q)_{c+d}(q)_{b+e}(q)_{c+e}} \,.
$$
Alternative formulas for the colored Jones polynomial of $4_1$ lead to
identities among $q$-series. For instance, the Habiro formula for $4_1$ 
\cite{Habiro5} combined with the above formula for $\Phi_{4_1,0}(q)$
leads to the following identity:
\be
\lbl{eq.phi410}
\frac{1}{(1-q)(q)_\infty^{3}} = \sum_{\l \in \Adm}
(-1)^d \frac{q^{Q(\l)+L(\l)}}{(q)_{a} (q)_{b}(q)_{c}(q)_{a+d}
(q)_{b+d} (q)_{c+d}(q)_{b+e}(q)_{c+e}} \,.
\ee
The above identity has been proven by Armond-Dasbach. A detailed list
of identities for knots with knots with at most 8 crossings is given
in Appendix \ref{sec.experiment}.

\subsection{Extensions of stability}
\lbl{sub.extensions}

The methods that prove Theorem \ref{thm.2} are general and apply to several
other circumstances of $q$-holonomic sequences that appear in Quantum
Topology. We will list two results here, whose proofs will be discussed
in detail in a later publication \cite{GLfuture}.

\begin{theorem}
\lbl{thm.positive}
If $K$ is a positive link, then $\hat J_{K,n}(q)$ is stable and the
corresponding limit $F_{K}(x,q)$ is obtained by a Nahm sum associated to a
positive downwards diagram of $K$. Moreover, for every $k \in \BN$ we have
$\Phi_{K,k}(q) \in \BZ[q^{\pm 1}]$.
\end{theorem}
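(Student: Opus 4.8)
The plan is to run, essentially verbatim, the machinery developed in Sections~\ref{sec.bounded} and \ref{sec.kbounded}--\ref{sec.sk} for alternating links, replacing the checkerboard/Tait-graph combinatorics by the combinatorics of a positive diagram. First I would fix a positive downwards diagram $D$ of $K$: a Morse presentation in which every crossing is positive and all strands flow downwards. Cabling $D$ by the Jones--Wenzl idempotent and resolving each positive crossing by the fusion (skein) expansion produces a finite state sum whose summation variables range over the lattice points of a rational polyhedral cone $C\subset\BR^r$ attached to $D$, exactly as the cone $\Adm$ arose from $\cD$ in the alternating case. The outcome is a presentation of $\hat J_{K,n}(q)$ as a truncation of a generalized Nahm sum of the form \eqref{eq.nahmgen}, with a symmetric matrix $A$ and linear data $(a,b)$ read off from the local positive-crossing weights, and with $n$ entering through the shift $x=q^{n+1}$ of the summation region.

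Second, I would establish regularity and the minimum-degree bound. In the alternating case this rested on the known lowest degree of $J_{K,n}(q)$ and on the lowest coefficient being $\pm1$; here the corresponding input is positivity. At a positive crossing the fusion weights have a definite, monotone degree behaviour, so the quadratic form governing the exponent of the Nahm sum is bounded below on $C$ and its recession directions strictly increase the exponent. This gives regularity in the sense of Section~\ref{sub.nahmsum} and, after tracking the $x$-grading, a lower bound on $\mindeg_q$ of the required shape, yielding the boundedness step (the analogue of Section~\ref{sec.bounded}).

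Third, stability to all orders should follow from the thick--thin decomposition of Sections~\ref{sec.kbounded}--\ref{sec.sk}. I would split $C\cap\BN^r$ into a thin part, whose lattice points stay within bounded distance of the faces surviving the limit and produce the limits $\Phi_{K,k}(q)$, and a thick part, whose $q$-degree grows at least linearly in $n$ for each fixed order $k$ and hence contributes $0$ to the $k$-limit in the sense of Definition~\ref{def.stable}. The $q$-holonomicity half of Theorem~\ref{thm.qholo}, applied to the present Nahm sum, then upgrades this to the sharp quadratic rate of convergence and shows each $\Phi_{K,k}(q)$ is $q$-holonomic.

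The delicate point is the \emph{Moreover} clause, $\Phi_{K,k}(q)\in\BZ[q^{\pm1}]$: for positive links the $k$-limits are genuine Laurent polynomials, not merely elements of $\BZ[[q]]$ as in the alternating case, and I expect this to be the main obstacle. The plan is to use positivity to make every recession direction of $C$ strictly degree-increasing, and then to show that the thin part supporting $\Phi_{K,k}(q)$ is in fact \emph{compact}, so that each $k$-limit is a finite sum of monomials. Combined with the degree bound \eqref{eq.degPhik} and $q$-holonomicity, this finiteness of support forces $\Phi_{K,k}(q)$ to be a Laurent polynomial. Making the compactness argument uniform in $k$, and verifying that a positive downwards position can always be arranged without destroying the cone structure, is where the real work lies.
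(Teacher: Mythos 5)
First, a point of comparison you could not have known: the paper does \emph{not} prove Theorem \ref{thm.positive}. It is stated in Section \ref{sub.extensions} as an extension whose detailed proof is deferred to \cite{GLfuture}, and the only guidance given is one sentence: the theorem follows from the same $R$-matrix state-sum methods that prove Theorem \ref{thm.2}, and the proof is \emph{easier} ``since it does not require to center the states of the R-matrix state sum of a positive link.'' Your proposal departs from this indicated route at the foundation: you build the state sum from Jones--Wenzl cabling and a fusion/skein expansion (closer in spirit to Armond's approach \cite{Ar}), whereas all of the machinery you propose to run ``essentially verbatim'' --- centered states, the functionals $Q_2$, $L_0$, $L_1$, ($x$-)adequate and weakly $x$-adequate series, seeds, $k$-bounded states --- is defined on the $R$-matrix state sum of a downward diagram, and its hardest ingredients (Lemma \ref{lem.L1}, Proposition \ref{prop.Q+L1}, Propositions \ref{prop.sk1} and \ref{p.533}) are proved using the checkerboard/$B$-polygon combinatorics of a reduced alternating ($A$-adequate) diagram via Lemma \ref{l67}. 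None of that transfers verbatim to a fusion state sum of a positive diagram; you would have to rebuild each of these lemmas, and you give no actual argument for the two that carry the difficulty: the copositivity statement (your one sentence about ``monotone degree behaviour'') and the partition of the $k$-bounded states.

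Second, your description of the thick--thin step misstates what that machinery does, and in doing so misses the simplification the paper is pointing at. In the paper the $k$-unbounded (``thick'') states do \emph{not} contribute $0$: Proposition \ref{prop.unbounded} shows their sum is itself $k$-stable with a nontrivial limit; and it is the $k$-bounded (linear-growth, ``thin'') states that form the hard part, handled by the seed recursion of Theorem \ref{thm.1stable.main}, which is precisely what produces the infinite $q$-series in the alternating $\Phi_{K,k}$. For a positive diagram the expected situation is the opposite of needing this: positivity should eliminate the region of linear growth altogether --- which is exactly why the $\Phi_{K,k}$ land in $\BZ[q^{\pm 1}]$ rather than $\BZ[[q]]$ --- so the right strategy is to show the seed machinery is \emph{unnecessary} (for each fixed power of $x=q^{n+1}$ only finitely many states contribute, and Lemma \ref{lem.x} finishes), not to redo it. Your instinct that all recession directions are strictly degree-increasing is the correct one for the ``Moreover'' clause, but your closing appeal to $q$-holonomicity and the degree bound \eqref{eq.degPhik} to ``force'' Laurent polynomiality is a non sequitur: those properties cannot turn an infinite series into a polynomial, and they are not needed once finiteness of support is known. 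Note also that finiteness of support is not by itself sufficient: individual $R$-matrix weights such as $\binom{n-d}{a-d}_q$ tend to infinite series like $1/(q)_{a-d}$ as $n\to\infty$, so one must additionally show that the weights of the surviving states have Laurent-polynomial limits (as they visibly do in the trefoil example following the theorem).
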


The proof of the above theorem is easier than that of Theorem \ref{thm.2}
since it does not require to center the states of the R-matrix state sum
of a positive link. An example that illustrates the above theorem is taken
from \cite[Sec.1.1.4]{HL}: for the right handed trefoil $3_1$, its
associated series is
$$
F_{3_1}(x,q)=\frac{1-x}{1-q} \sum_{k=0}^\infty x^k \left(1-\frac{x}{q}\right)\dots
\left(1-\frac{x}{q^k}\right) \in \BZ[q^{\pm 1}][[x]]\,.
$$
Some results related to the $0$-stability of a class of positive knots
are obtained in \cite{AK}.

Next we discuss an extension of Theorem \ref{thm.2} to evaluations
of quantum spin networks. For a detailed discussion of those, we refer the
reader to \cite{Co,KL,GV}. Using the notation of \cite{GV},
let $\ga=(a,b,c,d,e,f)$ be an admissible coloring
of the edges of the standard tetrahedron
$$
\psdraw{3j6j}{1.3in}
$$
Consider the standard spin network evaluation
$J_{,n\ga}(q) \in \BZ[q^{\pm 1}]$ \cite{GV,Co}.

\begin{theorem}
\lbl{thm.q6j}
For every admissible $\ga$, the sequence $\hat J_{\psdraw{tetra}{0.18in},n\ga}(q)$
is stable, and its limit is given by a Nahm sum.
\end{theorem}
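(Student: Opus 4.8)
The plan is to reduce Theorem \ref{thm.q6j} to the single-variable structure of the tetrahedral evaluation and then to run the same thin--thick (centering) analysis that proves Theorem \ref{thm.2}. First I would write $J_{\text{tetra},n\gamma}(q)$ in its classical closed form (the Masbaum--Vogel / Kauffman--Lins evaluation of the tetrahedral spin network): up to an explicit prefactor that is a ratio of quantum factorials of linear forms in $(na,\dots,nf)$, it is a single alternating sum
$$
\sum_{s} (-1)^{s}\,\frac{(q)_{s+1}}{\prod_{i=1}^4 (q)_{s-n\alpha_i}\;\prod_{j=1}^3 (q)_{n\beta_j-s}}
$$
(written after converting balanced quantum integers to $q$-Pochhammer symbols and clearing the overall monomial), where the $\alpha_i$ are the four triangle-sums and the $\beta_j$ the three square-sums attached to $\gamma$, and $s$ ranges over the interval $[\max_i n\alpha_i,\ \min_j n\beta_j]$. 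Admissibility of $\gamma$ is exactly what makes this interval nonempty and the triples at the four vertices legal, so the expression is a well-defined proper $q$-hypergeometric sum in $n$. In particular it is $q$-holonomic in $n$ by creative telescoping \cite{PWZ}, and its minimal $q$-degree is a quadratic quasi-polynomial in $n$; this lets me divide by the lowest monomial and form $\hat J_{\text{tetra},n\gamma}(q)\in\BZ[[q]]$ exactly as in Section \ref{sub.results}.

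Second, I would introduce the centered summation variable $k = \min_j n\beta_j - s \ge 0$, which plays the role of the admissible lattice variable of a Nahm sum over the one-dimensional cone $\BN$. Writing $\beta_{j_0}$ for a square-sum realizing the minimum, every factorial in the summand becomes $(q)_\ell$ with $\ell$ of the form $nm+t$ for a fixed positive slope $m$ (coming from $\beta_{j_0}-\alpha_i>0$ or $\beta_j-\beta_{j_0}>0$) together with a bounded shift $t$ linear in $k$, the sole exception being $(q)_{n\beta_{j_0}-s}=(q)_k$. Using $(q)_{nm+t}=(q)_\infty/(q^{nm+t+1})_\infty$ and expanding each correction factor $(q^{nm+t+1})_\infty$ in powers of $q^{n+1}$, the summand organizes into a series in $x=q^{n+1}$ whose $x^k$-coefficient is, after collecting, a convergent one-variable generalized Nahm sum in $k$ with a quadratic exponent and linear term read off from the tetrahedral data, in direct analogy with the forms $Q,L$ of Theorems \ref{thm.1}--\ref{thm.1a}. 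The $x^0$-term gives the $0$-limit and the higher coefficients give the $\Phi_{\text{tetra},k}$; matching this expansion against Definition \ref{def.stable} then establishes stability to all orders and exhibits each limit as a Nahm sum lying in $\BZ((q))$.

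The main obstacle, exactly as flagged in Remark \ref{rem.thm2c} for links, is the all-order remainder estimate rather than the bookkeeping of leading terms. I must show that truncating the $x$-expansion at order $k$ and truncating the $k$-sum both introduce errors lying in $q^{N}\BZ[[q]]$ with $N\to\infty$ as $n\to\infty$, uniformly; this is where the properness and boundedness-below of the resulting quadratic exponent (regularity in the sense of Section \ref{sub.nahmsum}) does the work of guaranteeing that only finitely many lattice points contribute to each power of $q$. The second delicate point is degenerate admissible colorings, where two square-sums coincide ($\beta_j=\beta_{j_0}$) or a triangle-sum meets a square-sum ($\alpha_i=\beta_{j_0}$); there the dominant factorial is no longer unique, the quadratic exponent may degenerate, and the cone or the shift data must be re-read accordingly. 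These cases are handled by the same thin--thick decomposition used in Sections \ref{sec.kbounded}--\ref{sec.sk}, now in the simpler one-summation setting, so no new mechanism beyond Theorem \ref{thm.2} is needed; the remaining work is purely to verify that that decomposition specializes correctly to the tetrahedral state sum.
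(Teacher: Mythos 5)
Your first two paragraphs follow exactly the route the paper itself takes: the paper justifies Theorem \ref{thm.q6j} by the single observation that the tetrahedral evaluation is a one-dimensional proper $q$-hypergeometric sum which is \emph{already centered}, so that the elementary $x$-adequacy argument (Lemma \ref{lem.x} with $x=q^{n+1}$) gives stability to all orders and exhibits the limit as a Nahm sum; the details are deferred to \cite{GLfuture}. The gap is in your third paragraph, and it is twofold. First, the machinery of Sections \ref{sec.kbounded}--\ref{sec.sk} cannot ``specialize'' to this setting: it is built from $B$-polygons, seeds, and balanced states of an alternating diagram, none of which has any counterpart in a one-variable hypergeometric sum, so the verification you leave as ``remaining work'' has no content. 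Second, and more importantly, no such mechanism is needed, because the phenomenon it was invented for --- directions of linear growth of the quadratic form inside the summation cone --- never occurs here, for any admissible $\gamma$. Converting the balanced quantum factorials to Pochhammer symbols (your displayed summand silently drops the resulting $k$-dependent monomial, though your second paragraph restores it) and substituting $s=n\beta_{j_0}-k$ gives a summand exponent
$$
E(k)=\frac{3}{2}\,k^{2}+\Bigl(n\sum_{j}(\beta_j-\beta_{j_0})+\tfrac12\Bigr)k+(\text{terms depending only on }n),
$$
because the coefficient of $s^2$ is $\tfrac14(-1+4+3)=\tfrac32$ (one numerator factorial, four triangle and three square factorials) and $\sum_i\alpha_i=\sum_j\beta_j$ (every edge of the tetrahedron lies in exactly two triangles and two quadrilaterals).

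Thus the quadratic part is universally $\frac32 k^2$ --- it never degenerates --- and the $nk$ cross term has the nonnegative coefficient $\sum_j(\beta_j-\beta_{j_0})$, so it is absorbed into nonnegative powers of $x=q^{n+1}$; this is precisely what ``already centered'' means, and it is what makes the simple argument of Lemma \ref{lem.x} sufficient, with no thin--thick decomposition. Your two ``delicate'' cases are in fact the tamest ones: coinciding square sums only shrink the cross term (when all three coincide it vanishes, which is the paper's own example $\gamma=(2,\dots,2)$ with exponent $\frac32k^2+\frac12k$), and if some $\alpha_i=\beta_{j_0}$ then admissibility forces $\max_i\alpha_i=\min_j\beta_j$, so the sum collapses to the single term $k=0$ and there is nothing to prove. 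Replacing your appeal to the thin--thick decomposition by this one computation closes the gap; everything else in your proposal agrees with the paper's argument.
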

For example, if $\gamma=(2,2,2,2,2,2)$, then
$$
\hat J_{\psdraw{tetra}{0.18in},n\ga}(q) =
\frac{1}{1-q} \sum_{k=0}^{n} (-1)^k
\frac{ q^{\frac{3}{2}k^2+\frac{1}{2}k}}{(q)_k^3}
\frac{(q)_{4n+1-k}}{(q)_k^3 (q)_{n-k}^4}\,,
$$
and
$$
F_{\psdraw{tetra}{0.18in}}(x,q)=\frac{1}{(1-q)(q)_\infty^3}
\sum_{k=0}^\infty
(-1)^k
\frac{ q^{\frac{3}{2}k^2+\frac{1}{2}k}}{(q)_k^3}
\frac{(x q^{-k})_\infty^4}{(x^4 q^{-k+1})_\infty} \in \BZ((q))[[x]]\,,
$$
where $x=q^{n+1}$.
In particular,
$$
\Phi_{\psdraw{tetra}{0.18in},0}(q)=
\frac{1}{(1-q)(q)_\infty^3}
\sum_{k=0}^{\infty} (-1)^k \frac{ q^{\frac{3}{2}k^2+\frac{1}{2}k}}{(q)_k^3} \in
\BZ[[q]]\,.
$$
The proof of the above theorem follows easily from the fact that the quantum
6j-symbol is given by a 1-dimensional sum of a $q$-proper hypergeometric
summand, and the sum is already centered.
The analytic and arithmetic properties of the corresponding Nahm sum will
be discussed in forthcoming work \cite{GZ}.

\subsection{Plan of the proof}
\lbl{sub.plan}

The strategy to prove Theorems \ref{thm.1} and \ref{thm.2} is the following.

We begin with the $R$-matrix state sum for the colored Jones polynomial,
reviewed in Sections \ref{sub.morseknot}-\ref{sub.weights}.

We center the downward diagram, its corresponding states and their weights
in Section \ref{sub.local.antistate}.

We factorize the weights of the centered states as the product of a monomial
and an element of $\Zp$ in Section \ref{sub.local.antistate}. The advantage
of using centered states is that the lowest $q$-degree of their weights
is the sum of a quadratic function $Q(s)$ of $s$ with a quadratic function
of $n$.

Although $Q(s)$ is not a positive definite quadratic form,
in Section \ref{s101} we show that $Q(s)$ is copositive on the cone
of the centered states. The proof uses the combinatorics of alternating
downward diagrams, and their centered states, reminiscent to the Kauffman
bracket.

Section \ref{sec.0stability} we prove the $0$-stability Theorem
\ref{thm.1}.

If $Q(s)$ were positive definite, then it would be easy to deduce
Theorem \ref{thm.2}. Unfortunately, $Q(s)$ is never positive definite,
and it always has directions of linear growth in the cone of centered
states. In Sections \ref{sec.kbounded} we state a partition of the
set of $k$-bounded states, and prove stability away from the region of
linear growth. In Section \ref{sec.bounded} deal with stability in the
region of linear growth.

Section \ref{sec.sk} is rather technical, and gives a proof of the key
Proposition \ref{prop.sk1} that partitions the set of $k$-bounded states.

Section \ref{sec.thm.qholo} deduces the $q$-holonomicity of the sequence
$\Phi_{K,k}(q)$ of an alternating link from the $q$-holonomicity of the
corresponding colored Jones polynomial. As a result, we obtain sharp
quadratic lower bounds for the minimum degree of $\Phi_{K,k}(q)$ and
sharp bounds for the convergence of the colored Jones polynomial stated
in Theorem \ref{thm.qholo}.

In Section \ref{sec.Phi1} we give an algorithm for computing $\Phi_{K,k}(q)$
from a reduced alternating planar projection.

In Section \ref{sec.tait} we prove that $\Phi_{K,0}(q)$ is determined by
the reduced Tait graph of an alternating link $K$.

In Section \ref{sec.examples} we give some illustrations of Theorems
\ref{thm.1} and \ref{thm.2}.

\subsection{Acknowledgment}
An early version of the paper was presented in talks of the first author to
a Spring School in Geometry and Quantum Topology in the Diablerets 2011,
and in the Mathematische Arbeitstagung in Bonn, 2011. The authors wish to
thank the organizers of the above conferences for their hospitality,
C. Armond and O. Dasbach for explaining to us their beautiful work. The
first named author wishes to thank T. Dimofte and
D. Zagier for their interest, encouragement and for the generous sharing
of their ideas.


\section{The $R$-matrix state-sum of the colored Jones
polynomial}
\lbl{sub.Rmatrix}

In this section we review the $R$-matrix state sum of the colored Jones
function, discussed in detail in  \cite{Tu1,Tu2,Ohtsuki}. We will
use the following standard notation in $q$-calculus.

\begin{eqnarray*}
\binom{a}{b}_q &=& \frac{(q;q)_a}{(q;q)_b (q;q)_{a-b}},  \quad
\text{ for } a, b \in \BN, b \le a.
\end{eqnarray*}

\subsection{Downward link diagram} Recall that a link diagram
$D \subset \BR^2$ is {\em alternating} if walking along it, the sequence
of crossings alternates from overcrossings to undercrossings. A
diagram $D$ is {\em reduced} if it is not of the form
$$
\psdraw{unreduced}{1in}
$$
where $D_1$ and $D_2$ are diagrams with at least one crossing.

A {\em downward link diagrams of links} is an oriented link  diagram  in
the standard plane in general position (with
its height function) such that at every crossing the orientation of both
strands of the link is downward. A usual link diagram may not satisfy the
downward requirement on the orientation at a crossing. However, it is easy
to convert a link diagram into a downward one  by rotating the non-downward
crossings as follows:
$$
\psdraw{Rside}{1.5in}
$$

\subsection{Link diagrams and states}
\lbl{sub.morseknot}

Fix a downward link diagram $D$ of an oriented link $K$ with $c_D$ crossing.
Considering $D$ as a 4-valent graph, it has $2c_D$ edges.
A {\em state} of $D$ is a map
$$
r: \{\text{edges of} \,\, D \}  \to \BR
$$
such that at every crossing we have
$$ a+b = c+d,
$$
where $a,b,c,d$ are the values of $s$ of the edges incident to the crossing
as in the following figure
\be
\lbl{eq.abcd}
\psdraw{pabcd.down}{0.6in} \qquad\qquad
\psdraw{nabcd.down}{0.6in}
\ee
The set $S_{D,\BR}$ of all states of $D$ is a vector space.
For a state $r\in S_{D,\BR}$ and a crossing $v$ of $D$ define
$$
r(v) = \sign(v)\, (a-d)\,,
$$
where as usual the sign of the crossing on the
left hand side of \eqref{eq.abcd} is positive and the sign of the one on
the right hand side is
negative. For a positive integer $n$, a state
$r \in S_{D,\BR}$ is called {\em $n$-admissible} if the values of $r$ are
integers in $[0,n]$ and $r(v)\ge 0$ for every crossing $v$.
Let $S_{D,n}$ be  the set of all $n$-admissible states.

\begin{remark}
\lbl{rem.Snp} Later we will prove that $\dim S_{D,\BR}= c_D+1$.
By definition, $S_{D,n}$ in 1-1 correspondence with the set
$n P_D \cap \BZ^{2c_D}$
of lattice points of $n P_D$ for a lattice polytope $P_D$ in $\BR^{2c_D}$
where $c_D$ is the number of crossings of $D$.
\end{remark}

\subsection{Winding number and its local weight}
\def\al{\alpha}
Suppose $\al$ is an oriented simple closed curve in the standard plane.
By the  winding number $W(\al)$ we mean the winding number of $\al$ with
respect to a point in the region
bounded by $\al$. Observe that $W(\al)=1$  if $\al$ is counterclockwise,
$-1$ if otherwise.

The winding number $W(\al)$ can be calculated by a local weight sum as
follows. A {\em local part} of $\al$ is a small neighborhood of a local
maximum or minmum.
For a local part $X$ define $W(X)=1/2$ if $X$ is winding counterclockwise,
$-1/2$ if otherwise. In other words, we have

\begin{equation*}
W\left(\psdraw{locmax2}{0.5in}\right)=
W\left(\psdraw{locmin2}{0.5in}\right)=1/2,
\qquad
W\left(\psdraw{locmax}{0.5in}\right)=
W\left(\psdraw{locmin}{0.5in}\right)=-1/2 \,.
\end{equation*}

The next lemma is elementary.

\begin{lemma}
 For every simple closed curve $\al$,
\be
\lbl{e91}
W(\al) = \sum _X W(X)\,,
\ee
where the sum is over all the local parts of $\al$.
\end{lemma}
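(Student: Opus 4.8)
The plan is to prove that for any oriented simple closed curve $\al$ in the plane, the winding number equals the sum of local contributions $W(X)$ over all local maxima and minima. The key observation is that the winding number counts the total signed rotation of the tangent vector as we traverse $\al$ once, divided by $2\pi$; for a simple closed curve this total rotation is $\pm 2\pi$ by the theorem on turning tangents (Hopf's Umlaufsatz), giving $W(\al)=\pm 1$ consistent with the stated orientation convention. The strategy is to show that this total rotation decomposes as a sum of purely local contributions, one at each critical point of the height function, and that each such contribution equals exactly $2\pi \cdot W(X)$ with $W(X)=\pm 1/2$ as defined.

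First I would place $\al$ in general position with respect to the height function, so that $\al$ has finitely many nondegenerate local maxima and minima, and is monotonic (in height) on each arc between consecutive critical points. On such a monotone arc the tangent direction stays within an open half-plane of directions (either all upward-pointing or all downward-pointing components), so the tangent vector accumulates no net rotation of magnitude $\pi$ or more along the arc; the only places where the tangent vector reverses its vertical component are precisely the local maxima and minima. Thus all of the turning is concentrated at the critical points, and the contribution at each critical point is a half-turn, i.e.\ rotation of magnitude $\pi$. The sign of that half-turn is determined by whether the curve is locally winding counterclockwise or clockwise, which is exactly the case distinction encoded in the four pictures: $W(X)=1/2$ for the counterclockwise local parts and $W(X)=-1/2$ for the clockwise ones.

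Next I would make the bookkeeping precise by summing the signed half-turns. Writing the total turning as $2\pi \sum_X W(X)$ and invoking Hopf's theorem that the total turning of a simple closed curve is $2\pi W(\al)$, Equation~\eqref{e91} follows immediately after dividing by $2\pi$. As a consistency check one notes that the number of local maxima equals the number of local minima, and that the signs arrange so that the sum is $+1$ when $\al$ is oriented counterclockwise and $-1$ otherwise, matching the elementary observation recorded just before the lemma.

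The main obstacle, and the only subtle point, is justifying that \emph{all} the net rotation of the tangent vector occurs at the critical points and that each contributes exactly a signed half-turn rather than some other multiple of $\pi$. This rests on the monotonicity of the height along each arc between critical points together with nondegeneracy of the critical points, which forces the tangent to sweep through the horizontal direction exactly once (with the correct orientation) at each local max or min. Since the lemma is asserted to be elementary, I would keep this argument short: in practice one can even bypass Hopf's theorem entirely by an induction that removes an innermost ``cap'' (a local max together with a local min cobounding a monotone bigon-like piece), checking that such a removal changes both sides of \eqref{e91} by the same amount, with the base case being a curve with a single maximum and single minimum where both sides evaluate directly to $\pm 1$.
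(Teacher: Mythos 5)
The paper offers no proof of this lemma at all: it is stated with the remark ``the next lemma is elementary'' and then used later (e.g.\ in the proof of Lemma \ref{lem.L1}), so there is no written argument to compare yours against, and any complete elementary proof is acceptable. Your proof is correct, and it is the standard one: identify $W(\al)$ with the turning number of $\al$ (legitimate here because the paper's preceding observation says the winding number is $+1$ exactly for counterclockwise curves, and Hopf's Umlaufsatz says the same about the turning number of a simple closed curve, so the two agree in this setting), then localize the turning at the extrema of the height function. One phrase does need tightening: it is not literally true that ``all of the turning is concentrated at the critical points'' --- the monotone arcs between extrema do rotate the tangent, just by less than $\pi$ each --- and the contribution of a local part is only approximately $\pm\pi$. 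The clean fix is the one you yourself sketch in your last paragraph: compute the degree of the tangent map $S^1\to S^1$ by signed preimage counting over the two horizontal directions; in general position the tangent is horizontal precisely at the local maxima and minima, each extremum is a single transversal pass with sign $+1$ or $-1$ according to whether the local part winds counterclockwise or clockwise, and hence the degree equals $\sum_X W(X)$, giving \eqref{e91}. Your alternative induction (cancelling an adjacent max--min pair, whose local weights are $+1/2$ and $-1/2$ and so cancel) also works, but there the extra care is in keeping the curve embedded during the cancellation, e.g.\ by choosing an innermost pair; the Umlaufsatz route avoids this entirely and is the one I would keep.
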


\subsection{Local weights, the colored Jones polynomial, and their
factorization}
\lbl{sub.weights}

Consider the monoid
\begin{equation*}
\Zp=1+q \BZ[q]\,.
\end{equation*}
Fix a natural number $n \geq 1$ and a downward link diagram $D$.

{\em A local part of $D$} is a small neighborhood of
a crossing or a local extreme of $D$. There are six types of local parts
of $D$: two types of crossings
(positive or negative) and four types of local extrema (minima or maxima,
oriented clockwise, or counterclockwise):
\be
\lbl{f.6types}
\psdraw{Rp.down}{0.5in} \qquad
\psdraw{Rn.down}{0.5in} \qquad
\psdraw{cupr}{0.5in} \qquad
\psdraw{cupl}{0.5in} \qquad
\psdraw{capr}{0.5in} \qquad
\psdraw{capl}{0.5in}
\ee

For an $n$-admissible state $r$ and
a local part $X$, the weight $w(X,r)$ is defined by
\begin{equation*}
w(X,r)=\wlt(X,r) w_\succ(X,r)\,,
\end{equation*}
where $\wlt(X,r) \in \{\pm q^{m/4} \, | \, m \in \BZ\}$ is a monomial,
$w_\succ(X,r) \in \Zp$, and $\wlt(X,r)$ and $w_\succ(X,r)$ are given by
Table \ref{eq.wwsucc}.

{\tiny
\begin{table}[htpb]
\caption{The local weights $\wlt$ and $w_\succ$ of a state.}
\lbl{eq.wwsucc}
\text{
\begin{tabular}{|c|c|c|c|c|c|c|} \hline
& $\psdraw{pabcd.down}{0.6in}$ & $\psdraw{nabcd.down}{0.6in}$
& $\psdraw{locmaxk}{0.5in}$ & $\psdraw{locmink}{0.5in}$
& $\psdraw{locmax2k}{0.5in}$ & $\psdraw{locmin2k}{0.5in}$
\\ \hline
$\wlt$ & $q^{(n+nd+nb -ab-dc)/2 }$ & $(-1)^{b-c}   q^{(-n- nb-n d +bd +ac - b+c)/2} $ & $ q^{-(2a-n)/4} $ & $ q^{-(2a-n)/4} $ & $ q^{(2a-n)/4}$ & $ q^{(2a-n)/4}$
\\ \hline
$\w_\succ$ & $(q;q)_{c-b} \binom{n-d}{a-d}_{q}   \binom{c}{c-b}_{q} $ & $(q;q)_{b-c}     \binom{n-c}{b-c}_{q}    \binom{d}{d-a}_{q} $ & $ 1$ & $ 1$ & $ 1$ & $ 1$ \\ \hline
\end{tabular}
}
\end{table}
}

For a local extreme point $X$ with the value of the state $a$, we have the  convenient formula
\begin{align*}
w(X,a)= q^{W(X) (2a-n)/2}\,.
\end{align*}
Let the weight of a state be defined by
$$
w(r) = \prod_X w(X,r)\,,
$$
where the product is over all the local parts of
$D$. Then the unframed version of the colored Jones polynomial of the link
$K$, each component of which is colored by the $n+1$-dimensional
$sl_2$-module, is given by

\be
\lbl{eq.statesum}
J_{K,n}(q)=\sum_{r \in S_{D,n}} w(r)\,,
\ee
where $S_{D,n}$ is the set of all $n$-admissible states of $D$.
For example, the value of the unknot is
$$
J_{\text{Unknot},n}(q)=
[n+1]:=\frac{q^{(n+1)/2} - q^{-(n+1)/2}}{q^{1/2}-q^{-1/2}}
\,.
$$
Note that $J_{K,0}(q)=1$ for all links and
$J_{K,1}(q^{-1})/J_{\text{Unknot},1}(q^{-1})$ is the Jones polynomial of $K$
\cite{Jo}. Since we could not find a reference for the state sum formula
\eqref{eq.statesum} in the literature, we will give a proof
in the Appendix.


\section{Alternating link diagrams and centered states}
\lbl{sec.comb}

In this section we will discuss the combinatorics of alternating
diagrams.

\subsection{Alternating link diagrams and $A$-infinite type}
\lbl{sub.alt.knot}

Recall that a link diagram $D$ gives rise to a polygonal complex structure
of $S^2= \BR^2 \cup \{ \infty\}$, and if $D$ is alternating and connected,
then the checkerboard coloring with colors $A$ and $B$ at each crossing
looks like Figure \ref{f.pntype}.

If $K$ is non-split, then $D$ is a connected graph. If $K$ is split, then
$D$ has several connected components.
We will say that an alternating diagram $D$ is {\em $A$-infinite} if the
point $\infty\in S^2$ is contained in an $A$-polygon of every connected
subgraph of $D$.
It is clear that by moving the connected components of $D$ around in $S^2$,
we can assume that $D$ is $A$-infinite. This will make the colors of
different connected components compatible.

We will use the following obvious property of an $A$-infinite alternating
link diagram: all the $B$-polygons are finite, i.e. in
$\BR^2= S^2\setminus \{\infty\}$.

For example, the left-handed trefoil given by the standard
closure of the braid $s_1^{-3}$ is $A$-infinite, whereas the right-handed
trefoil given by the standard closure of the braid $s_1^3$ is not.
Here $s_1$ is the standard generator of the braid group in two strands.

\subsection{The digraph $\bD$ of an alternating diagram $D$}
\lbl{sub.alt.knot.rotate}

Let $D$ be an oriented link diagram.
Recall that we consider $D$ also as a graph whose edges are oriented.
We say that an edge of a   $D$ is {\em of type $O$} if it begins
as an overpass, and {\em of type $U$} if it begins as an underpass.If $D$ is
alternating and one travel along the the link the edges alternate from type
$U$ to type $O$ and vice-versa.

For a link diagram $D$ let $\bD$ be the  directed graph on $\BR^2$ obtained
from the projection of $D$ on the plane by reversing the orientation of all
edges of type $O$. Of course the projection of $D$ is simply $D$ without the
over/under crossing information.

If $D$ is {\em downward alternating}, it is easy to see that $\bD$ is
obtained from $D$ by the following changing of orientations near
a crossing point,
\be
\lbl{eq.bD}
\psdraw{barDD.down.orient}{4in}
\ee
i.e., if the crossing is a positive one, then the two left edges incident to
it get orientation reversed, and if the crossing is negative, then the two
right edges incident to it get orientation reversed.
We will retain the markings $A$ and $B$ for angles and regions of
complements of $\bD$.

At every vertex of $\bD$ (or a crossing of $D$) there are two ways to
smoothen the diagram. Following Kauffman \cite{Kf2} we call the
$A$-{\em smoothening} (resp., $B$-{\em smoothening})
the one where the two $A$-regions (resp., $B$-regions) get connected.
See the following figure for two examples of an $A$-smoothening.

\begin{equation*}
\psdraw{smoothen.down.orient}{4in}
\end{equation*}

Note that after either type of smoothening, the orientation of the edges of
$\bD$ is still well-defined.

\begin{remark}
\lbl{rem.resolution}
Doing an $A$-resolution (resp. $B$-resolution) on a vertex of $\bD$ is the
same as doing an $A$-resolution (resp. $B$-resolution) on the original
diagram $D$ in the sense of Kauffman \cite{Kf}. The advantage here, with
directed graph $\bD$ for the case of alternating links, is that the resulting
graph of any resolution is still oriented.
\end{remark}

Part (b) of the following lemma is where $A$-infinity is used in an essential
way.

\begin{lemma}
\lbl{l67}
Suppose $D$ is an alternating link diagram.

(a) To the right of every oriented edge of $\bD$ is an $A$-polygon, and to
the left of every oriented edge of $\bD$ is a $B$-polygon.

(b) Suppose $D$ is $A$-infinite, then every circle obtained from $\bD$ by
after doing $A$-resolution at every vertex of $\bD$  bounds a polygonal
region of type $B$. Moreover every such circle is
winding counterclockwise, i.e., it has winding number $1$.

(c) If $D$ is reduced, then each circle in (b) does not self-touch, i.e.,
the two arcs resulting from the $A$-resolution at one vertex do not belong to
the same circle.

\end{lemma}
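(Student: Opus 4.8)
The plan is to establish the three parts in order, treating (a) as a purely local input that drives the global arguments in (b) and (c).

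\emph{Part (a).} I would reduce the statement to a check in a neighbourhood of a single crossing. The $A/B$-coloring is a coloring of faces and so is constant along any edge of $\bD$, and the $\bD$-orientation of an edge is likewise fixed; hence for a given oriented edge the adjacent regions on its right and on its left are well defined, and it suffices to identify them near one endpoint. Near a crossing only two pictures occur, the positive and the negative crossing. For each, the reorientation rule \eqref{eq.bD} fixes the directions of the four incident half-edges, while the checkerboard rule of Figure \ref{f.pntype} fixes which of the four angles are of type $A$ and which of type $B$. Superimposing the two pictures one reads off, in each case, that every reoriented half-edge carries its $A$-angle on the right and its $B$-angle on the left; this is exactly (a), and is precisely the feature for which $\bD$ was defined.

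\emph{Part (b).} Here I would combine (a) with the fact, already noted in the text, that an $A$-resolution leaves the edge orientations well defined. Resolving every crossing removes all crossings, so the outcome is a disjoint union of \emph{embedded} circles, each inheriting an orientation from $\bD$. By (a), along its edge-parts each circle $C$ has a $B$-region on its left and an $A$-region on its right; moreover, since the $A$-resolution joins the two $A$-angles at each crossing while separating the two $B$-angles, the $B$-faces are kept apart, so the region immediately on the left of $C$ is of type $B$ and $C$ bounds it. Now the hypothesis that $D$ is $A$-infinite enters: it forces every $B$-polygon, and with it this left-hand region, to lie in $\BR^2$, i.e.\ to be bounded. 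An embedded closed curve has a single unbounded complementary region (the one containing $\infty$, which is of type $A$); since the $B$-region on the left of $C$ is bounded it must be the interior of $C$, whence $C$ is oriented with its interior on the left, i.e.\ counterclockwise, and $W(C)=1$. (Equivalently, one could sum the local contributions of the extrema of $C$ via \eqref{e91}.)

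\emph{Part (c).} I would show that a self-touching circle produces a nugatory crossing. Suppose the two arcs created by the $A$-resolution at a vertex $v$ lie on one and the same circle; by the description in (b) this means that the two opposite $B$-angles at $v$ belong to a single $B$-polygon $P$. Joining these two angles by an arc through the interior of $P$ and closing it through the crossing point produces a simple closed curve meeting $D$ only at $v$. This curve separates the rest of $D$ into two parts and thereby exhibits $D$ in the excluded form, with diagrams $D_1$ and $D_2$ joined at the single crossing $v$, contradicting that $D$ is reduced. Hence no circle self-touches.

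The routine part is (a), a finite inspection of the two crossing pictures. The conceptual core, and the step I expect to be the main obstacle, is (b): one must argue cleanly that resolving the vertices keeps the $\bD$-orientation and the $A/B$-coloring coherent—so that each state circle genuinely has a $B$-region on its left and bounds it—and then recognize that $A$-infinity is exactly what upgrades this qualitative statement to the quantitative conclusion $W=1$, by ensuring the $B$-region on the left is bounded and hence is the interior of the circle.
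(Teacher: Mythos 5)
Your proposal is correct, and parts (a) and (b) follow the paper's own proof essentially step for step: (a) is the same local inspection at the two crossing types of $\bD$, and (b) is the same chain of observations — the state circles are exactly the boundaries of the $B$-polygons, $A$-infinity forces those polygons to lie in $\BR^2$ (hence be bounded), and part (a) then forces the counterclockwise orientation. Where you genuinely diverge is part (c): the paper disposes of it by citation, invoking \cite[Prop.5.3]{Lickorish} that every reduced alternating diagram is $A$-adequate, whereas you give a direct argument — a self-touching at $v$ puts both $B$-angles of $v$ inside a single $B$-region, an embedded arc through that region closed up at $v$ is a simple closed curve meeting $D$ only at $v$, and this exhibits $v$ as a nugatory crossing. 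This is in essence the proof of Lickorish's proposition, so your route makes the lemma self-contained and shows precisely where reducedness enters; the paper's route is shorter but opaque on that point.

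One caveat, which affects the paper's citation just as much as your argument: your final contradiction needs ``reduced'' to mean ``no nugatory crossing at all,'' which is Lickorish's notion. With the paper's literal definition — $D$ is unreduced only if \emph{both} $D_1$ and $D_2$ carry at least one crossing — your curve $\gamma$ yields no contradiction in the case where one of its two sides contains no crossing, i.e.\ when $v$ is a Reidemeister I kink. That case is not vacuous: an alternating, $A$-infinite diagram with a negative kink added has a self-touching $A$-state circle (the $A$-resolution at the kink absorbs the loop into a single strand, so both resolution arcs lie on one circle), yet it is ``reduced'' in the paper's literal sense. So to make (c) airtight you should either work with the standard (stronger) notion of reduced, or add a sentence disposing of kinks separately.
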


\begin{proof}
(a) follows easily by inspecting the directions of the edges
and the markings of the regions at the two types of vertices  of $\bD$.

(b)  The boundaries of the $B$-polygons are exactly
the  circles obtained from $D$ after doing  $A$-resolution at every vertex
of $D$. Since the infinity region is not a $B$-type region, every circle does
bound a $B$-type region in the plane $\BR^2$. From part (a) it follows that
each circle, which is the boundary of a polygonal region of type $B$, is
counterclockwise.

(c) This is a well-known fact. A link diagram having the property that no
circle obtained after doing $A$-resolution at every crossing has a
self-touching point is known as an {\em $A$-adequate diagram}. In
\cite[Prop.5.3]{Lickorish} it was proved that every reduced alternating
link diagram is $A$-adequate.
\end{proof}

\subsection{Centered states}
\lbl{sub.centered}

Fix an alternating downward diagram $D$ with $c_D$ crossings and its
directed graph $\bD$. Recall that  $\cE(\bD)$ and $\cV(\bD)$ denote
respectively the set of oriented edges of $\bD$ and  the set of vertices of
$\bD$.

A {\em centered state} of $\bD$ is a map
$
s: \cE(\bD) \to \BR
$
such that  at every vertex $v$, we have
\be
a+d=b+c\,,
\lbl{c2}
\ee
with the convention that $a,b,c,d$ are the values of $s$
as indicated in the following figure

\be
\lbl{f.pntypeb}
\psdraw{barDpabcd.down}{0.8in} \qquad\qquad
\psdraw{barDnabcd.down}{0.8in}
\ee

For the above vertex we define
\be
\lbl{eq.sv}
s(v) = a+d=b+c\,,
\ee
thus extending $s$ to a map $s: \cE(\bD) \cup \cV(\bD)) \to \BR$.

Let $S_{\bD,\BR}$ and $S_{\bD,\BN}$ the sets of all centered states of $\bD$ with
values respectively in $\BR$ and in $\BN$.  For a fixed
positive integer $n$, define a map
\be
\lbl{eq.sbs}
S_{D,\BR} \longto S_{\bD,\BR} \qquad r \mapsto \hat r
\ee
by
$$
\hat r(e)=
\begin{cases} n-r(e) & \text{if the edge $e$ is of type $O$} \\
r(e) & \text{if $e$ is of type $U$}.
\end{cases}
$$
It is easy to see the map \eqref{eq.sbs} is a vector space isomorphism.
If $r \in S_{D,n}$, i.e., $r$ is $n$-admissible, then $\hat r$ is called
{\em $n$-admissible}. Let $S_{\bD, n}$ be  the set of all $n$-admissible
centered states.

To characterize $n$-admissible centered states let us introduce the
following norm for  $s\in S_{\bD,\BN}$:
$$
|s|= \max_{v \in \cV(\bD)} s(v)= \max_{u \in \cV(\bD) \cup \cE(\bD) } s(u)
$$
The following is a reformulation  of $n$-admissibility in terms of  centered
states.

\begin{lemma}
\lbl{l7a}
A centered state $s$ is $n$-admissible if and only if $s\in S_{\bD,\BN}$ and
$|s| \le n$. In other words,
$$
S_{\bD,n}= \{ s\in S_{\bD,\BN}\, : \,|s| \le n\}
$$
\end{lemma}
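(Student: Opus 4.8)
The plan is to transport, through the bijection $r\mapsto\hat r$ of \eqref{eq.sbs}, the two defining conditions of $n$-admissibility of a state $r$ --- namely that $r$ takes integer values in $[0,n]$ and that $r(v)\ge 0$ at every crossing $v$ --- into conditions on $s=\hat r$, and then to verify that together these become exactly ``$s\in S_{\bD,\BN}$ and $|s|\le n$''. Since \eqref{eq.sbs} is a bijection, $S_{\bD,n}$ is by definition the image $\{\hat r:r\in S_{D,n}\}$, so it is enough to match the two sets of conditions under the substitution $\hat r(e)=n-r(e)$ on type-$O$ edges and $\hat r(e)=r(e)$ on type-$U$ edges.

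First I would dispose of the edge values. For every edge $e$ we have $\hat r(e)\in\{r(e),\,n-r(e)\}$, and $0\le r(e)\le n$ holds if and only if $0\le n-r(e)\le n$; hence $r$ takes integer values in $[0,n]$ if and only if $s=\hat r$ does, i.e. if and only if $s\in S_{\bD,\BN}$ with $s(e)\le n$ for every edge.

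The heart of the argument, and the step I expect to require the most care, is the vertex condition. At a crossing $v$ the centered value $s(v)$ is the sum of the two $\hat r$-values that are paired in \eqref{f.pntypeb}, and I must re-express it in terms of the labels $a,b,c,d$ of \eqref{eq.abcd}. Reading off from the reorientation rule \eqref{eq.bD} which two of the four edges incident to $v$ are of type $O$ (and so carry $n-r(e)$) --- the two left edges at a positive crossing and the two right edges at a negative one --- one checks two things at once: the state relation $a+b=c+d$ of $D$ becomes the centered relation $a+d=b+c$ of $\bD$ (so $\hat r$ is indeed a centered state, consistent with \eqref{eq.sbs}), and in both the positive and the negative case a direct computation yields the single key identity
$$
s(v)=n-\sign(v)(a-d)=n-r(v).
$$
This identity converts the crossing condition $r(v)\ge 0$ into $s(v)\le n$.

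Finally I would assemble the two directions. If $r$ is $n$-admissible, the edge step gives $s=\hat r\in S_{\bD,\BN}$ while the identity gives $s(v)=n-r(v)\le n$ for every $v$, whence $|s|=\max_v s(v)\le n$. Conversely, if $s\in S_{\bD,\BN}$ with $|s|\le n$, then by the definition of $|s|$ we have $s(u)\le n$ for every $u\in\cV(\bD)\cup\cE(\bD)$; letting $r$ be the preimage of $s$ under \eqref{eq.sbs}, the edge step forces $r(e)\in\{0,\dots,n\}$ and the identity forces $r(v)=n-s(v)\ge 0$, so $r\in S_{D,n}$ and therefore $s\in S_{\bD,n}$. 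Apart from the label-tracking behind $s(v)=n-r(v)$, every step is merely a reinterpretation of the inequalities under the affine reflection $x\mapsto n-x$ applied on the type-$O$ edges.
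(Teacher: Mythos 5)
Your proposal is correct and follows the same route as the paper: the paper's own proof consists of the single observation that $r(v)=n-\hat r(v)$ for every vertex $v$, from which the equivalence of the two sets of conditions is immediate. You simply make explicit the label-tracking behind that identity and the (trivial) edge-value step, which the paper leaves to the reader.
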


\begin{proof}
This follows immediately from the definition, since for any state $r$
and for every vertex $v$ of $\bD$ we have $r(v) = n- \hat r(v)$.
\end{proof}

It follows that if a centered state is $n$-admissible, then it is
$(n+1)$-admissible.


\section{Local weights in terms of centered states}
\lbl{sec.factor}

In this section we will give an explicit formula for the weight of an
centered state. It turns out that the state sum of the colored Jones
polynomial in terms of centered states has the important property of
separation of variables needed in the proof of the stability.
See Remark \ref{why}.

\subsection{Local weights of centered states and their factorization}
\lbl{sub.local.antistate}

For an $n$-admissible centered state $s= \hat r$, let us define
$w(s):= w(r)$. From the state sum of $w(r)$ we get the following state sum
for $w(s)$
\be
w(s) =\sum _{X} w( X, s)\,,
\lbl{e201}
\ee
where the sum is over all the local parts $X$ of $\bD$. Here a local parts
of $\bD$ is a neighborhood of either a vertex or an extreme point of $\bD$,
and the value of
$$
w(X,s)=\wlt(X,s)w_\succ(X,s)
$$
is obtained by replacing Table \eqref{eq.wwsucc} with Table
\eqref{eq.wwsucc2},

\begin{table}[htpb]
\caption{The local weights $\wlt$ and $w_\succ$ of centered states}
\lbl{eq.wwsucc2}
\text{
\begin{tabular}{|c|c|c|c|c|c|c|} \hline
& $\psdraw{barDpabcd.down}{0.6in}$ & $\psdraw{barDnabcd.down}{0.6in}$
& $\psdraw{locmaxk}{0.5in}$ & $\psdraw{locmink}{0.5in}$
& $\psdraw{locmax2k}{0.5in}$ & $\psdraw{locmin2k}{0.5in}$
\\ \hline
$\wlt$ & $ q^{\frac{n +ab + cd}2} $ & $(-1)^{n-a-d} q^{\frac{-n^2-2n +ac +bd +b+c} 2 } $ & $ q^{-(2a-n)/4} $ & $ q^{-(2a-n)/4} $ & $q^{(2a-n)/4} $ & $q^{(2a-n)/4} $
\\ \hline
$\w_\succ$ & $\wp(X)_{a,b}^{c,d} $ & $\wp(X)_{a,b}^{c,d} $ & $ 1$ & $ 1$ & $ 1$ & $ 1$ \\ \hline
\end{tabular}
}
\end{table}
where
\be
\lbl{eq.wabcd}
\wp(X)_{a,b}^{c,d}=
(q;q)_{n-a-d} \,\binom{n-d}{n-a-d}_{q} \, \binom{n-c}{n-a-d}_{q}
\ee

Note that  $w_\succ(X,s)$ is independent of the sign of the local crossing,
and takes the same value $1$ at all local extrema.
Hence, we use the notation $\wp(v,s)$ for
the right hand side of
\eqref{eq.wabcd}, where $v \in \cV$ is the involved vertex. The following is
a convenient way rewrite the value of $\wp(v,s)$.
\begin{lemma}  For a vertex $v$ in  \eqref{f.pntypeb} and $x= q^{n+1}$, we have
\be
\lbl{eq.wbs}
\wp \left(\psdraw{barDpabcd.down}{0.6in}\right)=
\wp \left(\psdraw{barDnabcd.down}{0.6in}\right)=
\frac{(x\, q^{-a-d})_\infty}{(x\, q^{-d})_\infty(x\, q^{-c})_\infty}
\frac{(q)_\infty}{(q)_a (q)_b}\,.
\ee
\end{lemma}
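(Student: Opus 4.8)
The plan is to reduce the stated identity to the definition of $\wp(X)_{a,b}^{c,d}$ in \eqref{eq.wabcd} together with a short $q$-binomial manipulation. First I would expand the two $q$-binomial coefficients using $\binom{m}{k}_q = (q)_m/\big((q)_k\,(q)_{m-k}\big)$, which gives
\[
\binom{n-d}{n-a-d}_q = \frac{(q)_{n-d}}{(q)_{n-a-d}\,(q)_a},
\qquad
\binom{n-c}{n-a-d}_q = \frac{(q)_{n-c}}{(q)_{n-a-d}\,(q)_{a+d-c}}.
\]
The only non-formal input enters here: the centered-state relation $a+d=b+c$ at the vertex (equation \eqref{c2}) identifies the lower index $a+d-c$ with $b$, so the second denominator factor is $(q)_b$. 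Substituting both expansions into \eqref{eq.wabcd}, one factor $(q)_{n-a-d}$ cancels against the leading $(q)_{n-a-d}$, leaving
\[
\wp(X)_{a,b}^{c,d}=\frac{(q)_{n-d}\,(q)_{n-c}}{(q)_{n-a-d}\,(q)_a\,(q)_b}.
\]

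Next I would pass from finite to infinite Pochhammer symbols via $(q)_m = (q)_\infty/(q^{m+1})_\infty$, applied to the three factors $(q)_{n-d}$, $(q)_{n-c}$ and $(q)_{n-a-d}$. The three copies of $(q)_\infty$ so produced (two in the numerator, one in the denominator) combine to a single $(q)_\infty$, and the remaining quotient of shifted factors becomes
\[
\frac{(q^{\,n-a-d+1})_\infty}{(q^{\,n-d+1})_\infty\,(q^{\,n-c+1})_\infty}.
\]
Setting $x=q^{n+1}$ rewrites $q^{\,n-a-d+1}=x\,q^{-a-d}$, $q^{\,n-d+1}=x\,q^{-d}$ and $q^{\,n-c+1}=x\,q^{-c}$, which is exactly the right-hand side of \eqref{eq.wbs}, with the $(q)_a(q)_b$ factors carried along unchanged throughout.

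Finally, the equality of the values at the positive and the negative crossing in \eqref{f.pntypeb} is immediate: by Table \eqref{eq.wwsucc2} the $w_\succ$-entry is the same symbol $\wp(X)_{a,b}^{c,d}$ for both signs, and the computation above never used the sign of the crossing. I expect no genuine obstacle here, as the argument is essentially bookkeeping; the one substantive step is the use of the vertex relation $a+d=b+c$ to collapse the exponent $a+d-c$ to $b$, which is precisely what makes the two $q$-binomials recombine into the clean product displayed above.
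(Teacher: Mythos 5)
Your proof is correct and follows essentially the same route as the paper's: both reduce \eqref{eq.wbs} to the definition \eqref{eq.wabcd} and then convert the $q$-binomials and $q$-factorials into ratios of infinite Pochhammer symbols via $(q)_m=(q)_\infty/(q^{m+1})_\infty$, using the vertex relation $a+d=b+c$ to identify $(q)_{a+d-c}$ with $(q)_b$. The paper's proof merely cites the two conversion identities and leaves the bookkeeping (which you carry out explicitly) to the reader, so your write-up is a faithful, more detailed version of the same argument.
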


\begin{proof}
The identity follows from Equation \eqref{eq.wabcd},
and the following (easy to check) identities
\begin{eqnarray*}
{\binom nk}_q &=& \frac{(q^{n-k+1})_\infty}{(q)_k \, (q^{n+1})_\infty}
\\
(q)_k &=& \frac{(q)_\infty}{(q^{k+1})_\infty}.
\end{eqnarray*}
\end{proof}

\begin{remark}
The right hand side of Equation \eqref{eq.wbs} can also be written in
the following form:
\be
\lbl{eq.wbsalt}
\frac{(x\, q^{-a-d})_\infty}{(x\, q^{-d})_\infty(x\, q^{-c})_\infty}
\frac{(q)_\infty}{(q)_a (q)_b}=
\frac{(q)_\infty }{(q)_a (q)_b} \,
\frac{(x\, q^{-s(v)})_a \, (x\, q^{-s(v)})_b }{ (x\, q^{-s(v)})_\infty }
= \frac{(q)_\infty }{(q)_a (q)_b} \,
\frac{(x\, q^{-s(v)})_b}{(x\, q^{-d})_\infty} \,.
\ee
\end{remark}

\subsection{The functionals $P_0, P_1,Q, L_0,L_1$}
To study the power of $q$ in Table \eqref{eq.wwsucc2},
let us introduce the following functionals $P_0, P_1,Q, L_0,L_1$ on centered
states, defined by local weights as in Table \ref{t0}.

\begin{table}[htpb]
\caption{The definition of $L_0, L_1, Q, P_0$ and $P_1$.}
\lbl{t0}
\begin{tabular}{|c|c|c|c|c|c|c|} \hline
& $\psdraw{barDpabcd.down}{0.6in}$ & $\psdraw{barDnabcd.down}{0.6in}$
& $\psdraw{locmaxk}{0.5in}$ & $\psdraw{locmink}{0.5in}$
& $\psdraw{locmax2k}{0.5in}$ & $\psdraw{locmin2k}{0.5in}$
\\
\hline
$L_1$ & $0$ & $(b+c)/2 = (a+b+c+d)/4$ & $-a/2$ & $-a/2$ & $a/2$ & $a/2$
\\ \hline
$L_0$ & $0$ & $a+d=b+c$ & $0$ & $0$ & $0$ & $0$ \\ \hline
$Q$ & $(ab+ cd)/2$ & $(ac+bd)/2$ & $0$ & $0$ & $0$ & $0$ \\ \hline
$P_0$ & $0$ & $n$ & $0$ & $0$ & $0$ & $0$ \\ \hline
$P_1$ & $n/2$ & $-n^2/2-n$ & $n/4$ & $n/4$ & $-n/4$ & $-n/4$ \\ \hline
\end{tabular}
\end{table}
If $F$ is one of the functionals $P_1,P_2,Q,L_0,L_1$, and $s$ is a centered
state, then we define
$$
F(s) = \sum_{X} F(X,s)\,,
$$
where the sum is over all local parts $X$, with the value of $F$ at a
local part is given in Table \eqref{t0}.
These functionals are introduced so that for a local part $X$ with centered
state $s$ we have

\begin{eqnarray*}
\lbl{e5}
w(X,s) = (-1)^{P_0(X,s) + L_0(X,s)} \, q^{P_1(X,s)+ Q(X,s) + L_1(X,s)}\, \wp(X,s)
\,.
\end{eqnarray*}
From Equation \eqref{e201} we have
\be
\lbl{e5a}
w(s) = (-1)^{P_0(s)  + L_0(s) } \, q^{P_1(s) + Q(s)  + L_1(s)} \, \wp(s) \,.
\ee
Here $\wp(s) = \prod_{X} \wp(X,s)$, where the product is over all local
parts of $D$.
Note that  $\wp(X,s) \in \Zp$.  The functionals $L_0,L_1$ are linear
forms on $S_{\bD,\BR}$ and do not depend on $n$ in the sense that the value
of each of $L_0,L_1$ will be the same if we consider $s$ as an
$(n+1)$-admissible centered state instead of an $n$-state. The functional
$Q_2=Q+L_1$ is a quadratic form on $S_{\bD,\BR}$ not depending on $n$. The two
functionals $P_0, P_1$ depend only on $n$, i.e., if $s,s'$ are $n$-admissible
centered states, then $P_i(s)=P_i(s')$. Hence we will also write $P_i(n)$
instead of $P_i(s)$, for $i=0,1$.

\begin{lemma}
\lbl{l.degree}
We have

\be
J_{K,n}(q)= (-1)^{P_0(n)}\, q^{P_1(n)} \sum_{s \in S_{\bD,n}} F(q^{n+1},q,s),
\lbl{e24a}
\ee
where
\be
\lbl{eq.Fxqs2}
F(x,q,s)=(q)_\infty^{c_D} \, (-1)^{L_0(s)} \,  \,
\frac{q^{Q_2(s)}}{  \prod_{e\in \cE(\bD)}\,  (q)_{s(e)} }\,
 \frac{\prod_{v\in \cV(\bD)} \, \left(x q^{-s(v)}\right)_\infty
}{\prod_{e\in \cE(\bD)} \, \left(x q^{-s(e)}\right)_\infty }\,.
\ee

\end{lemma}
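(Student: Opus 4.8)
The plan is to assemble the claimed identity from three ingredients that are already in place: the $R$-matrix state sum \eqref{eq.statesum}, the change of variables to centered states, and the factorization \eqref{e5a} of a single weight. First I would rewrite $J_{K,n}(q)=\sum_{r\in S_{D,n}}w(r)$ by transporting the sum along the bijection $r\mapsto s=\hat r$ of \eqref{eq.sbs}, which is a bijection of $n$-admissible states by Lemma \ref{l7a}; with the convention $w(s):=w(r)$ this gives $J_{K,n}(q)=\sum_{s\in S_{\bD,n}}w(s)$. Next I would substitute the factorization \eqref{e5a}, namely $w(s)=(-1)^{P_0(s)+L_0(s)}\,q^{P_1(s)+Q(s)+L_1(s)}\,\wp(s)$. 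Since $P_0$ and $P_1$ depend only on $n$, the scalar $(-1)^{P_0(n)}q^{P_1(n)}$ pulls out of the sum, and writing $Q_2=Q+L_1$ reduces everything to the single identity
\[
(-1)^{L_0(s)}\,q^{Q_2(s)}\,\wp(s)=F(q^{n+1},q,s)
\]
for each centered state $s$.

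The prefactor $(-1)^{L_0(s)}q^{Q_2(s)}$ already appears verbatim in \eqref{eq.Fxqs2}, so the entire content is the product formula
\[
\wp(s)=(q)_\infty^{c_D}\,\frac{\prod_{v\in\cV(\bD)}\bigl(xq^{-s(v)}\bigr)_\infty}{\prod_{e\in\cE(\bD)}(q)_{s(e)}\;\prod_{e\in\cE(\bD)}\bigl(xq^{-s(e)}\bigr)_\infty},\qquad x=q^{n+1}.
\]
I would prove this by expanding $\wp(s)=\prod_{v}\wp(v,s)$ and inserting the closed form \eqref{eq.wbs}, written as $\wp(v,s)=\frac{(xq^{-s(v)})_\infty}{(xq^{-c})_\infty(xq^{-d})_\infty}\,\frac{(q)_\infty}{(q)_a(q)_b}$ using $a+d=s(v)$. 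Two of the four kinds of factors are immediate: the $(q)_\infty$ contributes $(q)_\infty^{c_D}$ over the $c_D$ vertices, and the numerator $(xq^{-s(v)})_\infty$ is already indexed by vertices, producing $\prod_{v}(xq^{-s(v)})_\infty$. What remains is to match the two denominators, i.e.\ to show $\prod_v(q)_a(q)_b=\prod_e(q)_{s(e)}$ and $\prod_v(xq^{-c})_\infty(xq^{-d})_\infty=\prod_e(xq^{-s(e)})_\infty$.

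Both of these follow from one combinatorial assertion, which I regard as the heart of the argument: each edge of $\bD$ occupies an $\{a,b\}$-slot at exactly one of its two endpoints and a $\{c,d\}$-slot at the other. Reading off the conventions of \eqref{f.pntypeb} and Table \eqref{eq.wwsucc2}, the pair $\{a,b\}$ whose $(q)$-Pochhammers sit in the denominator is the pair of edge-slots lying above the vertex, while $\{c,d\}$ is the pair lying below; the assertion then says that every edge is a top slot at one of its two crossings and a bottom slot at the other. This is exactly where the downward structure enters. Orienting $D$ downward, an edge leaves the bottom of its tail crossing and, because the orientation is downward at the crossing it next reaches, must enter that crossing from above, so it is a bottom slot at one endpoint and a top slot at the other; cups and caps along the edge are irrelevant since only incidence at crossings matters. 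Passing to $\bD$ reverses some orientations but does not alter the planar height structure, so the top/bottom incidence is unchanged. A global count confirms consistency: there are $4c_D$ edge-slots and $2c_D$ edges, each contributing one top and one bottom incidence, so the two products telescope edge-by-edge.

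The main obstacle is precisely this last bookkeeping. One must verify, separately for positive and negative crossings (the smoothing/orientation rule \eqref{eq.bD} treats the two signs asymmetrically), that the labels $a,b,c,d$ dictated by Table \eqref{eq.wwsucc2} genuinely split the four incident edges into the top pair and the bottom pair, and one must confirm that no degenerate incidence spoils the bijection; loop edges, for instance, are harmless since such an edge still leaves the bottom and re-enters the top of its vertex, occupying one slot of each type, and Lemma \ref{l67} controls the $A$-infinity boundary behavior. Once the top/bottom partition is checked to be globally consistent, the two denominator products collapse to $\prod_e(q)_{s(e)}$ and $\prod_e(xq^{-s(e)})_\infty$, the displayed product formula for $\wp(s)$ follows, and combining it with the prefactors of the previous paragraph yields \eqref{e24a}.
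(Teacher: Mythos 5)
Your proposal is correct and follows essentially the same route as the paper: reindex the state sum by centered states, factor each weight via \eqref{e5a} and \eqref{eq.wbs}, and collapse the vertex products $\prod_v (q)_a(q)_b$ and $\prod_v (xq^{-c})_\infty (xq^{-d})_\infty$ into edge products using the fact that each edge is a lower edge at one of its crossings and an upper edge at the other — exactly the paper's one-line re-indexing observation, which you spell out in more detail (including the degenerate cases). The only, and immaterial, discrepancy is that you label $\{a,b\}$ as the pair of edges above the vertex and $\{c,d\}$ as the pair below, which is opposite to the paper's convention; the re-indexing argument works identically either way.
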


\begin{proof}

By \eqref{eq.wbs}  we have
\be
\prod_{v \in \cV(\bD)} \wp(v,s)= \frac{(q)_\infty^{c_D}}{
\prod_{v\in \cV(\bD)}
\, (q)_a (q)_b  }\,
 \frac{
 \prod_{v\in \cV(\bD)} \, \left(q^{n+1-s(v)}\right)_\infty
 }{\prod_{v\in \cV(\bD)}  ( q^{n+1-d})_\infty\, (q^{n+1-c})_\infty   }
\in \Zp\,.
\notag
\ee

 Here $a$ and $b$ (respectively $c$ and $d$) are the $s$-values of the two
lower (respectively upper) edges incident to $v$. When $v$ runs the set
$\cV$ of vertices, the  two lower edges of
 $v$ run the set $\cE$ of all edges, as do the two upper edges of $v$. Hence

\be  \prod_{v \in \cV(\bD)} \wp(v,s) = \frac{(q)_\infty^{c_D}}{
\prod_{e\in \cE(\bD)}\,  (q)_{s(e)} }\,
 \frac{\prod_{v\in \cV(\bD)} \, \left(q^{n+1-s(v)}\right)_\infty
 }{\prod_{e\in \cE(\bD)} \, \left(q^{n+1-s(e)}\right)_\infty }\,.
\lbl{e24b}
\ee

From Equation
\eqref{e5a} and $J_{K,n}(q)= \sum_{s \in S_{\bD,n}} w(s)$, we have

\begin{align*}
J_{K,n}(q)&= (-1)^{P_0(n)}\, q^{P_1(n)} \sum_{s \in S_{\bD,n}} (-1)^{L_0(s)}
\, q^{Q_2(s)} \, \prod_{v \in \cV(\bD)} \wp(v,s)\,,
\end{align*}
which is equal to the right hand side of \eqref{e24a} by identity
\eqref{e24b} and the definition of $F(x,q,s)$.
\end{proof}

\begin{remark}
\lbl{why}
(a) It is important for the stability that there is no mixing between
$n$ and $s$ in the formulas of the functionals $P_0,P_1,Q,L_0,L_1$.
In the states-sum using states in $D$, misxing occurs, and this
is the reason why we introduce centered states.

(b) The quadratic form $Q$ has the following simple description. Suppose
$\al$ is an $A$-angle of the digraph $\bD$, and the $s$-values of the two
edges of $\al$ are $a$ and $b$.
Define $Q(\al,s)= ab/2$. Then
\be Q(s) = \sum _\al Q(\al,s)\,,
\lbl{Q.anlge}
\ee
where the sum is over all $A$-angles $\al$.
\end{remark}


\section{Positivity of $Q_2$ and the lowest degree of the colored
Jones polynomial}
\lbl{Q2}

In this section we prove the copositivity of $Q_2:= Q+ L_1$ on the cone
$S_{\bD,\BN}$ and derive a formula for the lowest degree of the colored Jones
polynomial. Again we fix a reduced, alternating, $A$-infinite downward
diagram $D$  with $c_D$ crossings.

\subsection{A Hilbert basis for $S_{\bD,\BN}$:
elementary centered states}
\lbl{sub.elementary}

From its very definition, the set $S_{\bD,\BN}$ of $\BN$-valued centered
states of $\bD$ can be identified with the set
of lattice points of a lattice cone in $\BR^{2 c_D}$. In general, the set
of lattice points of a rational cone is a monoid, and a generating set
is called a {\em Hilbert basis} which plays an important role in
{\em integer programming}; see for instance \cite[Sec.13]{St} and also
\cite[Sec.16.4]{Sch}. Note that every element of a finitely generated
additive monoid is an $\BN$-linear combination of a Hilbert basis. Although
the natural number coefficients are not unique, this is not a problem
for applications.

The goal of this section is to describe a useful Hilbert basis for
$S_{\bD,\BN}$.

Recall that $\bD$ is a directed graph. Suppose $\gamma$ is a directed cycle
of $\bD$, i.e., closed path consisting of a sequence of distinct edges
$e_1,\dots e_n$ of $D$ such that the ending point of $e_j$ is the starting
points of $e_{j+1}$ (index is taken modulo $n$) and there is no repeated vertex
along the path except for the obvious case where the first vertex is also the
last vertex. An example of a cycle of $\bD$ is the boundary of a polygon
in the complement of $\bD$.

\begin{definition}
\lbl{def.elementary}
For a directed cycle $\gamma$ of $\bD$ let $s_\gamma$ be the function on the
set of edges of $\bD$ which assigns 1 to every edge of $\gamma$ and 0 to every
other edge. Such a centered state is called {\em elementary}, and $\gamma$
is called its support. Let $\calB$
denote the (finite set) of all elementary centered states of $\bD$.

For a polygon  $p\in \cP(\bD)$ the boundary $\partial p$ is a directed
cycle of $\bD$, and we will use the notation $s_p := s_{\partial p}$.
\end{definition}
From Lemma \ref{l7a} we see that $s_\gamma$ is an $n$-admissible centered
state for every $n \ge 1$.

\begin{lemma}
\lbl{lem.005}
$\calB$ is a Hilbert basis of $S_{\bD,\BN}$.

\end{lemma}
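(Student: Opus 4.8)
The plan is to show that $\calB$ generates the monoid $S_{\bD,\BN}$, i.e.\ that every $\BN$-valued centered state is a finite $\BN$-linear combination of elementary states $s_\gamma$, each supported on a directed cycle of $\bD$. Since every $s_\gamma$ is itself a centered state (the defining relation $a+d=b+c$ at each vertex holds because a directed cycle enters and exits each vertex it passes through, contributing $0$ or $1$ symmetrically), any $\BN$-combination of elements of $\calB$ lies in $S_{\bD,\BN}$; the content of the lemma is the reverse inclusion. I would argue by induction on the norm $|s|=\max_{v} s(v)$, or equivalently on the total weight $\sum_{e\in\cE(\bD)} s(e)$. The base case $s=0$ is trivial.

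\smallskip
\noindent\textbf{The decomposition step.} Given a nonzero $s\in S_{\bD,\BN}$, the key is to extract a directed cycle from the support of $s$. First I would observe that the centered-state condition \eqref{c2} is exactly a \emph{flow conservation} law if we orient things correctly: at each vertex the two incoming edges and two outgoing edges of $\bD$ satisfy $a+d=b+c$, which says that the sum of $s$-values on one diagonal pair equals the sum on the other. The clean way to exploit this is to reinterpret $s$ as a circulation on $\bD$. More precisely, using the orientation of $\bD$ (recall from \eqref{eq.bD} that $\bD$ is obtained from $D$ by reversing type-$O$ edges), I would verify that \eqref{c2} becomes precisely Kirchhoff's conservation condition ``flow in $=$ flow out'' at every vertex, so that $s$ is a nonnegative integral circulation. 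It is a standard fact that any nonnegative integral circulation on a directed graph decomposes as a nonnegative integral combination of indicator functions of directed cycles: pick any edge $e_0$ with $s(e_0)>0$, follow outgoing edges of positive $s$-value (conservation guarantees that at each vertex reached there is an outgoing positive edge), and since $\bD$ is finite this walk must eventually revisit a vertex, yielding a directed cycle $\gamma$ in the support. Then $s-s_\gamma$ is again a nonnegative centered state with strictly smaller total weight, and induction finishes the argument.

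\smallskip
\noindent\textbf{The main obstacle.} The subtle point is matching the combinatorial ``centered state'' relation $a+d=b+c$ at a crossing \eqref{f.pntypeb} with a genuine flow-conservation law, because the four edges at a vertex of $\bD$ are not simply ``two in, two out'' in the naive planar sense—the labels $a,b,c,d$ sit at specific corners, and the identity pairs diagonal corners rather than an in-set against an out-set. I would resolve this by carefully reading off, from the two vertex pictures in \eqref{f.pntypeb} together with the edge orientations displayed in \eqref{eq.bD}, which of $a,b,c,d$ label incoming versus outgoing edges, and checking that in both the positive and negative crossing case the relation $a+d=b+c$ coincides with (incoming sum) $=$ (outgoing sum). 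Once this identification is secure, the cycle-extraction is routine. As a sanity check I would confirm that the cycles thus produced are exactly directed cycles in the sense of Definition \ref{def.elementary} (distinct edges, no repeated interior vertex), trimming any repeated vertex to a genuine simple directed cycle if the greedy walk produces a longer closed trail; this trimming only decreases the support further and does not affect the induction. The fact that $\calB$ is \emph{finite} is immediate since a finite graph has finitely many simple directed cycles, so no further work is needed there.
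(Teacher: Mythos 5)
Your proposal is correct and takes essentially the same route as the paper's own proof: both arguments observe (by inspecting the edge orientations of \eqref{eq.bD}) that the centered-state relation \eqref{c2} at each vertex of $\bD$ is exactly conservation of flow (the two in-edges against the two out-edges), then greedily follow positive-value out-edges to extract a directed cycle $\gamma$ with $s-s_\gamma \in S_{\bD,\BN}$, and finish by induction on total weight. Your explicit ``circulation/Kirchhoff decomposition'' framing is only a cosmetic repackaging of that same step.
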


\begin{proof}
Let $s$ be a $\BN$-valued centered state of $\bD$.
Suppose $e$ is an oriented edge such that $s(e) >0$. At the ending vertex
$v$ of $e$
let $e'$ and $e''$ be the two edges which are perpendicular to $e$.
Inspection of Figure \eqref{eq.bD} shows that
$v$ is the starting vertex for both $e'$ and $e''$. Equation \eqref{c2}
shows that $s(e') + s(e'') \ge s(e)$. Hence one of them, say $s(e')>0$. This
means if $e$ is an edge with $s(e)>0$, we can continue $e$ to another edge
$e'$ for which $s(e') >0$. Repeating this process we can construct a
cycle $\gamma$ of $\bD$ such that the value of $s$ is positive on any edge
of $\gamma$. This means $s-s_\gamma$ is an $\BN$-valued centered state.
Induction completes the proof of the lemma.
\end{proof}
\begin{remark}
It is easy to see that any $s \in \calB$ is not a $\BN$-linear combination
of the other elements in $\calB$. Thus there is no redundant element in
$\calB$. Of course $\calB$ is linearly dependent over $\BR$ (or over $\BZ$),
and we will extract a $\BR$-basis from the set $\calB$ later.
\end{remark}

\subsection{Values of $L_1$ and $Q$ on elementary centered states}

Suppose $\gamma$ is a directed cycle of $\bD$ and $v$ is a vertex of
$\gamma$.  Among the four edges of $\bD$ incident to $v$, the two edges of
$\gamma$ are not two opposite edges because of the
orientation constraint, see  \eqref{eq.bD}. In other words, at each vertex
$v$, $\gamma$ is an angle.
We say that a vertex $v$ of $\gamma$ is of type $A$ or $B$ according as
the two edges of $\gamma$ at $v$ form an angle of type $A$ or $B$.
Let
$N_{\gamma,A}$ be the number of vertices of $\gamma$ of type $A$. The fact that
$D$ is reduced
is used in the proof of part (b) of the next lemma.

\begin{lemma}
\lbl{lem.L1}
Suppose $s=s_\gamma \in \calB$ is an elementary centered state.

(a) We have
\be
\lbl{e151}
L_1(s) = W(\gamma) +\frac{1}{2}N_{\gamma,A}
 \ee
(b)  Moreover,
$L_1(s)  \ge  0$,
and $L_1(s)=0$ if and only if  $\gamma$
is clockwise and has exactly two vertices of type $A$.

\end{lemma}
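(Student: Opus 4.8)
The plan is to compute $L_1(s_\gamma)$ directly from its local definition and then recognize the result as a sum of two nonnegative contributions. Recall from Table \ref{t0} that $L_1$ receives nonzero contributions only from negative crossings (where it contributes $(b+c)/2$) and from the four types of local extrema (where it contributes $\pm a/2$, with sign $+$ for the counterclockwise-oriented extrema and $-$ for the clockwise ones). For an elementary centered state $s_\gamma$, the edge-values are $0$ or $1$, so these local contributions simplify dramatically. First I would analyze the extremum contributions: since $s_\gamma$ assigns $1$ exactly to the edges of $\gamma$ and $0$ elsewhere, the sum $\sum_X (\pm a/2)$ over local extrema $X$ picks out precisely the local maxima and minima lying on $\gamma$, each contributing $\pm 1/2$ according to its winding orientation. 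By the elementary Lemma on winding numbers (Equation \eqref{e91}), this sum is exactly $W(\gamma)$.

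Next I would analyze the crossing contributions. At a negative crossing $v$ the contribution is $(b+c)/2$, which by the centering relation \eqref{c2} equals $(a+b+c+d)/4$; for $s_\gamma$ this counts the fraction of the four incident edges that lie on $\gamma$. Here the key structural fact from Lemma \ref{l67}(a) and the orientation picture \eqref{eq.bD} enters: the two edges of $\gamma$ meeting at a vertex $v$ always form an \emph{angle} rather than a pair of opposite edges, so at each vertex of $\gamma$ exactly two of the four incident edges carry value $1$. Whether $v$ contributes to $L_1$ depends on whether it is a positive or negative crossing, but — reindexing the sum over vertices of $\gamma$ by the type ($A$ or $B$) of the angle that $\gamma$ makes there — I expect the total crossing contribution to collapse to $\tfrac12 N_{\gamma,A}$, the count of $A$-type vertices weighted by $1/2$. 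Assembling the two pieces gives part (a): $L_1(s_\gamma)=W(\gamma)+\tfrac12 N_{\gamma,A}$.

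For part (b), I would use the structural constraints on directed cycles in $\bD$. Since $\gamma$ is a simple directed cycle embedded in the plane, its winding number is $W(\gamma)=\pm 1$, being $+1$ when counterclockwise and $-1$ when clockwise. When $\gamma$ is counterclockwise, $W(\gamma)=1\ge 0$ and $N_{\gamma,A}\ge 0$, so $L_1(s)>0$ immediately. The delicate case is $\gamma$ clockwise, where $W(\gamma)=-1$ and I must show $N_{\gamma,A}\ge 2$, with equality characterizing $L_1(s)=0$. The plan here is a discrete Gauss--Bonnet / turning-number argument: as one traverses $\gamma$, at each vertex the path turns, and the type ($A$ or $B$) of the angle governs the sign of the turn contribution; summing the turning over the closed cycle recovers $2\pi W(\gamma)=-2\pi$, which forces a minimum number of $A$-corners. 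I would combine this with the reducedness hypothesis — invoked via Lemma \ref{l67}(c), which guarantees the $A$-resolution circles do not self-touch, precluding degenerate configurations with too few $A$-angles — to pin down exactly two as the extremal case.

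The main obstacle I anticipate is part (b), specifically the clockwise case: turning part (a) into the sharp inequality $N_{\gamma,A}\ge 2$ and isolating the equality condition. The contributions of $A$- versus $B$-angles to the geometric turning of the embedded cycle must be made precise (an $A$-angle and a $B$-angle contribute turns of opposite character), and reducedness must be used exactly where it prevents a clockwise cycle from realizing fewer than two $A$-corners. By contrast, part (a) is a bookkeeping exercise: the real content is verifying that the crossing terms reorganize cleanly into $\tfrac12 N_{\gamma,A}$ and that the extremum terms reproduce $W(\gamma)$ via \eqref{e91}.
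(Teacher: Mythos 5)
Your part (a) decomposition contains two false intermediate claims whose errors happen to cancel, and the cancellation is precisely what needs to be proved. Claim (i), that the local extrema of $D$ lying on $\gamma$ contribute exactly $W(\gamma)$, is wrong: Equation \eqref{e91} computes $W(\gamma)$ as a sum over all local parts of the curve $\gamma$ itself, and the smoothened cycle $\gamma$ acquires local extrema at vertices of $\bD$ where it turns a corner; these vertex turns contribute $\pm 1/2$ to $W(\gamma)$ but are not extrema of $\bD$, so your sum omits them. Claim (ii), that the crossings contribute $\tfrac12 N_{\gamma,A}$, is also wrong: by Table \ref{t0} the crossing contribution to $L_1$ is keyed to the \emph{sign} of the crossing, not to the type of the angle — every negative crossing on $\gamma$ contributes $(a+b+c+d)/4=1/2$ whether its angle is of type $A$ or $B$, and every positive crossing contributes $0$, so the crossing sum equals $\tfrac12\cdot\#\{\text{negative crossings on }\gamma\}$. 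A concrete counterexample to both claims: in the $A$-infinite (all-negative) diagram of the left trefoil, take $\gamma=\partial p$ for a $B$-bigon $p$; then both vertices are negative $B$-type, so the crossing sum is $1$ while $\tfrac12 N_{\gamma,A}=0$, and consequently the extrema sum is $0$ while $W(\gamma)=1$. The correct mechanism, which is the actual content of \eqref{e151}, is a per-vertex comparison depending on sign and angle type simultaneously: at an $A$-vertex (positive or negative) the $L_1$-contribution exceeds the local winding contribution by exactly $1/2$, while at a $B$-vertex the two agree. This is what the paper's Table \ref{t1} records, and it cannot be split into ``extrema give $W$'' plus ``crossings give $\tfrac12 N_{\gamma,A}$.''

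Part (b) has the more serious gap: your turning-number argument cannot rule out $N_{\gamma,A}=0$ for a clockwise cycle. Numerically, a clockwise cycle all of whose corners are $B$-angles is perfectly consistent with turning bookkeeping — the required $W(\gamma)=-1$ can be supplied by clockwise extrema of the curve ($B$-corners contribute $0$ or $+1/2$ and $A$-corners $0$ or $-1/2$ to the winding, so no purely local count excludes $N_{\gamma,A}=0$). The genuine obstruction is global and topological, and it is exactly where $A$-infinity enters the paper: a cycle with $N_{\gamma,A}=0$ is one of the circles obtained by $A$-resolving every vertex of $\bD$, i.e.\ the boundary of a $B$-polygon, and Lemma \ref{l67}(b) — whose proof uses $A$-infinity, namely that every $B$-polygon is a bounded region — says such circles are counterclockwise, a contradiction. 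Your sketch never invokes Lemma \ref{l67}(b) or the $A$-infinite hypothesis at all; the reducedness input via Lemma \ref{l67}(c) that you do cite only handles the case $N_{\gamma,A}=1$ (a single $A$-corner would force a self-touching $A$-resolution circle, contradicting $A$-adequacy). So as written, the case $N_{\gamma,A}=0$ is not excluded and the inequality $N_{\gamma,A}\ge 2$ is not established.
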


\begin{proof}
(a)
For a local part $X$ of $\bD$, let $\gamma_X = \gamma\cap X$. Clearly
$L_1(X,s)=0$ if $\gamma_X= \emptyset$. If $X$ is a small neighborhood
of a vertex of $\bD$, then $\gamma_X$ is two sides of an angle of $\gamma$,
and we will smoothen $\gamma_X$ at the corner to get an oriented smoothed arc.
See Row 1 and Row 2 of Table \ref{t1}  for various $X$ and smoothened
$\gamma_X$. In the table, $X$ is a small neighborhood of a vextex. The two
edges incident to the vertex with label 1 belong to $\gamma$. The marking
$A$ or $B$ at one of the angles of $X$ indicates the type of the vertex,
which appear in Row 3. In Row 3 we also indicate the sign of the crossing
of $X$ (as it appeared originally in $D$); this makes the computation of
$L_1$ easier.

\begin{table}[htpb]
\caption{The calculation of $L_1, W, Q$.}
\begin{tabular}{|c|c|c|c|c|c|c|c|c|} \hline
$X$ & $\psdraw{P1010.down}{0.5in}$ & $\psdraw{P0101.down}{0.5in}$
    & $\psdraw{M0011.down}{0.5in}$ & $\psdraw{M1100.down}{0.5in}$
    & $\psdraw{P0011.down}{0.5in}$ & $\psdraw{P1100.down}{0.5in}$
    & $\psdraw{M1010.down}{0.5in}$ & $\psdraw{M0101.down}{0.5in}$
\\ \hline
$\gamma_X $ & \empty \hskip -.6 cm $\psdraw{GP1010}{0.15in}$  &$ \empty \hskip .6 cm \psdraw{GP0101}{0.15in}$  &  $\psdraw{GM0011}{0.4in}$ & $\psdraw{GM1100}{0.4in}$ & $\psdraw{GP0011}{0.4in}$ & $\psdraw{GP1100}{0.4in}$&
$\empty \hskip -.6 cm \psdraw{GM1010}{0.15in}$ & \empty \hskip .6 cm $\psdraw{GM0101}{0.15in}$
\\ \hline
vertex type & $B,+$& $B,+$ & $B,-$ & $B,-$ & $A,+$ & $A,+$& $A,-$& $A,-$  \\ \hline
$L_1(X,s_\gamma)$ & 0 & 0  & $1/2$ & $1/2$ & 0 &  0 & $1/2$& $1/2$\\ \hline
$W(\gamma_X)$ & 0 & 0 & $1/2$ & $1/2$ & $-1/2$  & $-1/2$  & 0& 0 \\ \hline
$Q(X)$ & 0 & 0 & $0$ & $0$ & $1/2$  & $1/2$  & 1/2& 1/2 \\ \hline
\end{tabular}
\lbl{t1}
\end{table}

We define $W(\gamma_X)$ to be its local winding number if $\gamma_X$
contains a local extreme point, 0 otherwise. Rows 4 and 5 of Table \ref{t1}
gives the values of $L_1(X,s_\gamma)$ and $W(\gamma_X)$.
From the table, together with the obvious case when $X$ is a neighborhood of
a local extreme point of $\bD$, we have
$$
L_1(X,s_\gamma)=
\begin{cases}
W(\gamma_X) + \frac{1} 2 \quad &
\text{if $X$ is a vertex of $\gamma$ of type $A$}\\
W(\gamma_X)  &\text{otherwise}.
\end{cases}
$$
Summing up the above identity over all the local parts $X$ and using
\eqref{e91}, we get \eqref{e151}.

(b) Case 1: $\gamma$ is counterclockwise. From \eqref{e151} we have
$L_1(s) \ge W(\gamma)=1 >0$. In this case $L_1$ is strictly positive.

Case 2:  $\gamma$ is clockwise. Then $L_1(s) = -1 + N_{\gamma,A}/2$. We will
show that $N_{\gamma,A} \ge 2$.

If $N_{\gamma,A}=0$, then $\gamma$ is one of the circles obtained from $\bD$ by
doing $A$-resolution at every vertex. By part (b) of Lemma \ref{l67},
$\gamma$ is counterclockwise. Thus $N_{\gamma,A}\neq 0$ if $\gamma$ is clockwise.

Suppose $N_{\gamma,A}=1$, i.e. $\gamma$ has exactly one  vertex of type $A$,
say  $v$; all other
vertices of $\gamma$ are of type $B$. If one does $A$-resolution at every
vertex of $\bD$, then $\gamma \setminus \{v\}$ is
part of one of the resulting circles, and this circle has a self-touching
point at $v$. This is impossible if the diagram $D$ is reduced, see part (c)
of Lemma \ref{l67}. Thus $N_{\gamma,A}\neq 1$.

We have shown that if $\gamma$ is clockwise then $N_{\gamma,A} \ge 2$. Hence
$L_1(s) = -1 + N_{\gamma,A}/2 \ge 0$, and equality happens if and only
$N_{\gamma,A}=2$.

\end{proof}
\begin{remark}\lbl{rem.adequate}
 We see that for the proof of part (b), we needs only the fact that $D$
is $A$-adequate.
\end{remark}

\begin{lemma}
\lbl{lem.Q}
\rm{(a)} For all $\BN$-valued centered states $s$ and $s'$ we have
\be
\lbl{eq.Qrr}
Q(s+s') \geq Q(s)+Q(s')
\ee
\rm{(b)} Suppose $s=s_\gamma\in \calB$ is elementary centered state. Then
\be
 Q(s) = \frac {N_{\gamma,A}}2.
 \lbl{e151a}
 \ee
It follows that $Q(s) \ge 0$,  with equality if and only if $\gamma$ is the
boundary of a polygonal region of type $B$.
\end{lemma}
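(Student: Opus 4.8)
The plan is to base the whole argument on the $A$-angle description of $Q$ recorded in Remark \ref{why}(b): writing $Q(s) = \sum_\alpha Q(\alpha, s)$, where $\alpha$ ranges over all $A$-angles of $\bD$ and $Q(\alpha, s) = \tfrac12\, s(e_1)\, s(e_2)$ for the two edges $e_1,e_2$ bounding $\alpha$. This exhibits $Q$ as one half of a sum of products of pairs of edge values, which reduces both parts to bookkeeping with non-negative integers.

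For part (a), I would expand each local term bilinearly. Since
$$
(s+s')(e_1)\,(s+s')(e_2) = s(e_1)s(e_2) + s'(e_1)s'(e_2) + \big(s(e_1)s'(e_2) + s'(e_1)s(e_2)\big),
$$
summing over all $A$-angles gives $Q(s+s') - Q(s) - Q(s') = \tfrac12 \sum_\alpha \big(s(e_1)s'(e_2) + s'(e_1)s(e_2)\big)$. Every edge value of an $\BN$-valued centered state is a non-negative integer, so each summand is $\ge 0$ and the superadditivity \eqref{eq.Qrr} follows immediately.

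For part (b), I would evaluate $Q$ on $s_\gamma$, using that $s_\gamma(e)\in\{0,1\}$ and equals $1$ exactly on the edges of $\gamma$. A local term $\tfrac12 s_\gamma(e_1)s_\gamma(e_2)$ is therefore nonzero (equal to $\tfrac12$) precisely when both edges of the $A$-angle $\alpha$ lie on $\gamma$. The key combinatorial point is that, since $\gamma$ is a directed cycle visiting each vertex at most once, at a vertex $v$ on $\gamma$ exactly the two edges forming the $\gamma$-angle belong to $\gamma$ while the other two do not (and no edge of $\gamma$ meets a vertex off $\gamma$). Hence at a type-$A$ vertex exactly one of the two $A$-angles has both its edges on $\gamma$, contributing $\tfrac12$, whereas at a type-$B$ vertex neither $A$-angle does. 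Summing yields $Q(s_\gamma) = \tfrac12 N_{\gamma,A}$, which is \eqref{e151a}, and in particular $Q(s_\gamma)\ge 0$ with equality iff $N_{\gamma,A}=0$, that is, iff every vertex of $\gamma$ is of type $B$.

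The remaining step is to identify the condition $N_{\gamma,A}=0$ with $\gamma$ bounding a $B$-polygon, and this is where I expect the most care is needed. If $\gamma=\partial p$ for a $B$-polygon $p$, then at each vertex the elementary sector of the $\gamma$-angle facing the interior of $p$ is a $B$-angle, so every vertex is of type $B$. Conversely, if all vertices of $\gamma$ are of type $B$, then at each vertex $\gamma$ turns through a $B$-angle, which is exactly how the two arcs of the $A$-resolution are joined; thus $\gamma$ is one of the circles obtained by $A$-resolving every vertex of $\bD$, and by Lemma \ref{l67}(b) (where $A$-infiniteness enters) such a circle bounds a polygonal region of type $B$. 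This is precisely the identification already made in Case~2 of the proof of Lemma \ref{lem.L1}, so I would simply invoke it. The principal obstacle is therefore not any hard estimate but the careful local accounting at a $4$-valent vertex: ensuring exactly one $A$-angle is counted per type-$A$ vertex, and correctly matching the all-$B$-angle condition with the $A$-resolution circles.
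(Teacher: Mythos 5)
Your proof is correct and follows essentially the same route as the paper's: part (a) is the paper's one-line observation that $Q$ has non-negative coefficients in the $A$-angle expansion, and your vertex-by-vertex accounting in part (b) is exactly the content of Row 6 of Table \ref{t1} (each type-$A$ vertex of $\gamma$ contributes $1/2$, all other vertices contribute $0$). Your explicit identification of the case $N_{\gamma,A}=0$ with the $A$-resolution circles via Lemma \ref{l67}(b) merely spells out what the paper leaves implicit in the ``it follows'' clause.
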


\begin{proof}
(a) Since $Q$ is defined by an expression with positive coefficients, we
have $Q(s+s') \geq Q(s)+Q(s')$.

(b) Row 6 of Table \ref{t1} shows that every vertex of type $A$ of $\gamma$
contributes $1/2$ to the value of $Q$, while others contribute 0. Hence
$
Q(s) = \frac {N_{\gamma,A}}2.
$
\end{proof}

\subsection{Copositivity of $Q_2$}
\lbl{s101}

Recall $Q_2=Q+L_1$.

\begin{proposition}
\lbl{prop.Q+L1} (a) For $s,s' \in S_{\bD,\BN}$, we have
$$ Q_2(s + s') \ge Q_2(s) + Q_2(s').$$

(b) If $s= \sum_{j=1 }^l m_j s_j$, where $s_j\in \calB$ and $m_j \in \BN$, then
\be Q_2(s) \ge \sum_j m_j \ge |s|.
\lbl{e330}
\ee
 In particular,
$Q_2$ is copositive in the cone $S_{\bD,\BN}$, i.e., for every
$s \in S_{\bD,\BN}$, $Q_2(s) \geq 0$  and equality happens if and if only $s=0$.
\end{proposition}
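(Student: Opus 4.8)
The plan is to reduce both statements to the values of $Q_2$ on the Hilbert basis $\calB$, exploiting that $L_1$ is linear while $Q$ is only superadditive. \emph{Part (a)} is then immediate: because $L_1$ is a linear form on $S_{\bD,\BR}$ (whereas $Q$ is quadratic), one has $L_1(s+s')=L_1(s)+L_1(s')$, so the superadditivity defect of $Q_2$ coincides with that of $Q$,
$$Q_2(s+s')-Q_2(s)-Q_2(s') = Q(s+s')-Q(s)-Q(s') \ge 0,$$
where the last inequality is Lemma \ref{lem.Q}(a).

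\emph{Part (b), key estimate.} The crucial point is that $Q_2(s_\gamma)\ge 1$ for every elementary state $s_\gamma\in\calB$. Adding Lemma \ref{lem.Q}(b) to Lemma \ref{lem.L1}(a) yields the clean identity $Q_2(s_\gamma)=N_{\gamma,A}+W(\gamma)$. I would then split on the orientation of $\gamma$: if $\gamma$ is counterclockwise, then $W(\gamma)=1$ and $N_{\gamma,A}\ge 0$ give $Q_2(s_\gamma)\ge 1$; if $\gamma$ is clockwise, then $W(\gamma)=-1$, but the reducedness ($A$-adequacy, via Lemma \ref{l67}(c)) argument already used in proving Lemma \ref{lem.L1}(b) forces $N_{\gamma,A}\ge 2$, so again $Q_2(s_\gamma)\ge 1$.

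\emph{Part (b), the two inequalities and copositivity.} For $Q_2(s)\ge\sum_j m_j$, I would write $s=\sum_j m_j s_j$ as a sum of $N=\sum_j m_j$ elements of $\calB$ counted with multiplicity and apply the superadditivity of part (a) repeatedly, so that $Q_2(s)\ge\sum_k Q_2(s_{j_k})\ge N=\sum_j m_j$, each summand contributing at least $1$. For $\sum_j m_j\ge|s|$, I would observe that each elementary state is $\{0,1\}$-valued on vertices: since $\gamma$ meets every vertex $v$ as an angle (never as two opposite edges, by the orientation constraint of \eqref{eq.bD}), at most one of the two opposite edges whose values sum to $s_\gamma(v)$ lies on $\gamma$, so $s_\gamma(v)\in\{0,1\}$. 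As $v\mapsto s(v)$ is linear, $s(v)=\sum_j m_j s_{\gamma_j}(v)\le\sum_j m_j$ for every $v$, hence $|s|\le\sum_j m_j$. Finally, by Lemma \ref{lem.005} every $s\in S_{\bD,\BN}$ is such a nonnegative integral combination of $\calB$, so $Q_2(s)\ge\sum_j m_j\ge 0$; moreover $Q_2(s)=0$ forces $\sum_j m_j=0$, i.e.\ $s=0$, while trivially $Q_2(0)=0$.

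The hard part is entirely packaged into the lemmas already proved, above all the topological input that a clockwise support cycle of a reduced alternating diagram must pass through at least two $A$-vertices. Granting that, the one genuinely new observation is that the identity $Q_2(s_\gamma)=N_{\gamma,A}+W(\gamma)$ is bounded below by $1$ in both orientation cases; this is exactly what promotes copositivity from the bare $Q_2(s)\ge 0$ to the sharp norm bound $Q_2(s)\ge|s|$ needed later, and it is the step I would check most carefully.
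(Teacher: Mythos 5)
Your proposal is correct and follows essentially the same route as the paper: part (a) from the linearity of $L_1$ plus Lemma \ref{lem.Q}(a), and part (b) by reducing via superadditivity to the bound $Q_2(s_\gamma)=W(\gamma)+N_{\gamma,A}\ge 1$ on elementary states, with the clockwise case handled by the reducedness input $N_{\gamma,A}\ge 2$ from Lemma \ref{lem.L1}(b). The only (harmless) differences are that you replace the paper's ``$Q_2(s_\gamma)$ is a positive integer'' argument with an explicit orientation case split, and you spell out the inequality $\sum_j m_j\ge |s|$ (via $s_\gamma(v)\in\{0,1\}$ at every vertex) which the paper dismisses as immediate from the definition.
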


\begin{proof} \
(a) follows immediately form
 Lemma \ref{lem.Q}(a), noting that $L_1(s+s')= L_1(s) + L_1(s')$.

(b) The second inequality of \eqref{e330} follows immediately from the
definition.

From part (a) one needs only  to prove the first inequality of \eqref{e330}
for $s\in \calB$ an elementary centered state with support $\gamma$. By
\eqref{e151} and \eqref{e151a},
$$ Q_2(s) = W(\gamma) + N_{\gamma,A}.$$
In particular, $Q_2(s)$ is an integer.

By Lemmas
\ref{lem.L1}(b) and \ref{lem.Q}(b), we have $ L_1(s) + Q(s) \ge 0$, and
equality happens
only when $ L_1(s) = Q(s) = 0$. However, if $L_1(s) = 0$, then by Lemma
\ref{lem.L1}(b), $N_{\gamma,A} =2$, and then $Q(s) = N_{\gamma,A}/2=1 >0$. Thus,
we have proved that if $s$ is an elementary
 centered state, then $Q(s) >0$.  Since $Q(s)\in \BZ$, we have $Q(s) \ge 1$.
\end{proof}

\begin{remark}
In general $Q(s), L_1(s) \in \frac12\BZ$. In the proof we
shown that $Q_2(s)= Q(s) + L_1(s) \in \BZ$ for  any elementary centered
state $s$. One can also show that
$Q_2(s)\in \BZ$ for all $s \in S_{\bD,\BN}$. This can be deduced from the fact
that $\hat J_{K,n}(q) \in \BZ[q]$, see the discussion on fractional powers of
$J_{K,n}$ in \cite{Le_Duke}.
\end{remark}

\subsection{The lowest degree of the colored Jones polynomial}
\begin{proposition}
\lbl{c.degree}
(a) The minimal degree of $q$ in of $J_{K,n}(q)$ is
$$
P_1(n) = \frac{n}{2} c_+ - \frac{n^2 +2n}{2} c_- - \frac{n}{2} \sum_M  W(M)\,,
$$
where the last sum is over all the local extreme points of $D$.

(b) With $F(x,q,s)$ defined by \eqref{eq.Fxqs2}, we have
\be
 \lbl{e242}
\hat J_{K,n}(q)=
\sum_{s \in S_{\bD,n}} F(q^{n+1},q,s).
\ee
\end{proposition}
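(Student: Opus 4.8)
The plan is to extract both statements from the factorization already established in Lemma~\ref{l.degree}, namely
\[
J_{K,n}(q)=(-1)^{P_0(n)}\,q^{P_1(n)}\sum_{s\in S_{\bD,n}}F(q^{n+1},q,s),
\]
by showing that the state sum on the right lies in $1+q\BZ[[q]]$. Granting this, the minimal degree of $J_{K,n}(q)$ is exactly $P_1(n)$ (with lowest coefficient $(-1)^{P_0(n)}=\pm1$), which is part~(a); and since $\hat J_{K,n}(q)$ is by definition $J_{K,n}(q)$ divided by its lowest monomial $(-1)^{P_0(n)}q^{P_1(n)}$, what survives is precisely the state sum, giving \eqref{e242} of part~(b).

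First I would examine a single summand. Setting $x=q^{n+1}$ in \eqref{eq.Fxqs2}, $n$-admissibility gives $s(u)\le n$ for every edge and vertex $u$ (Lemma~\ref{l7a}), so each exponent $n+1-s(u)\ge1$ and hence each factor $(q^{n+1-s(u)})_\infty$ is a genuine power series in $1+q\BZ[[q]]$. The same holds for $(q)_\infty^{c_D}$ and for each $1/(q)_{s(e)}$. Consequently $F(q^{n+1},q,s)\in\BZ[[q]]$, and after dividing out the overall sign $(-1)^{L_0(s)}$ and the prefactor $q^{Q_2(s)}$ the remaining product is a unit of constant term $1$; thus $\mindeg_q F(q^{n+1},q,s)=Q_2(s)$. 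By the copositivity Proposition~\ref{prop.Q+L1}(b), $Q_2(s)$ is a nonnegative integer vanishing only at $s=0$, so $Q_2(s)\ge1$ and $F(q^{n+1},q,s)\in q\BZ[[q]]$ for every nonzero state.

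The decisive point is the single state $s=0$. There all $s$-values vanish, so $Q_2(0)=0$, $L_0(0)=0$, each $(q)_{s(e)}=1$, and the infinite-product ratio collapses using $|\cV(\bD)|=c_D$ and $|\cE(\bD)|=2c_D$:
\[
\frac{\prod_{v\in\cV(\bD)}(q^{n+1})_\infty}{\prod_{e\in\cE(\bD)}(q^{n+1})_\infty}
=\frac{(q^{n+1})_\infty^{c_D}}{(q^{n+1})_\infty^{2c_D}}=(q^{n+1})_\infty^{-c_D},
\]
so that $F(q^{n+1},q,0)=\bigl((q)_\infty/(q^{n+1})_\infty\bigr)^{c_D}=(q)_n^{c_D}\in 1+q\BZ[q]$. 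Hence in the finite sum $\sum_{s\in S_{\bD,n}}F(q^{n+1},q,s)$ the constant term comes only from $s=0$ and equals $1$, while all other summands start in degree $\ge1$; the sum therefore lies in $1+q\BZ[[q]]$, which is exactly what parts~(a) and~(b) require.

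Finally the explicit value of $P_1(n)$ is pure bookkeeping from Table~\ref{t0}: summing the local functional $P_1$ contributes $\tfrac n2$ at each positive crossing and $-\tfrac{n^2}{2}-n$ at each negative crossing, giving $\tfrac n2 c_+-\tfrac{n^2+2n}{2}c_-$, while each local extreme $M$ contributes $\pm\tfrac n4=-\tfrac n2 W(M)$ in accordance with the winding signs recorded in the $\wlt$ entries of Table~\ref{eq.wwsucc2}; summing over extrema gives $-\tfrac n2\sum_M W(M)$. The only nontrivial ingredient in the whole argument is the strict bound $Q_2(s)\ge1$ for $s\neq0$, which singles out $s=0$ as the unique minimal-degree contributor; everything else is the factorization already in hand.
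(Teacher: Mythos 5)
Your proposal is correct and follows essentially the same route as the paper's proof: both rest on the factorization of Lemma \ref{l.degree}, the copositivity $Q_2(s)>0$ for $s\neq 0$ from Proposition \ref{prop.Q+L1}, and the bookkeeping of Table \ref{t0} to evaluate $P_1(n)$. The only difference is that you spell out what the paper leaves implicit, namely that each summand $F(q^{n+1},q,s)$ lies in $q^{Q_2(s)}\left(\pm 1+q\BZ[[q]]\right)$ and that the $s=0$ term alone supplies the constant term $1$, so no cancellation can occur at the bottom degree.
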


\begin{proof}(a)
By \eqref{e24a}, the minimal degree of $q$ in $w(s)$ is
$P_1(n) + Q_2(s)$. When $s\neq 0$, Proposition \ref{prop.Q+L1}
implies that $Q_2(s) >0$. Hence the
smallest degree of $J_{K,n}(q)$ is $P_1(n)$. From the values of
$P_1(X,s)$ in Table \ref{t0} we see that
$P_1(n) = \frac{n}{2} c_+ - \frac{n^2 +2n}{2} c_- - \frac{n}{2} \sum_M  W(M)$.

(b) follows easily from part (a) and \eqref{e24b}.
\end{proof}

The value of $P_0$ in Table \ref{t1}, Equation \ref{e24a}, and
Proposition \ref{c.degree} imply the following.

\begin{corollary}
We have
\be
J_{K,n}(q)= (-1)^{n c_-} \, q^{P_1(n)}\, \hat J_{K,n}(q).
\lbl{eq.decomposition}
\ee
\end{corollary}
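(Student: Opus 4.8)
The plan is to read the identity off directly from the factorized state sum already in hand, so that only a sign computation remains. First I would invoke Lemma~\ref{l.degree}: Equation~\eqref{e24a} writes
\be
J_{K,n}(q)= (-1)^{P_0(n)}\, q^{P_1(n)} \sum_{s \in S_{\bD,n}} F(q^{n+1},q,s),
\ee
while Proposition~\ref{c.degree}(b) identifies the remaining sum with $\hat J_{K,n}(q)$. Substituting the latter into the former immediately yields $J_{K,n}(q)= (-1)^{P_0(n)}\, q^{P_1(n)}\, \hat J_{K,n}(q)$, reducing the entire statement to the evaluation $P_0(n)=n\,c_-$.

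To obtain that evaluation I would simply sum the local contributions of $P_0$ recorded in Table~\ref{t0}. There $P_0(X,s)$ equals $n$ at every negative crossing of $\bD$ and vanishes at positive crossings and at local extreme points, and it is visibly independent of the state $s$ (justifying the notation $P_0(n)$). Summing over all local parts $X$ of $\bD$ then gives $P_0(n)=n\,c_-$, where $c_-$ is the number of negative crossings of $D$. Plugging this in produces $(-1)^{nc_-}q^{P_1(n)}\hat J_{K,n}(q)$, as claimed.

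The one point I would take care to spell out---rather than a genuine obstacle---is the compatibility with the definition of $\hat J_{K,n}(q)$ as $J_{K,n}(q)$ divided by its lowest monomial. Since $\hat J_{K,n}(q)\in 1+q\BZ[q]$ has minimal $q$-degree $0$ and lowest coefficient $+1$, the lowest monomial of $J_{K,n}(q)$ is exactly $(-1)^{P_0(n)}q^{P_1(n)}$, whose exponent $P_1(n)$ is the minimal degree computed in Proposition~\ref{c.degree}(a). So the corollary is really pure bookkeeping of the sign and degree already assembled; the only thing left to verify is that the sole source of odd parity is the negative crossings, which is immediate from the table.
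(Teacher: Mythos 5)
Your proof is correct and matches the paper's own argument: the paper likewise combines Equation~\eqref{e24a}, Proposition~\ref{c.degree}, and the values of $P_0$ from the table (namely $P_0(n)=nc_-$, since $P_0$ contributes $n$ at each negative crossing and $0$ elsewhere) to obtain \eqref{eq.decomposition}. Your closing remark on consistency with the lowest-monomial definition of $\hat J_{K,n}(q)$ is sound bookkeeping but not an extra step beyond what the paper does.
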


\begin{remark}
\lbl{rem.mindeg}
The minimal degree of the colored Jones polynomial $J_{K,n}(q)$ had been
calculated using the Kauffman bracket skein module, and is given by
$P_1'(n):=\frac{n}{2} c_+ - \frac{n^2 +n}{2} c_-  - \frac{n}{2} s_A$, where $s_A$
is the number of circles obtained from $\bD$ by doing $A$-resolution at
every vertex; see \cite[Proposition 2.1]{Le}\footnote{Note that the
framing of $K$ in \cite{Le} is different.}. Our result implies that
$P_1(n)=P_1'(n)$.
We will give a direct proof of this identity in the Appendix.
Note also that $s_A-c_+ = \sigma +1$ (see \cite{Tu3,Murasugi}),
where $\sigma$ is the signature of the link. Hence the lowest degree of $q$
is given by $- \frac{n^2 +n}{2} c_-  -\frac n2 (\sigma +1)$.
\end{remark}


\section{From $\bD$ to the dual graph $D^*$}
\lbl{sec.bDtoD*}

In this section we connect the centered states on $\bD$ with the admissible
colorings of the dual graph $D^*$. The main idea of this section is
summarized in the following figure. If a crossing has coloring $a,b,c,d$
at the four regions of it counterclockwise, then there is a coloring
of the four arcs such that the sum of the colors of the two overarcs is
equal to the sum of the colors of the two underarcs:
\begin{figure}[htpb]
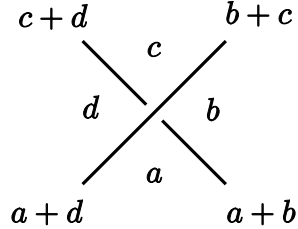

$$
\psdraw{4regions.arcs}{1.5in}
$$
\caption{From a coloring of the regions to a coloring of the arcs.}
\lbl{f.4regions.arcs}
\end{figure}

Recall from Section \ref{sub.nahmalt} that $D^*$ is the dual graph of
$\bD$, considered as an unoriented graph. We have defined the Tait graph
$\cT$, the lattice $\Lambda_0$ with its subsets $\Adm, \Adm(n)$,
and functions $L, Q$ on $\Lambda_0$. Note that one does not need to bring
$D$ to a downward position by twisting in small neighborhood of
crossing points in order to construct $D^*$.

For $\lambda\in \Lambda_0$ let $\tau(\lambda): \cE(\bD)\to \BR$ be the
linear map defined by
 $$\tau(\lambda)(e)= e^*(\lambda),$$
where $e^*\in \cE(\D^*)$ is the dual edge of $e$.
Then $\tau: \Lambda_0 \otimes \BR \to S_{\bD,\BR}$ is a $\BR$-linear map.

\begin{proposition}
\lbl{p.210}
\rm{(a)} The map $\tau: \Lambda_0 \otimes \BR \to S_{\bD,\BR}$ is
vector space isomorphism.
\newline
\rm{(b)} $\tau$ maps $\Adm$ and $\Adm(n)$ isomorphically onto
respectively $S_{\bD,\BN}$ and $S_{\bD,n}$.
\newline
\rm{(c)} We have
\begin{align}
L_1(\tau(\lambda))& = L(\lambda) \lbl{eq.lambda}\\
Q(\tau(\lambda))& = Q(\lambda)  \lbl{eq.lambda2}
\end{align}
\newline
\rm{(d)} For every centered state $s$, we have
\be
\lbl{eq.lambda0}
L_0(\tau(s)) \equiv 2L(s) \pmod 2.
\ee
\end{proposition}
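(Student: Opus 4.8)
The plan is to treat $\tau$ as nothing more than the rule ``color each arc of $\bD$ by the sum of the colors of the two regions it separates'', and to build an explicit inverse by propagating region-colors across edges. The first point is that $\tau(\lambda)$ really is a centered state: at a crossing the four incident arcs border the four surrounding regions, and both of the sums $a+d$ and $b+c$ appearing in \eqref{c2} equal the total of the colors of all four regions (this is exactly the content of Figure \ref{f.4regions.arcs}), so the relation $a+d=b+c$ holds automatically. Hence $\tau$ lands in $S_{\bD,\BR}$ and carries the integer lattice $\Lambda_0$ into the integer centered states.

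For (a) and (b) I would construct a two-sided inverse directly, which also fulfils the promise in Remark \ref{rem.Snp} that $\dim S_{\bD,\BR}=c_D+1$. Given a state $s$, set the color of $v_\infty$ to $0$ and define $\lambda$ on the remaining regions by crossing arcs one at a time, via $\lambda(R')=s(e)-\lambda(R)$ whenever the arc $e$ separates $R$ and $R'$. This is forced, proving uniqueness (hence injectivity); for existence one must check path-independence. Since $\cD$ gives a cell structure of $S^2$, it suffices to check consistency around each face of $\cD$, i.e.\ around each crossing, and going once around the corresponding quadrilateral changes the starting color by the alternating sum of the four arc-values at that crossing, which vanishes precisely because of the centered-state relation $a+d=b+c$. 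As $H_1(S^2)=0$ this local consistency produces a global $\lambda\in\Lambda$, with $\lambda\in\Lambda_0$ since $v_\infty\mapsto0$, and $\lambda$ is integral when $s$ is; thus $\tau$ is a lattice isomorphism. Because $e\mapsto e^*$ is a bijection $\cE(\bD)\to\cE(\cD)$ and $\tau(\lambda)(e)=e^*(\lambda)$, the inequalities defining $\Adm$ translate verbatim into $\tau(\lambda)(e)\ge0$, giving $\tau(\Adm)=S_{\bD,\BN}$; and since $s(v)=p(\lambda)$ for the quadrilateral $p\in\cP(\cD)$ at the crossing $v$, the norm $|s|=\max_v s(v)$ equals $\max_p p(\lambda)$, so the bound $|s|\le n$ of Lemma \ref{l7a} matches the definition of $\Adm(n)$ and gives $\tau(\Adm(n))=S_{\bD,n}$.

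For (c) the key observation is that $\tau$ sends the basis vertex $p^*$ of $\Lambda_0$ to the elementary state $s_p=s_{\partial p}$ supported on the boundary cycle of the polygon $p$. Since $L_1$ and $L$ are linear it suffices to compare them on these generators, and Lemma \ref{lem.L1}(a) gives $L_1(s_p)=W(\partial p)+\tfrac12 N_{\partial p,A}$: for a $B$-polygon this is $1+0=1=L(p^*)$, and for an $A$-polygon it is $-1+\tfrac12\deg(p^*)=L(p^*)$, using $W(\partial p)=\pm1$ from Lemma \ref{l67} and that all corners of an $A$-polygon (resp.\ none of a $B$-polygon) are $A$-angles. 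For the quadratic identity I would expand $Q(\tau(\lambda))$ crossing by crossing using $Q(s)=\sum_\alpha \tfrac12 s(e_1^\alpha)s(e_2^\alpha)$ over $A$-angles (Remark \ref{why}(b)), writing each arc-value as a sum of two region-colors, then read off the symmetric matrix and match it against $\cQ=\Deg(\cD)+\Adj(\cD)+\cL(\cT)$. This splits into the cases $A$-diagonal ($\deg(p^*)$, from the two $A$-angles at each incident crossing), $B$-diagonal (zero, since $\deg_{\cD}=\deg_{\cT}$ on $\cV_B$), $A$--$B$ (the adjacency $\Adj(\cD)$), $A$--$A$ (zero, forced by the bipartite structure of $\cD$), and $B$--$B$. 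I expect the $B$--$B$ case to be the real crux: two $B$-regions are never $\cD$-adjacent but appear as the two \emph{opposite} $B$-corners of a crossing, and it is exactly there that each crossing contributes one $\cT$-edge, so the off-diagonal $B$--$B$ entry reproduces the off-diagonal of $\cL(\cT)$, namely $\Adj(\cT)$. This is where the Laplacian of the Tait graph enters, and getting the bookkeeping of opposite corners versus shared arcs right is the main obstacle in the whole proof.

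Finally, for (d) I would again reduce to the generators: $L_0$ is linear, and $L_0(s_p)=\#\{\text{negative crossings incident to }p\}$, since the centered state $s_p$ takes value $1$ at every crossing on $\partial p$ and $L_0$ weights only negative crossings, by $s(v)$. Comparing with $2L(p^*)\bmod 2$ reduces everything to the parity statement that a $B$-polygon meets an even number of negative crossings and an $A$-polygon an even number of positive crossings. I would prove this from the winding data of Table \ref{t1}: along $\partial p$ a crossing contributes $\pm\tfrac12$ to $W(\partial p)$ exactly at the positive $A$-corners and the negative $B$-corners, so $2W(\partial p)$ equals $\mp\#\{\text{such crossings}\}$ plus the contribution of the genuine maxima and minima of $\partial p$; since a closed plane curve has equally many height-maxima and height-minima, that last contribution is even, and $W(\partial p)=\pm1$ then forces the required parity. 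This yields $L_0(\tau(\lambda))\equiv 2L(\lambda)\pmod 2$ and closes the proposition.
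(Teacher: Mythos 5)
Your parts (a)--(c) are correct and essentially coincide with the paper's own argument: you invert $\tau$ by propagating region-colors across arcs, with uniqueness from connectivity of $\cD$ together with $k_{v_\infty}=0$ and existence from path-independence, which reduces (since $S^2$ is simply connected) to the quadrilateral faces, where consistency is exactly the centered-state relation \eqref{c2}; and you verify both identities of (c) on the generators $\tau(p^*)=s_p$ using Lemma \ref{lem.L1}. Your entry-by-entry comparison of the matrix of $Q\circ\tau$ with $\cQ=\Deg(\cD)+\Adj(\cD)+\cL(\cT)$ is the same computation as the paper's evaluation of both quadratic forms on sums $v+v'$ of basis vectors (knowing a quadratic form on all such sums is the same as knowing its matrix), and your identification of the $B$--$B$ off-diagonal with $\Adj(\cT)$ via opposite corners at crossings is exactly how $\cQ$ was designed; the cancellation of the $B$-diagonal by $\Deg(\cD)-\Deg(\cT)$ is also right.

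Part (d), however, has a genuine gap. You correctly reduce it to the claim that a $B$-polygon has an even number of negative corners (and an $A$-polygon an even number of positive corners), but your derivation of this parity from winding data is circular. Let $N$ be the number of corners of $\partial p$ at which the smoothing produces an extremum (by Table \ref{t1} these are precisely the negative $B$-corners, resp.\ positive $A$-corners), let $E$ be the number of genuine extrema of $\bD$ on $\partial p$, and let $E_+-E_-$ be their signed winding contribution. The fact that a closed curve has equally many maxima and minima applies to \emph{all} extrema of the smoothed curve $\partial p$, corner-extrema included; it therefore gives only $N+E\equiv 0 \pmod 2$, not evenness of $E$ or of its contribution $E_+-E_-$. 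The winding identity gives $2W(\partial p)=\pm N+(E_+-E_-)=\pm2$, i.e.\ again $N\equiv E_+-E_-\equiv E\pmod 2$. So both of your inputs produce one and the same congruence, and substituting one into the other yields $0\equiv 0$: nothing forces $N$ to be even. (A hypothetical $B$-triangle with three negative corners and one clockwise genuine extremum on its boundary satisfies every one of your constraints; it is excluded by orientation considerations, not by curve geometry.)

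What is missing is the paper's key observation that crossing signs are \emph{orientation} data, invisible to the unoriented geometry of $\partial p$. Inspecting \eqref{eq.bD}, a vertex of a $B$-polygon $p$ is a negative crossing if and only if the two edges of $\partial p$ at that vertex are incompatibly oriented by the link orientation $\oor_D$ (both pointing into, or both out of, the vertex), and dually for positive crossings on $A$-polygons. For \emph{any} assignment $\oor$ of orientations to the edges of a cycle, the number $f(\oor)$ of incompatible vertices is even: reversing a single edge changes compatibility exactly at its two endpoints, so $f$ is invariant mod $2$, and $f=0$ for the cyclic (counterclockwise) orientation. Applying this to $\oor_D$ gives the required parity, hence $L_0(s_p)\equiv 2L_1(s_p) \equiv 2L(p^*)\pmod 2$ in both cases. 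That argument on orientations, rather than on winding numbers, is what closes part (d).
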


\begin{proof} (a)
Fix $s\in S_{\bD,\BR}$. We will show that the equation
\be \tau(\lambda)=s
\lbl{eq.745}
\ee
has one and exactly one solution $\lambda \in \Lambda_0$ such that. This
will prove the bijectivity of $\tau$.

With the basis $\bb:=\cV(D^*) \setminus \{v_\infty\}$ of $\Lambda_0$, every
$\lambda\in \Lambda_0$ has a unique presentation $\lambda= \sum_{v\in \cV(D^*)}
k_v v$ with $k_v=0$ for $v= v_\infty$. We need to solve for $k_v, v \in \bb$
from Equation \eqref{eq.745}.

Equation \eqref{eq.745} is the same as the following linear system of $2c_D$
equations: For  every edge $e^*\in \cE(D^*)$ whose end points are $v$ and $v'$,
\be
\lbl{eq.755}
k_{v} + k_{v'} = s(e) \,.
\ee
If $k_v$ is known, and $v'$ is connected to $v$ by an edge, then there is
only one possible value for  $k_{v'}$, namely $k_{v'}= s(e)- k_v$. We call
such $k_{v'}$ the extension of the value $k_v$ at $v$ along the edge $e^*$.
Since the graph $D^*$ is connected, and $k_{v_\infty}=0$, we see that there is
at most one solution $\lambda \in \Lambda_0$ of \eqref{eq.745}.

Now let us look at the existence of solution of \eqref{eq.745}. Given
$v\in \cV(D^*)$, $a\in \BR$, and a path $\al$ of the graph $\D^*$
connecting $v$ to $v'\in \cV(\D^*)$, there is only one way to extend
$k_v=a$ at $v$ to $v'$ along the path $\al$. Denote by $\lambda_{\al,a}(v')$
the value at $v'$ of this extension. When $\al$ is a closed path, i.e.
$v'=v$, let $\Delta(\al,a)= \lambda_{\al,a}(v')- a$. We will show that
$\Delta(\al,a)=0$ for any closed path $\al$. This will prove the
existence of the solution.

On $\BR^2$, the closed path $\al$ encloses a region $R$. When the region
is just a polygon of $D^*$ (which must be a quadrilateral), the fact that
$\Delta(\al,a)=0$ follows easily from \eqref{c2}.
For general closed path $\al$, since $\Delta(\al,a)$ is the sum of
$\Delta_{\al_j,a_j}$, where $\al_j$'s are the boundaries of all the polygons
of $D^*$ in $R$, we also have $\Delta(\al,a)=0$.

The above fact shows that if we begin with $k_{v_\infty}=0$, we can uniquely
extend $k_v$ to all vertices of $D^*$, and obtain in this way an inverse
of $s$.

The proof actually shows that $\tau$ is a $\BZ$-isomorphism between
$\Lambda_0$ and $S_{\bD\BZ}$.

(b) Because $\tau(\lambda)(e) = \lambda(e^*)$, this follows easily from
the definitions.

(c) To prove \eqref{eq.lambda}, it is enough to consider the case
$\lambda=v\in \bb=\cV(D^*) \setminus \{v_\infty\}$, a basis vector. Let
$p=v^*\in \calP(D)$ be the dual  polygon.
From the definition we have $\tau(v)= s_{p}$, where $s_p$ is the elementary
centered state with support the boundary of $p$. Now the identity
$L_1(\tau(v)) = L(v)$ follows from  the value
 of $L_1$ given in Lemma \ref{lem.L1} and the definition of $L$. Actually,
the definition of $L$ was built so that \eqref{eq.lambda} holds.

Let us turn to \eqref{eq.lambda2}.  To show that two quadratic forms on a
vector space are the same it is enough to show that
they agree on the set $v+v'$, where $v,v'$ are elements in a basis of the
vector space. A basis of $\Lambda_0$ is $\cV(D^*)\setminus \{v_\infty\}$.
Hence we need to check that if $v_1,v_2\in \cV(D^*)\setminus \{v_\infty\}$,
\be
\lbl{e103d}
Q(\tau(v_1 + v_2)) = Q(v_1 + v_2)\,.
\ee
There are three cases to consider: both $v_1, v_2$ are $A$-vertices, both
are $B$-vertices, and exactly one of them is an $A$-vertex. In each case,
the identity  \eqref{e103d} can be verified easily.
Actually, the matrix $\cQ$ in Introduction was built so that \eqref{e103d}
holds.

(d) We only need to check \eqref{eq.lambda0} for $s=\tau(v), v \in \bb$.
Let $p=v^*\in \cV(\bD)$ be the dual  polygon. We already saw that
$\tau(v)= s_p$.
From the definition of $L_0$ given by Table \ref{t0}, we have that
$L_0(s_p)$ is the number of negative vertices of $p$. Here a vertex is
negative if it is negative as a crossing of the link diagram $D$.

There are two cases.

{\bf Case 1:}
$p$ is a $B$-polygon. Suppose $\oor$ is an arbitrary orientation on edges
of $p$. A vertex $v$ of $p$ is $\oor$-incompatible if the orientations of
the two edges incident to $v$ are incompatible, i.e. the two incident
edges are both going out from $v$ or both coming in to $v$. Let $f(\oor)$
be the number of all $\oor$-incompatible vertices. It is easy to see that
if $\oor'$ is obtained from $\oor$ by changing the orientation at exactly
one edge, then $f(\oor)= f(\oor') \pmod 2$. It follows that
$f(\oor) =0 \pmod 2$ for any orientation $\oor$, since if we orient all
the edges counterclockwise then $f=0$.

Let the orientation of $D$ on the edges of $p$ be denoted by $\oor_D$. By
inspection Figure \eqref{eq.bD} one sees that a vertex $v$ of $p$ is a
negative crossing if and only $v$ is $\oor_D$-incompatible.
Thus $L_0(s_p)= f(\oor_D)$, which is even by the above argument. On the
other hand, $2L_1(s_p)= 2$ by Lemma \ref{lem.L1}.

{\bf Case 2:}
$p$ is an $A$-polygon. By inspection Figure \eqref{eq.bD} one sees that
a vertex $v$ of $p$ is a positive crossing if and only $v$ is
$\oor_D$-incompatible. This means $L_0(s_p)= \deg(v) - f(\oor_D)
\equiv \deg(v) \pmod 2$, where $\deg(v)$ is the number of vertices of $p$,
which is equal to the degree of $v$ in the graph $D^*$. By Lemma
\ref{lem.L1}, $2L_1(s_p) = -2 + \deg(v)$. Hence we also have
\eqref{eq.lambda0}.
\end{proof}

\begin{corollary}
The dimension of $S_{D,\BR}$ (or $S_{\bD,\BR}$) is $c_D+\ell$, where $\ell$
is the number of connected components of the graph $D$.
\end{corollary}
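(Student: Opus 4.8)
The plan is to reduce the statement to the connected case, which is already essentially contained in Proposition \ref{p.210}, and then sum over connected components. Since the map \eqref{eq.sbs} $r\mapsto\hat r$ is a vector space isomorphism $S_{D,\BR}\cong S_{\bD,\BR}$, it suffices throughout to compute $\dim S_{\bD,\BR}$. First I would dispose of the connected (non-split) case $\ell=1$. Here Proposition \ref{p.210}(a) provides a vector space isomorphism $\tau\colon \Lambda_0\otimes\BR\to S_{\bD,\BR}$, so that $\dim S_{\bD,\BR}=\operatorname{rank}(\Lambda_0)$. As $\Lambda_0=\BZ[\cV(\cD)\setminus\{v_\infty\}]$ and a connected diagram $D$ with $c_D$ crossings gives a polygonal complex of $S^2$ with exactly $c_D+2$ polygons, we have $|\cV(\cD)|=c_D+2$ and hence $\operatorname{rank}(\Lambda_0)=c_D+1=c_D+\ell$.

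For the general case I would decompose $\bD$ into its connected components $\bD_1,\dots,\bD_\ell$, where $\bD_i$ has $c_i$ crossings and $\sum_{i=1}^\ell c_i=c_D$. The key observation is that the defining constraint \eqref{c2} at a vertex $v$ involves only the four edges incident to $v$, all of which lie in a single component; since every edge of $\bD$ belongs to exactly one component, a centered state is nothing more than an independent choice of centered state on each $\bD_i$. This yields a direct sum decomposition $S_{\bD,\BR}\cong\bigoplus_{i=1}^\ell S_{\bD_i,\BR}$. Applying the connected-case formula to each component gives $\dim S_{\bD_i,\BR}=c_i+1$, and summing produces $\dim S_{\bD,\BR}=\sum_{i=1}^\ell(c_i+1)=c_D+\ell$, as claimed.

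The only point requiring genuine care, and the main (though minor) obstacle, is the split case: one must verify that the dual-graph machinery of Proposition \ref{p.210}, proved under the standing assumption that $D^*$ is connected, legitimately applies componentwise. This is exactly where the $A$-infinite hypothesis enters. It was arranged so that $\infty$ lies in an $A$-polygon of every connected subdiagram, so each $\bD_i$ is itself a connected, reduced, alternating, $A$-infinite diagram to which Proposition \ref{p.210}(a) applies, with its own $A$-vertex at infinity. Once the edge-locality of \eqref{c2} is confirmed, so that the state space splits as a direct sum, additivity of $c_D$ and of $\ell$ over components finishes the argument. A degenerate crossing-free circle component causes no trouble: it contributes $c_i=0$ and a one-dimensional state space, consistent with the formula $c_i+1$.
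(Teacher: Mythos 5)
Your proof is correct and follows essentially the paper's intended route: the corollary is placed immediately after Proposition \ref{p.210} precisely because it follows from the isomorphism $\tau$, which gives $\dim S_{\bD,\BR}=\operatorname{rank}(\Lambda_0)=c_D+1$ for a connected diagram, with the split case handled by the evident component-wise direct sum decomposition of the state space. Your extra care about $A$-infiniteness of each component and about crossing-free circle components fills in details the paper leaves implicit, but does not constitute a different argument.
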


\begin{remark}
\lbl{rem.conerays}
One can show that the the integer-valued admissible colorings of $D^*$
are the lattice points in a $2 c_D$ dimensional cone with $2 c_D$
independent rays.
\end{remark}


\section{0-stability}
\lbl{sec.0stability}

In this section we give a proof of the 0-stability of colored Jones
polynomial of alternating link and Theorem  \ref{thm.1}, which describes
the 0-limit as a generalized Nahm sum.

\subsection{Expansion of $F$ and adequate series}
\lbl{sub.adseries}

\begin{definition}
We say that a series $G(x,q)=\sum_{m=0}^\infty a_m(q) x^m \in \BZ((q))[[x]]$ is
$x$-{\em adequate} of order $\le t$ if $G(xq^t,q) \in \BZ[[q]][[x]]$, i.e.
for every $m$, we have
$$
\mindeg_q(a_m(q)) \ge -mt \,.
$$
\end{definition}

\begin{lemma}
\lbl{lem.x}
\rm{(a)} For every $t \in \BN$, the set of $x$-adequate series of order
$\le t$ is a subring of $\BZ((q))[[x]]$.
\newline
\rm{(b)} If $G(x,q)$ is $x$-adequate of order $\le t$, then $x$-adequate of
order $\le t'$ for any $t' \ge t$.
\newline
\rm{(c)} If $G(x,q)$ is $x$-adequate of order $\le t$, then the series
$f_n(q)=G(q^{n},q)$ converges in the $q$-adic topology and defines an element in
$\BZ[[q]]$ for every $n >t$.
\newline
\rm{(d)} The sequence $(f_n(q))$ is stable 
and its associated series $F_f(x,q)$ satisfies $F_f(x,q)=G(x,q)$.
\end{lemma}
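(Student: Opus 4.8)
The plan is to treat (a)--(b) as formal statements about the substitution $x\mapsto xq^{t}$, to get (c) from a minimal-degree estimate, and to obtain (d) by reading off the $k$-limits from the expansion of $f_n=G(q^{n},q)$ in powers of $q^{n}$. For (a)--(b) I would introduce the substitution map $\sigma_t\colon \BZ((q))[[x]]\to\BZ((q))[[x]]$, $(\sigma_t G)(x,q)=G(xq^{t},q)$, which on coefficients sends $a_m(q)x^m$ to $q^{mt}a_m(q)x^m$. Since substituting $xq^{t}$ for $x$ preserves sums and products, $\sigma_t$ is a ring homomorphism, in fact an automorphism with inverse $\sigma_{-t}$ (well defined because $q^{-mt}a_m(q)\in\BZ((q))$). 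By definition $G$ is $x$-adequate of order $\le t$ exactly when $\sigma_t(G)$ lies in the subring $\BZ[[q]][[x]]$, so the set of such $G$ is $\sigma_t^{-1}(\BZ[[q]][[x]])$, the preimage of a subring under a ring homomorphism, hence a subring; this is (a). Part (b) is the trivial implication $\mindeg_q a_m\ge -mt\ge -mt'$ for $t'\ge t$.

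For (c), writing $f_n(q)=G(q^{n},q)=\sum_{m\ge0}a_m(q)\,q^{nm}$, adequacy gives $\mindeg_q\!\big(a_m(q)q^{nm}\big)\ge m(n-t)$. When $n>t$ this is $\ge m$, so the minimal degrees of the summands tend to $+\infty$; the series therefore converges $q$-adically, and since the partial sums lie in the $q$-adically complete ring $\BZ[[q]]$, the sum $f_n(q)$ is a well-defined element of $\BZ[[q]]$.

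For (d) the one real point is the shift between the variable $q^{n}$ in $f_n=G(q^{n},q)$ and the variable $q^{n+1}$ in Definition~\ref{def.stable}. I would absorb it by writing $q^{nm}=q^{-m}q^{m(n+1)}$, so that $f_n(q)=\sum_{m\ge0}\big(q^{-m}a_m(q)\big)q^{m(n+1)}$, and take $\Phi_j(q):=q^{-j}a_j(q)\in\BZ((q))$. The remainder is then $R_{n,k}:=q^{-k(n+1)}\big(f_n-\sum_{j=0}^{k}\Phi_jq^{j(n+1)}\big)=\sum_{m>k}a_m(q)\,q^{(m-k)n-k}$, whose minimal degree is $\ge n-(k+1)t-k$; this is bounded below uniformly in $n>t$ and tends to $+\infty$ as $n\to\infty$. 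By Definition~\ref{def.lim} this means $R_{n,k}\to0$, giving $k$-stability for every $k$, and the uniqueness clause of Definition~\ref{def.stable} identifies the $k$-limit as $\Phi_k=q^{-k}a_k$. Hence $(f_n)$ is stable with associated series $F_f(x,q)=\sum_k q^{-k}a_k(q)x^k=G(q^{-1}x,q)$. This is the asserted identity $F_f=G$ once the paper's evaluation convention $x=q^{n+1}$ is used (as in Proposition~\ref{c.degree}(b), where $\hat J_{K,n}=\sum_s F(q^{n+1},q,s)$): indeed $F_f(q^{n+1},q)=G(q^{n},q)=f_n$ for $n>t$, and the factor $q^{-k}$ is precisely the $n\leftrightarrow n+1$ reparametrization.

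I expect the main obstacle to be bookkeeping rather than analysis: the convergence in (c) and the tail estimate in (d) are the same elementary minimal-degree computation, so the care must go into tracking the $q^{nm}$ versus $q^{m(n+1)}$ shift consistently, so that the $k$-limits are pinned down correctly and the identity $F_f=G$ is read with the evaluation point $x=q^{n+1}$ that the paper employs throughout.
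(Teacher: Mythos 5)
Your proof is correct, and its substantive part, (d), runs along the same lines as the paper's: both expand the tail $\sum_{m>k}a_m(q)x^m$ at the evaluation point and bound the minimal $q$-degree of the $m$-th term by a linear function of $m$ with positive slope ($n+1-t$, resp.\ $n-t$), minimized at $m=k+1$, which puts the $k$-th remainder in $q^{n+1-(k+1)t}\BZ[[q]]$ (resp.\ $q^{n-(k+1)t-k}\BZ[[q]]$); for (a)--(c) the paper only says they ``follow easily from the definition'', and your ring-homomorphism $\sigma_t$ argument for (a) is a clean way to make that precise. The one genuinely noteworthy point of comparison is the off-by-one, which you detect and the paper papers over: taken literally, $f_n(q)=G(q^{n},q)$ combined with Definition \ref{def.stable} (which uses powers of $q^{n+1}$) gives, exactly as you compute, $\Phi_k(q)=q^{-k}a_k(q)$ and hence $F_f(x,q)=G(xq^{-1},q)$, so the asserted identity $F_f=G$ is false as printed (e.g.\ $G(x,q)=x$ gives $F_f(x,q)=q^{-1}x$). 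The paper's own proof of (d) silently switches conventions: it sets $\Phi_k(q)=a_k(q)$ and writes $f_n(q)-\sum_{j=0}^k\Phi_j(q)q^{(n+1)j}=\sum_{m=k+1}^\infty a_m(q)q^{(n+1)m}$, i.e.\ it tacitly reads $f_n(q)=G(q^{n+1},q)$, which is the convention used in every later invocation of the lemma (all of which evaluate at $F(q^{n+1},q,s)$). Your reconciliation via $F_f(q^{n+1},q)=G(q^{n},q)$ is exactly the right fix, and your write-up has the merit of making this typo in the statement visible rather than absorbing it without comment.
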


\begin{proof}
Parts (a), (b) and (c) follow easily from the definition of an $x$-adequate
series.

For (d), let $G(x,q)=\sum_{m=0}^\infty a_m(q) x^m \in \BZ((q))[[x]]$ and
define $\Phi_k(q)=a_k(q)$ for all $k \in \BN$. Then, we have for $n>t+1$
$$
q^{-k(n+1)}\left(f_n(q)-\sum_{j=0}^k \Phi_j(q) q^{(n+1)j}\right) =
q^{-k(n+1)} \sum_{m=k+1}^\infty a_m(q) q^{(n+1)m} \,.
$$
The minimum degree of the summand is bounded below by
$$
f(m)=-k(n+1)-mt+(n+1)m \,.
$$
Since $f(m)$ is a linear function of $m$ and the coefficient of $m$ in
$f(m)$ is $n+1-t>0$, it follows that
$$
f(m) \geq f(1)=n+1-t(k+1) \,.
$$
Thus,
$$
q^{-k(n+1)}\left(f_n(q)-\sum_{j=0}^k \Phi_j(q) q^{(n+1)j}\right)
\in q^{n+1-(k+1)t}\BZ[[q]]
$$
which implies (d).
\end{proof}

Recall that for a centered state $s$, $F(x,q,s)$ defined by
\eqref{eq.Fxqs2}, satisfies

\begin{eqnarray}
\notag
F(x,q,s) &=&  q^{Q_2(s)} \tF(x,q,s) \\
\tF(x,q,s) &:=&
\frac{(-1)^{L_0(s)}(q)_\infty^{c_D} }{  \prod_{e\in \cE(\bD)}\,  (q)_{s(e)} }\,
 \frac{\prod_{v\in \cV(\bD)} \, \left(x q^{-s(v)}\right)_\infty
}{\prod_{e\in \cE(\bD)} \, \left(x q^{-s(e)}\right)_\infty }\,.
\lbl{eq.tildeF}
\end{eqnarray}
Using the well-known identities (see e.g \cite{Kac})

\begin{align}
(x)_\infty = \sum_{j=0}^\infty \frac{(-1)^j \, q^{\binom j2}}{(q)_j} x^j, \qquad
\frac1{(x)_\infty} =  \sum_{j=0}^\infty \frac{x^j }{(q)_j}
\lbl{eq.773}
\end{align}
we can expand $\tF$ into power series in $x$,
\be
\lbl{Fxqs}
\tF(x,q,s) =
\sum_{m=0}^\infty a_m(q,s) x^m \in \BZ((q))[[x]]\,.
\ee
The negative powers of $q$ in $a_m(q,s)$ come from the negative powers
$q^{-s(v)}, q^{-s(e)}$ that appear in the expression of $\tF$. Since
$|s| \ge \max(s(v), s(e))$, we have the following.

\begin{lemma}
\lbl{lem.amb}
For every $s\in S_{\bD,\BN}$, $\tF(x,q,s)$ is $x$-adequate of order $\le |s|$.
\end{lemma}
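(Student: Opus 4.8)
The plan is to bound the minimum degree in $q$ of each coefficient $a_m(q,s)$ in the expansion \eqref{Fxqs}, by tracking where negative powers of $q$ can arise. The only source of negative powers in $\tF(x,q,s)$ is the factor
$$
\frac{\prod_{v\in \cV(\bD)} \, \left(x q^{-s(v)}\right)_\infty}{\prod_{e\in \cE(\bD)} \, \left(x q^{-s(e)}\right)_\infty}\,,
$$
since $(q)_\infty^{c_D}/\prod_e (q)_{s(e)} \in \BZ[[q]]$ has nonnegative $q$-powers. So the first step is to expand this ratio in powers of $x$ using the two identities in \eqref{eq.773}.

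Concretely, I would substitute $x \mapsto x q^{-s(v)}$ into $(x)_\infty = \sum_j (-1)^j q^{\binom{j}{2}} x^j/(q)_j$ for each vertex factor, and $x \mapsto x q^{-s(e)}$ into $1/(x)_\infty = \sum_j x^j/(q)_j$ for each edge factor. Collecting the coefficient of $x^m$ then produces a finite sum indexed by tuples of nonnegative integers $(j_u)_{u \in \cV \cup \cE}$ with $\sum_u j_u = m$, where each factor contributes a power $q^{-s(u) j_u}$ (together with manifestly nonnegative powers coming from $q^{\binom{j}{2}}$ and the $1/(q)_j$ denominators, which lie in $\Zp^{-1}\cdot\BZ[[q]]$ but with nonnegative minimum degree). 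The key observation is that the most negative $q$-power attainable in the coefficient of $x^m$ is bounded below by $-\max_u s(u)\cdot\sum_u j_u = -|s|\, m$, using the definition $|s| = \max_{u \in \cV(\bD)\cup\cE(\bD)} s(u)$ from Section \ref{sub.centered}.

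Thus for each $m$ I would conclude $\mindeg_q(a_m(q,s)) \ge -m|s|$, which is exactly the condition in the definition of $x$-adequacy of order $\le |s|$. The main obstacle, such as it is, is purely bookkeeping: one must verify that no further negative powers sneak in from the nonnegative-power factors, and that the contributions of $q^{\binom{j}{2}}$ and the Pochhammer denominators $1/(q)_{j}$ (which expand in $\BZ[[q]]$) do not lower the minimum degree below the bound from the explicit $q^{-s(u)j_u}$ terms. Since all these auxiliary factors have minimum $q$-degree $\ge 0$, the bound survives, and the claim follows immediately from the definition of $x$-adequacy together with Lemma \ref{lem.x}.
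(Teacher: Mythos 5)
Your proof is correct and follows essentially the same route as the paper: the paper likewise expands $\tF(x,q,s)$ in powers of $x$ via the identities \eqref{eq.773} and observes that the only negative powers of $q$ in $a_m(q,s)$ come from the factors $q^{-s(v)}$ and $q^{-s(e)}$, each exponent bounded by $|s|$, giving $\mindeg_q(a_m(q,s)) \geq -m|s|$. You have simply written out explicitly the bookkeeping (the tuples $(j_u)$ with $\sum_u j_u = m$ and the nonnegativity of the auxiliary factors) that the paper leaves as a two-sentence remark before the lemma.
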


\subsection{Proof of $0$-stability}
\lbl{sub.thm1}

Now we show that $\hat J_{K,n}(q)$ is 0-stable, and identify its 0-limit.
Recall $F(x,q,s)$ given by \eqref{eq.Fxqs2} or \eqref{eq.tildeF}.
By \eqref{e242}
$$
\hat J_{K,n}(q) = \sum_{s\in S_{\bD,n}} F(x,q,s)\big|_{x=q^{n+1}}\, .
$$
Hence we expect that the 0-limit is
\be
\lbl{e501}
\Phi_{0}(q):= \sum_{s\in S_{\bD,\BN}} F(0,q,s) \,.
\ee

We have
\be
F(0,q,s)= (q)_\infty^{c_D} \, (-1)^{L_0(s)} \, \,
\frac{   q^{Q_2(s)}}{  \prod_{e\in \cE(\bD)}\,  (q)_{s(e)} }.
\lbl{e503}
\ee

Part (b) of Proposition \ref{prop.Q+L1} shows that the  right hand side of
\eqref{e501} is regular, and defines an element in $\BZ[[q]]$. We will show
that
\be
\lbl{eq.0limit1}
\hat J_{K,n}(q)-\Phi_0(q) \in q^{n+1}\BZ[[q]]
\ee
for all $n$. This certainly implies that the 0-limit of $\hat J_{K,n}(q)$
exists and is equal to $\Phi_{0}(q)$.  We have
\begin{eqnarray}
\hat J_{K,n}(q)-\Phi_0(q) &=&  \sum_{s: |s| \leq n}
\big[F(q^{n+1},q,s) - F(0,q,s)\big] -  \sum_{s: n < |s|}
F(0,q,s) \,.
\lbl{e504}
\end{eqnarray}

By part (b) of Proposition \ref{prop.Q+L1}, $Q_2(s) \geq |s|$. Then
\eqref{e503} implies that $F(0,q,s) \in q^{|s|}\BZ[[q]]$, and hence
the second sum on the right hand side of \eqref{e504} is in $q^{n+1}\BZ[[q]]$.

Let us look at the first term. Using the expansion \eqref{Fxqs}, we have
\begin{align}
F(q^{n+1},q,s) - F(0,q,s) & =
 q^{Q_2(s)} \,
\sum_{m=1}^\infty a_m(q,s) q^{m(n+1)} \notag \\
& =
\sum_{m=1}^\infty \left[ a_m(q,s)\, q^{m|s|}\right] \, q^{f(m)}, \lbl{e505}
\end{align}
where $f(m)= Q_2(s) + m(n+1)-m|s|= m(n+1-|s|) + Q_2(s)$, which is linear in
$m$. Since $n \ge |s|$, $f(m)$ achieves minimum when $m=1$:
$$ f(m) \ge f(1) = n+1 -|s| + Q_2(s) \ge n+1.$$

By Lemma \ref{lem.amb}, $a_m(q,s)\, q^{m|s|}$ has only non-negative power of $q$.
It follows that the  right hand side of \eqref{e505} belongs to
$q^{n+1}\BZ[[q]]$. This completes the proof of Equation \eqref{eq.0limit1}.
\qed

\begin{remark}
\lbl{rem.0limit1}
Equation \eqref{eq.0limit1} is stronger than 0-stability, and implies that
for every $m \in \BN$, the coefficient of $q^m$ in $\hat J_{K,n}(q)$
is independent of $n$ for all $n>m$.
\end{remark}

\subsection{End of the proof of Theorem \ref{thm.1}}
\lbl{sub.end.pf.thm1}

To complete the proof of Theorem \ref{thm.1},
it remains to prove that the right hand side of \eqref{e501} is equal to
that of \eqref{eq.nahm0}. This follows from
Proposition \ref{p.210}. \qed

\begin{remark}
 The fact that $D$ is reduced is used only in the proof of Lemma
\ref{lem.L1}. As seen in Remark \ref{rem.adequate}, Lemma \ref{lem.L1}
holds if $D$ is $A$-adequate, hence Theorem \ref{thm.1} holds
if $D$ is not necessarily reduced, but $A$-adequate.
\lbl{rem.ade}
\end{remark}

\subsection{Proof of Corollary \ref{cor.thm2.1}}
\lbl{sub.cor.thm2.1}

Fix a complex number $q$ with $|q|=a <1$.
We only need to show that the sum on the right hand side of  \eqref{e501}
is  absolutely convergent.

Choose $ 0 < \ve < 1-a$ such that
\be
a < (a+\ve)^{2c_D}. \lbl{e508}
\ee
This is possible by continuity since if $\ve=1-a$, the right hand side
of the above inequality is $1$ and $a<1$. Since $\lim_{j\to \infty }
(1-q)^j=1$, $|1-q^j| > a+\ve$ for $j$ big enough. It follows that
there is constant $C_1 >0$ such that for every $n$,
\be
\lbl{e506}
|(q)_n| > C_1 (a+\ve)^n \,.
\ee
Since $Q_2(s) \ge |s| \ge s(e)$ for every $e\in \cE(\bD)$, we have
\be
\lbl{e507}
\sum_{e \in \cE(\bD)} s(e) \le 2c_D \, Q_2(s)\,.
\ee

We have
\begin{align*}
\left| \frac{q^{Q_2(s)}}{\prod_{e\in \cE(\bD)} (q)_{s(e)}}\right| &
< \frac{a^{Q_2(s)}}{\prod_{e\in \cE(\bD)} C_1 \, (a+\ve)^{s(e)} }
\quad\text{by \eqref{e506}} \\
& <  (C_1)^{-2c_D} \frac{a^{Q_2(s)}}{(a+\ve)^{2c_D Q_2(s)} } \quad
\text{by \eqref{e507}}\\
&=  (C_1)^{-2c_D} \left( \frac{a} {(a+\ve)^{2c_D}}\right)^{Q_2(s)} \,.
\end{align*}
Thus,
\begin{align}
\sum_{s\in S_{\bD,\BN}} \left| \frac{q^{Q_2(s)}}{\prod_{e\in \cE(\bD)} (q)_{s(e)}}\right|
& < (C_1)^{-2c_D} \sum_{s\in S_{\bD,\BN}}  \left( \frac{q} {(a+\ve)^{2c_D}}\right)^{Q_2(s)}
\notag \\
& = (C_1)^{-2c_D} \sum_{m=0}^\infty  g(m) \left( \frac{a}
{(a+\ve)^{2c_D}}\right)^{m}, \lbl{e509}
\end{align}
where $g(m)$ is the number of  $s\in S_{\bD,\BN}$ such that $Q_2(s)=m$.
Because $Q_2(s)$ is quadratic and co-positive  in $S_{\bD,\BN}$,  $g(m)$ is
bounded above by a quadratic function of $m$ for large enough $m$.
From Equation \eqref{e508} it follows that the right hand side of
\eqref{e509} is absolutely convergent. This completes the proof of
Corollary \ref{cor.thm2.1}.


\section{Linearly bounded states}
\lbl{sec.kbounded}

In this section we will introduce a partition  of
the set of linearly bounded centered states, which will be key to the
$k$-stability
of the colored Jones polynomial. Throughout this section we fix a reduced,
alternating, $A$-infinite downward
alternating link diagram $D$ with $c_D$ crossings.  Let $\calS:= S_{\bD,\BN}$.
Recall that for a polygon $p\in \cP(\bD)$, $s_p$ is the elementary centered
state with support the boundary of $p$.

If $Q: S_{\bD,\BR} \to \BR$ were positive definite, it would be easy to
prove the stability of $\hat J_{K,n}(q)$. Unfortunately,  $Q$ is not positive
definite, and the summation cone $ S_{\bD,[0,\infty)}$ {\em always} contains
directions where $Q_2(s)=Q(s) +L_1(s)$ grows linearly, and not
quadratically. For instance, if $p$ is a $B$-polygon, then $Q_2(n s_p)=n$
is a linear function of $n$.

\begin{definition}
\lbl{def.kstate.refined}
We say that a centered state $s \in \calS$ is {\em $k$-bounded} for a
natural number $k$ if
$$
Q_2(s) \leq (k+1/3)|s|
$$
\end{definition}

For a subset $\mathcal M \subset \calS$ let  $\mathcal M^{(k)}$ denote the set
of $k$-bounded centered  states in $\mathcal M$.

\subsection{Balanced states at $B$-polygons}
\lbl{sub.Bstates}

Suppose $e$ is an edge of a  $B$-polygon $p$. In this section we always
use the orientation on $e$ coming from the directed graph $\bD$. The
orientation of $e$ is counterclockwise with respect to the
interior of $p$. Incident to the ending vertex of $e$ are four edges of $\bD$,
and let $\te$ be the one opposite to $e$, i.e. in a small neighborhood of
the vertex, $\bD$ looks like a cross, and $e$ and $\te$ are on a line
as in the following figure
$$
\psdraw{Pv2}{1in}
$$
Suppose  $s\in \calS$ is a centered state. Recall that $s$ is a function on the
set of edges of $\bD$ and that we already extended $s$ to the vertices
of $\bD$; see Equation \eqref{eq.sv}. Now we further extend $s$ to the set
of $B$-polygons of $\bD$. Suppose $p$ is a $B$-polygon of $\bD$. Let
\be
\lbl{def.spoly}
s(p)=  \sum_{e \in \cE(p)} s(\te).
\ee

\begin{definition}
\lbl{def.sbal}
We will say that a state $s \in \calS$ is {\em balanced} at a $B$-polygon $p$
if $s(v) \le s(p)$ for every vertex $v$ of $p$, and equality holds for at
least one vertex.
\end{definition}

\subsection{Seeds}
\lbl{sub.seeds}

In this section we introduce seeds, their partial ordering, and
relative seeds.

We say that two $B$-polygons are {\em disjoint} if they do not have
a common vertex. Suppose $\Pi $ is a collection of disjoint $B$-polygons.
Let $\nbd(\Pi )$ be the set of all edges of $\bD$ incident to a vertex of
a polygon in $\Pi $. Observe that every edge of a polygon in $\Pi $ is in
$\nbd(\Pi )$.

\begin{definition}
\lbl{def.seed}
\rm{(a)} A seed $\th =(\Pi ,\sigma)$ consists of a collection $\Pi $
of disjoint $B$-polygons and a map
$ \sigma: \nbd(\Pi ) \to \BN$ such that $\sigma$  can be extended to a
centered state $s\in \calS$ which is balanced at every polygon in $\Pi $.
Such $s$ is called an extension of $\sigma$, and the set of all extensions
of $\sigma$ is denoted by $\cS_\th$. For a polygon $p\in \Pi $ let
$\sigma(p)= s(p)$, which does not depend on the
extension $s$.
\newline
\rm{(b)}
The {\em $B$-norm} $\vv \th  \vv_B$ of $\th $ is
the number of $B$-polygons in $\Pi $.
\end{definition}
We allow the empty seed $\th =\emptyset$, in which case
$\calS_\th  = \calS$.
Next we define a partial order on the set of seeds.

\begin{definition}
\lbl{def.orderseed}
Suppose
$\th =(\Pi ,\sigma)$ and $\th '=(\Pi ',\sigma')$ are seeds. Then
$\th  \le \th '$ if $\Pi \subset \Pi '$ and $\sigma$ is the
restriction of $\sigma'$.
\end{definition}

Observe that $\emptyset \le \th $ for any seed $\th $. Moreover, if
$\th  < \th '$, then $\vv \th  \vv_B < \vv \th ' \vv_B $. Since
the number of $B$-polygons is finite, we have the following simple but
important fact.

\begin{lemma}
\label{lem.finite}
Every strictly increasing sequence of seeds is finite.
\end{lemma}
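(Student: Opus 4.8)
The plan is to exploit the $B$-norm $\vv\cdot\vv_B$ as a strictly order-preserving, integer-valued function on the poset of seeds, together with the finiteness of the set of $B$-polygons of $\bD$. First I would invoke the observation recorded just before the lemma: whenever $\th < \th'$ one has $\vv\th\vv_B < \vv\th'\vv_B$. This is immediate from the definition of the partial order, since $\th < \th'$ forces $\Pi \subsetneq \Pi'$, and hence $\Pi'$ contains strictly more $B$-polygons than $\Pi$.

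Given a strictly increasing sequence of seeds $\th_1 < \th_2 < \th_3 < \cdots$, applying this monotonicity to each consecutive pair yields a strictly increasing sequence of natural numbers
$$
\vv\th_1\vv_B < \vv\th_2\vv_B < \vv\th_3\vv_B < \cdots .
$$
Each $\vv\th_i\vv_B$ counts the $B$-polygons in the collection $\Pi_i$, so it is bounded above by the total number $N$ of $B$-polygons of $\bD$. Since $D$ has only $c_D$ crossings, the associated polygonal complex of $S^2$ has finitely many polygons, so in particular $N < \infty$. A strictly increasing sequence of natural numbers bounded above by $N$ can assume at most the $N+1$ values $0,1,\dots,N$, hence has at most $N+1$ terms; therefore the original sequence of seeds is finite.

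There is essentially no obstacle here: the full content of the lemma is carried by the two already-available facts that the $B$-norm is strictly order-preserving and that the number of $B$-polygons is finite. The only point worth stating explicitly is that the resulting length bound $N+1$ is uniform, depending only on the fixed diagram $D$ and not on the particular increasing sequence chosen, which is clear since $N$ is determined by $D$ alone.
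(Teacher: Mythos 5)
Your proof is correct and is exactly the paper's argument: the authors state the lemma as an immediate consequence of the two observations recorded just before it, namely that $\th < \th'$ implies $\vv \th \vv_B < \vv \th' \vv_B$ and that the number of $B$-polygons of $\bD$ is finite. Your write-up just makes this explicit, including the (correct) check that $\th < \th'$ forces $\Pi \subsetneq \Pi'$.
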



We now introduce relative seeds.

\begin{definition}
\lbl{def.relseeds}
Suppose $\th =(\Pi,\sigma) < \th' = (\Pi' ,\sigma')$.
\newline
\rm{(a)}
Let $|\th' \setminus\th| := \max_{p \in \Pi' \setminus \Pi}\sigma'(p)
= \max_{v \in \cV(\Pi' \setminus \Pi)} \sigma'(v)$.
\newline
\rm{(b)}
Let
$\calS_{\th < \th'}$ be the set of all $ s \in \calS_\th $ of the form
\be
\lbl{eq.sk}
s = s' +  \sum_{p \in (\Pi' \setminus \Pi)}\,  (m - \sigma'(p))\,  s_{p}
\ee
where
\be
\lbl{eq.para}
\quad s' \in \calS_{\th'}  \quad \text{and } |s'| < m.
\ee
\end{definition}

\subsection{A partition of the set of $k$-bounded states}
\lbl{sub.thinthick}

In this section we give a partition of the set of
$k$-bounded states $\calS^{(k)}=\calS_\emptyset^{(k)}$ and more generally, the
set $\calS_\th ^{(k)}$ of $k$-bounded states with seed $\th $. The next
proposition will be proven in Section \ref{sec.sk}.

\begin{proposition}
\lbl{prop.sk1}
For every non-negative integer $k$ and every seed $\th$ there exists a
constant $C>0$ 
such that if $|s| > C k^2$ and $s \in \calS_\th ^{(k)}$,
then $s \in \calS_{\th <\th' }$ for a unique seed
$\th'  > \th $ with $|\th'  \setminus\th <k$.
In other words, up to elements $s$ with $|s| \le  C k^2$, we have
the following finite partition of the set $\calS_\th ^{(k)}$ of $k$-bounded
states:
$$
\calS_\th ^{(k)} = \bigsqcup_{\th'  > \th , \, |\th'  \setminus\th| <k }
\calS_{\th <\th' }^{(k)} \,.
$$
\end{proposition}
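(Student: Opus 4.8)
The plan is to read off from $k$-boundedness a quantitative bound on the $A$-angle products of $s$, use it to locate the $B$-polygons along which $s$ grows only linearly, and peel those off as multiples of the elementary states $s_p$. First I would record the analytic input. Since $L_1$ is a linear form and $L_1(s_\gamma)\ge 0$ for every elementary state (Lemma \ref{lem.L1}(b)), expanding $s$ in the Hilbert basis $\calB$ (Lemma \ref{lem.005}) gives $L_1(s)\ge 0$ on all of $\calS$; hence $k$-boundedness forces $Q(s)=Q_2(s)-L_1(s)\le (k+\tfrac13)|s|$. By the angle description of $Q$ in Remark \ref{why}(b), $2Q(s)=\sum_\alpha s(e_1^\alpha)s(e_2^\alpha)$, the sum over all $A$-angles $\alpha$ with incident edges $e_1^\alpha,e_2^\alpha$. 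Thus $\sum_\alpha s(e_1^\alpha)s(e_2^\alpha)\le (2k+\tfrac23)|s|$: at every $A$-angle the product of the two $s$-values is small, and whenever one of them is comparable to $|s|$ the other is $O(k)$.

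Next I would analyze the local structure at a \emph{heavy} vertex, one with $s(v)=|s|$. Writing $s(v)$ as the sum of each opposite pair of edges and feeding in the $A$-angle bound, one sees that exactly two of the four edges at $v$ are heavy (value $\ge |s|-O(k)$) and two are light (value $O(k)$), and that the two heavy edges span the $B$-angle at $v$. Following a heavy edge to its next vertex $v'$ keeps $s(v')$ within $O(k)$ of $|s|$, so the same dichotomy recurs and the heavy edge continues into a unique heavy $B$-angle. Because the two heavy edges at each vertex bound a $B$-region on the correct side (Lemma \ref{l67}(a)), tracing heavy edges runs along the boundary of a $B$-polygon; reducedness, i.e. $A$-adequacy (Lemma \ref{l67}(c)), guarantees that this trace is an embedded cycle, so it closes up to the boundary of a single $B$-polygon $p$, all of whose edges are heavy.

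With this in hand I would assemble the larger seed. Let $\Pi'\supset\Pi$ add all $B$-polygons whose entire boundary is heavy; two such polygons cannot meet, since a shared vertex would carry at least three heavy edges, contradicting the two-heavy-edge dichotomy, and disjointness from the old polygons of $\Pi$ follows from the balance condition built into $\calS_\th$. Set $m=|s|$ and $s'=s-\sum_{p\in\Pi'\setminus\Pi}(m-\sigma'(p))\,s_p$, where $\sigma'(p)=s(p)=\sum_{e\in\cE(p)}s(\te)$ is unchanged by the subtraction because $s_p$ is supported on $\partial p$. On a boundary edge $e\in\cE(p)$ this lowers the heavy value $s(e)$ to $s'(e)=s(e)-(m-\sigma'(p))$, and at a vertex $v\in p$ it lowers $s(v)$ by $m-\sigma'(p)$, giving $s'(v)\le \sigma'(p)<m$; I would then check that $s'$ is $\BN$-valued and balanced at every $p\in\Pi'$, so that $s'\in\calS_{\th'}$ and $s\in\calS_{\th<\th'}$. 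The relative-norm estimate $|\th'\setminus\th|=\max_{p}\sigma'(p)=\max_{p}s(p)<k$ is the decisive point: each transverse edge $\te$ forms an $A$-angle with a heavy boundary edge, so the $A$-angle bound above, applied vertex by vertex around $p$, forces $s(p)$ below the threshold $k$. Uniqueness of $\th'$ holds because the set of heavy $B$-polygons, and the value $m=|s|$, are intrinsic to $s$; together with disjointness of the pieces and Lemma \ref{lem.finite} (finiteness of the index set) this yields the asserted finite partition, the finitely many states with $|s|\le Ck^2$ being the stated exceptions.

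The main obstacle is exactly the quantitative bookkeeping deferred to Section \ref{sec.sk}: making the heavy/light dichotomy \emph{clean} (no intermediate edge-values surviving) once $|s|>Ck^2$, verifying that the peeled $s'$ genuinely lies in $\calS_{\th'}$ (nonnegativity $s'(e)\ge 0$ and the balance equality at each new polygon), and above all extracting the sharp bound $|\th'\setminus\th|<k$ from a slope-$(k+\tfrac13)$ budget. The quadratic threshold $Ck^2$ is the scale at which the linear-growth $B$-polygon directions dominate the $O(k)$-sized transverse corrections accumulated around a polygon; this competition between a transverse defect of size $\sim k$ and the $(k+\tfrac13)|s|$ ceiling is what forces $|s|$ to exceed a constant times $k^2$ before the decomposition becomes rigid, and it is where I expect the delicate constant-chasing to concentrate.
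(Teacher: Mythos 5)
Your structural analysis (the heavy/light dichotomy at maximal vertices, tracing heavy edges around the boundary of a $B$-polygon, disjointness of the resulting polygons, and the peeling $s' = s - \sum_p (m-\sigma'(p))\,s_p$) runs parallel to the paper's proof, which organizes the same facts as Claims 1--2 inside Lemma \ref{lem.000}. But there is a genuine gap at the step you yourself flag as decisive: the bound $|\th'\setminus\th| < k$. You discard $L_1$ at the outset, reducing $k$-boundedness to $Q(s) \le (k+\tfrac13)|s|$ and then arguing purely with the $A$-angle description of $Q$ from Remark \ref{why}(b). This cannot deliver the strict inequality $s(p)<k$. Concretely: around a heavy polygon $p$ each transverse edge $\te$ forms an $A$-angle with a heavy boundary edge (value $\ge |s|-O(kc_D)$), and summing the one family of angles you invoke gives only
$$
s(p) \;\le\; \frac{2(k+\tfrac13)\,|s|}{|s|-O(kc_D)} \;\approx\; 2k+\tfrac23 .
$$
Even using both families of $A$-angles at the vertices of $p$ together with $\sum_{e} s(\te) = \sum_{e} s(\ce) = s(p)$ (Equation \eqref{eq.TP}), the best available estimate is $Q(s) \ge (|s|-O(kc_D))\,s(p)$, hence $s(p) \le k+\tfrac13+o(1)$, i.e.\ $s(p)\le k$. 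The borderline case $s(p)=k$ can never be excluded this way, no matter how large $|s|$ is: if $s = l\,s_p + s'$ with $s'(p)=k$, then $Q(s)-Q(s') = l\,s'(p) = lk$, which sits comfortably below the budget $(k+\tfrac13)|s| \approx (k+\tfrac13)l$. So the $Q$-only route is provably insufficient, not merely short on constants, and with it both the existence claim (your constructed $\th'$ need not satisfy $|\th'\setminus\th|<k$) and the exhaustiveness of the stated partition fail.

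What closes the gap in the paper is precisely the linear form you threw away. Lemma \ref{lem.494} computes the peeling of $Q_2 = Q + L_1$:
$$
Q_2(l\,s_p + s') - Q_2(s') = l\,\bigl(s'(p)+1\bigr),
$$
the $+1$ coming from $L_1(s_p)=1$ for a $B$-polygon (Lemma \ref{lem.L1}). With this, $s(p)\ge k$ forces $Q_2(s) \ge (|s|-4kc_D)(k+1)$, which exceeds $(k+\tfrac13)|s|$ once $|s| > Ck^2$ --- this is exactly where the quadratic threshold enters --- contradicting $k$-boundedness; hence $s(p)\le k-1 < k$. To repair your argument you must keep $Q_2$ intact through the peeling step, rather than split off $L_1$ as a nonnegative error term at the start; the remainder of your outline (uniqueness via the intrinsic character of the heavy polygons and of $m=|s|$, existence via the explicit subtraction and the balance check) then goes through essentially as in the paper.
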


When $\th $ is maximal we obtain the following.

\begin{corollary}
\lbl{cor.sk1.2}
For every non-negative integer $k$ and every maximal seed $\th $,
$\calS_\th ^{(k)}$ is a finite set.
\end{corollary}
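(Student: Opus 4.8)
The plan is to deduce finiteness immediately from the partition in Proposition \ref{prop.sk1}, exploiting the fact that a maximal seed admits no proper refinement. First I would apply Proposition \ref{prop.sk1} to the given $k$ and the maximal seed $\th$: it furnishes a constant $C>0$ such that any $s \in \calS_\th^{(k)}$ with $|s| > Ck^2$ necessarily lies in $\calS_{\th < \th'}$ for some seed $\th' > \th$ with $|\th' \setminus \th| < k$. The crucial observation is that, because $\th$ is maximal, there is no seed $\th'$ with $\th' > \th$ at all; hence the index set of the disjoint union
$$
\calS_\th^{(k)} = \bigsqcup_{\th' > \th, \, |\th' \setminus \th| < k} \calS_{\th < \th'}^{(k)}
$$
is empty. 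Consequently the hypotheses ``$s \in \calS_\th^{(k)}$ and $|s| > Ck^2$'' can never be satisfied, and every $s \in \calS_\th^{(k)}$ must obey $|s| \le Ck^2$.

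Next I would observe that the set of centered states of bounded norm is finite. Since $\calS = S_{\bD,\BN}$ consists of $\BN$-valued functions on the $2c_D$ edges of $\bD$, and since $|s| = \max_{u \in \cV(\bD) \cup \cE(\bD)} s(u)$ bounds in particular every edge value $s(e)$, the condition $|s| \le Ck^2$ confines $s$ to the finitely many maps $\cE(\bD) \to \{0,1,\dots,\lfloor Ck^2 \rfloor\}$ that satisfy the centered-state constraints \eqref{c2}. Therefore $\calS_\th^{(k)}$ is finite, as claimed.

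There is essentially no obstacle here: the entire combinatorial content is already packaged in Proposition \ref{prop.sk1}, and the corollary is precisely the base case in which the recursive refinement of seeds terminates because $\th$ cannot be enlarged. The only point requiring care is the phrase ``up to elements $s$ with $|s| \le Ck^2$'' in the statement of the proposition; I would make explicit that this bounded-norm remainder is exactly what survives once the union over proper refinements is empty, so that maximality of $\th$ collapses the full $k$-bounded set $\calS_\th^{(k)}$ onto that finite remainder.
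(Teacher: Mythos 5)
Your proof is correct and matches the paper's (implicit) argument exactly: the paper states Corollary \ref{cor.sk1.2} as an immediate consequence of Proposition \ref{prop.sk1}, precisely because maximality of $\th$ empties the index set of the partition, forcing $|s| \le Ck^2$ for all $s \in \calS_\th^{(k)}$, and there are only finitely many $\BN$-valued centered states of bounded norm on the $2c_D$ edges of $\bD$. Your explicit spelling-out of the bounded-norm remainder is exactly the intended reading.
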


The next proposition will also be proven in Section \ref{sec.sk}.

\begin{proposition}
\lbl{p.533}
 (a) Suppose $s \in S_{\th < \th'}$ with presentation \eqref{eq.sk} as in
Definition \ref{def.relseeds}. One has
 $|s|=m$ and

\begin{align}
\lbl{eq.Q2}
Q_2(s) - Q_2(s') & =
\sum_{p \in (\Pi \setminus \Pi ')}(m- \sigma(p))(\sigma(p)+1), \\
L_0(s) & \equiv L_0(s') \pmod 2.\lbl{eq.L0}
\end{align}
(b)
Fixing $\th < \th'$,
the presentation of $s\in \cS_{\th<\th'}$ given by Equation
\eqref{eq.sk}, with $(m,s')$ satisfying \eqref{eq.para} is unique.
In other words, the map $(m,s') \mapsto s$ given by \eqref{eq.sk}
is a bijection between the set of pairs $(m,s')$ satisfying
\eqref{eq.para} and $\cS_{{\th<\th'}}$.
\end{proposition}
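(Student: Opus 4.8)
The plan is to prove (a) first, since the normalization $|s|=m$ it provides is exactly what makes the presentation in (b) reconstructible. Write $s=s'+u$ with $u=\sum_{p\in\Pi'\setminus\Pi}(m-\sigma'(p))\,s_p$, as in \eqref{eq.sk}. The coefficients are positive: since $s'\in\calS_{\th'}$ is balanced at each $p\in\Pi'$ (Definition \ref{def.sbal}), we have $\sigma'(p)=s'(p)=\max_{v\in p}s'(v)\le|s'|<m$. First I would record the local value $s_p(v)=1$ at every vertex $v$ of a $B$-polygon $p$: the two boundary edges of $p$ meeting at $v$ form a $B$-angle, hence are not an opposite pair, so assigning them the value $1$ and the two opposite edges the value $0$ is consistent with the centered condition $a+d=b+c$ and gives $s_p(v)=1$. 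Since the polygons in $\Pi'\setminus\Pi$ are pairwise disjoint, at a vertex $v$ of a polygon $p\in\Pi'\setminus\Pi$ only the term $s_{p}$ contributes, so $s(v)=s'(v)+(m-\sigma'(p))\le m$, with equality at a balancing vertex; at all other vertices $s(v)=s'(v)\le|s'|<m$. This yields $|s|=m$.

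For the quadratic identity, let $\la s,t\ra:=Q(s+t)-Q(s)-Q(t)$ be the symmetric bilinear form of $Q$, so that $Q_2(s)-Q_2(s')=\la s',u\ra+Q(u)+L_1(u)$. By Lemma \ref{lem.L1}, $L_1(s_p)=W(\partial p)+\tfrac12 N_{\partial p,A}=1$ for a $B$-polygon, so $L_1(u)=\sum_{p}(m-\sigma'(p))$. Using the description of $Q$ as a sum over $A$-angles (Remark \ref{why}(b)), $Q(u)=0$: at a vertex of a polygon $p\in\Pi'\setminus\Pi$ the two boundary edges form the $B$-angle, so each incident $A$-angle meets $\partial p$ in exactly one edge, and disjointness of the polygons prevents any $A$-angle from meeting two distinct boundaries, whence every $A$-angle contributes a factor $0$ to $Q(u)$. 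The crux is the identity $\la s',s_p\ra=s'(p)$ for a $B$-polygon $p$, which I would prove as follows.

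Expanding $\la s',s_p\ra$ over $A$-angles, only those at vertices of $p$ survive, each contributing $\tfrac12 s'(g)$, where $g$ is its non-boundary edge (since $s_p$ is $1$ on the boundary edge and $0$ on $g$); summing, $\la s',s_p\ra=\tfrac12\sum_{v\in p}(s'(f_1^v)+s'(f_2^v))$, where $f_1^v,f_2^v$ are the two non-boundary edges at $v$, namely the edges opposite to the incoming and to the outgoing boundary edge of $\partial p$ at $v$. The edge opposite to the incoming boundary edge at $v$ is precisely $\te$ for that incoming edge, so $\sum_v s'(f_1^v)=\sum_{e\in\cE(p)}s'(\te)=s'(p)$ by \eqref{def.spoly}. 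The key observation is that the centered condition $a+d=b+c$ says exactly that the two opposite pairs of edges at a vertex have equal sums; applied to the pairs $(e_{\mathrm{in}},f_1^v)$ and $(e_{\mathrm{out}},f_2^v)$, it gives $s'(f_1^v)-s'(f_2^v)=s'(e_{\mathrm{out}})-s'(e_{\mathrm{in}})$. Summing over $v\in p$, the right-hand side telescopes to $0$ since $e_{\mathrm{in}}$ and $e_{\mathrm{out}}$ each run over all of $\cE(p)$, whence $\sum_v s'(f_2^v)=\sum_v s'(f_1^v)=s'(p)$ and $\la s',s_p\ra=s'(p)$. Combining, $\la s',u\ra=\sum_{p}(m-\sigma'(p))\sigma'(p)$, so $Q_2(s)-Q_2(s')=\sum_{p\in\Pi'\setminus\Pi}(m-\sigma'(p))(\sigma'(p)+1)$. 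The congruence \eqref{eq.L0} then follows from linearity of $L_0$ together with the fact, established in the proof of Proposition \ref{p.210}(d) (Case 1), that $L_0(s_p)$ equals the number of negative vertices of the $B$-polygon $p$, which is even.

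Finally, for (b): part (a) gives $m=|s|$, so $m$ is determined by $s$. Moreover, for $p\in\Pi'\setminus\Pi$ the quantity $s(p)=\sum_{e\in\cE(p)}s(\te)$ is unchanged by adding $u$, since its summands run over non-boundary edges of $p$, on which $u$ vanishes by disjointness (an edge with one endpoint a vertex of $p$ cannot be a boundary edge of a disjoint polygon $p'$); hence $\sigma'(p)=s'(p)=s(p)$ is read off from $s$ and the fixed seed $\th'$. Thus the coefficients $m-\sigma'(p)$, and then $s'=s-u$, are forced, proving uniqueness; since every element of $\calS_{\th<\th'}$ admits such a presentation by definition and the construction \eqref{eq.sk} lands in $\calS_{\th<\th'}$ for every $(m,s')$ satisfying \eqref{eq.para}, the map is a bijection. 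The main obstacle is the bilinear identity $\la s',s_p\ra=s'(p)$; once the centered condition is read as equality of opposite-edge-pair sums, everything else is bookkeeping.
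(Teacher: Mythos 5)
Your proposal is correct and takes essentially the same route as the paper: the argument that $|s|=m$, the treatment of \eqref{eq.L0} via linearity of $L_0$ and evenness of $L_0(s_p)$, and the proof of part (b) are all identical to the paper's. Your key bilinear identity $\langle s',s_p\rangle=s'(p)$, obtained from the $A$-angle expansion of $Q$ together with the telescoping consequence of the centered condition \eqref{c2}, is exactly the polarized form of the paper's Lemma \ref{lem.494} (whose proof also rests on the $A$-angle expansion and on \eqref{eq.TP}); the paper applies that lemma once per disjoint polygon, whereas you sum over all of them at once by bilinearity, a purely organizational difference.
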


\subsection{The weight of $k$-bounded states}
\lbl{sub.Fxqs}

In this section we express $F(x,q,s)$ in terms of $F(x,q,s')$ for
centered states $s,s'$ related by Equation \eqref{eq.sk}.

\begin{definition}
\lbl{def.Gxyq}
We say that a series
$G(x,y,q)= \sum_{i,j=0}^\infty  G_{i,j}(q) x^i y^j \in \BZ((q))[[x,y]]$ is
{\em weakly $x$-adequate} of order less $\le t$ if $G(xq^t,y,q)\in
q^{-C}\BZ[[q]][[x,y]]$ for some constant $C$ depending on $G$, i.e.
$$
\mindeg_q (G_{i,j}(q)) > -ti-C
$$
for every $i,j \ge 0$.
\end{definition}

The next lemma is elementary.

\begin{lemma}
\lbl{lem.weaklyx}
\rm{(a)} If $G(x,y,q)$ is weakly $x$-adequate of
order $\le t$, then $G(q^{k}, q^l, q) \in \BZ((q))$ for every $k\ge t+1$,
$l \ge 0$.
\newline
\rm{(b)}  If $G(x,y,q) \in q^{-C}\BZ[[q]][[x,y]]$ is weakly $x$-adequate of
order $\le t$, then for every $l \in \BN$, $q^{-C}G(x,q^l,q)\in\BZ[[q]][[x]]$
is $x$-adequate of order $t$.
\newline
\rm{(c)} The set of  weakly
$x$-adequate  series of order less $\le t$ is closed under addition
and multiplication, i.e. it is a $\BZ$-subalgebra of $\BZ((q))[[x,y]]$.
\newline
\rm{(d)}
If $G(x,y,q)$ is weakly $x$-adequate of order $\le t$,
then it is  weakly $x$-adequate of order $\le t'$ for every $t' \geq t$.
\end{lemma}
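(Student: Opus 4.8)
The plan is to prove all four parts by direct bookkeeping of the single quantity $\mindeg_q G_{i,j}(q)$, using only the defining inequality of weak $x$-adequacy: there is a constant $C$ with
\be
\mindeg_q G_{i,j}(q) \ge -ti - C \qquad \text{for all } i,j \ge 0.
\ee
I would dispose of (d) and (c) first, since they are purely formal. For (d), if $t' \ge t$ then $-ti - C \ge -t'i - C$ for every $i \ge 0$, so the same bound holds with $t$ replaced by $t'$ and the same constant. For (c), closure under addition is immediate from $\mindeg_q(G_{i,j} + H_{i,j}) \ge \min(\mindeg_q G_{i,j}, \mindeg_q H_{i,j})$, which is bounded below by $-ti - \max(C_G,C_H)$. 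For the product I would first check that each coefficient $(GH)_{i,j} = \sum_{i'+i''=i,\ j'+j''=j} G_{i',j'} H_{i'',j''}$ is a finite sum, so that $GH$ is a well-defined element of $\BZ((q))[[x,y]]$, and then estimate each summand by $(-ti'-C_G)+(-ti''-C_H) = -ti-(C_G+C_H)$, so that $GH$ is weakly $x$-adequate of order $\le t$ with constant $C_G+C_H$.

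For part (a) I would substitute in two stages, which makes the source of convergence transparent. Writing $G = \sum_i g_i(y,q)\,x^i$ with $g_i(y,q)=\sum_j G_{i,j}(q)\,y^j$, the inner substitution $y=q^l$ gives $g_i(q^l,q)=\sum_j G_{i,j}(q)\,q^{lj}$, whose $j$-th term has minimal degree at least $-ti - C + lj$; this sum converges $q$-adically and defines an element of $\BZ((q))$ with $\mindeg_q g_i(q^l,q)\ge -ti-C$. The outer substitution then gives $G(q^k,q^l,q)=\sum_i g_i(q^l,q)\,q^{ki}$, whose $i$-th term has minimal degree at least
\be
(k-t)\,i - C,
\ee
and the hypothesis $k \ge t+1$ forces $k-t\ge 1$, so this lower bound tends to $+\infty$; hence only finitely many summands contribute to each fixed power of $q$, the double sum lies in $\BZ((q))$, and part (a) follows. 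Part (b) is the same computation with the outer substitution omitted: the coefficient of $x^i$ in $G(x,q^l,q)$ is exactly $g_i(q^l,q)$, with $\mindeg_q \ge -ti-C$, so after multiplying by $q^{C}$ to absorb the constant shift one lands in $\BZ[[q]][[x]]$ with the coefficient of $x^i$ satisfying $\mindeg_q \ge -ti$, which is precisely $x$-adequacy of order $t$.

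I expect no genuine obstacle; the lemma is elementary, and the real care goes into two bookkeeping points. First, in (c) one must confirm that the convolution defining $(GH)_{i,j}$ is a finite sum, so the product is a legitimate element of $\BZ((q))[[x,y]]$ before any degree estimate is meaningful. Second, one must keep the roles of the order $t$ and the additive constant $C$ distinct: the order $t$ is preserved by every operation, whereas $C$ is allowed to grow (to $\max(C_G,C_H)$ under sums and to $C_G+C_H$ under products). Beyond this, the entire argument is carried by the strict inequality $k-t\ge 1$, which upgrades the merely linear-in-$i$ lower bound on $\mindeg_q G_{i,j}$ into honest $q$-adic convergence.
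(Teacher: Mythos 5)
Your strategy --- direct bookkeeping of $\mindeg_q G_{i,j}(q)$ from the defining inequality --- is the natural one, and there is nothing in the paper to compare it against: the authors introduce Lemma \ref{lem.weaklyx} with the single sentence ``The next lemma is elementary'' and give no proof, so your write-up supplies exactly the verification they omit. Parts (c) and (d) are complete and correct, including the two points you rightly isolate: finiteness of the convolution defining $(GH)_{i,j}$, and the distinction between the order $t$ (preserved) and the constant $C$ (allowed to grow).

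There is, however, one genuine gap, in the inner substitution $y=q^l$ of parts (a) and (b). You assert that $g_i(q^l,q)=\sum_j G_{i,j}(q)\,q^{lj}$ ``converges $q$-adically'' because its $j$-th term has $\mindeg_q \geq -ti-C+lj$. That lower bound tends to $+\infty$ in $j$ only when $l\geq 1$; for $l=0$ it is constant in $j$, and the sum can genuinely diverge. For instance $G(x,y,q)=\sum_{j\geq 0} y^j$ is weakly $x$-adequate of every order (take $C=1$), yet $G(q^k,1,q)=\sum_{j\geq 0}1$ is not an element of $\BZ((q))$, so part (a) is false at $l=0$ as literally stated. This defect is inherited from the lemma itself, which allows $l\geq 0$ in (a) and $l\in\BN$ in (b), rather than created by your proof: your argument is complete exactly in the range $l\geq 1$, and no argument can cover $l=0$ without an extra hypothesis. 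Note that the paper does apply the lemma with $l=|s|$, which vanishes for the zero state (e.g.\ in Theorem \ref{thm.1stable.main} and Lemma \ref{lem.301}); what saves those applications is that the series actually arising there have additional structure --- e.g.\ $G_{\th<\th'}\in y^\ell\,\BZ((q))[[x,y]]$ with $\ell\geq 1$ from Lemma \ref{lem.FSk}, whose $y^j$-coefficients vanish for $j<\ell$ and have $\mindeg_q$ growing with $j$ --- which restores convergence. You should state your proof for $l\geq 1$ and flag the $l=0$ case as requiring such extra input.

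A final cosmetic point on (b): the paper's factor $q^{-C}$ has the wrong sign, and you silently correct this by multiplying by $q^{+C}$; that is the right move, but your phrase ``one lands in $\BZ[[q]][[x]]$'' should then be dropped, since the coefficients of $q^{C}G(x,q^l,q)$ satisfy only $\mindeg_q \geq -ti$. The precise conclusion is that $q^{C}G(xq^t,q^l,q)\in\BZ[[q]][[x]]$, i.e.\ that $q^{C}G(x,q^l,q)$ is $x$-adequate of order $\leq t$.
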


The next lemma uses the notation of
Definition \ref{def.relseeds}.

\begin{lemma}
\lbl{lem.FSk}
Given $s \in \calS_{\th < \th' }$ with presentation \eqref{eq.sk} and
let $\ell= \vv \th'  \vv_B - \vv \th  \vv_B$. Then there
exists a  weakly $x$-adequate series $G_{\th < \th' } (x,y,q)
\in  y^\ell \BZ((q))[[x,y]]$ of order $\le |\th' \setminus \th |$
such that for $n \ge |s|$ we have
$$
F(q^{n+1},q,s)=  G_{\th < \th' }(q^{n+1},q^{|s|},q)\, F(q^{n+1},q,s').
$$
Moreover, $ G_{\th < \th' }(q^{n+1},q^{|s|},q)\in \BZ[[q]]$.
\end{lemma}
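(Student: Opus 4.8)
The plan is to compute the ratio $R:=F(q^{n+1},q,s)/F(q^{n+1},q,s')$ straight from the closed formula \eqref{eq.Fxqs2} and to recognize it as the evaluation, at $x=q^{n+1}$ and $y=q^{|s|}$, of a series $G_{\th<\th'}(x,y,q)$ that depends only on the pair of seeds. First I would cancel the common data: the sign factor drops out because $L_0(s)\equiv L_0(s')\pmod 2$ by \eqref{eq.L0}, and $(q)_\infty^{c_D}$ cancels at once. The power $q^{Q_2(s)-Q_2(s')}$ is evaluated by \eqref{eq.Q2} as $\prod_{p\in\Pi'\setminus\Pi}q^{(m-\sigma'(p))(\sigma'(p)+1)}$; writing $y=q^{m}$ with $m=|s|$ this equals $\prod_{p}y^{\sigma'(p)+1}q^{-\sigma'(p)(\sigma'(p)+1)}$, so it already carries a factor $y^{\sum_p(\sigma'(p)+1)}$ of $y$-degree at least $\ell=\vv\th'\vv_B-\vv\th\vv_B$. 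The decisive structural point is that the polygons of $\Pi'\setminus\Pi$ are pairwise disjoint: by Definition \ref{def.relseeds} the substitution $s=s'+\sum_p(m-\sigma'(p))s_p$ changes only the boundary edges and vertices of those polygons, each increasing by $k_p:=m-\sigma'(p)\ge 1$ (since $s_p(v)=1$ at each vertex of $p$), while leaving $s(p)=\sigma'(p)$ and every other value fixed. Because a seed records $\sigma'$ on all of $\nbd(\Pi')$, every residual value of $s'$ occurring in $R$ is prescribed by $\th'$; this is exactly what makes $G_{\th<\th'}$ well defined and independent of the particular $s$.

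Next I would reorganize the Pochhammer factors of \eqref{eq.Fxqs2}. In the ratio only the changed quantities survive, so the $x$-dependent part factors over $p\in\Pi'\setminus\Pi$, and for each $p$ it equals $H_p(xq^{-k_p})/H_p(x)$ with $H_p(w):=\prod_{v\in p}(wq^{-s'(v)})_\infty\big/\prod_{e\in\partial p}(wq^{-s'(e)})_\infty$, a series depending only on $\th'$; the remaining factor, coming from the denominators $(q)_{s(e)}$, is $\prod_{e\in\partial p}(q^{\,s'(e)-\sigma'(p)+1}y)_\infty/(q^{\,s'(e)+1})_\infty$, which lies in $\BZ((q))[[y]]$ with $y$-constant term $1$. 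Multiplying the three pieces and taking the product over $p$ defines $G_{\th<\th'}(x,y,q)$, using $q^{-k_p}=q^{\sigma'(p)}y^{-1}$ to make $y$ explicit. Substituting $x=q^{n+1}$, $y=q^{m}$ recovers $R$, and since $s(v),s(e)\le|s|=m\le n$ every argument $q^{\,n+1-s(\cdot)}$ has positive exponent, so the value is an honest element of $\BZ[[q]]$.

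The hard part will be proving that $G_{\th<\th'}$ really lies in $y^\ell\BZ((q))[[x,y]]$, that is, that it has only nonnegative powers of $y$ and is divisible by $y^\ell$, together with weak $x$-adequacy of order $\le|\th'\setminus\th|$. The difficulty is that $q^{-k_p}=q^{\sigma'(p)}y^{-1}$ inserts a factor $y^{-1}$, so $H_p(xq^{-k_p})$ a priori produces a term $y^{-t}$ beside each $x^{t}$; these negative powers are invisible factor by factor and must be shown to be dominated by the $y^{\sigma'(p)+1}$ extracted above. I expect to do this by collapsing the vertex product in $H_p$ against a single Pochhammer whose $x$-power is the number of sides of $p$ — as happens in the tetrahedral computation of Section \ref{sub.extensions}, where the polygon factor is $(xq^{-k})_\infty^{4}/(x^{4}q^{-k+1})_\infty$ — which exhibits the cancellation of the spurious negative powers of $y$ directly. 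The balance condition at $p$, giving $s'(v)\le s'(p)=\sigma'(p)$ for every vertex $v$ of $p$, is what controls these degrees; the same bound $s'(v),s'(e)\le\sigma'(p)\le|\th'\setminus\th|$ shows that each power of $x$ in $G_{\th<\th'}$ carries $q$-degree at least $-|\th'\setminus\th|$ up to a global constant $C$, giving weak $x$-adequacy of the stated order. Membership of the evaluation in $\BZ[[q]]$ then follows from Lemma \ref{lem.weaklyx}(a), whose hypothesis $n\ge|\th'\setminus\th|+1$ holds because $|\th'\setminus\th|=\max_{p}\sigma'(p)\le|s'|<m=|s|\le n$.
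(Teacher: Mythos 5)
Your setup coincides with the paper's: both compute the ratio $F(q^{n+1},q,s)/F(q^{n+1},q,s')$ directly from \eqref{eq.Fxqs2}, cancel the sign via \eqref{eq.L0} and the factor $(q)_\infty^{c_D}$, convert $q^{Q_2(s)-Q_2(s')}$ via \eqref{eq.Q2} into $\prod_{p}y^{\sigma'(p)+1}q^{-\sigma'(p)(\sigma'(p)+1)}$, and use the fact that all remaining factors involve only edges and vertices of the polygons of $\Pi'\setminus\Pi$, whose $s'$-values are prescribed by the seed $\th'$. However, the actual content of the lemma --- that $G_{\th<\th'}$ lies in $y^{\ell}\BZ((q))[[x,y]]$ (in particular has \emph{no} negative powers of $y$) and is weakly $x$-adequate of order $\le|\th'\setminus\th|$ --- is exactly the step you postpone, and the mechanism you propose for it would fail. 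Each constituent of your $H_p(xq^{-k_p})$ is an \emph{infinite} Pochhammer in $w=q^{\sigma'(p)}xy^{-1}$, hence contributes monomials $x^jy^{-j}$ for arbitrarily large $j$: the negative $y$-powers are unbounded factor by factor, so no bookkeeping against the fixed power $y^{\sigma'(p)+1}$ can succeed until a cancellation \emph{inside} $H_p$ itself is exhibited. The cancellation you suggest --- merging the vertex product into a single Pochhammer whose $x$-power is the number of sides of $p$, modeled on the tetrahedral identity $(xq^{-k})_\infty^4/(x^4q^{-k+1})_\infty$ --- is a special feature of that one example, not an identity available here.

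The missing idea, which is the heart of the paper's proof, is a pairing internal to each polygon. Since $\partial p$ is a directed cycle, each vertex $v$ of $p$ is the ending vertex of exactly one edge $e\in\cE(p)$, and the seed relations \eqref{ee1}--\eqref{ee2} give $s'(v)-s'(e)=\sigma'(\te)$ with $\sigma'(\te)\le\sigma'(p)\le|\th'\setminus\th|$. Hence
$$
\frac{(wq^{-s'(v)})_\infty}{(wq^{-s'(e)})_\infty}=(wq^{-s'(v)})_{\sigma'(\te)}\,,
$$
so $H_p(w)=\prod_{e\in\cE(p)}(wq^{-s'(v)})_{\sigma'(\te)}$ is in fact a \emph{polynomial} in $w$ of degree $\sum_{e}\sigma'(\te)=\sigma'(p)$: all infinite $x$-Pochhammers cancel. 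Consequently $H_p(q^{\sigma'(p)}xy^{-1})$ has $y$-powers bounded below by $-\sigma'(p)$, which the factor $y^{\sigma'(p)+1}$ dominates, leaving one net power of $y$ per polygon and hence $y^{\ell}$ in total; moreover every $x$ now occurs either in these finite products (polynomials in $x,y$ over $\BZ[q^{\pm 1}]$, $x$-adequate of order $\le|\th'\setminus\th|$ because $s'(v)\le\sigma'(p)$ by the balance condition) or in $1/H_p(x)$, a product of reciprocals of finite Pochhammers $(xq^{-s'(v)})_{\sigma'(\te)}$, which is $x$-adequate of the same order. This is precisely the paper's decomposition into the prefactor $y/q^{\sigma(p)}$ and the three bracketed factors; without this pairing step your argument does not close. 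A minor additional point: Lemma \ref{lem.weaklyx}(a) only yields membership in $\BZ((q))$, so the final claim $G_{\th<\th'}(q^{n+1},q^{|s|},q)\in\BZ[[q]]$ should rest on your earlier direct observation that for $n\ge|s|$ every Pochhammer argument carries a positive power of $q$, not on that lemma.
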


\begin{proof}
Let 
$x=q^{n+1}$ and $y=q^{|s|}$.
By Proposition \ref{p.533},  $m= |s|$. For convenience we write
$\th'\setminus \th$ for $\Pi'\setminus\Pi$, and $\sigma$
for $\sigma(\th')$.
We have the following relations, followed directly from the definition.

 \begin{align}
 \sigma(v) & =  \sigma(e) + \sigma(\te)  \lbl{ee1}\\
   \sigma(p) & = \sum _{e \in \cE(p)} \sigma(\te) \lbl{ee2}\\
 s(e) & = m-\sigma(p) + {\sigma}(e) \lbl{ee3}\\
s(v) & = m-\sigma(p) + {\sigma}(v). \lbl{ee4}
\end{align}
Here $e$ is an edge and $v$ is a vertex  of a $B$-polygon $p$ in
$\th'\setminus \th$,  and $\te$ is defined as in Section
\ref{sub.Bstates}. Besides, in \eqref{ee1}, $v$ is the ending vertex of
the edge $e$. Besides, each of $\sigma(p), \sigma(v)=\sigma(e)
+ \sigma(\te)$ is bounded from above by $|\th'\setminus \th|$, by definition.

From the definition \eqref{eq.Fxqs2} and Proposition \ref{p.533},  we have

\begin{eqnarray*}\frac{ F(q^{n+1},q,s)}{F(q^{n+1},q,s')}
&=& q^{\sum_{p \in (\th'\setminus \th)}(m- \sigma(p) )(\sigma(p)+1)}
\prod_{e\in \cE(\th'\setminus \th )}\frac{(q)_{{\sigma}(e)} \,
(x q^{-{\sigma}(e)})_\infty }{(q)_{s(e)}\, (x q^{-s(e)})_\infty  }
\prod_{v \in \cV (\th'\setminus \th )} \frac{(x q^{-s(v)})_\infty}{(x q^{-{\sigma}(v)})_\infty}
\\
&=&
\prod_{p\in (\th'\setminus \th)} \left\{\frac
y {q^{\sigma(p)}}\, \prod_{e\in \cE(p)} \left[ \frac{(q)_{{\sigma}(e)}}{
(x q^{-{\sigma}(e)-{\sigma}(\te )})_{{\sigma}(\te )}} \right]\,
\left[ \frac{({y} q^{{\sigma}(e)+1-\sigma(p)})_\infty }{(q)_\infty} \right]\,
\right. \\
& & \left. \qquad\quad
\left[ \left( \frac y{q^{\sigma(p)}}  \right)^{{\sigma}(\te )}
\left( \frac {q^{\sigma(p)}}y x \, q^{- {\sigma}(e)-{\sigma}(\te )} \right)_{{\sigma}(\te )}
\right] \right\}
\end{eqnarray*}
where the second identity follows from  a simplification of $q$-factorial
using relations \eqref{ee1}--\eqref{ee4}. Let us look at the factors in
square brackets.

Since ${\sigma}(e)+{\sigma}(\te ) \le |\th'  \setminus \th |$, the
first square bracket factor   is $x$-adequate with order
$\le  |\th'  \setminus \th |$.

It is clear that for $a \ge 0$,  $(yq^{-a})_\infty \in q^{-a(a+1)/2}
\BZ[[q]][[y]]$. The second square bracket factor  is in $q^{-C}\,
\BZ[[q]][[y]]$, where $C =  |\th'  \setminus \th |( |\th'
\setminus \th |+1)/2$.

The third square bracket factor is a polynomial in $x,y$ with coefficients
in $\BZ[q^{\pm 1}]$, and it is $x$-adequate with order $\le  |\th'
\setminus \th |$.
\end{proof}

\subsection{Stability away from the region of linear growth}
\lbl{sec.kunboundedseed}

In this section we show the stability for the $k$-unbounded centered states.

\begin{proposition}
\lbl{prop.unbounded}
Fix $k,l \in \BN$.
Suppose  $\th  < \th' $ are seeds and
$G(x,y,s)\in \BZ((q))[[x,y]]$ is weakly $x$-adequate of order $\le l
+  |\th'  \setminus \th |$ .  Then
\begin{eqnarray*}
B_n(q) &:=& \sum_{s:\, |s| \leq n -l \,, s \in \calS_{\th < \th' } \setminus \calS_{\th <\th' }^{(k)}}
F(q^{n+1},q,s)\, G(q^{n+1},q^{|s|},q)
\end{eqnarray*}
is $k$-stable.
\end{proposition}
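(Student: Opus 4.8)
The plan is to expand each summand as a power series in $x=q^{n+1}$ and read off the $k$-limits from its low-order coefficients, using the $k$-unboundedness $Q_2(s)>(k+\tfrac13)|s|$ (the defining property of $\calS_{\th<\th'}\setminus\calS_{\th<\th'}^{(k)}$, see Definition \ref{def.kstate.refined}) to control degrees. By \eqref{eq.tildeF} we have $F(x,q,s)=q^{Q_2(s)}\tF(x,q,s)$ with $\tF(x,q,s)$ $x$-adequate of order $\le|s|$ (Lemma \ref{lem.amb}), while the hypothesis together with Lemma \ref{lem.weaklyx}(b) makes $G(x,q^{|s|},q)$ equal to $q^{-C}$ times a series that is $x$-adequate of order $\le l+|\th'\setminus\th|$. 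Writing the summand as $\Psi(x,q,s):=F(x,q,s)\,G(x,q^{|s|},q)=\sum_{p\ge0}c_p(q,s)\,x^p$, so that $B_n(q)=\sum_{s}\sum_{p\ge0}c_p(q,s)\,q^{p(n+1)}$ over the states $s\in\calS_{\th<\th'}\setminus\calS_{\th<\th'}^{(k)}$ with $|s|\le n-l$ (convergence of each $\Psi(q^{n+1},q,s)$ being supplied by Lemmas \ref{lem.x}(c) and \ref{lem.weaklyx}(a)), the natural candidate $k$-limits are $\Phi_p(q):=\sum_{s\in\calS_{\th<\th'}\setminus\calS_{\th<\th'}^{(k)}}c_p(q,s)$ for $0\le p\le k$.

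The central estimate, which I would isolate first, is a lower bound on $\mindeg_q c_p(q,s)$. Splitting the coefficient of $x^p$ according to the split $p=m+i$ between the order coming from $\tF$ and that from $G$, the two adequacy bounds give $\mindeg_q\ge Q_2(s)-m|s|-(l+|\th'\setminus\th|)\,i-C$ for each such term. Bounding $m,i\le p$ and inserting $Q_2(s)>(k+\tfrac13)|s|$ yields the crude estimate $\mindeg_q c_p(q,s)>(k+\tfrac13-p)|s|-(l+|\th'\setminus\th|)p-C$. For $0\le p\le k$ the coefficient of $|s|$ is at least $\tfrac13>0$, so $\mindeg_q c_p(q,s)\to\infty$ as $|s|\to\infty$; since each norm level is finite, each $\Phi_p(q)$ is a well-defined element of $\BZ((q))$ with minimum degree bounded below.

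It then remains to bound $q^{-k(n+1)}\bigl(B_n(q)-\sum_{p=0}^k\Phi_p(q)q^{p(n+1)}\bigr)$, which splits into a truncation tail $-\sum_{p=0}^k q^{(p-k)(n+1)}\sum_{|s|>n-l}c_p(q,s)$ and a high-order part $\sum_{|s|\le n-l}\sum_{p>k}q^{(p-k)(n+1)}c_p(q,s)$. For the tail the crude estimate suffices: with $|s|>n-l$ its coefficient has $\mindeg_q>(k+\tfrac13-p)(n-l)-(l+|\th'\setminus\th|)p-C$, and since $(p-k)+(k+\tfrac13-p)=\tfrac13$ the $n$-dependence after the prefactor is $+\tfrac13 n+O(1)$, hence $\to\infty$.

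The high-order part ($p>k$) is the main obstacle, since the crude bound fails once $p$ is large (the term $-(l+|\th'\setminus\th|)p$ becomes uncontrolled). Here I must keep $m,i$ separate and use the summation constraint $|s|\le n-l$, i.e. $\delta:=n+1-|s|\ge l+1$. A short rearrangement turns the per-term bound into $\mindeg_q>(i+\tfrac13)|s|+(p-k)\delta-(l+|\th'\setminus\th|)\,i-C$, and I would then split into two cases. If $|s|\ge l+|\th'\setminus\th|$, then $(i+\tfrac13)|s|-(l+|\th'\setminus\th|)i\ge\tfrac13|s|$ while $(p-k)\delta\ge\delta$, giving a bound $\ge\tfrac13 n+O(1)$ uniformly in $p$. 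If $|s|<l+|\th'\setminus\th|$ is bounded, then $\delta>n+1-(l+|\th'\setminus\th|)$ and the bound, viewed as a function of $p\ge k+1$, is increasing for $n$ large with minimum at $p=k+1$, again $\to\infty$. In both cases $\mindeg_q\to\infty$ uniformly in $p>k$ and in $s$, so the high-order part, multiplied by $q^{-k(n+1)}$, tends to $0$; this is exactly where the offset $l$ in the range $|s|\le n-l$ is needed. Together with the tail estimate, Definition \ref{def.stable} then yields that $B_n(q)$ is $k$-stable with $k$-limit $\Phi_k(q)$.
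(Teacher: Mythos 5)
Your proof is correct and follows essentially the same route as the paper's: expand $F(x,q,s)\,G(x,q^{|s|},q)$ in powers of $x=q^{n+1}$, take the candidate limits $\Phi_p(q)$ to be the coefficient-wise sums over the $k$-unbounded states, split the error into a truncation tail ($p\le k$, $|s|>n-l$) and a high-order part ($p>k$, $|s|\le n-l$), and bound both by combining the $k$-unboundedness $Q_2(s)>(k+\tfrac13)|s|$ with $x$-adequacy, minimizing the exponent (linear in $p$ with positive slope) at $p=k+1$. The only divergence is one of care, not of method: you keep the two adequacy orders ($\le |s|$ for $\tF$ and $\le l+|\th'\setminus\th|$ for the $G$-factor) separate and run a two-case analysis in the high-order part, whereas the paper simply treats the product as $x$-adequate of order $\le|s|$ (absorbing the $G$-contribution into constants), so your write-up is, if anything, tighter where the paper is terse.
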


\begin{proof}
Recall $\ti F(x,q,s)$ from Equation \eqref{Fxqs}.
Expand
$$
\ti F(x,q,s) G(x,q^{|s|},q)=\sum_{m=0}^\infty a_m(q,s)x^m
$$
into a power series in $x$ and define
$$
\Phi_j(q)= \sum_{s:\,s \not\in S^{(k)}}  q^{Q_2(s)} \, a_j(q,s)
$$
for $j \leq k$. The weak $x$-adequate condition on $G$ and adequate
condition on $F$ (from Lemma \ref{lem.amb})
imply that for all but finitely many $s$ and for $j \leq k$ we have
$$
Q_2(s)-\mindeg_q(a_j(q,s)) > (k+1/3)|s|-\mindeg_q(a_j(q,s)) \geq
|s|/3-C \,,
$$
where $C \in \BZ$ is such that $G(x,y,s) \in q^{-C} \BZ[[q]][[x,y]]$.
It follows that $\Phi_j(q) \in \BZ((q))$ is convergent.
Let $f_n(q)=\sum_{s:\, |s| \leq n \,, s \not\in S^{(k)}} F(q^{n+1},q,s)$.
We now follow the proof of part (d) of Lemma \ref{lem.x}. We have:
\begin{equation*}
\left(f_n(q)-\sum_{j=0}^k \Phi_j(q) q^{j(n+1)}\right) q^{-k(n+1)} =
\S_{1,n}-\S_{2,n}
\end{equation*}
where
\begin{eqnarray*}
\S_{1,n}&=&  \sum_{s:\, |s| \leq n \,, s \not\in S^{(k)}}
\sum_{j=k+1}^\infty  q^{Q_2(s)} \, a_j(q,s)
q^{(j-k)(n+1)} \\
\S_{2,n}&=&  \sum_{s:\, |s| > n \,, s \not\in S^{(k)}}
\sum_{j=0}^k q^{Q_2(s)} \, a_j(q,s)
q^{(j-k)(n+1)}
\end{eqnarray*}
For $\S_{1,n}$ we use the $x$-adequacy of order $\leq |s|$ to  obtain
$$
Q_2(s)+\mindeg_q(a_j(q,s))+(j-k)(n+1)
\geq Q_2(s)-j|s|+(j-k)(n+1)
$$
Since the coefficient of $j$ in the above expression is $n+1-|s|>0$, it
follows that its minimum as a function of $j$ is attained at $j=k+1$, i.e.,
$$
Q_2(s)-j|s|+(j-k)(n+1) \geq Q_2(s)-(k+1)|s|+n+1
$$
Since $s \not\in S^{(k)}$ and $|s| \leq n$ it follows
$$
Q_2(s)-(k+1)|s|+n+1 \geq (k+1/3)|s|-(k+1)|s|+n+1=-2|s|/3+n+1>n/3\,.
$$
For $\S_{2,n}$ since $|s| >n$ we the fact that $s$ is not $k$-bounded
to obtain
$$
Q_2(s)-j|s|+(j-k)(n+1)  \geq  Q_2(s) -k|s| \geq |s|/3> n/3 \,.
$$
Thus,
$$
\left(f_n(q)-\sum_{j=0}^k \Phi_j(q) q^{j(n+1)}\right) q^{-k(n+1)} \in
q^{n/3} \BZ[[q]] \,.
$$
This completes the proof of the proposition.
\end{proof}



\section{Stability in the region of linear growth}
\lbl{sec.bounded}

\begin{theorem}
\lbl{thm.1stable.main}
Suppose $\th $ is a seed and  $G(x,y,q) \in \BZ((q))[[x,y]]$ is weakly
$x$-adequate of order $\le |\th |+l$, where $l \in \BN$.
Then the sequence
$$
H_n(q)=\sum_{s:\, |s| \leq n-l \,, s \in \calS_\th } F(q^{n+1},q,s)\, G(q^{n+1},q^{|s|},q)
$$
is stable. 
\end{theorem}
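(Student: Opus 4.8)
The plan is to prove, for each fixed $k$, that $H_n(q)$ is $k$-stable; since stability is $k$-stability for all $k$, this suffices. I would proceed by Noetherian induction on the seed $\th$ in the order of Definition~\ref{def.orderseed}, which is well-founded by Lemma~\ref{lem.finite}, with maximal seeds as the base case. Fixing $k$, I first split
\[
H_n(q)=\sum_{\substack{s\in\calS_\th^{(k)}\\ |s|\le n-l}} F(q^{n+1},q,s)\,G(q^{n+1},q^{|s|},q)
+\sum_{\substack{s\in\calS_\th\setminus\calS_\th^{(k)}\\ |s|\le n-l}} F(q^{n+1},q,s)\,G(q^{n+1},q^{|s|},q)
\]
into the $k$-bounded and $k$-unbounded contributions. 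The $k$-unbounded sum is $k$-stable: the proof of Proposition~\ref{prop.unbounded} uses only that the states fail to be $k$-bounded and that the multiplier has finite $x$-adequacy order, so it applies verbatim to the sum over all of $\calS_\th$, with no relative-seed structure needed.

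For the $k$-bounded sum I would use the partition of Proposition~\ref{prop.sk1}: up to a finite set $E$ of states with $|s|\le Ck^2$,
\[
\calS_\th^{(k)}=\bigsqcup_{\substack{\th'>\th\\ |\th'\setminus\th|<k}}\calS_{\th<\th'}^{(k)}.
\]
For each fixed $s\in E$ the summand $F(x,q,s)\,G(x,q^{|s|},q)$ is an $x$-adequate series up to a fixed power of $q$ (Lemma~\ref{lem.amb} and Lemma~\ref{lem.weaklyx}(b)), hence defines a stable sequence when evaluated at $x=q^{n+1}$ (Lemma~\ref{lem.x}(d)); so the finite sum over $E$ is stable. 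This also settles the base case, since for maximal $\th$ there is no $\th'>\th$ and $\calS_\th^{(k)}=E$ is finite by Corollary~\ref{cor.sk1.2}.

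For a piece $\calS_{\th<\th'}^{(k)}$ with $\th'>\th$ I would write $\sum_{\calS_{\th<\th'}^{(k)}}=\sum_{\calS_{\th<\th'}}-\sum_{\calS_{\th<\th'}\setminus\calS_{\th<\th'}^{(k)}}$, the subtracted term being $k$-stable by Proposition~\ref{prop.unbounded} (now $\th<\th'$ is strict, as its hypothesis requires). For the full relative sum I would reindex by the bijection $(m,s')\mapsto s$ of Proposition~\ref{p.533}(b) and substitute the factorization $F(q^{n+1},q,s)=G_{\th<\th'}(q^{n+1},q^{|s|},q)\,F(q^{n+1},q,s')$ of Lemma~\ref{lem.FSk} (with $|s|=m$), obtaining
\[
\sum_{\substack{s'\in\calS_{\th'}\\ |s'|\le n-l-1}} F(q^{n+1},q,s')\sum_{m=|s'|+1}^{n-l}\ti G(q^{n+1},q^{m},q),\qquad \ti G:=G_{\th<\th'}\cdot G.
\]
Because $\ti G\in y^{\ell}\BZ((q))[[x,y]]$ with $\ell=\vv\th'\vv_B-\vv\th\vv_B\ge1$, the inner $m$-sum is a $q$-adically convergent geometric series and equals $\ti G^{\flat}(x,q^{|s'|},q)-\ti G^{\flat}(x,q^{\,n-l},q)$, where $\ti G^{\flat}$ is obtained from $\ti G$ by replacing each $y^{j}$ with $q^{j}y^{j}/(1-q^{j})$. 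The first term is an $H_n$-sum for the larger seed $\th'$ with parameter $l+1$ and multiplier $\ti G^{\flat}$, covered by the inductive hypothesis. In the second term $q^{\,n-l}=x\,q^{-l-1}$ at $x=q^{n+1}$, so $\ti G^{\flat}(q^{n+1},q^{\,n-l},q)=\hat G(q^{n+1},q)$ for a one-variable weakly $x$-adequate series $\hat G$; this is a stable sequence (by the argument of Lemma~\ref{lem.x}(d)), and the second term is it times an $H_n$-sum for $\th'$ with multiplier $1$, again covered by the induction. Since pointwise sums and products of $k$-stable sequences are $k$-stable, the induction closes.

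The main obstacle is the $x$-adequacy bookkeeping: I must check that $\ti G^{\flat}$ and the constant multiplier $1$ meet the order hypothesis of the theorem for the seed $\th'$ and parameter $l+1$, so that the inductive hypothesis truly applies. This relies on the fact that the resummation $\flat$ and the substitution $y=x\,q^{-l-1}$ do not increase the $x$-order (Lemma~\ref{lem.weaklyx}(c),(d)), and that passing from $\th$ to $\th'$ raises the order only by the amount controlled by $|\th'\setminus\th|$ through Lemma~\ref{lem.FSk}; since the seed poset has finite height (Lemma~\ref{lem.finite}), the adequacy orders stay finite along every branch of the recursion.
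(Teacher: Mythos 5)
Your proposal is correct and follows essentially the same route as the paper's proof: downward induction on seeds, splitting off the $k$-unbounded part via Proposition \ref{prop.unbounded}, handling the finite exceptional set and the maximal-seed base case via Corollary \ref{cor.sk1.2} and Lemma \ref{lem.301}, and converting each relative piece $\calS_{\th<\th'}$ into a sum for the larger seed using Lemma \ref{lem.FSk} and the parametrization of Proposition \ref{p.533}(b). Your explicit geometric-series resummation (the $\flat$ operation together with the endpoint term at $y=xq^{-l-1}$) is precisely the computation in the paper's Lemma \ref{lem.dumb}, merely with the upper endpoint kept as a separate scalar stable factor rather than absorbed into the single two-variable series $G''$.
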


\begin{remark}
\lbl{rem.thm2follows}
In particular, the above theorem holds when $\th =\emptyset$, $l=0$ and
$G=1$. In that case, Proposition \ref{c.degree} implies that
$H_n(q)=\hat J_{K,n}(q)$ and we conclude the stability of the colored Jones
polynomial of an alternating link $K$.
\end{remark}

\begin{proof}
Fix a natural number $k$. We will prove that $H_n(q)$ is $k$-stable.
Subtracting the $k$-unbounded part from $H_n(q)$ and using Proposition
\ref{prop.unbounded}, it is enough to show that
$$
H'_n(q)=\sum_{s:\, |s| \leq n-l \,, s \in \calS_\th ^{(k)}} \cF_n(q,s)
$$
is $k$-stable.
We proceed by downwards induction, starting from the case when $\th $ is
maximal.
This case follows  from Corollary \ref{cor.sk1.2}, which states
that $\calS_{\th }^{(k)}$ is a finite set, and Lemma \ref{lem.301}.

Assume that the statement holds for all $\th' $ strictly greater than $\th $.
We will show that the statement holds for $\th $. Then Lemma
\ref{lem.finite} implies that the statement holds for any seed $\th $.

Using the partition of $\calS_\th ^{(k)}$ described in Proposition
\ref{prop.sk1}, and $n$ sufficiently large, we obtain that

\be
\lbl{eq.15}
\sum_{s:\, |s| \leq n-l  \,, s \in \calS_\th ^{(k)}} \cF_n(q,s)
= \sum_{\th'  > \th , \,  |\th' \setminus \th |  \le k}
\,\left(  \sum_{s:\, |s| \leq n-l \,, s \in \calS_{\th < \th' }^{(k)}}
\cF_n(q,s) \right)   + \mathrm{Err},
\ee
where $\mathrm{Err}$ is a finite alternating sum of terms of the form
$\cF_n(q,s)$ for some $s \in \calS_{\th }$. By Lemma \ref{lem.301},
$\mathrm{Err}$ is stable.
Because the outer sum on the right hand side of \eqref{eq.15}  is finite,
it is enough to prove
$k$-stability for each inner sum
$$
H''_n(q):=\sum_{s:\, |s| \leq n-l \,, s \in \calS_{\th < \th' }^{(k)}} \cF_n(q,s)\,.
$$
Adding back the $k$-unbounded part (using Proposition
\ref{prop.unbounded}), it is enough to show that
$$
H'''_n(q):=\sum_{s:\, |s| \leq n-l \,, s \in \calS_{\th < \th' }} \cF_n(q,s)
$$
is $k$-stable. Using the decomposition of Lemma \ref{lem.FSk}, we have
\begin{eqnarray}
\cF_n(q,s) &=& G (q^{n+1}, q^m, q) \,  G_{\th < \th' }(q^{n+1}, q^m, q) \,
F(q^{n+1},q,s')  \notag\\
&=& G'(q^{n+1}, q^m, q)\, \, F(q^{n+1},q,s')\,, \lbl{eq.FF}
\end{eqnarray}
where $G'(x,y,q)= G(x,y,q)  \,  G_{\th < \th' }(x,y,q)$, and $s' \in \calS_{\th'} $.
$G(x,y,q)$ is weakly $x$-adequate of order $\leq |\th|+l$ and
$|\th |+l \le |\th'+l |$. Moreover, $ G_{\th < \th' }(x,y,q)$ is weakly
$x$-adequate of order $\leq |\th' \setminus \th |$ and
$|\th' \setminus \th | \le |\th' | \leq |\th' |+l$. Lemma \ref{lem.weaklyx}
implies that $G'(x,y,q)$ weakly $x$-adequate of order $\le |\th' |+l$.

By part (b) of Proposition \ref{p.533},
$\calS_{\th  < \th' }$ is parametrized by pairs
$(m,s')$ with $s'\in \cS_{\th'} $ with $ |s'| < m$. We have

\begin{align*}
H'''_n(q) &= \sum_{s:\, |s| \leq n-l \,, s \in \calS_{\th < \th' }} \cF_n(q,s)
\\
& =
\sum_{m=1}^{n-l} G'(q^{n+1}, q^m, q) \sum_{s':\, |s'| <m \,, s' \in \calS_{\th'} }
F(q^{n+1},q,s')
\\
&= \sum_{s:\, |s| \le n-l-1 \,, s \in \calS_{\th' }} F(q^{n+1},q,s)
\sum_{m =|s|+1}^{n-l} G' (q^{n+1}, q^m, q)\\
&= \sum_{s:\, |s| \le n-l-1 \,, \, s \in \calS_{\th'} } F(q^{n+1},q,s)\,
G'' (q^{n+1}, q^{|s|},q)\,,
\lbl{eq.13}
\end{align*}
where the second identity follows from \eqref{eq.FF} and the above
mentioned parametrization of  $\calS_{\th  < \th' }$, the third identity follows
by changing notation $s'$ to $s$ and
exchanging the two summations, and the fourth identity follows from
Lemma \ref{lem.dumb} below, with $G'' (x,y,q)$ a weakly $x$-adequate
series of order $\le |\th' |+l$.
By induction hypothesis, the last sum of the above identity is $k$-stable.
This completes the proof of Theorem \ref{thm.1stable.main}.
\end{proof}

\begin{lemma}
\lbl{lem.301}
For a fixed $s \in \calS_\th $, and $G(x,y,q)$ weakly $x$-adequate of order
$\leq t$, the sequence  $\cF_n(q,s):=F(q^{n+1},q,s)\,
G(q^{n+1},q^{|s|},q)$ is stable.
\end{lemma}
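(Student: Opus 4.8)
The plan is to reduce the statement to the one-variable $x$-adequacy machinery of Lemma~\ref{lem.x}. The crucial point is that $s$ is \emph{fixed}, so $|s|$ is a fixed non-negative integer; consequently the second variable $y$ of $G(x,y,q)$ is specialized to the fixed power $q^{|s|}$, and every series in sight becomes an honest $x$-adequate series in $x$ and $q$. Once this is recognized, $\cF_n(q,s)$ is nothing but a fixed $x$-adequate series evaluated at $x=q^{n+1}$, and its stability is a direct consequence of Lemma~\ref{lem.x}(d).

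First I would record that $F(x,q,s)$ is itself $x$-adequate of order $\le |s|$: by \eqref{eq.tildeF} we have $F(x,q,s)=q^{Q_2(s)}\,\tF(x,q,s)$, Lemma~\ref{lem.amb} gives that $\tF(x,q,s)$ is $x$-adequate of order $\le |s|$, and since $Q_2(s)\ge 0$ by Proposition~\ref{prop.Q+L1}, multiplication by $q^{Q_2(s)}$ only raises $q$-degrees and hence preserves $x$-adequacy of order $\le|s|$. Next I would specialize $G$: writing $G(x,y,q)\in q^{-C}\BZ[[q]][[x,y]]$ for the relevant constant $C$, Lemma~\ref{lem.weaklyx}(b) shows that $q^{-C}G(x,q^{|s|},q)$ is $x$-adequate of order $\le t$. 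Setting $t'=\max(|s|,t)$, Lemma~\ref{lem.x}(b) promotes both $F(x,q,s)$ and $q^{-C}G(x,q^{|s|},q)$ to order $\le t'$, and the subring property of Lemma~\ref{lem.x}(a) yields that the product
$$
q^{-C}\,F(x,q,s)\,G(x,q^{|s|},q)
$$
is $x$-adequate of order $\le t'$. Applying Lemma~\ref{lem.x}(d) to this product (evaluated at $x=q^{n+1}$) shows that the normalized sequence $q^{-C}\cF_n(q,s)$ is stable, lying in $\BZ[[q]]$ once $n+1>t'$. Finally, multiplying back by the fixed power $q^{C}$ shifts all $q$-degrees by the constant $C$ and therefore preserves the coefficientwise convergence in Definition~\ref{def.lim}, so $\cF_n(q,s)$ is stable, with $k$-limits equal to $q^{C}$ times those of the normalized sequence.

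I do not expect a serious obstacle: the argument is essentially bookkeeping with the $x$-adequacy subring. The only mild points to verify are that substituting $y=q^{|s|}$ together with carrying the global $q^{-C}$ normalization does not disturb $x$-adequacy — which is exactly what Lemma~\ref{lem.weaklyx}(b) is designed to guarantee — and that multiplication by a fixed integer power of $q$ sends stable sequences to stable sequences, which is immediate from the translation invariance of Definition~\ref{def.lim}.
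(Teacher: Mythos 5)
Your proof is correct and follows essentially the same route as the paper's: specialize $y=q^{|s|}$ via Lemma \ref{lem.weaklyx}, use the subring property of Lemma \ref{lem.x}(a) to get $x$-adequacy of $q^{-C}F(x,q,s)G(x,q^{|s|},q)$, and conclude by Lemma \ref{lem.x}(d). The only difference is that you make explicit the bookkeeping the paper leaves implicit (the $x$-adequacy of $F(x,q,s)$ itself using $Q_2(s)\ge 0$, matching the two orders via $t'=\max(|s|,t)$, and restoring the factor $q^{C}$ at the end).
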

\begin{proof}
Lemma \ref{lem.weaklyx} implies that $q^{-C}G(x,q^{|s|},q)$ is $x$-adequate
and part (a) of Lemma \ref{lem.x} implies that $q^{-C}F(x,q,s)G(x,q^{|s|},q)$
is $x$-adequate, too. The result follows from part (d) of Lemma
\ref{lem.x}.
\end{proof}

The next lemma is reminiscent to the notion of a $q$-Laplace transform.

\begin{lemma}
\lbl{lem.dumb}
Suppose $l, t \in \BN$, and
$G(x,y,q) \in q^{-C}\BZ[[q]] [[x,y]]$ is weakly $x$-adequate of order $\le l+ t$.
Then there exists a weakly $x$-adequate  series
$H(x,y,q) \in q^{-C} \BZ[[q]] [[x,y]]$ of order $\le l+ t$,
such that for every $a,n\in \BN$ with $n \ge l+t+1$ and $n \ge l+a+1$,
\be
\lbl{eq.14}
\sum_{m= a+1}^{n-l} G(q^{n+1}, q^m,q) = H(q^{n+1}, q^a,q) \,.
\ee
\end{lemma}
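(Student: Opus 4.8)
The plan is to realize the assignment $G\mapsto H$ as a $q$-Laplace transform, i.e.\ to invert the first-difference operator in the lower summation variable $a$. Put $x=q^{n+1}$ and $y=q^a$, and write $G=\sum_{i,j\ge 0}G_{i,j}(q)\,x^iy^j$. The heart of the matter is that substituting $y=q^m$ and summing the resulting geometric progressions in $m$ produces no new transcendental dependence: for every $j\ge 1$,
\[
\sum_{m=a+1}^{n-l} q^{mj}=\frac{q^jy^j-q^{-lj}x^j}{1-q^j},
\]
using $q^{(a+1)j}=q^jy^j$ and $q^{(n-l+1)j}=q^{-lj}x^j$, the latter being exactly what ties the upper limit $n-l$ to the variable $x$. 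Thus the entire $n$- and $a$-dependence of each monomial is absorbed into $x$ and $y$.

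Carrying this out termwise, I would set
\[
H(x,y,q)=\sum_{i\ge 0,\ j\ge 1}\frac{G_{i,j}(q)}{1-q^j}\Big(q^{j}x^iy^j-q^{-lj}x^{i+j}\Big).
\]
Equivalently, $H$ is a telescoping antiderivative: its positive-$y$ part is the unique solution of the difference equation $H(x,q^{-1}y,q)-H(x,y,q)=G(x,y,q)$, so that $\sum_{m=a+1}^{n-l}G(q^{n+1},q^m,q)$ telescopes to $H(q^{n+1},q^a,q)-H(q^{n+1},q^{n-l},q)$; the boundary value $H(x,q^{-l-1}x,q)$, which is a series in $x$ alone because $q^{n-l}=q^{-l-1}x$, is exactly the second sum, parked in the $y^0$-slot. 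This yields \eqref{eq.14} on the nose.

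For the adequacy bookkeeping I would estimate $\mindeg_q$ of each coefficient. Since $\tfrac{q^j}{1-q^j}\in q^j\BZ[[q]]$, the first term contributes an $x^iy^j$-coefficient of minimal degree $>j-(l+t)i-C\ge -(l+t)i-C$. Since $\tfrac{q^{-lj}}{1-q^j}\in q^{-lj}\BZ[[q]]$, the second term contributes, after collecting $i+j=k$, an $x^{k}$-coefficient of minimal degree $>-lj-(l+t)(k-j)-C=-(l+t)k+tj-C\ge -(l+t)k-C$. Hence $H(xq^{l+t},y,q)\in q^{-C}\BZ[[q]][[x,y]]$ with the same $C$, i.e.\ $H$ is weakly $x$-adequate of order $\le l+t$, as required; convergence of the substitutions $x=q^{n+1}$, $y=q^a$ in the stated range $n\ge l+t+1$, $n\ge l+a+1$ is guaranteed by Lemma \ref{lem.weaklyx}.

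The one delicate point — and the crux of the argument — is the $y^0$-part of $G$. For $j=0$ the inner sum degenerates to the constant-length sum $\sum_{m=a+1}^{n-l}1=(n-l)-a=\log_q(x/y)-(l+1)$, which is not a power series in $x,y$; correspondingly the first-difference operator $H\mapsto H(x,q^{-1}y,q)-H(x,y,q)$ annihilates $y^0$ and is invertible precisely on the positive-$y$ part. I would dispose of this by recalling how the lemma is invoked: the series fed to it always carries the factor $G_{\th<\th'}\in y^{\ell}\BZ((q))[[x,y]]$ produced by Lemma \ref{lem.FSk}, with $\ell=\vv\th'\vv_B-\vv\th\vv_B\ge 1$, so the relevant $G$ has $G_{i,0}=0$ for all $i$. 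Thus $G$ lies in the image of the first-difference operator, the inversion above applies verbatim, and the constructed $H$ satisfies \eqref{eq.14} with the required adequacy. The entire difficulty is therefore isolated to this single vanishing of the $y^0$-input, which the structure of Lemma \ref{lem.FSk} supplies.
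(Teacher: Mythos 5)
Your proof is correct and is essentially the paper's own argument: the paper sums the geometric series $\sum_{m=a+1}^{n-l} q^{mj} = \frac{y^jq^j - x^jq^{-lj}}{1-q^j}\big|_{x=q^{n+1},\,y=q^a}$ and defines $H(x,y,q)=\sum_{i,j}G_{i,j}(q)\,x^i\,\frac{y^jq^j-x^jq^{-lj}}{1-q^j}$, which is exactly your formula (your telescoping description is just a repackaging of it), with the adequacy estimates dismissed as ``easy to see'' where you carry them out in full. The one point where you go beyond the paper is the $j=0$ term: the paper's displayed $H$ divides by $1-q^j$, which vanishes at $j=0$, and indeed for $G$ with nonzero $y^0$-part (e.g.\ $G=1$) the sum equals $n-l-a$, which is not of the form $H(q^{n+1},q^a,q)$ for any fixed series, so the lemma as literally stated implicitly requires $G\in y\,\BZ((q))[[x,y]]$; your observation that the sole invocation (via Lemma \ref{lem.FSk}, whose output lies in $y^{\ell}\BZ((q))[[x,y]]$ with $\ell\ge 1$) supplies exactly this vanishing is the correct repair of a gap the paper's proof passes over in silence.
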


\begin{proof}
Let $G(x,y,q) = \sum G_{i,j}(q)  x^i y^j \in \BZ((q)) [[x,y]]$. We have:

\be
\sum_{m=a+1}^{n-l} q^{mj} = \frac{q^{j(a+1)} -q^{(n+1-l)j} }{1-q^j}
= \frac{y^jq^j -x^j q^{-lj} }{1-q^j}\Big |_{x= q^{n+1}, y= q^a}
\ee
Hence if we define
$$
H(x,y,q)= \sum_{i,j} G_{i,j}(q) x^i \frac{y^jq^j -x^j q^{-lj} }{1-q^j} \,,
$$
then \eqref{eq.14} holds. It is easy to see that $H$ is weakly
$x$-adequate of order $\le l+t$.
\end{proof}


\section{Partition of the set of  $k$-bounded states}
\lbl{sec.sk}

In this section, we will prove Propositions \ref{prop.sk1} and
Proposition \ref{p.533}. We will fix an $A$-infinite alternating, 
diagram $D$ with $c_D$ crossings. We assume that $D$ represent a
non-trivial link, hence $c_D \ge 2$.

\subsection{Some lemmas about $k$-centered states}
\lbl{sub.some}

Suppose $p$ is a $B$-polygon of $\bD$. Recall that the orientation of
every edge of $p$ is counterclockwise. Incident to the ending vertex of
en edge $e \in \cE(p)$
there are two edges of $\bD$ no belonging to $p$; one of them is $\te$
defined in Section \ref{sub.Bstates}, and  let $\ce$ be the other edge
as in Figure \ref{figureA}.

\begin{figure}[htpb]
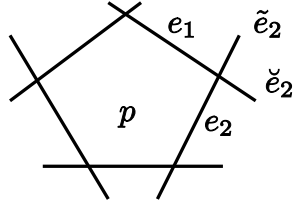

$$
\psdraw{figureA}{1.5in}
$$
\caption{A vertex of a $B$-polygon $p$ and its neighboring edges.}
\lbl{figureA}
\end{figure}

Adding up Equations \eqref{c2} for all vertices of $p$,
and using the definition of $s(p)$ from Equation \eqref{def.spoly}
it follows that
\be
\lbl{eq.TP}
s(p)=\sum_{e\in \cE(p)} s(\ti e) =\sum_{e\in \cE(p)} s(\breve{e}) \,.
\ee

\begin{lemma}
\lbl{lem.494}
Suppose $p\in \cP(\bD)$ is a $B$-polygon, $s'$ is a centered state,
$l \in \BN$, and
$$
s= ls_p + s'\,.
$$
Then
$$
Q_2(s) = Q_2(s') + l(s'(p)+1)\,.
$$
\end{lemma}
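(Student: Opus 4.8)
The plan is to split $Q_2 = Q + L_1$ and evaluate each summand on $s = l\,s_p + s'$ separately, using that $L_1$ is linear and that $Q$ is a genuine quadratic form whose polarization is visible from its angle description in Remark~\ref{why}(b). First I would handle $L_1$: since $L_1$ is a linear functional on $S_{\bD,\BR}$, write $L_1(s) = l\,L_1(s_p) + L_1(s')$. The support $\gamma = \partial p$ of $s_p$ is the boundary of a $B$-polygon, so $N_{\gamma,A}=0$, and by Lemma~\ref{l67}(b) such a circle is counterclockwise, giving $W(\gamma)=1$. Lemma~\ref{lem.L1}(a) then yields $L_1(s_p) = W(\gamma) + \tfrac12 N_{\gamma,A} = 1$, hence $L_1(s) = L_1(s') + l$.

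Next I would treat $Q$. Using $Q(t) = \tfrac12\sum_\alpha t(e_\alpha)t(f_\alpha)$ over $A$-angles $\alpha$ with edges $e_\alpha,f_\alpha$ (Remark~\ref{why}(b)), I regard $Q$ as a quadratic form with associated cross term
$$
\mathcal B(x,y) = \tfrac12 \sum_\alpha \big(x(e_\alpha)\,y(f_\alpha) + x(f_\alpha)\,y(e_\alpha)\big),
$$
so that $Q(l\,s_p + s') = l^2 Q(s_p) + l\,\mathcal B(s_p,s') + Q(s')$. By Lemma~\ref{lem.Q}(b) the boundary of a $B$-polygon satisfies $Q(s_p) = N_{\gamma,A}/2 = 0$, which removes the $l^2$ term and leaves $Q(s) = Q(s') + l\,\mathcal B(s_p,s')$. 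The entire lemma then reduces to the single identity $\mathcal B(s_p,s') = s'(p)$.

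The main step is this cross-term computation, and it is where all the combinatorics sits. Since $s_p$ is $1$ on the edges of $\partial p$ and $0$ elsewhere, only $A$-angles having a boundary edge of $p$ among their two edges contribute to $\mathcal B(s_p,s')$; because $D$ is reduced, $\partial p$ is a simple cycle (Lemma~\ref{l67}(c)), so these are exactly the $A$-angles at the vertices of $p$. At a vertex $v$ of $p$ the two edges of $p$ form the $B$-corner, while the remaining two edges are the non-boundary edges $\te$ and $\ce$ of Section~\ref{sec.sk}; inspecting the cyclic order of the four edges shows that the two $A$-angles at $v$ are precisely (boundary edge, $\te$) and $(\ce$, boundary edge). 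Each such angle contributes half the $s'$-value of its non-boundary edge, so the total contribution at $v$ is $\tfrac12\big(s'(\te) + s'(\ce)\big)$. Summing over the vertices of $p$ — equivalently over $e \in \cE(p)$, each vertex being the ending vertex of a unique boundary edge — and invoking \eqref{eq.TP}, which gives $\sum_{e\in\cE(p)} s'(\te) = \sum_{e\in\cE(p)} s'(\ce) = s'(p)$, produces $\mathcal B(s_p,s') = \tfrac12\big(s'(p)+s'(p)\big) = s'(p)$.

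Finally I would assemble the pieces:
$$
Q_2(s) = Q(s) + L_1(s) = \big(Q(s') + l\,s'(p)\big) + \big(L_1(s') + l\big) = Q_2(s') + l\,(s'(p)+1),
$$
which is the assertion. I expect the only genuine work to be the bookkeeping in the cross-term step: matching the two $A$-angles at each vertex of $p$ to the edges $\te$ and $\ce$, and confirming through \eqref{eq.TP} that their $s'$-values sum to $2s'(p)$. Everything else is formal once $Q(s_p)=0$ and $L_1(s_p)=1$ are in hand.
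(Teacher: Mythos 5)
Your proof is correct and follows essentially the same route as the paper's: both rest on the angle-sum description of $Q$ from Remark \ref{why}(b), the linearity of $L_1$ together with $L_1(s_p)=1$, and the identity \eqref{eq.TP} to evaluate the contribution of the $A$-angles at the vertices of $p$. The only cosmetic difference is that you organize the computation as a polarization identity and kill the $l^2$ term via $Q(s_p)=0$ (Lemma \ref{lem.Q}(b)), whereas the paper computes the difference $Q(s)-Q(s')$ directly — the underlying combinatorial fact, that each $A$-angle at a vertex of $p$ contains exactly one edge of $\partial p$ (your careful appeal to Lemma \ref{l67}(c) here is a nice touch the paper leaves implicit), is identical.
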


\begin{proof}
Recall that for a centered state $s$,
\be
Q(s)= \frac12 \sum_\al ab, \lbl{e342}
\ee
 where the sum is over all angles $\al$ of type $A$, and $a$ and $b$
are the $s$-values of the two edges forming the angle $\al$.

Note that $s(e)=s'(e)$ except when $e$ is an edge of $p$. Hence
\be
\lbl{e356}
Q(s) - Q(s') =  \frac12 \sum_ {\al} \big( s(e)s(f) - s'(e) s'(f)\big)\,,
\ee
where the sum is over all $A$-angles $\al$ whose vertex is a vertex
of $p$. Each vertex $v$ has two $A$-angles, and each such $A$-angle
has one edge in $p$, denoted by $e$ in \eqref{e356},
and one edge not belonging to $p$, denoted by $f$ in \eqref{e356}.
Then $s(f)=s'(f)$ and $s(e)-s'(s)=l$, hence from \eqref{e356}
$$
Q(s) - Q(s') = l \sum_{\al } s'(f)/2= l s'(p)\,.
$$
Since $L_1$ is linear we have have $L_1(s)- L_1(s') = L_1(l s_p) =l$,
where the last identity comes from \eqref{e151}. Hence
\be
\lbl{eq.QQ2}
Q_2(s)- Q_2(s')  = Q(s)- Q_2(s')  + L_1(s) - L_1(s')  = l(s'(p)+1)
\,.
\ee
\end{proof}

\begin{lemma}
\lbl{lem.176}
Suppose $p$ is a $B$-polygon, $s$ a centered state, and
$m= \max_{v\in \cV(p)}s(v)$. Then for $s(e) \ge m - s(p)$ for every $e\in \cE(p)$.
\end{lemma}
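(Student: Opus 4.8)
The plan is to reduce the inequality to a combinatorial matching statement along the boundary cycle of $p$ and then exhibit the matching explicitly. Orient $\partial p$ counterclockwise and list its vertices and edges cyclically as $v_1,\dots,v_r$ and $e_1,\dots,e_r$, with $e_i$ running from $v_i$ to $v_{i+1}$ (indices mod $r$); since $D$ is reduced, Lemma \ref{l67}(c) guarantees this cycle is simple, so these are $r$ distinct vertices and $r$ distinct edges. The first step is the identity $s(p)=S_V-S_E$, where $S_V=\sum_i s(v_i)$ and $S_E=\sum_i s(e_i)$. This follows from \eqref{eq.TP}: at the ending vertex $v_{i+1}$ of $e_i$ the edges $e_i$ and $\te_i$ form a collinear (opposite) pair, so \eqref{c2} and \eqref{eq.sv} give $s(e_i)+s(\te_i)=s(v_{i+1})$, whence $s(\te_i)=s(v_{i+1})-s(e_i)$ and therefore $s(p)=\sum_i s(\te_i)=S_V-S_E$.

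Substituting this identity, the target bound $s(e_i)\ge m-s(p)$ is equivalent to $\sum_{j\ne i}s(e_j)\le S_V-m$. The elementary ingredient I would record next is that $s(e)\le s(v)$ whenever $v$ is an endpoint of an edge $e$: at the crossing $v$ the edge opposite $e$ is collinear with it, so by \eqref{eq.sv} their two values sum to $s(v)$, and the partner value is nonnegative because $s$ is $\BN$-valued. In particular $s(e_j)\le s(v_j)$ and $s(e_j)\le s(v_{j+1})$ for every $j$, i.e. each edge value is dominated by the value at either of its two endpoints.

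It then remains to produce an injection $\phi$ from the edges $\{e_j:j\ne i\}$ onto the vertices $\{v_k:k\ne t\}$, where $v_t$ attains the maximum $m$, with $\phi(e_j)$ always an endpoint of $e_j$; summing $s(e_j)\le s(\phi(e_j))$ over this bijection gives $\sum_{j\ne i}s(e_j)\le\sum_{k\ne t}s(v_k)=S_V-m$, which finishes the proof. I would build $\phi$ by hand: deleting the edge $e_i$ turns the cycle into a path through all $r$ vertices, and deleting the vertex $v_t$ splits that path into (possibly empty) left and right arcs. Sending each remaining edge to its endpoint that is \emph{farther from} $v_t$ along the path makes the left-arc edges land on the vertices strictly on one side of $v_t$ and the right-arc edges on those strictly on the other side, so the $r-1$ edges map bijectively onto the $r-1$ vertices distinct from $v_t$, each with $s(e_j)\le s(\phi(e_j))$ by the previous paragraph. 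I expect this matching step to be the only real obstacle: all the algebra collapses once the cycle bookkeeping is fixed, and the crux is checking that the explicit ``farther endpoint'' rule is genuinely a bijection onto $\{v_k:k\ne t\}$ (uniformly, including the cases where $v_t$ is an end of the path). The two hypotheses that $D$ is reduced, so that $\partial p$ is a simple cycle, and that $s\ge 0$ on edges are precisely what make this construction valid.
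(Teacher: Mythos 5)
Your proof is correct, but it follows a genuinely different route from the paper's. Both arguments rest on the same two local facts — at any vertex $v$ the two collinear edges have $s$-values summing to $s(v)$ (Equations \eqref{c2} and \eqref{eq.sv}), and $s\ge 0$ — yet the paper's proof is a one-pass telescoping argument: it anchors at the vertex realizing the maximum $m$ (the head of $e_1$), writes $s(e_1)=m-s(\te_1)$, adds the inequalities $s(e_j)-s(e_{j-1})=s(\ce_j)-s(\te_j)\ge -s(\te_j)$ along the boundary until it reaches the target edge, and concludes $s(e)\ge m-\sum_{j} s(\te_j)\ge m-s(p)$, the last step using nonnegativity of the omitted terms in the defining sum \eqref{eq.TP}. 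You instead globalize: the identity $s(p)=S_V-S_E$ turns the claim into the counting inequality $\sum_{j\ne i}s(e_j)\le S_V-m$, which you then prove by an explicit edge-to-endpoint bijection that avoids the maximal vertex. The trade-offs are worth noting. The paper's telescoping is shorter and in fact proves something slightly sharper: $s(e)$ is bounded below by $m$ minus only the partial sum of the $s(\te_j)$ along the arc from the maximal vertex to $e$, not the full $s(p)$. It also works for an arbitrary closed boundary walk, so it never needs $\partial p$ to be embedded. Your identity $s(p)=S_V-S_E$ and your matching, by contrast, do require $\partial p$ to be a simple cycle (each vertex the head of exactly one boundary edge), which you correctly secure from reducedness via Lemma \ref{l67}(c); this is legitimate since reducedness is a standing hypothesis, but it is a hypothesis the paper's own proof of this particular lemma avoids. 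Finally, both proofs (yours explicitly, the paper's implicitly through $s(\ce_j)\ge 0$) use that $s$ is $\BN$-valued, even though the statement only says ``centered state''; in the context of Section \ref{sec.sk} this is the intended reading.
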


\begin{proof}
Suppose $m=s(v)$, where $v$ is the ending vertex of the edge $e_1$.
Assume that $e_1,e_2,\dots e_t$ are all edges of $p$, counting clockwise,
as in Figure \ref{figureA}.
By identity \eqref{c2} at the ending vertex of $e_j$, we have
$s(e_j) - s(e_{j-1})= s(\ce_j)-s(\te_j)$. Hence
\be
\lbl{e367}
s(e_j) - s(e_{j-1}) \ge - s(\te_j) \,.
\ee
Summing the above inequalities with $j$ from $2$ to $n$, together with the
identity $s(e_1)= m - s(te_1)$, we have
$$
s(e_n) \ge m - \sum_{j=1}^n s(\te_j) \ge m - s(p) \,.
$$
\end{proof}

\subsection{Assumption on lower bound of $|s|$}

\begin{definition}
For a centered state $s$ and a positive integer $k$,
a polygon $p\in\cP(\bD)$  is $(k,s)$-big if $s$ achieves the maximal
value $|s|$ at one of the the vertices of $p$ and $s(p) <k$.
\end{definition}

For a vertex $v$ of $\bD$ there are 4 polygons in $\cP(\bD)$ incident to
$v$, i.e. having $v$ as a vertex, and two of them are $B$-polygons which
are opposite through $v$.

\begin{lemma}
\lbl{lem.000}
Suppose $s$ is a $k$-bounded centered state satisfying
\begin{equation}
\lbl{eq.assumeA}
|s|>12k(2k+1) c_D.
\end{equation}
\rm{(a)} Any $(k,s)$-big polygon is a $B$-polygon and any two $(k,s)$-big
polygons are disjoint.
\newline
\rm{(b)}  Suppose $s$ achieves maximal at a vertex $v$, i.e. $s(v)=|s|$. Then
exactly one of the two $B$-polygons
incident to $v$ is $(k,s)$-big.
\end{lemma}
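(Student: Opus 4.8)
The plan is to run everything off three inputs already in place. First, the centered–state relation at a vertex: the value $s(v)$ is the common sum $s(e)+s(\te)$ over the two collinear strands $\{e,\te\}$ through $v$ (Equation \eqref{eq.sv}), so that for any polygon $p$ the two nonnegative expressions $s(p)=\sum_{e\in\cE(p)}s(\te_e)=\sum_{e\in\cE(p)}s(\ce_e)$ of \eqref{eq.TP} hold. Second, Lemma \ref{lem.176}: for a $B$-polygon with a maximal vertex every edge satisfies $s(e)\ge|s|-s(p)$. Third, the two-sided control $|s|\le Q_2(s)\le(k+\tfrac13)|s|$ coming from copositivity (Proposition \ref{prop.Q+L1}) and the $k$-bounded hypothesis, together with the angle formula $Q(s)=\tfrac12\sum_\al s(e_\al)s(f_\al)$ over $A$-angles (Remark \ref{why}(b)) and $Q_2\ge Q$ (as $L_1\ge0$ by Lemma \ref{lem.L1}).

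For part (a) I first show a big polygon must be a $B$-polygon. If $p$ were a $(k,s)$-big $A$-polygon and $v\in\cV(p)$ a vertex with $s(v)=|s|$, the corner of $p$ at $v$ is an $A$-angle whose two edges $e_1,e_2\in\cE(p)$ have collinear partners $\te_{e_1}$ and $\ce_{e_1}$; each of these is one of the nonnegative terms defining $s(p)$, so $s(e_i)=|s|-s(\te_{e_1})\ge|s|-s(p)>|s|-k$. Then $Q(s)\ge\tfrac12 s(e_1)s(e_2)>\tfrac12(|s|-k)^2$, which against $Q_2(s)\le(k+\tfrac13)|s|$ forces $|s|<4k+1$, contradicting \eqref{eq.assumeA}. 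Disjointness is similar: if two $(k,s)$-big ($B$-)polygons shared a vertex $w$, Lemma \ref{lem.176} makes all four edges at $w$ exceed $|s|-k$, and the two edges of one collinear strand (one from each polygon) give $s(w)>2(|s|-k)>|s|$, impossible once $|s|>2k$.

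For part (b) write $p_1,p_2$ for the two $B$-polygons at $v$ and $e_1,e_2\in\cE(p_1)$, $e_3,e_4\in\cE(p_2)$ for the four edges at $v$, with collinear strands $(e_1,e_3),(e_2,e_4)$, so $s(e_1)+s(e_3)=s(e_2)+s(e_4)=|s|$. Since $e_3$ is the collinear spoke of $p_1$ at $v$ and $e_1$ that of $p_2$, we get $s(p_1)\ge s(e_3)$ and $s(p_2)\ge s(e_1)$, hence $s(p_1)+s(p_2)\ge|s|>2k$; thus $s(p_1),s(p_2)$ cannot both be $<k$, giving \emph{at most one} big polygon.

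Producing \emph{at least one} big polygon is the step I expect to be hardest. The entry point is a ``no wide $A$-angle'' principle: any $A$-angle with both edges $\ge|s|/2$ contributes $\ge|s|^2/8$ to $Q$, so under $Q_2(s)\le(k+\tfrac13)|s|$ and $|s|>8(k+\tfrac13)$ no such angle exists. Applied to the $A$-angles $(e_1,e_4)$ and $(e_2,e_3)$ at $v$ together with the strand sums, this forces (after possibly swapping $p_1\leftrightarrow p_2$) both $p_1$-edges to be fat, $s(e_1),s(e_2)\ge|s|/2$, and both $p_2$-edges thin, $<|s|/2$; in particular $s(p_2)\ge s(e_1)\ge|s|/2>k$, so $p_2$ is not big and only $p_1$ can be. It remains to prove $s(p_1)<k$. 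The crude estimate $Q(s)\ge Q_{p_1}\ge(|s|-s(p_1))\,s(p_1)$ — obtained by summing the $A$-angle contributions at the vertices of $p_1$, each pairing a fat $p_1$-edge (bounded below by $|s|-s(p_1)$ via Lemma \ref{lem.176}) with a spoke, the spokes totalling $2s(p_1)$ — only yields $s(p_1)\le 2k+\tfrac23$. Closing the gap from $2k$ down to $<k$ is the real difficulty and is where the quadratic, $c_D$-dependent threshold $|s|>12k(2k+1)c_D$ must enter: the plan is to charge the spoke-flux of $p_1$ globally, since each unit of spoke-flux is an edge of a neighboring polygon and is paid for by further $A$-angle contributions to $Q$ distributed over the at most $c_D$ polygons of $\bD$, and the largeness of $|s|$ relative to $c_D k^2$ is exactly what makes these accumulated contributions incompatible with $Q_2(s)\le(k+\tfrac13)|s|$ unless $s(p_1)<k$. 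Carrying out this bookkeeping, rather than the one-polygon estimate above, is the technical heart of the proof.
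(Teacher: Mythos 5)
Your part (a) and the ``at most one'' half of part (b) are correct, and in places cleaner than the paper's own arguments. Your disjointness proof via the collinear--strand identity $s(w)=s(e)+s(\te)$ (giving $s(w)>2(|s|-k)>|s|$) and your counting argument $s(p_1)+s(p_2)\ge s(e_1)+s(e_3)=s(v)=|s|>2k$ replace the paper's repeated use of the $A$-angle quadratic bound, and they work. Your identification of the unique candidate polygon (both $p_1$-edges at $v$ fat, both $p_2$-edges thin, hence $s(p_2)\ge s(e_1)\ge |s|/2>k$) also matches the paper's choice of the $B$-polygon containing an edge of value $\ge|s|/2$ at $v$.

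The existence half of (b) is a genuine gap, and the repair you sketch cannot work as described, because any bookkeeping that only accumulates $A$-angle contributions to $Q$ is blind to the decisive term. Concretely, consider states of the form $s=m\,s_{p_1}+s'$ with $m$ large and $s'(p_1)=k$: the computation behind Lemma \ref{lem.494} gives exactly $Q(s)=Q(s')+mk$, which sits \emph{below} the threshold $(k+\tfrac13)|s|\approx(k+\tfrac13)m$, so the quadratic form alone is perfectly consistent with $s(p_1)=k$, i.e.\ with $p_1$ failing to be $(k,s)$-big. Such states are excluded only by the linear functional $L_1$: since $L_1(m s_{p_1})=m$, Lemma \ref{lem.494} yields $Q_2(s)=Q_2(s')+m\bigl(s'(p_1)+1\bigr)$, and it is this ``$+1$'' --- invisible to $Q$ --- that converts the bound $s(p_1)\le k$ into the needed strict inequality $s(p_1)<k$. (Your own crude estimate $(|s|-t)t\le(k+\tfrac13)|s|$ with $t=s(p_1)$, even pushed with integrality and \eqref{eq.assumeA}, stalls exactly at $t\le k$, since $t=k$ satisfies it.) The paper closes the argument with two ingredients you are missing. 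First, a propagation step (its Claims 1 and 2): an $A$-angle estimate at each vertex shows that once an edge of $p_1$ exceeds $|s|-4kc_D$, any adjacent edge of $p_1$ is within $2k$ of it; starting from the fat edge at $v$ and walking around $p_1$ (a path of at most $c_D-1$ edges) gives $s(g)\ge|s|-4kc_D$ for \emph{every} edge $g$ of $p_1$ --- this is precisely where $c_D$ enters the threshold \eqref{eq.assumeA}. Second, the peeling step: if $s(p_1)\ge k$, then $s'=s-(|s|-4kc_D)s_{p_1}$ is a genuine (nonnegative) centered state by the propagation claim, and Lemma \ref{lem.494} together with copositivity (Proposition \ref{prop.Q+L1}) gives $Q_2(s)\ge(|s|-4kc_D)\bigl(s(p_1)+1\bigr)\ge(|s|-4kc_D)(k+1)>(k+\tfrac13)|s|$, contradicting $k$-boundedness. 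Any completion of your proof would have to be reorganized around $L_1$ (equivalently, around Lemma \ref{lem.494}) rather than around further contributions to $Q$.
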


\begin{proof}
(a)
If two edges $e,f \in \cE(\bD)$ form an $A$-angle, then from \eqref{e342}
we have $Q(s) \ge s(e) s(f) /2$. Hence if $s$ is $k$-bounded we have
\be
\lbl{e382}
(k+1/3)|s| \ge \frac {s(e) s(f)} 2 \,.
\ee
If $p$ is an $A$-polygon, then any two consecutive edges of $p$ form an
$A$-angle.
Suppose $p$ is $(k,s)$-big. Then $s(p)<k$, and by Lemma \ref{lem.176},
$s(e) > |s|-k$ for every edge $e$ of $p$.  Also, from \eqref{eq.assumeA}
it is clear that $k < |s|/2$. It follows from \ref{e382}  that
\be
 (k+1/3) |s| \ge (|s|-k)^2/2 > |s|^2 /8.
 \lbl{e384}
 \ee
Hence $|s|< 8(k+1/3)$, which contradicts \eqref{eq.assumeA}.

Now suppose  two $(k,s)$-big polygons share a common vertex $v$. Then for
any $A$-angle at $v$ the $s$-value of any edge is $\ge |s|-k$. We again
lead to  \eqref{e384}, which is a contradiction.

(b) To prove part (b) we first prove a few claims.

{\bf Claim 1.}
Suppose $p$ is a $B$-polygon of $\bD$. Assume that
$s(e) > |s| - 4k c_D$ for an edge $e$ of $p$.
Then $s(e') \ge s(e) -2k$ for any edge $e'$ of $p$ incident to $e$.

\begin{proof}[Proof of Claim 1]
Assume the contrary that $s(e') < s(e) -2k$.
Suppose $v$ is the common vertex of $e,e$ and $f,f'$ are the two remaining
edges incident to $v$ such that $f$ is opposite to $e$ as in
the following Figure \ref{f.Pv}.

\begin{figure}[htpb]
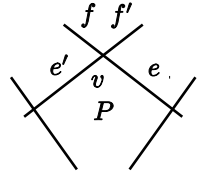

$$
\psdraw{Pv}{1in}
$$
\caption{A vertex $v$ of $B$-polygon and its neighboring edges.}
\lbl{f.Pv}
\end{figure}

Since $ s(f')-s(f)  = s(e) - s(e')>2k$, we
have $s(f') \ge 2k+1$. Since the angle between $e$ and $f'$ is of type $A$,
$$
Q_2(s) \ge s(e)s(f')/2 \ge (2k+1) (|s|- 4k c_D)/2 > |s|(k+1/3),
$$
where the last inequality follows from \eqref{eq.assumeA}.
The above inequality contradicts the assumption that $s$ is $k$-bounded.
\end{proof}

 Let $v$ be a vertex of $\bD$
where $s(v)=|s|$. One of the four edges incident to $v$, say $e$,  has
$s$-value $\ge |s|/2$.
Let  $p$ be the unique $B$-polygon of $\bD$ having
$e$ as an edge on the boundary. We will prove that $p$ is $(k,s)$-big.

{\bf Claim 2.}
The $s$-value of every edge  of $p$ is $ \geq |s|- 4k c_D$.

\begin{proof}[Proof of Claim 2] Besides $e$, suppose $e'$ is the other
edge incident to $v$ which is also an edge of $p$, and $f,f'$ are the
other two edges which are not edges of $p$,
as in Figure \eqref{f.Pv}. Note that the number of edges of $p$ is less
than $2c_D$, the total number of edges of $\bD$.

By  \eqref{e382} and $s(e) \ge |s|/2$,  we have

$$
(k+ 1/3)|s| \ge s(e) s(f') /2 \ge |s| s(f')/4 \,.
$$
It follows that $s(f') \le 4(k+1/3) \le 4k+2$, and hence by Equations
\eqref{c2} we have $s(e') \ge |s| - 4k-2$.

If $g$ is an edge of $p$, then there is a path from $e'$ to $g$
consisting of at most $c_D-1$ edges. It follows from Claim 1 that
$$s(g) \ge s(e') - 2k(c_D-1) \ge  |s| - 4k-2 - 2k(c_D-1)
= |s| -(2k+2 + 2k c_D) \ge |s| -4k c_D.$$
 In the last inequality we used that fact that $k\ge 1$ and $c_D \ge 2$.
\end{proof}

Now we can  finish the proof of part (b) of Lemma \ref{lem.000}.
It remains to prove that $s(p) <k$.
 Assume the contrary  $s(p) \geq k$.
By Claim 1, $s':= s - (k- 4k c_D) s_p$ takes non-negative value at every
edge of $\bD$, hence $s'$ is a centered state, and $Q_s(s') \ge 0$.
Note that $s(p)= s'(p)$, since $s$ and $s'$ agree on any edge not
belonging to $p$. By Lemma \ref{lem.494},
$$
Q_2(s)= Q_2(s') + (|s|- 4 k c_D) (s(p)+1)  \ge (|s|- 4 k c_D)(k+1)
> (k+1/2)|s|\,,
$$
which contradicts the  $k$-boundedness of $s$. This completes the
proof of Lemma \ref{lem.000}.
\end{proof}

\subsection{Proof of Proposition \ref{p.533}}
\lbl{sub.p.533}

Part (a). Suppose $s \in \calS^{(k)}_{\th < \th'}$ has the presentation \eqref{eq.sk}
\be
\lbl{eq.sk10}
 s = s' + \sum_{p \in (\Pi' \setminus \Pi)}(m-s(p)) s_p
\ee
with $s' \in S_{\th'}$ and $|s'| < m$. The $s'(e)=s(e)$ for every edge
$e$ outside $\Pi' \setminus \Pi$. Hence if $v$ is is not a vertex of
any $p\in (\Pi' \setminus \Pi)$, then
$s(v) = s'(v) < m$.

On the other hand if $v$ is a vertex of $p\in (\Pi' \setminus \Pi)$, then

$$
s(v) = s'(v) + (m- s'(p)) \le m \,,
$$
where the inequality follows from the fact that $s'$ is balanced at $p$.
But there is a vertex of $p$ such that $s'(v)= s'(p)$, and for which
$s(v)=m$. It follows that the maximum of $s(v)$ is $m$, or
$|s|=m$.

Identity \eqref{eq.Q2} follows right away from Lemma \ref{lem.494}.
Identity \eqref{eq.L0} follows that the fact that $L_0$ is a linear map,
$L_0(s_p)=\equiv 2 L_1(s_p) \equiv 2\pmod 2$,  by Lemmas \ref{p.210} and
{lem.L1}(a).

Part(b). We have to show that $s'$ and $m$ are uniquely determined by
$s$. In fact, by part (a), $m = |s|$. Then \eqref{eq.sk10} shows that
$s'$ is determined by $s$ and $m$.
 This completes the proof of Proposition \ref{p.533}.
\qed

\subsection{Proof of Proposition \ref{prop.sk1}}
\lbl{sub.prop.sk1}
Suppose
$\th=(\Pi,\sigma)$ is a seed 
and consider a $k$-bounded centered state
$s \in \calS^{(k)}_{\th}$. Recall that $|\th|= \max_{v \in \cV(\Pi)} \sigma(v)$.
Assume that
\be
\lbl{e575}
|s|> \max(12k(2k+1) c_D, |\th|+k).
\ee
Will show that if $s \in \calS^{(k)}_{\th}$ satisfying the lower bound
\eqref{e575}, then there is a unique $\th' > \th$ with
$|\th'\setminus \th|<k$ such that
$s \in \calS^{(k)}_{\th < \th'}$. This will prove Proposition \ref{prop.sk1}.

{\bf Uniqueness.} Assume that  $s \in \calS^{(k)}_{\th < \th'}$ with
$|\th'\setminus \th|<k$. Then $s$ has presentation \eqref{eq.sk10}.
By Proposition \ref{p.533}(a),   $m= |s|$ is uniquely determined by $s$.
 In the proof of  Proposition \ref{p.533}(a)  in Section \ref{sub.p.533}
we showed that if  $p\in (\Pi' \setminus \Pi)$ then there is a vertex
$v$ of $p$ such that $s(v)=|s|$.
We also have  that $s(p) \le |\th'\setminus \th|<k$. Thus every
$p \in \Pi'\setminus \Pi$ is $(k,s)$-big.

Conversely, suppose $p$ is a $(k,s)$-big polygon. Then there is a
vertex $v$ of $p$ such that $s(v)=|s|$. The proof of  Proposition
\ref{p.533}(a) showed that $v$ is a vertex of
 a polygon $p'\in \Pi'\setminus \Pi$. Both $p$ and $p'$ are incident
to $v$ and both are $(k,s)$-big. By Proposition \ref{lem.000}(a), $p=p'$.

 Thus $\Pi'\setminus \Pi$ is the set of all  $(k,s)$-big polygons. This
determines $\Pi'$ uniquely. Then \eqref{eq.sk10} shows that $s'$ is
uniquely determined by $s$, and
 hence $\sigma'$, which is the restriction of $s'$ on $\nbd(\Pi')$ is
uniquely determined by $s$. This completes the proof of uniqueness.

{\bf Existence.} The proof of the uniqueness already shows us how to
construct a presentation \eqref{eq.sk10} for $s \in \calS^{(k)}_{\th}$.

Let $\Psi$ be the set of all $(k,s)$-big polygons. If $p$ is $(k,s)$-big,
 then by Lemma \ref{lem.176} and \eqref{e575}, $s(v) > |s| - k>|\th|$
for every vertex $v \in \cV(p)$.
This implies if   
$p$ is disjoint from any polygon in  $\Pi$. In particular,
$\Pi \cap \Psi = \emptyset$. Let $\Pi'= \Pi \cup \Psi$.

By Lemma \ref{lem.176}, for any edge $e$ of a $B$-polygon $p \in \Psi$,
$s(e) \ge |s| - \s(p)$. Then
\be s' := s - \sum_{p \in \Psi}(|\s|-s(p)) s_p
\lbl{e080}
\ee
takes non-negative integer value at every edge of $\bD$, and hence is a
centered state. Note that $s(p)= s'(p)$ for any $p \in \Psi$ since
$s$ and $s'$ agree on any edge outside $\Psi$.  We will show that
\eqref{e080} gives us the presentation \eqref{eq.sk10}.

If $v$ is any vertex of $\bD$ for which $s(v)= |s|$, then Lemma
\ref{lem.000}(b) shows that $v$ is a vertex of some polygon $p\in \Psi$.
Hence $s'(v) = s(v) - (|s|-s(p)) < s(v)$.
This means $|s'| <|s|$.

If $v$ is vertex of $p\in \Psi$, then
$$
s'(v) = s(v) - (|s| - s(p)) = (s(v) - |s|) - s(p) \le s(p)= s'(p)\,.
$$
 On the other hand, if $v$ is a vertex of $p \in \Psi$ for which
$s(v)=|s|$, then the above identity shows that $s'(v) =s'(p)$. This
means $s'$ is balanced at every $p\in \Psi$. Since $s'= \sigma$ in
 $\nbd(\Pi)$, it is balanced at every $p\in \Pi$. Thus $s'$ is balanced
at every $p \in \Pi'= \Pi \cup \Psi$.

 Let $\sigma'$ be the restriction of $s'$ on $\nbd(\Pi')$ and
$\th'=(\Pi', \sigma')$. Then $s' \in S_{\th'}$, and \eqref{e080} gives
us the presentation
 \eqref{eq.sk10}, and we have $s \in \calS^{(k)}_{\th < \th'}$.

 Let us estimate $|\th'\setminus \th|$. By definition \ref{def.relseeds},
 $$ |\th'\setminus \th|= \max_{v \in \cV(\Psi)}s'(v) = \max_{p\in \Psi}s'(p) < k.$$

 Thus we conclude that every $s \in \calS^{(k)}_{\th}$  satisfying
\eqref{e575} is an element of $\calS^{(k)}_{\th < \th'}$ for some
$\th' > \th$ with $|\th'\setminus \th| < k$.
 This concludes the proof of the existence, and whence Proposition
\ref{prop.sk1}.
\qed


\section{Proof of Theorem \ref{thm.qholo}}
\lbl{sec.thm.qholo}

In this section we prove Theorem \ref{thm.qholo}.
It is well-known that pointwise sums and products of $q$-holonomic sequences
are $q$-holonomic (see \cite{PWZ,Z}). Moreover, the colored Jones polynomial
$(J_{K,n}(q))$ of every link  is $q$-holonomic \cite{GL}.
Using \eqref{eq.decomposition} we deduce that
$(\hat J_{K,n}(q))$ is $q$-holonomic for every alternating link $K$.
Using a recursion relation \eqref{eq.def.qholo} for $f_n(q)=\hat J_{K,n}(q)$
and the stability Theorem \ref{thm.2}, and collecting powers of $q$ and
$q^n$, it follows that $\Phi_{K,k}(q)$ is $q$-holonomic.

Using a linear recursion for $\Phi_{K,k}(q)$, it is easy to see that
$\mindeg_q(\Phi_{K,k}(q))$ is bounded below by a
quadratic function of $k$; see for example \cite[Thm.10.3]{GL2}. A stronger
statement is known \cite{Ga3}, namely $\mindeg_q(\Phi_{K,k}(q))$ is a
quadratic quasi-polynomial of $k$. This proves Equation \eqref{eq.degPhik}.

Equation \eqref{eq.quadstable} follows from Equation \eqref{eq.degPhik}
using Lemma \ref{lem.quadrate} below. This concludes the proof of Theorem
\ref{thm.qholo}.
\qed

\begin{lemma}
\lbl{lem.quadrate}
Fix $f_n(q) \in \BZ((q))$ and $\Phi_k(q) \in \BZ((q))$ and let
$$
R_{k,n}(q)=\left(f_n(q)-\sum_{j=0}^k \Phi_k(q) q^{j(n+1)}\right)q^{-k(n+1)}
$$
Assume that $\lim_{n\to\infty} R_{k,n}(q)=0$ for all $k$.
Then the following are equivalent:
\begin{itemize}
\item[(a)]
$\mindeg_q (\Phi_k(q)) \geq -C_1 k^2 - C_2$ for all $k$.
\item[(b)]
$\mindeg_q (R_{k,n}(q)) \geq n+1 -C_1 (k+1)^2 -C_2$ for all $k$ and all $n$
large enough.
\end{itemize}
\end{lemma}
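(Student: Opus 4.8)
The plan is to build everything on a single first-order recursion in $k$ relating the remainders $R_{k,n}$ to the limits $\Phi_k$. Writing the defining relation for both $R_{k,n}$ and $R_{k+1,n}$, clearing the factors $q^{-k(n+1)}$ and $q^{-(k+1)(n+1)}$, and subtracting, the telescoping in $\sum_{j=0}^{k}\Phi_j q^{j(n+1)}$ leaves only the single term $\Phi_{k+1}q^{(k+1)(n+1)}$. After dividing by $q^{k(n+1)}$ this yields the exact identity
$$
R_{k,n}(q)=q^{\,n+1}\bigl(\Phi_{k+1}(q)+R_{k+1,n}(q)\bigr),
\qquad\text{equivalently}\qquad
\Phi_{k+1}(q)=q^{-(n+1)}R_{k,n}(q)-R_{k+1,n}(q),
$$
valid for all $k$ and $n$. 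The only other ingredient I need is a qualitative consequence of the standing hypothesis $\lim_{n\to\infty}R_{k,n}(q)=0$: by Definition \ref{def.lim} and the remark following it, for each fixed $k$ one has $\mindeg_q(R_{k,n})\to+\infty$ as $n\to\infty$, so in particular $\mindeg_q(R_{k,n})\geq -C_1(k+1)^2-C_2$ once $n$ is large enough.

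For (a)$\Rightarrow$(b) I take $\mindeg_q$ of the first form of the identity, giving
$$
\mindeg_q\bigl(R_{k,n}\bigr)\ \geq\ (n+1)+\min\bigl\{\mindeg_q\Phi_{k+1},\ \mindeg_q R_{k+1,n}\bigr\}.
$$
Hypothesis (a) bounds the first entry by $-C_1(k+1)^2-C_2$, and the convergence input bounds the second by the same quantity for $n$ large; together these give exactly (b). For (b)$\Rightarrow$(a) I take $\mindeg_q$ of the second form: the term $q^{-(n+1)}R_{k,n}$ has minimum degree $\geq -C_1(k+1)^2-C_2$ directly from (b), while $\mindeg_q R_{k+1,n}\geq (n+1)-C_1(k+2)^2-C_2\geq -C_1(k+1)^2-C_2$ as soon as $n+1\geq C_1(2k+3)$. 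Since $\Phi_{k+1}$ is independent of $n$, evaluating at one admissible $n$ fixes $\mindeg_q\Phi_{k+1}\geq -C_1(k+1)^2-C_2$ for every $k\geq 0$, which is (a) at all indices $\geq 1$. The index $0$ is not produced by the recursion and must be handled as a base case: since $R_{0,n}=f_n-\Phi_0$, the element $\Phi_0\in\BZ((q))$ has some fixed finite minimum degree, so $\mindeg_q\Phi_0\geq -C_2$ holds after enlarging $C_2$ if necessary, which leaves the quadratic growth rate untouched.

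I expect the only delicate point to be the role of the next remainder $R_{k+1,n}$: in each direction the identity controls the object of interest not purely by its counterpart but also by $R_{k+1,n}$, and it is precisely the hypothesis $\lim_n R_{k,n}=0$ that tames this extra term, at the cost of requiring $n$ large (depending on $k$) — which is why statement (b) is phrased for $n$ large enough rather than for all $n$. Verifying that the elementary identities $(k+2)^2-(k+1)^2=2k+3$ and the two $\min$-estimates line up with the quadratic exponents is routine once the recursion is in hand, so no genuine obstacle remains beyond setting up the identity correctly and isolating the $\Phi_0$ base case.
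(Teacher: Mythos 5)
Your proof is correct and is essentially the paper's own argument: the same telescoping identity (the paper writes it as $\Phi_k(q)=R_{k,n}(q)-q^{-n-1}R_{k-1,n}(q)$, which is your identity after an index shift, with a harmless sign typo), the same observation that $\lim_{n\to\infty}R_{k,n}(q)=0$ forces $\mindeg_q(R_{k,n}(q))\to+\infty$, and the same two $\min$-valuation estimates, one for each direction. Your explicit treatment of the index-$0$ base case in (b)$\Rightarrow$(a) — where $C_2$ must be enlarged because the recursion only produces $\Phi_k$ for $k\geq 1$ — addresses a point the paper's proof silently passes over, and it is genuinely needed (with equal constants that direction can fail at $k=0$), though it only affects the additive constant and not the quadratic rate that the lemma is used for.
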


\begin{proof}
Let $v=\mindeg_q$. The assumption on $R_{k,n}(q)$ implies that
\be
\lbl{eq.degR}
\lim_{n\to\infty} \mindeg_q(R_{k,n}(q))=+\infty \,.
\ee
It is easy to see that for all $k$ and $n$ we have
\be
\Phi_k(q)=R_{k,n}(q)-q^{-n-1}R_{k-1,n}(q) \,.
\ee
It follows that
\be
\lbl{eq.PhiR2}
-n-1+v(R_{k-1,n}(q)) \geq \min\{v(R_{k,n}(q)), v(\Phi_k(q))  \}
\ee
and
\be
\lbl{eq.PhiR1}
v(\Phi_k(q)) \geq \min\{v(R_{k,n}(q)), -n-1+v(R_{k-1,n}(q))\}
\ee
Now, (a) implies (b) by Equations \eqref{eq.degR} and \eqref{eq.PhiR1}
and (b) implies (a) by Equations  \eqref{eq.degR} and \eqref{eq.PhiR2}.
\end{proof}


\section{An algorithm for the computation of $\Phi_{K,k}(q)$}
\lbl{sec.Phi1}

\subsection{A parametrization of $1$-bounded states}
\lbl{sub.1bdd}

In this section we will compute explicitly the series $\Phi_{K,1}(q)$
of an alternating knot in terms of a planar projection as in Theorems
\ref{thm.1} and \ref{thm.1a}. We begin with a corollary of
Proposition \ref{prop.sk1} for $k=$ and $\th=\emptyset$.

\begin{corollary}
\lbl{lem.1bdd}
Suppose that $s$ is a $1$-bounded centered state and $|s|>6$.
Then, there exists a
$B$-polygon $p$ and a state $s'$ such that
\be
\lbl{eq.sPs'}
s=|s|s_P + s'
\ee
and
$s(e)=0$ if $e$ is an edge of $\bD$ which contains a vertex of $p$.
Moreover, $(P,s')$ are uniquely determined by $s$.
\end{corollary}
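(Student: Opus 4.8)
The plan is to obtain this statement as the $k=1$, $\th=\emptyset$ specialization of Proposition \ref{prop.sk1}, and then to show that the seed produced there consists of a single $B$-polygon. First I would apply Proposition \ref{prop.sk1} with $k=1$ and $\th=\emptyset$: for $|s|$ beyond the threshold of that proposition, there is a unique seed $\th'=(\Pi',\sigma')>\emptyset$ with $|\th'\setminus\emptyset|<1$ and $s\in\calS^{(1)}_{\emptyset<\th'}$. Since $\sigma'$ takes values in $\BN$, the bound $|\th'\setminus\emptyset|=\max_v\sigma'(v)<1$ forces $\sigma'\equiv 0$, so in particular $\sigma'(p)=0$ for every $p\in\Pi'$. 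Unwinding Definition \ref{def.relseeds}(b), the presentation \eqref{eq.sk} then reads $s=s'+|s|\sum_{p\in\Pi'}s_p$ with $s'\in\calS_{\th'}$ and $|s'|<|s|$. Because seeds consist of $B$-polygons by Definition \ref{def.seed}, each $p\in\Pi'$ is already a $B$-polygon.

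The key step is to prove $\Pi'$ is a singleton. For this I would use the exact $Q_2$-count of Proposition \ref{p.533}(a): since $\sigma'(p)=0$ for every $p\in\Pi'$, formula \eqref{eq.Q2} gives
\[
Q_2(s)-Q_2(s')=\sum_{p\in\Pi'}\bigl(|s|-\sigma'(p)\bigr)\bigl(\sigma'(p)+1\bigr)=|\Pi'|\,|s|.
\]
By copositivity (Proposition \ref{prop.Q+L1}) we have $Q_2(s')\ge 0$, hence $Q_2(s)\ge |\Pi'|\,|s|$; on the other hand $1$-boundedness is exactly $Q_2(s)\le(4/3)|s|$. Therefore $|\Pi'|\le 4/3$, i.e. $|\Pi'|\le 1$, while $\th'>\emptyset$ strictly forces $\Pi'\neq\emptyset$. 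Thus $\Pi'=\{p\}$ for a single $B$-polygon $p$, and the presentation collapses to $s=|s|s_p+s'$, which is \eqref{eq.sPs'}.

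It then remains to read off the stated vanishing condition and uniqueness. Since $s(p)=\sigma'(p)=0$ and $s(p)=\sum_{e\in\cE(p)}s(\te)$ is a sum of non-negative terms by \eqref{def.spoly} and \eqref{eq.TP}, every edge incident to a vertex of $p$ but not lying on $\partial p$ has $s$-value $0$; meanwhile Lemma \ref{lem.176} (with $m=|s|$ and $s(p)=0$) gives $s(e)\ge|s|$, hence $s(e)=|s|$, for each edge of $p$. Equivalently $s'=s-|s|s_p$ vanishes on every edge meeting a vertex of $p$, which is the asserted condition (to be read for the edges off $\partial p$, where $s$ itself is $0$). Uniqueness of $(p,s')$ is inherited directly: the seed $\th'$, and therefore $p=\Pi'$, is unique by Proposition \ref{prop.sk1}, and for fixed $\emptyset<\th'$ the representing pair $(m,s')=(|s|,s')$ is unique by Proposition \ref{p.533}(b).

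The main obstacle I anticipate is the singleton claim, where everything hinges on pitting the exact linear growth $Q_2(s)=Q_2(s')+|\Pi'|\,|s|$ against the $1$-boundedness ceiling $(4/3)|s|$, leaving room for exactly one polygon. A secondary point is pinning the explicit threshold ``$|s|>6$'' rather than the coarse diagram-dependent bound $Ck^2$ of Proposition \ref{prop.sk1}: this is sharpened in the present case by noting that an $A$-polygon cannot be $(1,s)$-big once $|s|$ exceeds a small absolute constant, since a big $A$-polygon would have two consecutive (hence $A$-angled) edges of value $\approx|s|$, forcing $Q_2(s)\ge Q(s)\gtrsim|s|^2/2$, already incompatible with $(4/3)|s|$ for $|s|>6$.
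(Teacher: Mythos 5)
Your proposal takes exactly the route the paper does: the paper's entire proof of this corollary is the one-line remark that it is Proposition \ref{prop.sk1} specialized to $k=1$ and $\th=\emptyset$. What you add, and what the paper leaves implicit, is the passage from the seed produced by that proposition to a \emph{single} $B$-polygon: Proposition \ref{prop.sk1} by itself only yields a disjoint collection $\Pi'$ of $(1,s)$-big polygons, so some argument is genuinely needed to get the corollary as stated. Your pincer
$$
|\Pi'|\,|s| \;=\; Q_2(s)-Q_2(s') \;\le\; Q_2(s) \;\le\; \tfrac43 |s|,
$$
obtained from \eqref{eq.Q2} with $\sigma'\equiv 0$ (which you correctly deduce from $|\th'\setminus\emptyset|<1$) together with copositivity of $Q_2$, is a clean and correct way to force $|\Pi'|=1$; equivalently one could apply Lemma \ref{lem.494} twice to two disjoint $(1,s)$-big polygons. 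Your reading of the vanishing condition (it can only refer to the edges meeting $\cV(p)$ that do not lie on $\partial p$, since the edges of $p$ themselves carry the value $|s|$) and your derivation of uniqueness from Propositions \ref{prop.sk1} and \ref{p.533}(b) are also correct.

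The one genuine issue, which you flag but do not resolve, and which is equally a defect of the paper's own proof, is the threshold. The corollary asserts the absolute bound $|s|>6$, while Proposition \ref{prop.sk1}, both as stated and as proved (via hypothesis \eqref{eq.assumeA}, i.e.\ $|s|>12k(2k+1)c_D$, which is $36c_D$ for $k=1$), only delivers the conclusion for $|s|$ larger than a diagram-dependent constant. Your sketched sharpening disposes of $(1,s)$-big $A$-polygons, which is the content of Lemma \ref{lem.000}(a); but the diagram-dependence enters through Claims 1 and 2 in the proof of Lemma \ref{lem.000}(b) — the propagation of near-maximal edge values around a $B$-polygon, needed to produce a big $B$-polygon at a maximal vertex at all — and your argument does not replace that step. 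So, as written, your proof (like the paper's) establishes the statement only with $|s|>Cc_D$ in place of $|s|>6$; closing that gap would require a separate, genuinely local argument for $k=1$.
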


\subsection{The computation of $\Phi_{K,1}(q)$ in terms of a planar
diagram}
\lbl{sub.Phi1}

We
start with the state-sum of $\hat J_{K,n}(q)$ over the set of states $s$ with
$|s| \leq n$ and separate it in two different sums: $Q_2(s) > 4|s|/3$
or $Q_2(s) \leq 4|s|/3$. Then we have
$$
\hat J_{K,n}(q)=f^{(1)}_{n}(q) + f^{(2)}_{n}(q)\,,
$$
where
$$
f^{(1)}_{n}(q)=
\sum_{s:\,|s| \leq n\,,Q_2(s) > 4|s|/3} F(q^{n+1},q,s),
\qquad f^{(2)}_{n}(q)=\sum_{s:\,|s| \leq n\,,Q_2(s) \leq 4|s|/3} F(q^{n+1},q,s)\,.
$$
We will show that $f^{(i)}_{n}(q)$ are $1$-stable for $i=1,2$ and compute their
$1$-stable limit. For the stability of $f^{(1)}_{n}(q)$, write
$$
F(x,q,s)= q^{Q_2(s)} \sum_{k=0}^\infty a_k(q,s) x^k \,,
$$
where $a_k(q,s) \in \BZ((q))$ satisfy $\mindeg_q(a_k(q,s)) \geq -k |s|$.
Define
$$
\Phi^{(1)}_{k}(q)=\sum_{s: Q_2(s) > 4|s|/3} q^{Q_2(s)} a_k(q,s)
$$
for $k=0,1$. Using Equation \eqref{Fxqs} we see that
\begin{eqnarray*}
a_0(s)&=& (q)_\infty^{c_D}
\frac{(-1)^{L_0(s)}}{\prod_{e \in \calE}(q)_{s(e)}} \\
a_1(s)&=& \frac{(q)_\infty^{c_D}}{1-q}
\frac{(-1)^{L_0(s)}}{\prod_{e \in \calE} (q)_{s(e)}}
\left(\sum_{e \in \calE} q^{-s(e)} - \sum_{v \in \cV}  q^{-s(v)} \right)
\end{eqnarray*}
where $\cV$ and $\calE$ are the vertices and the edges of $\bD$.

The series $\Phi^{(1)}_k(q)$ for $k=0,1$ are convergent since
$Q_2(s) -k|s| > 4|s|/3-k|s| \geq |s|/3$ for $k=0,1$. Moreover,
\begin{equation*}
(f^{(1)}_{n}(q)-\Phi^{(1)}_{0}(q)-q^{n+1}\Phi^{(1)}_{1}(q))q^{-n-1}=
\S_{1,n}-\S_{2,n}\,,
\end{equation*}
where
\begin{eqnarray*}
\S_{1,n}
&=& \sum_{Q_2(s) > 4|s|/3, \, |s| \leq n}  q^{Q_2(s)}
\ev_n\left(\sum_{k=2}^\infty a_k(q,s) x^{k-1}\right)
\\
\S_{2,n}&=& \sum_{Q_2(s) > 4|s|/3, \, |s| > n}  q^{Q_2(s)}
\ev_n\left(\sum_{k=0}^1 a_k(q,s) x^{k-1}\right) \,.
\end{eqnarray*}
Here, $\ev_n(f(x))=f(q^{n+1})$.
For the first sum, we have it suffices to consider $k=2$ and then
$$
Q_2(s) + n+1 -2 |s| > 4|s|/3+n+1-2|s|=|s|/3 + n+1-|s|
$$
Now,
$$
\min_{|s| \leq n} ( |s|/3 + n+1-|s|) =(|s|/3 + n+1-|s|)|_{|s|=n}=n/3+1
$$
thus the limit of the first sum is zero. For the second sum,
we have:
$$
Q_2(s)-n-1 \geq 4|s|/3-n-1 > 4n/3-n-1 \geq n/3-1
$$
and the limit is zero, too.

For the stability of $f^{(2)}_{n}(q)$, use Lemma \ref{lem.1bdd} to write
$s=m s_P + s'$ where $|s|=m$ and $p$ is a $B$-polygon with
$\kappa(P)$ edges. It follows that
$$
F(x,q,s)=\frac{q^m(q^{m+1})_\infty^{\kappa(P)} }{(q)_\infty^{\kappa(P)}} F(x,q,s')
\,.
$$
Since $|s|=m>n$, we change variables to $m=n+1+l$.
Then,
$$
q^{-(n+1)}f^{(2)}_{n}(q)=(q)_\infty^{c_D} \sum_P\sum_{l,s'} (-1)^{L_0(s')}
\frac{q^l(q^{n+2+l})_\infty^{\kappa(P)}}{(q)_\infty^{\kappa(P)}} \,\,
\frac{   q^{Q_2(s)}}{  \prod_{e\in \cE_P}\,  (q)_{s(e)} } \,.
$$
It follows that the limit is obtained by setting $q^n=0$ in the above
expression and summing over $l$, we obtain that
$$
\lim_{n\to\infty} q^{-(n+1)}f^{(2)}_{n}(q)=\Phi^{(2)}_0(q)
$$
where
$$
\Phi^{(2)}_1(q)=
\frac{(q)_\infty^{c_D}}{1-q}
\sum_P \sum_{s'} (-1)^{L_0(s')}
\frac{1}{(q)_\infty^{\kappa(P)}} \,\,
\frac{   q^{Q_2(s)}}{  \prod_{e\in \cE_P}\,  (q)_{s(e)} } \,.
$$
Setting $\Phi^{(2)}_0(q)=0$, it follows that
$$
\lim_{n\to\infty} (f^{(2)}_{n}(q)-\Phi^{(2)}_{0}(q)-q^{n+1}\Phi^{(2)}_{1}(q))q^{-n-1}=0\,.
$$
Using Section \ref{sec.bDtoD*} we can convert the above formula for
$\Phi_{K,1}(q)$ in terms of the Tait graph of an alternating planar
projection of $K$. This concludes the proof of Theorem \ref{thm.1a}.
\qed

The above algorithm can be extended to compute any $\Phi_{K,k}(q)$ as
follows. Separate the state-sum of Equation \eqref{e242} in two regions:
\begin{itemize}
\item
$s$ is not $k$-bounded.
\item
$s$ is $k$-bounded.
\end{itemize}
In the first region, use Proposition \ref{prop.unbounded} to compute the
$k$-stable limit.
In the second region, use Proposition \ref{prop.sk1} to write
$$
s=s^{(1)}+s', \qquad s=\sum_{j=1}^t (m-k_j)s_{P_j}\,.
$$
Observe that $t \leq k$. If $t=k$ we stop. Else, replace $(s,k)$ by
$(s',k-t)$ in the above step and and run it again. Keep going. Since each
step requires at least one new polygon of $B$-type which is vertex-disjoint
from the previous ones, this algorithm terminates in finitely many steps.


\section{$\Phi_0$ is determined by the reduced Tait graph}
\lbl{sec.tait}


In this Section we prove Corollary \ref{c001}.  Throughout we use the following convention on graphs: a graph is a  finite 1-dimensional CW-complex without loop edge.

Recall that a plane graph is a pair $\gamma=(\Gamma,f)$, where $\Gamma$ is a finite
connected planar graph and $f: \cT\to \BR^2 \subset S^2$ is an embedding. For example, if $D$ is an alternating nonsplit link diagram,
then the Tait graph $\gamma(D)=(\cT,f)$ is a plane graph. One can recover $K$ from $\cT$ up to orientation.

\subsection{From plane graph to non-oriented alternating link}
For a  plane graph $\gamma=(\Gamma,f)$ with $f(\Gamma)\subset \BR^2$ define an alternating $A$-infinite link diagram $D(\gamma)$ as follows.
If we replace $f(\Gamma)$ by a small normal neighborhood in $\BR^2$ and twist each edge as indicated below, then the boundary of the resulting surface
is $D(\gamma)$.

$$
\psdraw{tait2link}{2in}
$$

 Note that $D(\gamma)$ is a non-oriented alternating $A$-infinite link diagram. The resulting $D(\gamma)$, although alternating, maybe reduced.
If $D$ is an alternating link diagram, and $\cT$ be its Tait graph, then $D(\cT) = D$.

 \begin{exercise}
 Show that $D(\gamma)$ is reduced if and only if $\Gamma$ contains a cut edge, i.e. an edge $e$ such that removing in interior of $e$ make $\Gamma$ disconnected.
 \end{exercise}
For a plane graph $\gamma$ let $K(\gamma)$ be the non-oriented alternating link whose diagram is $D(\gamma)$.
Even when $D(\gamma)$ is reduced, it is still $A$-adequate. Hence we can use $D(\gamma)$ to calculate $\Phi_{K(\gamma),0}$, as in Theorem \ref{thm.1}, see
Remark \ref{rem.ade}. This means
\be  \Phi_{\gamma,0}(q)=  \Phi_{K(\gamma),0}(q),
\lbl{e909}
\ee
where $\Phi_{\gamma,0}$ is given by the the right hand side of \eqref{eq.nahm0} with $D=D(\gamma), \cT= \gamma$.

The dual $D^*(\gamma)$ (in $S^2$) of $D(\gamma)$  can be constructed directly from $\gamma$ as follows: in each region $p$ of $\gamma$ choose a point $u_p$
and connect $u_p$ to all the vertices of $p$ by edges inside the region
$p$ so that the edges do not intersect except at $u_p$. Then $D^*(\gamma)$ is the
plane graph whose vertex set is $\{u_p, p \in \cP(\gamma)\} \cup \cV(\gamma)$
and whose edges are all the edges just constructed. The edges of $\gamma$
are not edges of $D^*$.

\subsection{$k$-connected graph} Recall that a vertex $v$ of a graph $\Gamma$ is a cut vertex if $\Gamma$ is the
union of two proper subgraphs $\Gamma_1$  and $\Gamma_2$ so that $\Gamma_1 \cap \Gamma_2 = \{v\}$. A graph is 2-connected if it is connected and  has no cut vertex.

A pair $(u,v)$ of vertices of  $\Gamma$ is a {\em cut pair} if $\Gamma$ is the
union of two proper subgraphs $\Gamma_1$  and $\Gamma_2$, neither of which is an
edge, so that $\Gamma_1 \cap \Gamma_2 = \{u, v\}$.

Suppose $u,v$ are a cut pair for a 2-connected plane graph $\gamma$.  A {\em Whitney flip} is the
operation that replaces a plane graph $\ga$ by a plane graph $\ga'$
as follows
$$
\psdraw{whitneymove}{3in}
$$

From the definition it is clear that $K(\gamma')$ is then obtained from $K(\gamma)$ by a Conway mutation, described in the following
figure :
$$
\psdraw{mutation}{2in}
$$

By Whitney theorem \cite{Whitney}, 
two planar embeddings of a 2-connected planar graph are
related by a sequence of Whitney flips, composed with a auto-homeomorphism of $S^2$. Since Conway mutation does not change the colored Jones polynomial \cite{MT}, from \eqref{e909}  we have the following.
\begin{lemma}
If $\gamma_1$ and  $\gamma_2$ are two planar embeddings of the same 2-connected graph, then $\Phi_{\gamma_1,0}= \Phi_{\gamma_2,0}$.
\lbl{lem.002}
\end{lemma}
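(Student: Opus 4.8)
The plan is to reduce the equality of the two combinatorial series to the mutation-invariance of the colored Jones polynomial, using the identity \eqref{e909} together with Whitney's theorem. First I would record that by \eqref{e909} one has $\Phi_{\gamma_i,0}(q)=\Phi_{K(\gamma_i),0}(q)$ for $i=1,2$, so it suffices to prove $\Phi_{K(\gamma_1),0}(q)=\Phi_{K(\gamma_2),0}(q)$. The crux is that $\Phi_{K,0}(q)$ is a link invariant extracted from the colored Jones sequence: by the $0$-stability established in Section \ref{sec.0stability} (culminating in Theorem \ref{thm.1}) it equals $\lim_{n\to\infty}\hat J_{K,n}(q)$, and by \eqref{eq.decomposition} the normalized polynomial $\hat J_{K,n}(q)$ is obtained from $J_{K,n}(q)$ by dividing out its lowest monomial. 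Hence $\hat J_{K,n}(q)$, and therefore $\Phi_{K,0}(q)$, depends only on the sequence $(J_{K,n}(q))_n$.

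Next I would invoke Whitney's theorem \cite{Whitney}: since $\gamma_1$ and $\gamma_2$ are planar embeddings of the same $2$-connected graph $\Gamma$, they are related by a finite sequence of Whitney flips composed with a self-homeomorphism of $S^2$. As recorded in the paragraph preceding the lemma, each Whitney flip changes the associated link $K(\gamma)$ by a single Conway mutation. Since Conway mutation preserves the colored Jones polynomial \cite{MT}, each flip leaves the sequence $(J_{K(\gamma),n}(q))_n$ unchanged, hence leaves $\Phi_{K(\gamma),0}(q)$ unchanged by the previous paragraph. A self-homeomorphism of $S^2$ carries the diagram $D(\gamma)$ to an ambient-isotopic diagram, so it too preserves the link type and with it $\Phi_{K(\gamma),0}(q)$. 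Chaining these steps along the Whitney sequence yields $\Phi_{K(\gamma_1),0}(q)=\Phi_{K(\gamma_2),0}(q)$, which is the desired conclusion.

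The step requiring the most care is the self-homeomorphism of $S^2$, because an orientation-reversing homeomorphism replaces $K(\gamma)$ by its mirror, and $\Phi_{K,0}(q)$ is the tail (not the head) of $\hat J_{K,n}(q)$, so it is not mirror-invariant in general. The resolution is that the mirror operation sends the Tait graph of an alternating diagram to its planar dual, whereas the construction $\gamma\mapsto D(\gamma)$ uses the fixed twist convention of the oriented plane and produces a diagram whose Tait graph is exactly $\gamma$; since both $D(\gamma_1)$ and $D(\gamma_2)$ have Tait graph an embedding of $\Gamma$, the homeomorphism intertwining the two constructions can be taken orientation-preserving, and no mirror is introduced. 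Making this reflection-bookkeeping precise, that is, verifying the standard-twist construction is compatible with the orientation-preserving part of Whitney's theorem, is the one genuinely delicate point; the remaining steps are formal once $\Phi_{K,0}$ is known to be a colored-Jones invariant.
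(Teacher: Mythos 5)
Your proof is correct and takes essentially the same route as the paper: Equation \eqref{e909} reduces the claim to the associated links, Whitney's theorem converts the change of planar embedding into Whitney flips (hence Conway mutations) composed with a self-homeomorphism of $S^2$, and mutation invariance of the colored Jones polynomial \cite{MT} together with the fact that $\Phi_{K,0}$ is extracted from the colored Jones sequence finishes the argument. Your final paragraph on orientation-reversing homeomorphisms handles a point the paper leaves implicit, and your resolution is sound: the fixed twist convention in $\gamma\mapsto D(\gamma)$ ensures $K(\varphi(\gamma))=K(\gamma)$ even for orientation-reversing $\varphi$, since reflecting the diagram and switching all crossings are each mirror operations and therefore cancel.
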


\subsection{Planar collapsing of a bigon} Suppose $\gamma=(\Gamma,f)$ is a plane graph, and among the regions of $\BR^2 \setminus f(\Gamma)$ there is a bigon $u$ with vertices $v_1,v_2$ and edges $e_1, e_2$.
Let $\beta$ be the plane graph obtained from $\gamma$ by squeezing the bigon into one edge, called $e$, so that the bigon disappears and both $e_1$ and $e_2$ becomes $e$. We call $\gamma \to \beta$
a planar collapsing.

\begin{lemma} If $\gamma\to \beta$ is a planar collapsing, then
 $\Phi_{\gamma,0}= \Phi_{\gamma',0}$.
 \lbl{lem.planar}
\end{lemma}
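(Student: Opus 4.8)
The plan is to prove the statement in the form $\Phi_{\gamma,0}(q)=\Phi_{\beta,0}(q)$ (the $\gamma'$ in the statement being the collapsed graph $\beta$) by feeding the explicit Nahm sum \eqref{eq.nahm0} for both diagrams $D(\gamma)$ and $D(\beta)$ into the comparison and summing out a single coordinate. A planar collapse merges two parallel Tait edges $e_1,e_2$ of $\gamma$ into one, hence deletes one crossing, so $c_\beta=c_\gamma-1$. Dually, in $\cD=\cD_\gamma$ it removes exactly the degree-two $A$-vertex $u^*$ dual to the bigon face, together with its two edges $\epsilon_1=u^*v_1$, $\epsilon_2=u^*v_2$, where $v_1,v_2\in\cV_B$ are the endpoints of $e_1,e_2$. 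First I would record this local surgery precisely: the two quadrilaterals of $\cD_\gamma$ at the crossings $e_1,e_2$ are glued along $\epsilon_1,\epsilon_2$ and fuse into the single quadrilateral of $\cD_\beta$ at the new crossing, so that $\cE(\cD_\beta)=\cE(\cD_\gamma)\setminus\{\epsilon_1,\epsilon_2\}$ and $\cV(\cD_\beta)=\cV(\cD_\gamma)\setminus\{u^*\}$, while the degrees of the two surviving adjacent $A$-vertices $a_1,a_2$ are unchanged. Both $D(\gamma)$ and $D(\beta)$ are $A$-adequate, so Theorem \ref{thm.1} applies to each via Remark \ref{rem.ade} and \eqref{e909}.

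Choosing $v_\infty$ to be an $A$-vertex distinct from $u^*$ (legitimate since $\Phi_{K,0}$ is independent of $v_\infty$), the coefficient $t:=\lambda(u^*)$ is a free coordinate on $\Lambda_0$. I would then check that each ingredient of \eqref{eq.nahm0} separates into a ``$\beta$-part,'' depending only on $\lambda':=\lambda|_{\cV(\cD_\beta)\setminus\{v_\infty\}}$, and a ``$t$-part.'' Concretely: (i) admissibility splits as $\lambda'\in\Adm_\beta$ together with $\epsilon_1(\lambda)=t+\lambda(v_1)\ge 0$ and $\epsilon_2(\lambda)=t+\lambda(v_2)\ge 0$; (ii) since $L(u^*)=\deg(u^*)/2-1=0$ and all other values of $L$ are unchanged, $L_\gamma(\lambda)=L_\beta(\lambda')$, so neither the exponent $L(\lambda)$ nor the sign $(-1)^{2L(\lambda)}$ depends on $t$; (iii) from $\cQ=\Deg(\cD)+\Adj(\cD)+\cL(\cT)$ in \eqref{e0001}, the only entries that change are the $u^*$ row and column of $\cD_\gamma$ (with $\cQ(u^*,u^*)=2$, $\cQ(u^*,v_i)=1$) and the Tait adjacency $\cQ(v_1,v_2)$, which drops from $2$ to $1$; the $B$-diagonal entries stay $0$ because $\deg_{\cD}(v_i)=\deg_{\cT}(v_i)$. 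Assembling these,
\[
Q_\gamma(\lambda)=Q_\beta(\lambda')+\bigl(t^2+t\lambda(v_1)+t\lambda(v_2)\bigr)+\lambda(v_1)\lambda(v_2)=Q_\beta(\lambda')+\epsilon_1(\lambda)\,\epsilon_2(\lambda).
\]

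With the denominator factoring as $\prod_{e\in\cE(\cD_\beta)}(q)_{e(\lambda')}\cdot(q)_{\epsilon_1}(q)_{\epsilon_2}$, the Nahm sum for $\gamma$ regroups (interchanging summations, which is legitimate since regularity forces only finitely many $(\lambda',t)$ into each power of $q$) as
\[
\Phi_{\gamma,0}(q)=(q)_\infty^{\,c_\gamma}\sum_{\lambda'\in\Adm_\beta}(-1)^{2L_\beta(\lambda')}\frac{q^{Q_\beta(\lambda')+L_\beta(\lambda')}}{\prod_{e\in\cE(\cD_\beta)}(q)_{e(\lambda')}}\;S(\lambda'),\qquad S(\lambda')=\sum_{\substack{\epsilon_1,\epsilon_2\ge 0\\ \epsilon_1-\epsilon_2=\delta}}\frac{q^{\epsilon_1\epsilon_2}}{(q)_{\epsilon_1}(q)_{\epsilon_2}},
\]
where $\delta=\lambda(v_1)-\lambda(v_2)$. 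Since $S(\lambda')$ is symmetric in $\epsilon_1,\epsilon_2$, reindexing by $m=\min(\epsilon_1,\epsilon_2)$ gives $S(\lambda')=\sum_{m\ge 0}q^{m(m+|\delta|)}/\bigl((q)_m(q)_{m+|\delta|}\bigr)$. Comparing with $\Phi_{\beta,0}(q)=(q)_\infty^{\,c_\beta}\sum_{\lambda'}(\dots)$ and using $c_\beta=c_\gamma-1$, the lemma reduces to the single identity $(q)_\infty\,S(\lambda')=1$, i.e.\ the Durfee rectangle identity
\[
\sum_{m\ge 0}\frac{q^{m(m+d)}}{(q)_m\,(q)_{m+d}}=\frac{1}{(q)_\infty}\qquad(d\ge 0),
\]
which I would invoke as a classical partition identity (the $d=0$ case is the Durfee square identity; the general case is the Durfee-rectangle dissection of Young diagrams, or follows from the $q$-binomial theorem).

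The main obstacle is item (iii): getting the change in $\cQ$ exactly right so that the genuine cross term $\lambda(v_1)\lambda(v_2)$ produced by the drop of $\cQ(v_1,v_2)$ from $2$ to $1$ combines with the $u^*$-block $t^2+t(\lambda(v_1)+\lambda(v_2))$ into the clean product $\epsilon_1(\lambda)\epsilon_2(\lambda)$; this is precisely what makes the inner sum depend on $\lambda'$ only through $\delta$ and land on the Durfee rectangle identity. One should also dispose of the degenerate case $a_1=a_2$ (when the two bigon edges bound the same $A$-region, e.g.\ $\gamma$ a $2$-cycle), where the collapsed crossing is nugatory; there the same computation goes through, $D(\beta)$ remains $A$-adequate so Theorem \ref{thm.1} still applies, and the factorization together with the identity is unchanged.
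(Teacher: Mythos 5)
Your proposal is correct and takes essentially the same route as the paper's proof: both isolate the degree-two $A$-vertex dual to the bigon (so that $L(v_u)=0$), factor the Nahm sum \eqref{eq.nahm0} so that the free coordinate at that vertex contributes the factor $(q)_\infty\sum_{a}\,q^{(a+b_1)(a+b_2)}/\bigl((q)_{a+b_1}(q)_{a+b_2}\bigr)$, and conclude that this factor equals $1$ by Durfee's identity. The only difference is that you make explicit the bookkeeping the paper compresses into the phrase ``isolating the factors involving $v_u$'' --- namely the splitting of admissibility and the verification that $\cQ$ changes only in the $u^*$ row and column and in the Tait entry $\cQ(v_1,v_2)$ dropping from $2$ to $1$, which yields the clean cross term $(t+\lambda(v_1))(t+\lambda(v_2))$.
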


\begin{proof} The bigon  contributes an $A$-vertex $v_u$ to the set of
vertices of $D^*(\gamma)$. Then $\deg(v_u)=2$, and hence $L(v_u)=0$. By
isolating
the factors in the formula \eqref{eq.nahm0} of $\Phi_{\gamma,0}$ involving the
vertex $v_u$ we have
$$
\Phi_{\gamma,0}= \Phi_{\beta,0}
\left[ (q)_\infty \sum_{a: a+ b_1 \ge 0, a+b_2 \ge0}
\frac{q^{(a+b_1)(a+b_2)}}{(q)_{a+b_1} (q)_{a+b_2} }\right] \,.
$$
Here $a,b_1,b_2$ are the coordinates of $\lambda$ at respectively
$v_u, v_1, v_2$; and $b_1$ and $b_2$ are fixed in the sum. By the well-known
Durfee's identity (see \cite[Eqn.2.6]{Andrews}), the factor in
the square bracket is equal to 1.
\end{proof}

\begin{remark}
Suppose $K_1$ and $K_2$ are alternating links such that after several planar
collapsings from $\cT(K_1)$ and and $\cT(K_2)$ one gets the same plane graph,
then
the above lemma says that $\Phi_{K_1,0}= \Phi_{K_2,0}$. This was proved in
\cite{ArDas} by another method.
\end{remark}

\subsection{Abstract collapsing}
\lbl{sub.collapsing}

Suppose $\Gamma_1$ is an abstract graph with a pair of parallel edges
$e_1,e_2$. Removing the interior of $e_1$, from $\Gamma_1$ we get a graph
$\Gamma_2$. We say that the move $\Gamma_1 \to \Gamma_1$ a collapsing.
Note that if $\Gamma_1$ is 2-connected  then $\Gamma_2$ is also 2-connected.

\begin{lemma}
\lbl{lem.0003}
Suppose $\gamma_1=(\Gamma_1,f_1)$ is a  plane graph, and  $\Gamma_2$ is
obtained from $\Gamma_1$ by collapsing a pair of parallel edges $e_1,e_2$.
Then there is a planar embedding $\gamma_2$ of $\Gamma_2$,
$\gamma_2=(\Gamma_2,f_2)$, such that $\Phi_{\gamma_1,0}= \Phi_{\gamma_2,0}$.
\end{lemma}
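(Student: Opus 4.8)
The plan is to reduce the abstract collapsing move to the planar collapsing move already handled in Lemma \ref{lem.planar}. The obstruction is that $e_1$ and $e_2$, while parallel in the abstract graph $\Gamma_1$, need not bound a bigon \emph{face} of the given embedding $f_1$: other parts of $\Gamma_1$ may lie in the region cut off by the Jordan curve $e_1\cup e_2$. The whole point is therefore to first replace $f_1$ by another planar embedding of $\Gamma_1$ in which $e_1,e_2$ do bound a bigon, \emph{without changing} $\Phi_{\gamma_1,0}$, and then squeeze that bigon by Lemma \ref{lem.planar}.

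First I would reduce to the case that $\Gamma_1$ is $2$-connected. Since $e_1$ and $e_2$ together form a cycle, they lie in a single block (a maximal subgraph without cut vertex) $B$ of $\Gamma_1$, which is genuinely $2$-connected; all other blocks are attached to $B$ at cut vertices and are untouched by the collapsing. At each cut vertex of the Tait graph the associated link $K(\gamma_1)$ is a connected sum, so by the multiplicativity of the $0$-limit under connected sum recorded in Remark \ref{rem.normalizations1} the series $\Phi_{\gamma_1,0}$ factors, up to powers of $(1-q)$, as a product of the $0$-limits of the blocks. The collapsing alters only the factor coming from $B$, and an embedding of $\Gamma_2$ is obtained by leaving every other block in place; hence it suffices to prove the lemma when $\Gamma_1=B$ is $2$-connected.

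Assume now $\Gamma_1$ is $2$-connected, with $e_1,e_2$ joining $u$ and $v$. Unless $\Gamma_1$ is just the bigon $\{e_1,e_2\}$ (in which case $e_1,e_2$ already bound a face and Lemma \ref{lem.planar} applies directly), the pair $(u,v)$ is a cut pair, with the two parallel edges forming one of the two sides. By Whitney's theorem \cite{Whitney} any two planar embeddings of a $2$-connected graph are related by Whitney flips at cut pairs; at the cut pair $(u,v)$ the edges $e_1$ and $e_2$ are two of the parallel bridges, which may be cyclically reordered and reflected by such flips. I would use this to choose an embedding $f_1'$ in which $e_1$ and $e_2$ are adjacent among these bridges, so that the region between them is an empty bigon. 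By Lemma \ref{lem.002} (each flip realizes a Conway mutation, which preserves the colored Jones polynomial \cite{MT}, cf.\ \eqref{e909}) we have $\Phi_{\gamma_1,0}=\Phi_{\gamma_1',0}$, where $\gamma_1'=(\Gamma_1,f_1')$. Applying the planar collapsing Lemma \ref{lem.planar} to the bigon bounded by $e_1,e_2$ in $f_1'$ now produces a planar embedding $\gamma_2=(\Gamma_2,f_2)$ with $\Phi_{\gamma_1',0}=\Phi_{\gamma_2,0}$, and combining the two equalities gives $\Phi_{\gamma_1,0}=\Phi_{\gamma_2,0}$.

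The main obstacle is the re-embedding step: showing that a $2$-connected plane graph can always be re-embedded so that two prescribed parallel edges bound a common face. This is precisely where Whitney's theorem and the bridge structure at the $2$-cut $(u,v)$ enter, and it is the only nontrivial input; the remaining steps are bookkeeping with the multiplicativity of $\Phi_0$ and a direct appeal to Lemmas \ref{lem.planar} and \ref{lem.002}.
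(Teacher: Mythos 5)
Your proposal is correct and takes essentially the same route as the paper: re-embed $\Gamma_1$ so that $e_1,e_2$ bound an empty bigon face, invoke Lemma \ref{lem.002} (flips preserve $\Phi_{\cdot,0}$) for the re-embedding, and then squeeze the bigon via Lemma \ref{lem.planar}. The only differences are cosmetic: the paper achieves the re-embedding by a single explicit Whitney flip of $e_2$ together with the subgraph $\Gamma_0$ trapped inside the Jordan curve $e_1\cup e_2$, rather than through Whitney's theorem and bridge rearrangement, and it omits your block-decomposition reduction because it implicitly works in the $2$-connected setting in which Lemma \ref{lem.0004} applies the result.
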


\begin{proof}
In the planar embedding $f_1(\Gamma_1) \subset \BR^2$,  $e_1$ and $e_2$
bound a region which may contain a subgraph $\Gamma_0$ of $\Gamma_1$.
Note that the common vertices $v_1$ and $v_2$ of $e$ and $e_2$ form a cut
pair for $\Gamma_1$.
By flipping $f_1(e_2)\cup f_2(\Gamma_0)$ through $v_1$ and $v_2$, from
$\gamma_1$ we get a new plane graph $\gamma_3=(\Gamma_1,f_3)$ in which
$f_3(e_1)$ and $f_3(e_2)$ form a bigon, and the result of planar collapsing
this bigon is denoted by $\gamma_2$.
By Lemmas \ref{lem.planar} and \ref{lem.002}, we have
$\Phi_{\gamma_1,0}= \Phi_{\gamma_3,0}= \Phi_{\gamma_2,0}$.
\end{proof}

\subsection{Proof of Corollary \ref{c001}}
We will first prove the following statement.

\begin{lemma}
\lbl{lem.0004}
Suppose $\gamma_i=(\Gamma_i,f_i)$ for $i=1,2$ are 2-connected graphs such
that $\Gamma_1'=\Gamma_2'$ as abstract graphs. Then
$\Phi_{\gamma_1,0}= \Phi_{\gamma_2,0}$.
\end{lemma}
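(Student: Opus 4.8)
The plan is to reduce both plane graphs $\gamma_1$ and $\gamma_2$ to planar embeddings of the single common abstract graph $\Gamma':=\Gamma_1'=\Gamma_2'$, arranging that the $0$-limit is unchanged at every step, and then to invoke the embedding-independence of the $0$-limit for $2$-connected graphs. Thus the whole argument is an assembly of the three lemmas already established in this section: the abstract collapsing Lemma \ref{lem.0003}, the planar collapsing Lemma \ref{lem.planar} (used inside it), and the Whitney-move Lemma \ref{lem.002}.

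First I would repeatedly apply the abstract collapsing of Lemma \ref{lem.0003}. Starting from $\gamma_1=(\Gamma_1,f_1)$, any pair of parallel edges of $\Gamma_1$ can be collapsed to a single edge, and Lemma \ref{lem.0003} produces a planar embedding of the collapsed graph having the same $0$-limit. Since $\Gamma_1$ is $2$-connected and, as noted in Section \ref{sub.collapsing}, collapsing a pair of parallel edges preserves $2$-connectedness, the hypotheses of Lemma \ref{lem.0003}—in particular that the two common endpoints of the collapsed edges form a cut pair—remain available at every stage. Iterating until no parallel edges remain yields a planar embedding $\delta_1=(\Gamma',g_1)$ of the reduced graph with $\Phi_{\gamma_1,0}=\Phi_{\delta_1,0}$. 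Running the identical procedure on $\gamma_2=(\Gamma_2,f_2)$ gives a planar embedding $\delta_2=(\Gamma',g_2)$ with $\Phi_{\gamma_2,0}=\Phi_{\delta_2,0}$.

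At this point $\delta_1$ and $\delta_2$ are two planar embeddings of one and the same abstract graph $\Gamma'$, which is $2$-connected for the reason above. Hence Lemma \ref{lem.002} applies and gives $\Phi_{\delta_1,0}=\Phi_{\delta_2,0}$. Chaining the three equalities,
$$
\Phi_{\gamma_1,0}=\Phi_{\delta_1,0}=\Phi_{\delta_2,0}=\Phi_{\gamma_2,0},
$$
which is the desired conclusion.

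The only point requiring genuine care is the bookkeeping of the $2$-connectedness hypothesis throughout the collapsing: one must verify that every intermediate graph stays $2$-connected, so that each application of Lemma \ref{lem.0003} is legitimate and so that the terminal graph $\Gamma'$ falls under Lemma \ref{lem.002} (whose proof rests on Whitney's theorem, valid for $2$-connected graphs). This is exactly guaranteed by the preservation statement in Section \ref{sub.collapsing}, so a straightforward induction on the number of redundant parallel edges discharges the whole reduction; beyond this verification I do not expect any substantive obstacle.
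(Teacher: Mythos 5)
Your proof is correct and takes essentially the same route as the paper's: both arguments reduce $\gamma_1$ and $\gamma_2$ to planar embeddings of the common reduced graph $\Gamma'$ by repeatedly applying the abstract collapsing Lemma \ref{lem.0003} (the paper organizes this as an induction on the total number of parallel pairs, you as a direct iteration) and then conclude with the Whitney-flip Lemma \ref{lem.002}. Your bookkeeping of $2$-connectedness under collapsing is exactly the point the paper relies on via the remark in Section \ref{sub.collapsing}, so there is no substantive difference.
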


\begin{proof}
Case 1: both $\Gamma_1$ and $\Gamma_2$ do not have multiple edges.
Then $\Gamma'_i=\Gamma_i$, hence $\Gamma_1=\Gamma_2$, and $\gamma_1$ and
$\gamma_2$ are planar embeddings of the same
2-connect graph. Lemma \ref{lem.002} tells us that
$\Phi_{\gamma_1,0}= \Phi_{\gamma_2,0}$.

Case 2: General case. This case is reduced to Case 1 by induction on the
number of total pairs of parallel edges in $\Gamma_1$ and $\Gamma_2$.
If there is no pair of parallel edges, this is Case 1.
Suppose $\Gamma_1$ has a pair of parallel edges, and let $\Gamma_3$ be
the result of abstract collapsing this pair of parallel edges. By Lemma
\ref{lem.0003}, there is a planar embedding $\gamma_3$ of $\Gamma_3$
such that $\Phi_{\gamma_1,0}= \Phi_{\gamma_3,0}$. Note that
$\Gamma_3'= \Gamma_1'= \Gamma_2'$. By induction, we have
$\Phi_{\gamma_3,0}= \Phi_{\gamma_2,0}$. This proves $\Phi_{\gamma_1,0}= \Phi_{\gamma_2,0}$.
\end{proof}

\noindent
Let us proceed to the proof of Corollary \ref{c001}.
Suppose $K_1$ and $K_2$ are alternating links such that $\cT(K_1)$ is
isomorphic to $\cT(K_2)$ as abstract graphs.
We can assume that both $K_1$ and $K_2$ are non-split. For $i=1,2$ let
$\gamma_i= (\cT(D_i), f_i)$ be the plane Tait graph of a reduced
$A$-infinite alternating link diagram of $K_i$.
Note that $\cT(D_i)$ is connected since $K_i$ is non-split. Moreover
$\cT(D_i)$ does not have a cut vertex since $D_i$ is reduced.
That is, $\cT(D_i)$ is $2$-connected. From Lemma \ref{lem.0004} and
Theorem \ref{thm.1} we have $\Phi_{K_1,0}= \Phi_{K_2,0}$. This completes
the proof of Corollary \ref{c001}.


\section{Examples}
\lbl{sec.examples}

In this section we give a formula for the $q$-series $\Phi_{K,0}(q)$ for all
twist knots and their mirrors, taken from unpublished work of the first
author and D. Zagier. In some caes, similar formulas have also been
obtained by Armond-Dasbach. Recall the family of {\em twist knots}
$K_p$ for an integer $p$ depicted as follows:
$$
\psdraw{twist.knot}{0.9in} \qquad \text{where} \qquad
\psdraw{twists}{2in}
$$
The planar projection of $K_p$
has $2|p|+2$ crossings, $2|p|$ of which come from the full twists,
and $2$ come from the negative {\em clasp}. For small $p$, the twist
knots appear in Rolfsen's table \cite{Rf} as follows:

$$
\begin{array}{|c|cccccccc|} \hline
\text{Twist knot} & K_{-4} & K_{-3} & K_{-2} & K_{-1} & K_1 & K_2 & K_3 & K_4
\\ \hline
\text{Rolfsen notation} & 10_1 & 8_1 & 6_1 & 4_1 & 3_1 & 5_2 & 7_2 & 9_2
\\ \hline
\end{array}
$$

Recall that $\sgn(n)=+1,0,-1$ when $n<0,n=0,n>0$ respectively.

\begin{theorem}
\lbl{thm.twist}
For $p<0$ we have:
\be
\lbl{eq.minus}
\Phi_{K_p,0}(q)= (q;q), \qquad
\Phi_{-K_p,0}(q)= \frac{(q;q)}{(q^2;q^{2|p|+1}) (q^3;q^{2|p|+1})
\dots (q^{2|p|-1};q^{2|p|+1})} \,.
\ee
For $p>0$ we have
\be
\lbl{eq.plus}
\Phi_{K_p,0}(q)=\sum_{n=0}^\infty q^{p n^2 + (p-1)n}-
\sum_{n=0}^\infty q^{p n^2 + (p+1)n+1} =
1+\sum_{n \in \BZ} \sgn(n) q^{p n^2 + (p-1)n}
\ee
\be
\Phi_{-K_p,0}(q)=(q;q)\,.
\ee
\end{theorem}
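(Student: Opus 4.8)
The plan is to apply the explicit Nahm sum formula of Theorem \ref{thm.1} to standard reduced alternating diagrams of $K_p$ and its mirror, and then to evaluate the resulting $q$-series in closed form. First I would fix, for each sign of $p$, a reduced alternating diagram $D_p$ in which the $2|p|$ crossings of the full twists form a single twist region and the remaining two crossings form the clasp. Reading off the checkerboard coloring, the twist region contributes a block of parallel edges to one of the two Tait graphs and a path of length $2|p|$ to the other, while the clasp contributes a fixed local piece. The knot $K_p$ itself is computed from one coloring via \eqref{eq.nahm0}, and its mirror $-K_p$ is computed from the dual coloring, using the mirror relation $J_{-K,n}(q^{-1})=J_{K,n}(q)$ of Remark \ref{rem.thm2b}. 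Thus exactly one of $\Phi_{K_p,0}$, $\Phi_{-K_p,0}$ comes from the ``parallel-edge'' side and the other from the ``path'' side.

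For the two families whose value is $(q)_\infty$ (namely $K_p$ with $p<0$, and $-K_p$ with $p>0$) I would show that the relevant side is the parallel-edge side, so that all of the $p$-dependence sits inside a bundle of parallel edges. By Corollary \ref{c001}, or equivalently by the bigon-collapsing Lemmas \ref{lem.planar} and \ref{lem.0003} (Durfee's identity), these parallel edges may be collapsed, reducing to the $p$-independent core graph produced by the clasp. Evaluating $\Phi_0$ of that fixed small graph directly from \eqref{eq.nahm0} — or simply invoking the base case $K_{-1}=4_1$, which is amphichiral and already handled by the computation of Section \ref{sub.applications} — gives $(q)_\infty$. The amphichirality of $4_1$ together with the fact that the product in \eqref{eq.minus} is empty when $p=-1$ provides a useful consistency check.

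For the nontrivial sides I would evaluate the multivariable Nahm sum \eqref{eq.nahm0} by summing over the interior $A$-region variables first. Each bounded $A$-region of degree two is summed out by Durfee's identity exactly as in Lemma \ref{lem.planar}, and the remaining variables are summed by the $q$-binomial identities \eqref{eq.773}. On the $p>0$ side this collapses the sum to a one-dimensional signed sum, which I would identify with the partial theta function $1+\sum_{n\in\BZ}\sgn(n)q^{pn^2+(p-1)n}$, verifying the equivalence of the two displayed forms by the reindexing $n\mapsto n+1$ in the negative tail. On the $p<0$ mirror side the same reduction leaves an iterated multisum over the $|p|-1$ twist variables, which I would evaluate by induction on $|p|$: peeling off one full twist via a single $q$-Gauss/$q$-binomial summation multiplies the answer by one factor of the form $1/(q^{j};q^{2|p|+1})_\infty$, reproducing the Andrews--Gordon-type product of \eqref{eq.minus}.

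The main obstacle I expect is the nontrivial side. The delicate part is setting up the cone $\Adm$, the quadratic form $Q$ and the linear form $L$ on $\Lambda_0$ for the chain-of-twists diagram with the signs and offsets exactly right, and then proving that the iterated sum genuinely telescopes into the claimed closed form. Isolating a single twist and proving the inductive step — that adding a full twist introduces exactly one new summation variable and extends the product by exactly one factor — is the crux. The positive-twist case is additionally harder because the answer is a false (partial) theta function rather than a convergent product, so no product telescoping is available and the evaluation must instead be carried out as a direct single-variable identity.
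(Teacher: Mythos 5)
First, an important point of comparison: this paper contains no proof of Theorem \ref{thm.twist}. The statement is imported from unpublished work of the first author with Zagier, and Appendix \ref{sec.experiment} explicitly says that among these experimental formulas only the ones for $3_1$ and $4_1$ are proven, the others remaining conjectural. So your argument cannot be checked against a proof in the paper; it must stand on its own. Its combinatorial half does stand. For the reduced alternating diagram of a twist knot the twist region appears in one checkerboard (Tait) graph as a bundle of parallel edges and in the other as a long path, with the clasp contributing the complementary local piece; Corollary \ref{c001} (equivalently Lemmas \ref{lem.planar} and \ref{lem.0003}) collapses the parallel-edge side to a triangle, independent of $p$, and the triangle is also the reduced Tait graph of both sides of $4_1$, whose $0$-limit is $(q)_\infty$ by the Armond--Dasbach identity \eqref{eq.phi410} (modulo the normalization factor $1/(1-q)$ of Remark \ref{rem.thm2a}, which Theorem \ref{thm.twist} silently drops and which your write-up should also address). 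The path side collapses to a cycle, of odd length $2|p|+1$ when $p<0$ and of even length $2p$ when $p>0$. All of this is correct and uses the paper's machinery appropriately.

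The genuine gap is in the second half, which is the entire analytic content of the theorem: evaluating the Nahm sum \eqref{eq.nahm0} of a cycle graph in closed form, i.e.\ showing that an odd cycle yields the theta series $h_{2|p|+1}$ of \eqref{eq.ha} (equivalently, via \eqref{eq.b}, the product in \eqref{eq.minus}) and an even cycle yields the false theta $h^*_{2p}$ of \eqref{eq.plus}. These are Rogers--Ramanujan/Andrews--Gordon-type identities (they are exactly the known tail identities for $(2,n)$ torus links, cf.\ Armond--Dasbach and Hikami), and your proposed inductive mechanism for them fails structurally. You claim that peeling off one full twist ``extends the product by exactly one factor'' of the form $1/(q^{j};q^{2|p|+1})_\infty$. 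But the modulus $2|p|+1$ occurs in \emph{every} factor of \eqref{eq.minus}: passing from $|p|-1$ to $|p|$ changes all existing factors from base $q^{2|p|-1}$ to base $q^{2|p|+1}$, so the level-$|p|$ product is not the level-$(|p|-1)$ product times one new factor, and no telescoping of the proposed shape can exist. The workable inductive device here is a Bailey-chain step, in which each twist adds a summation variable and raises the modulus by $2$ while transforming the whole series, the product emerging only at the very end through the Jacobi triple product; setting this up, together with the parallel false-theta argument for $p>0$ (for which you concede you have no mechanism), is precisely the missing work. As written, the crux of the theorem is asserted rather than proved.
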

Equation \eqref{eq.minus} implies that for $p<0$, $\Phi_{\pm K_p,0}(q)$
are modular forms \cite{BGHZ}. On the other hand, when
$p>1$, $\Phi_{K_p,0}(q)$ is {\em not} modular of any weight, according
to K. Ono.
This disproves any conjectured modularity properties of $\Phi_0(q)$,
even for $5_2$. On the other hand, $\Phi_{\pm K_p,0}(q)$ is a {\em false
theta series} of Rogers.

The modular form $\Phi_{K_p,0}(q)$ for $p>0$ is a beautiful theta series,
with a factorization
\be
\lbl{eq.b}
\frac{(q;q)}{\prod_{k=2}^{2b-1}(q^k;q^{2b+1})}=\sum_{n \in \BZ}
(-1)^n q^{\frac{2b+1}{2} n^2 +\frac{2b-1}{2} n}
\ee
for all natural numbers $b$. It was pointed out to us by D. Zagier that
the above identity follows immediately from the Jacobi triple product
identity, discussed in detail in \cite{BGHZ}.


\appendix

\section{Proof of the state-sum formula for the colored Jones
function}
\lbl{sec.app1}

In this section we give a proof of Equation \eqref{eq.statesum} which
we could not find in the literature. We begin by
recalling the definition of the colored Jones polynomial using
$R$-matrix.


\subsection{Link invariant associated to a ribbon algebra}

Quantum link invariants can be defined using a ribbon Hopf algebra. We
recall the formula for the invariant here. For further details,
see \cite{RT} or \cite{Ohtsuki}.

A  ribbon Hopf algebra $\U$  over a ground field $\cF$ has an $R$-matrix
$R \in \cH \otimes \cH$ and a group-like element $g \in \cH$ satisfying
$$
S^2(x) = g x g^{-1} \quad \forall x \in \cH,
$$
where $S$ is the antipode of the $\cH$.

Suppose  $V$  is a $\cH$-module, and $K$ is a {\em framed} link with a
downward  planar diagram $D$, where the framing is the blackboard framing.
The dual space $V^*$ has a natural structure of a $\cH$-module.
Fix a basis $\{e_j\}$ of $V$ and a dual basis $\{e_j^*\}$ of $V^*$.

The quantum invariant $\hat J_K(V)$ is defined through tangle operator
invariants  as follows.

The six tangle diagrams in Figure  \eqref{f.6types} are called
{\em elementary tangle diagrams}. {\em An extension} of an elementary tangle
diagram is the result of adding some (maybe none) vertical lines to the left
and to the right of an elementary tangle diagram, with arbitrary orientations
on the added lines, as in the following figure
\be
\lbl{f.extended}
\psdraw{Rpdextended}{2in}
\ee

Suppose $D$ is a downward link diagram which is in general position.
Using horizontal lines we cut $D$ into tangles, each is an extension of
an elementary tangle diagram.
Let  $T$ be one of the resulting tangles. On the bottom boundary of $T$
assign $V$  to each endpoint of $T$ where $T$ is  oriented down, and
the dual object $V^*$  to each endpoint where $T$ is oriented up. Tensoring
from left to right, this gives the boundary object $\partial_-T$. One
defines similarly $\partial_+T$, using the top boundary endpoints instead
of the bottom boundary ones. By convention, the empty product is the the
ground field $\cF$. It is clear that if $T'$ is the tangle right above $T$,
then $\partial_-(T') = \partial_+(T)$. For example, for the tangle $T$ of
Figure \eqref{f.extended} we have
$$
\partial_- T=\partial_+T=V^* \otimes V
\otimes V \otimes V \otimes V^* \otimes V^* \otimes V
$$

For each tangle $T$ as above we will define an operator
$$
\check J_T: \partial_-T \to \partial_+T
$$
as follows. First if $T$ is one of
the elementary tangles, then $\check J_T$ is given by

\begin{align}
T = \psdraw{Rpd}{0.5in} \qquad &
\check J_{T}:  V \otimes V \to V \otimes V \qquad
\text{given by}  \qquad \check J_T= \bb:= \sigma \, R  \\
 T = \psdraw{Rnd}{0.5in} \qquad &
\check J_{T}:  V \otimes V \to V \otimes V \qquad
\text{given by} \qquad \check J_T=  \bb^{-1} = R^{-1} \, \sigma  \\
 T = \psdraw{capr}{0.5in} \qquad &
\check J_{T}:  V^* \otimes V \to \cF  \qquad
\text{given by} \qquad f \otimes x \to f(x) \\
 T = \psdraw{capl}{0.5in} \qquad &
\check J_{T}:  V \otimes V^* \to \cF  \qquad
\text{given by} \qquad x \otimes f \to f(gx) \\
 T = \psdraw{cupr}{0.5in}  \qquad &
\check J_{T}: \cF\to V \otimes V^* \qquad
\text{given by} \qquad 1  \to \sum_j e_j \otimes e_j^* \\
 T = \psdraw{cupl}{0.5in} \qquad &
\check J_{T}: \cF\to V^* \otimes V \qquad
\text{given by} \qquad 1  \to \sum_j e_j^* \otimes g^{-1} (e_j).
\end{align}
Here $\sigma: V\otimes V$ is the permutation,
$\sigma(x\otimes y) = y\otimes x$.
If $T$ is an extension of an elementary tangle $E$, say $T$ is the result
of adding $m$ vertical lines to the left and $n$ vertical lines to right
of $E$, then define
$$
\check J_T = \id^{\otimes m} \otimes \check J_E \otimes \id^{\otimes n}.
$$
Finally, if $T_1,\dots, T_m$ are the tangles in the decomposition of the
downward  diagram $D$ of the link $K$, counting from top to bottom, then
$$
\check J_K:= \check J_{T_1} \dots \check J_{T_m},
$$
is an element of $\Hom_\cH (\cF, \cF)$ which one identifies with $\cF$.

\subsection{The case $\cH= U_h(\fsl_2)$}
\def\tbb{\tilde \bb}

The colored Jones polynomial is the quantum link invariant corresponding
to the ribbon Hopf algebra $\U:=U_h(\fsl_2)$, the quantized enveloping algebra
of $\fsl_2$. There are two versions of $\U$ in the literature,
we will use here the version used in \cite{Kassel,Ohtsuki}, which has the
opposite co-product structure of the one used in \cite{Jantzen,Lusztig}.
The ground ring is $\BQ[[h]]$ is not a field, but the theory carries over
without changes.

Recall that  $\U=U_h(sl_2)$ is the $h$-adically completed $\BQ[[h]]$-algebra
generated by $H,E,F$ subject to the relations

$$
\quad HE= E(H+2), \quad H F= F(H-2), \quad EF-FE=\frac{K-K^{-1}}{v-v^{-1}}.
$$
where
$$
v= \exp(h/2)=q^{1/2}
$$
and $K= \exp(hH/2)$.
The group like element is $g=K$. Recall the balanced quantum integer,
and the corresponding balanced quantum factorials and binomials defined
by
\begin{eqnarray*}
[ a ] &=& \frac{v^a-v^{-a}}{v-v^{-1}},  \quad \left[a\right] ! = \prod_{k=1}^a [k] \quad \text{ for } \, a \in \BN
\\
\qbinom a b &=& \frac{[a]!}{[b]![a-b]!} \quad
\text{ for } a, b \in \BN, b \le a.
\end{eqnarray*}
The $R$-matrix is an element of $\U \ho \U$, the completed tensor product of
$\U$ and $\U$, given by
$$
R = D \sum_{k=0}^\infty \frac{ v^{k(k-1)/2} \, (v-v^{-1})^k }{[k]!}  E^n  \otimes
F^{n}
$$
where 
$D = \exp(hH \otimes H/4)$, which is called the diagonal part.

The inverse of $R$ is
$$
R^{-1} =
\left( \sum_{k=0}^\infty \frac{(-1)^k v^{-k(k-1)/2} \, (v-v^{-1})^k }{[k]!}
E^n  \otimes F^{n} \right) D^{-1}
$$
For each positive integer $n$ there is a unique $n+1$-dimensional $\U$-module
$V_n$ such that there is an element $e_0\in V_n$ satisfying
$$
H(e_0)= n e_0, \quad E(e_0) =0
$$
The module $V_n$ is freely spanned by $F^j(e_0), j=0,1,\dots, n$. The basis
$$
\{ u_j=F^j(e_0)/[j]! \, | \, j=0,\dots,n\}
$$
is known as the canonical basis of $V_n$.

For a framed link $K$ let $\check J_{K,n}$ be the invariant of $K$ with color
$V_n$. It is known that if one increases the framing of a component of $K$ by one,
then $J_{K,n}$ gets multiplied by $v^{(n^2 + 2n)/2}$.

Define $\tilde J_{K,n}$ in the same way as in the definition  of
$\check J_{K,n}$, with $\bb$ replaced by $\tbb :=  v^{-\frac{n^2}{2}-n} \bb$ and
$\bb^{-1}$ replaced by $\tbb^{-1}$. Then $\tilde J_{K,n}$ is an invariant of unframed links.
Since $\tilde J_{K,n}= q^a\, \check  J_{K,n}$ for some $a\in \frac 14 \BZ$, when dividing by the smallest monomial, both $\tilde J_{K,n}$ and $\check  J_{K,n}$
are the same.

\subsection{$R$-matrix in the canonical basis}
\lbl{sub.Rcanonical}

In this section we calculate the matrix of $\tbb$ in the product of the
canonical basis. The action of $H, E^k$ and $F^k$ on the canonical basis
is given by

$$
F^k(u_a) = \frac{[a+k]!}{[a]!}\, u_{a+k}, \quad
E^k(u_a)= \frac{[n+k-a]!}{[n-a]!}\, u_{a-k}, \quad H(u_{j}) = (n-2j)e_j
$$
where we assume  $u_j=0$ if $j <0$ or $j>n$. From here one can easily
calculate the formula of $\tbb$ and $\tbb^{-1}$,

\begin{align}
\tbb(u_a \otimes u_b) & =
\sum_k v^{-n-na-nb + 2ab+2ak-2kb-\frac{3k^2+k}{2}} \{k\}!\,\qbinom{n+k-a}{k}
\, \qbinom{b+k}{k} \, u_{b+k} \otimes u_{a-k} \lbl{e40a}\\
\tbb^{-1}(u_a \otimes u_b) & = \sum_k (-1)^k v^{n+ nb+na-2ab-\binom k2}
\{k\}!\,\qbinom{n+k-b}{k} \, \qbinom{a+k}{k} \, u_{b-k} \otimes u_{a+k}
\lbl{e40b}
\end{align}
Let us denote by $\tbb_{a,b}^{c,d}$ the matrix entry of $\tbb$, i.e.,
$$
\tbb(u_a \otimes u_b) = \sum_{c,d} \tbb_{a,b}^{c,d} \, u_c \otimes u_d.
$$
Then $\tbb_{a,b}^{c,d} =0$ and $(\tbb^{-1})_{a,b}^{c,d}=0$ unless the numbers
$a,b,c,d$ form an $n$-admissible state for the crossing, i.e.,
$a+b=c+d$, $\ve(C)(a-d) \ge 0$ and $a,b,c,d \in [0,n] \cap \BZ$. Here
$\ve(C)$ is the sign of the crossing $C$. If $a,b,c,d$ form an
$n$-admissible state for the crossing, then from \eqref{e40a} and
\eqref{e40b} we have

\begin{align}
 (\tbb)_{a,b}^{c,d} & = v^{-n-nd-nb +ab+dc } (q^{-1};q^{-1})_k
\,\binom{n-d}{a-d}_{q^{-1}} \, \binom{c}{c-b}_{q^{-1}}  \lbl{e101a} \\
 (\tbb^{-1})_{a,b}^{c,d}  & = (-1)^k v^{n+ nb+nd -bd -ac + b-c} \, (q^{-1};q^{-1})_k
\, \binom{n-c}{b-c}_{q^{-1}}  \, \binom{d}{d-a}_{q^{-1}}
  \lbl{e101b}
\end{align}

Choose the following basis $\{f_0,\dots,f_n\}$ for the dual $V_n^*$ such
that $f_j= v^{-(n-2j)/2} e_j^*$. Then

\begin{align}
 T = \psdraw{capr}{0.5in}   \qquad & \check J_{T}:  V^*_n\otimes V_n\to \cF
\qquad \text{given by }  f_a  \otimes e_b  \to \delta_{ab} v^{-(n-2a)/2} \\
 T = \psdraw{capl}{0.5in}  \qquad & \check J_{T}:  V_n \otimes V^*_n\to \cF
\qquad \text{given by }  e_a \otimes f_b \to   \delta_{ab} v^{(n-2a)/2} \\
 T = \psdraw{cupr}{0.5in}  \qquad & \check J_{T}: \cF\to V_n\otimes V^*_n
\qquad \text{given by } 1  \to \sum_a v^{(n-2a)/2}  e_a \otimes f_a \\
 T = \psdraw{cupl}{0.5in}    \qquad & \check J_{T}: \cF\to V^*_n \otimes V_n
\qquad \text{given by } 1  \to \sum_j  v^{-(n-2a)/2} f_a \otimes e_a.
 \lbl{e109b}
\end{align}

From Equations \eqref{e101a}--\eqref{e109b} we see that

$$
J_{K,n}(q) = \tilde J_{K,n}(q^{-1}),
$$
where $J_{K,n}$ is the given in section \ref{sub.weights}.


\section{The lowest degree of the colored Jones polynomial of an
alternating link}
\lbl{sub.lowest}

We fix an $A$-infinite, reduced, alternating, downward  diagram $D$ of a
link $K$. Corollary \ref{c.degree} shows that
the minimal degree of the colored Jones polynomial $J_{K,n}$ is given by
$P_1(n) =  \frac{n}{2} c_+ - \frac{n^2 +2n}{2} c_- -\frac{n}{2} \sum_M W(M)$,
where the sum is over all the local extreme points of $D$.
On the other hand, the Kauffman bracket approach gives the minimal degree as
$\frac{n}{2} c_+ - \frac{n^2 +n}{2} c_-  - \frac{n}{2} s_A$, see \cite{Tu3}
and also \cite{Le,Ga1}. Here we show that the two results agree.
Recall that $s_A$ is the number of circles obtained from $D$ after doing
$A$-smoothenings at every vertex crossing of $D$. If $D$ is a connected graph, then $s_A$ is the number of $A$-vertices of $\D^*$.

\begin{lemma}
\lbl{lem.c-s+}
We have
\be
\lbl{e200} c_- = s_A - \sum_M W(M)
\ee
Consequently,
\be
P_1(n)= \frac{n}{2} c_+ - \frac{n^2 +n}{2} c_-  - \frac{n}{2} s_A.
\lbl{e06}
\ee
\end{lemma}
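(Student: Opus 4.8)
The plan is to dispose of the elementary algebra first and then prove the combinatorial identity \eqref{e200}. For the implication \eqref{e200} $\Rightarrow$ \eqref{e06}, I would simply substitute: writing \eqref{e200} as $\sum_M W(M) = s_A - c_-$ and inserting this into the formula $P_1(n)= \frac{n}{2} c_+ - \frac{n^2 +2n}{2} c_- - \frac{n}{2} \sum_M W(M)$ from Proposition \ref{c.degree}, the last term becomes $-\frac n2 s_A + \frac n2 c_-$; the contribution $\frac n2 c_-$ combines with $-\frac{n^2+2n}{2}c_-$ to give $-\frac{n^2+n}{2}c_-$, which is exactly \eqref{e06}. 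So everything reduces to \eqref{e200}.

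To prove \eqref{e200} I would exploit the all-$A$ smoothing of $\bD$ together with the winding-number calculus of Section \ref{sub.weights}. By Lemma \ref{l67}(b) the $s_A$ circles produced by $A$-resolving every vertex of $\bD$ are exactly the boundaries of the $B$-polygons, and each carries winding number $+1$ with the orientation induced from $\bD$; hence $s_A=\sum_i W(\gamma_i)$. Applying the elementary identity \eqref{e91} to each circle $\gamma_i$ and summing over $i$ gives $s_A=\sum_X W(X)$, where $X$ runs over all local extreme points of the smoothed diagram. These extrema split into two families: the local extrema already present in $D$, and the extrema newly created at the crossings that are smoothed in the \emph{turn-back} fashion.

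The local step is to analyze a single crossing. Because $D$ is downward, both strands are oriented downward, and inspecting the orientations of $\bD$ in \eqref{eq.bD} against the placement of the $A$-regions from Lemma \ref{l67}(a) shows that the $A$-smoothing is the orientation-compatible \emph{vertical} resolution at a positive crossing -- creating no new extreme -- whereas at a negative crossing it is the \emph{horizontal} resolution, producing one new local maximum and one new local minimum. Computing the winding contribution of this maximum--minimum pair from the conventions of Section \ref{sub.weights} then shows that the negative crossings account precisely for the discrepancy between $s_A$ and $\sum_M W(M)$, namely $c_-$, which yields \eqref{e200}. The reducedness of $D$ (equivalently its $A$-adequacy, Lemma \ref{l67}(c)) is used here to guarantee that the smoothing circles are embedded, so that \eqref{e91} applies to each of them separately.

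The main obstacle is the sign bookkeeping in this last step. The winding numbers $W(M)$ in Proposition \ref{c.degree} refer to the orientation of the original diagram $D$, whereas the circles $\gamma_i$ inherit the orientation of $\bD$, which reverses every type-$O$ edge; an extreme point sitting on a type-$O$ edge therefore has its local winding flipped between the two pictures. Reconciling these two orientations, and simultaneously pinning down the $\pm\tfrac12$ contributions of the new maximum and minimum produced at each negative crossing, is the delicate part of the argument and is exactly where the checkerboard structure of an $A$-infinite alternating diagram (Lemma \ref{l67}) must be used.
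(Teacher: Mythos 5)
Your geometric strategy is exactly the paper's: $A$-resolve every vertex of $\bD$, use Lemma \ref{l67}(b) to see $s_A$ counterclockwise circles, apply \eqref{e91} to each of them, and compare extrema before and after the resolution (positive crossings create none, each negative crossing creates a max--min pair); the algebra deducing \eqref{e06} from \eqref{e200} is also correct. The gap is that you stop precisely at the step that proves \eqref{e200}: you declare the ``sign bookkeeping'' to be the delicate part and leave it as an unresolved reconciliation between $D$-orientations and $\bD$-orientations. Moreover, that reconciliation problem is a misdiagnosis. The winding numbers $W(M)$ in Proposition \ref{c.degree} are \emph{not} computed with the orientation of $D$: the functional $P_1$ is defined by Table \ref{t0}, i.e.\ through the centered-state weights of Table \ref{eq.wwsucc2}, and there an extreme point is read together with its orientation in $\bD$ and its centered-state value. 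This is forced: if a type-$O$ extremum were read with $D$'s orientation, its $L_1$-entry would be $\pm(n-s(e))/2$, which depends on $n$, contradicting the stated fact that $L_1$ is an $n$-independent linear form on $S_{\bD,\BR}$; it is exactly the simultaneous flip of the arrow and the replacement $a\mapsto n-a$ that keeps the weight $q^{W(X)(2a-n)/2}$ form-invariant. Once this is noted, every winding number in the argument --- the $W(M)$ of the old extrema, the two $+\tfrac12$'s created at each negative crossing, and the $+1$ of each of the $s_A$ circles --- lives in $\bD$, and \eqref{e200} is the one-line count $s_A=\sum_M W(M)+c_-$. There is nothing left to reconcile.

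Indeed, if your reading were correct --- if $\sum_M W(M)$ meant the $D$-winding sum --- then \eqref{e200} would simply be false, so no bookkeeping could rescue the argument. Take the standard ($A$-infinite) closure of $s_1^{-3}$: with $D$'s orientation both closure arcs turn counterclockwise, so the $D$-winding sum is $+2$ (it equals the rotation number of the immersed curve, a regular-homotopy invariant which is not determined by $c_\pm$ and $s_A$), while $s_A-c_-=3-3=0$. The identity holds only because in $\bD$ exactly one of the two closure arcs is of type $O$ and gets reversed, making the extrema sum $1-1=0$. A smaller inaccuracy: reducedness ($A$-adequacy) is not what legitimizes applying \eqref{e91} --- the circles produced by any resolution are automatically disjoint embedded curves; what Lemma \ref{l67}(b), hence $A$-infiniteness, provides is that each circle bounds a $B$-region and is therefore counterclockwise (winding number $+1$ rather than $-1$). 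Reducedness plays no role in this lemma at all, which is why the paper's Remark \ref{rem.c-s+1} exhibits failure of \eqref{e200} for a non-$A$-infinite diagram, not for a non-reduced one.
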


\begin{remark}
\lbl{rem.c-s+1}
If $D$ is not $A$-infinite, Equation \eqref{e200} fails. For example, it
fails for for the following diagram of the right handed trefoil
$$
\psdraw{rtrefoil3}{4.5in}
$$
we have
$c_-=0$, $s_+=2$, and there are two clockwise local extrema and two
counterclockwise local extrema giving $\sum_P \ve(P)=0$.
\end{remark}

\begin{proof}(of Lemma \ref{lem.c-s+})
Suppose $D'$ is the result of doing  $A$-type resolution at every crossing
of $D$. Since all crossings of $D$ are downward, an $A$-type resolution
at a positive  crossing of $\bD$ creates no local extreme point, while an
$A$-type resolution at a negative crossing created 2 local extreme points,
each has winding weight $1/2$.

It follows that
$$
c_- +  \sum_{M \in D} W(M)=
 \sum_{M \in D'} W(M)
$$
By Lemma \ref{l67}(b),  $D'$ consists of $s_A$ circles, each having
winding number 1. Hence $\sum_{M \in D'} W(M)=s_A$, and we get
$c_-+ \sum_{M \in D} W(M)= s_A$.
\end{proof}


\section{Regularity of Nahm sums}
\lbl{sub.regularity}

In this section we will give a necessary and sufficient criterion for
regularity of a Nahm sum. This section is logically independent from the
proof of Theorems \ref{thm.1} and \ref{thm.2}, but we include it for
completeness. Fix a pointed cone $C$ in the Euclidean space
$(\BR^r,|\cdot|_2)$
with apex the origin that intersects the orthant $[0,\infty)^d$ other
than in the origin and consider a polynomial function of degree $d$
$$
f: \BR^r \longto \BR, \qquad f(s)=\sum_{i=0} f_i(s)
$$
where $f_i$ are homogeneous polynomials of degree $i$ and $f_d$ not
identically zero. Assume that $f(\BZ^r) \subset \BZ$. Let
$X=  C \cap \BN^d$, $X_{\BQ}=C \cap \BQ^d$ and $X_{\BR}=C \cap [0,\infty)^r$.

\begin{proposition}
\lbl{prop.proper}
The following are equivalent.
\begin{itemize}
\item[(a)] $f: X \longto \BZ$ is proper (i.e., the preimage
of a finite set is finite) and bounded below.
\item[(b)] For $s \in X$ there exists $i_0$ such that $f_i(s)=0$ for all
$i<i_0$ and $f_{i_0}(s) \geq 0$.
\item[(c)] There exists $c>0$ such that $F(s) \geq c |s|_2$
for all but finitely many $s \in C \cap \BN^d$
\end{itemize}
\end{proposition}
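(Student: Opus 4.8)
The plan is to prove the cycle of implications $(c)\Rightarrow(a)\Rightarrow(b)\Rightarrow(c)$, the first being essentially formal and the last carrying the analytic content. Throughout I would work with the cross-section $\Sigma:=X_{\BR}\cap\{\,|s|_2=1\,\}$ of the real cone; since $C$ is pointed and closed and $[0,\infty)^r$ is closed, $\Sigma$ is compact, and every nonzero $s\in X_{\BR}$ is written uniquely as $s=tw$ with $t=|s|_2>0$ and $w\in\Sigma$, so that $f(tw)=\sum_{i=0}^d f_i(w)\,t^i$ is, for fixed $w$, a real polynomial in $t$ whose behavior as $t\to\infty$ is governed by the highest index $i$ with $f_i(w)\neq0$. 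I would freely use homogeneity, $f_i(Ns)=N^i f_i(s)$, to pass between the lattice $X$, the rational directions $X_{\BQ}$, and, by continuity and density, the real directions $X_{\BR}$.

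For $(c)\Rightarrow(a)$: if $f(s)\ge c|s|_2$ off a finite set $E\subset X$, then $f$ is bounded below, and for each $M$ the set $\{\,s\in X: f(s)\le M\,\}$ is contained in $E$ together with the finite lattice set $\{\,s: |s|_2\le M/c\,\}$, so $f$ is proper. For $(a)\Rightarrow(b)$ I argue by contraposition. If $(b)$ fails there is a lattice point $s_0\in X$ whose leading (top-degree) nonvanishing homogeneous component $f_{i_0}(s_0)$ is negative, or for which all $f_i(s_0)$ vanish. In the first case $f(ts_0)=\sum_i f_i(s_0)t^i\to-\infty$ along $t\in\BN$ (with $ts_0\in X$), so $f$ is not bounded below; in the second case $f$ vanishes identically on the infinite lattice ray $\BN s_0$, so $f^{-1}(\{0\})$ is infinite and $f$ is not proper. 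Either way $(a)$ fails.

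The heart is $(b)\Rightarrow(c)$. I would first upgrade the pointwise condition $(b)$ to a \emph{stratified} positivity statement on $\Sigma$: setting $Z_d=\{\,w\in\Sigma: f_d(w)=0\,\}$ and inductively $Z_{j}=Z_{j+1}\cap\{f_{j}=0\}$, the claim is that $f_d\ge0$ on $\Sigma$ and $f_{j-1}\ge0$ on $Z_{j}$ for each $j$, with strict positivity wherever the component in question is the leading one, and with $(b)$ guaranteeing that no direction lies in the innermost stratum where every $f_i$ vanishes. This follows from $(b)$ at rational directions together with continuity, since each inequality is a closed condition and the relevant zero loci are cut out by the $f_i$, so the inequalities extend from the dense set of rational directions to their closures. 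With the stratification in hand, I would establish the uniform estimate that there exist $c>0$ and $T$ with $\sum_i f_i(w)t^i\ge ct$ for all $w\in\Sigma$ and $t\ge T$, by a compactness/covering argument: near any point of a stratum the leading component is bounded below by a positive constant while the finitely many higher components, which vanish on that stratum, contribute terms dominated once $t$ is large. Evaluating at $w=s/|s|_2$, $t=|s|_2$ then yields $f(s)\ge c|s|_2$ for every lattice $s$ with $|s|_2\ge T$, i.e. for all but finitely many $s\in X$, which is $(c)$.

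I expect the main obstacle to be precisely the non-uniformity created by the \emph{jumping of the leading degree} across $\Sigma$: the leading coefficient $w\mapsto f_{\ell(w)}(w)$ is not continuous, so one cannot take a single global minimum, and the compactness argument must be organized along the finitely many strata, comparing powers of $t$ in the transition regions. A secondary technical point is the passage from lattice and rational directions (where $(b)$ is assumed) to all of $\Sigma$ required for compactness; this is harmless in the situation of Theorem \ref{thm.1}, where the leading form $f_d=Q$ is copositive, indeed positive semidefinite with rational coefficients, so that its zero locus consists of rational directions and density of $X_{\BQ}$ already transports $(b)$ to all of $\Sigma$.
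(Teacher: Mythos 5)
Your proposal follows the same architecture as the paper's proof: the identical cycle of implications $(c)\Rightarrow(a)$, $(a)\Rightarrow(b)$, $(b)\Rightarrow(c)$, the same ray argument for $(a)\Rightarrow(b)$ (stated contrapositively rather than directly), and the same density-plus-compactness strategy on the cross-section $\Sigma$ for $(b)\Rightarrow(c)$. Your one substantive departure is that you run the compactness argument over the full stratification $Z_d\supset Z_{d-1}\supset\cdots$, whereas the paper works with exactly two levels; and this is precisely where your proof has a genuine gap.

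The failing step is the uniform estimate in $(b)\Rightarrow(c)$: you claim that near a point of a stratum ``the leading component is bounded below by a positive constant while the finitely many higher components, which vanish on that stratum, contribute terms dominated once $t$ is large.'' No uniform threshold $T$ exists: a higher component vanishing on the stratum is small nearby but is multiplied by a \emph{higher} power of $t$, so the $t$ needed to beat it blows up as one approaches the stratum. Worse, the statement you are trying to prove is false as soon as three strata genuinely occur. Take $r=2$, $C=[0,\infty)^2$, and
\[
f(x,y)=x^3-xy+y,\qquad f_3=x^3,\quad f_2=-xy,\quad f_1=y.
\]
Every direction in $\Sigma$ has positive leading coefficient ($f_3>0$ off the vertical axis; at $(0,1)$ one has $f_3=f_2=0$, $f_1=1$), so your stratified positivity holds on all of $\Sigma$, and (b) holds at every lattice point --- in the literal reading with $i<i_0$ as well as in the leading-coefficient reading that your proof and the paper's proof both actually use. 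Yet $f(2,y)=8-y\to-\infty$, so $f$ is neither proper nor bounded below on $X=C\cap\BN^2$ and both (a) and (c) fail. Quantitatively, near $w_\epsilon\approx(\epsilon,1)$ the polynomial $f(t\epsilon,t)=\epsilon^3t^3-\epsilon t^2+t$ dips to about $-\tfrac{4}{27}\epsilon^{-3}$ at $t=\tfrac23\epsilon^{-2}$: this is exactly the non-uniformity your covering argument would have to, but cannot, overcome. The paper's proof avoids rather than solves this problem: its ``without loss of generality'' tacitly assumes $f_{d-1}>0$ \emph{strictly} on $Z_d$, i.e.\ that the stratification stops at depth two, and under that extra hypothesis its two-level estimate ($f_dt^d+f_{d-1}t^{d-1}\ge c't^{d-1}$ on a neighborhood of $Z_d$, all remaining terms uniformly $O(t^{d-2})$) is sound; but that hypothesis is not implied by (b), as the example shows, although it does hold for the function $Q_2=Q+L_1$ the proposition is intended for, since by Proposition \ref{prop.Q+L1} one has $L_1>0$ on every direction where $Q$ vanishes. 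So your instinct that the general case needs the full stratification was correct, but the full-stratification lemma is not merely unproven in your write-up --- it is false, and no reorganization of the compactness argument can rescue $(b)\Rightarrow(c)$ in the stated generality. The secondary issue you flag (transporting the conditional positivity from rational to real directions) is also a real gap shared with the paper, but it is minor next to this one.
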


\begin{proof}
$\rm{(a)} \implies \rm{(b)}$
Fix $s \in C \cap \BN^d$. By properness, it follows that
the sequence $f(ns)$ is unbounded thus it has a subsequence that
goes to infinity. Suppose that $f_i(s)=0$ for $i<i_0$
and $f_{i_0}(s) \neq 0$. Then, $f(ns)=n^{i_0} f_{i_0}(s)+O(n^{i_0-1})$ goes
to infinity. It follows that $f_{i_0}(s) > 0$.

$\rm{(b)} \implies \rm{(c)}$ Indeed, (b) implies that
for all $s \in X$ we have $f_d(s) \geq 0$, and if $f_d(s)=0$ then
(without loss of generality) we assume that $f_{d-1}(s) \geq 0$. Since $f_i$
is homogeneous, it follows that for all $s \in X_{\BQ}$ we have
$f_{d}(s) \geq 0$, and if $f_d(s)=0$ then $f_{d-1}(s) \geq 0$. Since $f_i$
is continuous, it follows that for all $s \in X_{\BR}$ we have
$f_{d}(s) \geq 0$, and if $f_d(s)=0$ then $f_{d-1}(s) \geq 0$. Let $S$ denote the
unit sphere in the Euclidean space $\BR^r$ with the Euclidean norm
$|\cdot|_2$. Then, $f_d: [0,\infty)^r \cap S \longto [0,\infty)$, and
if $Z_d=\{x \in [0,\infty)^r \cap S \, | \, f_d(s)=0\}$, then
$f_{d-1}(Z_d)>0$. By continuity, choose an open neighborhood $U_d$
of $Z_d$ in $[0,\infty)^r \cap S$ with closure $\bar Z_d$
such that $f_{d-1}(\bar Z_d) \subset (0,\infty)$. Then,
$f_d(([0,\infty)^r \cap S)\setminus Z_d) \subset (0,\infty)$ and by compactness
it follows that there exists $c'>0$ such that
$f_d(([0,\infty)^r \cap S)\setminus Z_d) \in [c',\infty)$ and
$f_{d-1}(\bar Z_d) \subset [c',\infty)$. It follows that
$(f_d+f_{d-1})([0,\infty)^r \cap S) \subset [c',\infty)$, and
by homogeneity this implies that for all
$s \in X_{\BR}$, we have $(f_{d}+f_{d-1})(s) \geq c' |s|_2^{d-1}$. On the other
hand, by homogeneity, we have $f_i(s)<c''|s|_2^i$ for $i<r-1$. Since
$c'x^{r-1}-c'' x^{r-2} \geq c x$ for some $c>0$ and for all $x$ sufficiently
large, (c) follows.

$\rm{(c)} \implies \rm{(a)}$ is immediate.
\end{proof}

For example, the function $Q_2$ in Section \ref{Q2} is proper and bounded
from below. Actually, $Q_2\ge 0$ on the cone $S_{\bD,\BN}$.


\section{Experimental formulas for knots with a low number
of crossings}
\lbl{sec.experiment}

Theorem \ref{thm.2} gives an explicit Nahm sum formula for an alternating
knot $K$. The first author programmed the above formula with input an
alternating, reduced, $A$-infinite downward diagram of a knot, and with
the help of D. Zagier computed the first 50 terms of the corresponding
$q$-series for several examples, and then guessed the answer (in all but
the case of $8_5$ knot below). Every such guess is a $q$-series identity,
whose proof is unknown to us. We thank D. Zagier for guidance and stimulating
conversations. For an alternating knot $K$, let
$$
\ga(K)=(c_+,c_-,\s)
$$
denote the triple of positive crossings, negative crossings and the
signature of $K$. $K$ has $c=c_++c_-$ crossings, and writhe $w=c^+-c^-$.
Let $\d^*_K(n)$ and $\d_K(n)$ denote the minimum and maximum degree of the
colored Jones polynomial $J_{K,n}(q)$. Note that $\d^*_K(n)$ and $\d_K(n)$
are determined by $\ga(K)$ by:
$$
\d^*_K(n)=-c_-\frac{n(n+1)}{2}-\frac{\s}{2} n-\frac{n}{2},
\qquad \d_K(n)=c_+\frac{n(n+1)}{2}-\frac{\s}{2} n+\frac{n}{2}
$$
Let $\Phi^*_{K,0}(q) \in \BZ[[q]]$ and $\Phi_{K,0}(q) \in \BZ[[q]]$
denote the stable limit of the colored Jones
polynomial from the left and from the right.
The involution $K \mapsto - K$ given by the mirror image
acts as follows:
$$
c_\pm \mapsto c_\mp, \qquad \s \mapsto -\s, \qquad \d \mapsto -\d^*,
\qquad \d^* \mapsto -\d, \qquad \Phi \mapsto \Phi^*, \qquad
\Phi^* \mapsto \Phi
$$

The formulas for $\Phi_0(q)$ presented below agree to the first 8
values with the {\tt KnotAtlas} table of Bar-Natan, and also to 50
values with the Nahm sum formula of Theorem \ref{thm.2}. The formulas
are proven only for $3_1$ and $4_1$ knot, and remain conjectural for
all others.

$K_p$ is the $p$th {\em twist knot} obtained by $-1/p$ surgery on
the Whitehead link for an integer $p$ and $T(a,b)$ is the
left-handed $(a,b)$ torus knot. The results below are expressed in terms
of the following series for a positive natural number $b$:
\be
\lbl{eq.ha}
h_b(q)=\sum_{n \in \BZ} (-1)^n q^{b n(n+1)/2-n},
\qquad
h^*_b(q)=\sum_{n \in \BZ} \ve(n) q^{b n(n+1)/2-n}
\ee
Observe that
$$
h_1(q)=0, \qquad h^*_2(q)=1, \qquad h_3(q)=(q)_\infty
$$

$$
\begin{array}{|c|c|c|c|l|l|l|l|} \hline
K & c_- & c_+ & \s & \Phi^*_{K,0}(q) & \Phi_{K,0}(q) \\ \hline
\hline
3_1=-K_1 & 3 & 0 & 2 & h_3 & 1 \\ \hline \hline
4_1=K_{-1} & 2 & 2 & 0 & h_3 & h_3 \\ \hline \hline
5_1 & 5 & 0 & 4 & h_5 & 1 \\ \hline
5_2=K_2 & 0 & 5 & -2 & h^*_4 & h_3 \\ \hline \hline
6_1=K_{-2} & 4 & 2 & 0 & h_3 & h_5  \\ \hline
6_2 & 4 & 2 & 2 &  h_3 h^*_4 & h_3 \\ \hline
6_3 & 3 & 3 & 0 & h_3^2 & h_3^2 \\ \hline \hline
7_1 & 7 & 0 & 6 & h_7 & 1 \\ \hline
7_2=K_3 & 0 & 7 & -2 & h^*_6  & h_3 \\ \hline
7_3 & 0 & 7 & -4 & h^*_4 & h_5  \\ \hline
7_4 & 0 & 7 & -2 & (h^*_4)^2 & h_3 \\ \hline
7_5 & 7 & 0 & 4 &  h^*_4  & h^*_4 \\ \hline
7_6 & 5 & 2 & 2 &  h_3 h^*_4 & h_3^2  \\ \hline
7_7 & 3 & 4 & 0 & h_3^3 & h_3^2 \\ \hline \hline
8_1=K_{-3} & 6 & 2 & 0 & h_3 & h_7 \\ \hline
8_2 & 6 & 2 & 4 & h_3 h^*_6 & h_3 \\ \hline
8_3 & 4 & 4 & 0 & h_5 & h_5 \\ \hline
8_4 & 4 & 4 & 2 & h^*_4 h_5  & h_3    \\ \hline
8_5 & 2 & 6 & -4 & h_3   &  ??? 
\\ \hline
\hline
K_p, p>0 & 0 & 2p+1 & -2 & h_{2p}^* & h_3  \\ \hline
K_p, p<0 & 2|p| & 2 & 0  & h_3 & h_{2|p|+1} \\ \hline
T(2,p),p>0 & 2p+1 & 0 & 2p & h_{2p+1} & 1 \\ \hline
\end{array}
$$

For $8_5$, we have computed the first 100 terms using an 8-dim Nahm sum.
The result slightly simplifies when divided by $h_3(q)$:

\begin{math}
\Phi_{8_5}(q)/h_3(q)=
1-q+q^2-q^4+q^5+q^6-q^8+2 q^{10}+q^{11}+q^{12}-q^{13}-2 q^{14}+2 q^{16}+3 q^{17}+2 q^{18}+q^{19}-3 q^{21}-2 q^{22}+q^{23}+4 q^{24}+4 q^{25}+5 q^{26}+3 q^{27}+q^{28}-2 q^{29}-3 q^{30}-3 q^{31}+5 q^{33}+8 q^{34}+8 q^{35}+8 q^{36}+6 q^{37}+3 q^{38}-2 q^{39}-5 q^{40}-6 q^{41}-q^{42}+2 q^{43}+9 q^{44}+13 q^{45}+17 q^{46}+16 q^{47}+14 q^{48}+9 q^{49}+4 q^{50}-3 q^{51}-8 q^{52}-8 q^{53}-5 q^{54}+3 q^{55}+14 q^{56}+21 q^{57}+27 q^{58}+32 q^{59}+33 q^{60}+28 q^{61}+21 q^{62}+11 q^{63}+q^{64}-9 q^{65}-11 q^{66}-11 q^{67}-2 q^{68}+9 q^{69}+27 q^{70}+40 q^{71}+56 q^{72}+60 q^{73}+65 q^{74}+62 q^{75}+54 q^{76}+39 q^{77}+23 q^{78}+4 q^{79}-9 q^{80}-16 q^{81}-14 q^{82}-3 q^{83}+16 q^{84}+40 q^{85}+67 q^{86}+92 q^{87}+114 q^{88}+129 q^{89}+135 q^{90}+127 q^{91}+115 q^{92}+92 q^{93}+66 q^{94}+35 q^{95}+9 q^{96}-12 q^{97}-14 q^{98}-11 q^{99}+13 q^{100}+O(q)^{101}
\end{math}

Let us summarize some observations.

\begin{itemize}
\item
$\Phi_{K,0}(q)$ is not determined by $\ga(K)$ alone: see for
instance $(7_2,7_4)$ and $(7_3,- 7_5)$.
\item
In all knots above except $8_5$, $\Phi_{K,0}(q)$ is a finite product
of the form $\prod_i h_{a_i}^{b_i} (h^*_{a_i})^{c_i}$ where $b_i,c_i \in \BN$.
\item
The modularity properties of $\Phi_{8_5}(q)$ are completely unknown, and so
is its behavior at $q=1$ or at any complex root of unity.
\end{itemize}

\bibliographystyle{hamsalpha}
\bibliography{biblio}
\end{document}